\documentclass[oneside,11pt,reqno]{amsart}
\usepackage{amssymb,amsmath,amsthm,bbm,enumerate,mdwlist,url,multirow,hyperref,amsthm,paralist,mathrsfs,bm,tabularx,mathtools}
\usepackage[shortlabels]{enumitem}
\usepackage[normalem]{ulem}
\allowdisplaybreaks

\addtolength{\hoffset}{-2.5cm}
\textwidth 17.5cm
\addtolength{\textheight}{3cm}
\topmargin -0.75cm
\sloppy

\def\EE{\mathbb{E}}
\def\DD{\mathcal{D}}
\def\XX{\mathcal{X}}
\def\YY{\mathcal{Y}}
\def\FF{\mathcal{F}}
\def\GG{\mathcal{G}}

\def\BB{\mathcal{B}}
\def\dd{\mathrm{d}}
\def\PP{\mathbb{P}}
\def\QQ{\mathbb{Q}}
\def\ee{\mathsf{e}}

\def\pkLp{\mathsf{pk\text{-}Lp}}
\def\pkspLp{\mathsf{pk\text{-}spLp}}
\def\pksubo{\mathsf{pk\text{-}subo}}
\def\spLpnot{\mathsf{spLp}_{\neg\uparrow}}
\newcommand{\cm}{\mathsf{cm}_{ [0,\infty]}}

\linespread{1.3}
\theoremstyle{definition}
\newtheorem{definition}{Definition}
\theoremstyle{theorem}
\newtheorem{theostar}{Theorem}

\newtheorem{propositionstar}[theostar]{Proposition}

\newtheorem{proposition}[definition]{Proposition}

\newtheorem{theorem}[definition]{Theorem}
\AtBeginEnvironment{definition}{%
  \pushQED{\qed}%
}
\AtEndEnvironment{definition}{\popQED\enddefinition}
\newtheorem{corollary}[definition]{Corollary}
\numberwithin{equation}{section}
\numberwithin{definition}{section}
\theoremstyle{remark}
\newtheorem{remark}[definition]{Remark}
\AtBeginEnvironment{remark}{%
  \pushQED{\qed}%
}
\AtEndEnvironment{remark}{\popQED\endremark}

\newtheorem{question}[definition]{Question}
\AtBeginEnvironment{question}{%
  \pushQED{\qed}%
}
\AtEndEnvironment{question}{\popQED\endquestion}

\AtBeginEnvironment{application}{%
  \pushQED{\qed}%
}
\AtEndEnvironment{application}{\popQED\endapplication}
\newtheorem{example}[definition]{Example}
\AtBeginEnvironment{example}{%
  \pushQED{\qed}%
}
\AtEndEnvironment{example}{\popQED\endexample}
\makeatletter
\@namedef{subjclassname@2020}{%
  \textup{2020} Mathematics Subject Classification}
\makeatother
\begin{document}
\title[Positive Markov processes in Laplace duality]{Positive Markov processes in Laplace duality}

\author{Cl\'ement Foucart}
\address{CMAP, Ecole Polytechnique and   Laboratoire Analyse, G\'eom\'etrie \& Applications, Universit\'e Sorbonne Paris Nord}
\email{foucart@math.univ-paris13.fr}

\author{Matija Vidmar}
\address{Department of Mathematics, Faculty of Mathematics and Physics, University of Ljubljana and  Institute of Mathematics, Physics and Mechanics, Ljubljana, Slovenia}
\email{matija.vidmar@fmf.uni-lj.si}

\begin{abstract} 
This article develops a general framework for Laplace duality between positive Markov processes in which the one-dimensional Laplace transform of one process can be represented through that of another. We show that a process admits a Laplace dual if and only if it satisfies a certain complete monotonicity condition. Moreover, we analyse how the conventions adopted for the values of $0 \cdot \infty$ and $\infty \cdot 0$ are reflected in the weak continuity/absorptivity properties of the processes in duality at the boundaries $0$ and $\infty$. A broad class of generators admitting Laplace duals is identified, and we provide sufficient conditions under which the associated martingale problems are well-posed with the solutions being in  duality at the level of their semigroups. Laplace duality is shown to furnish a unifying structure for several generalizations of continuous-state branching processes, e.g. those with immigration or evolving in random environments. Along the way, a theorem originally due to Ethier and Kurtz -- connecting duality of generators to that of the associated semigroups -- is refined, and we provide a concise proof of the Courr\`ege form for the  pointwise infinitesimal generator of a positive Markov process whose  domain includes the exponential functions. The latter leads naturally to the notion of a Laplace symbol, which is a parsimonious encoding of the infinitesimal dynamics of the process.
\end{abstract}

\keywords{Positive Markov process; duality of semigroups; duality of generators; Laplace transform; complete monotonicity;  Laplace symbol; L\'evy-Khintchine function; martingale problem; branching process; stochastic population model.}

\subjclass[2020]{Primary: 60G07, 60J25. Secondary: 60J35, 60J80, 92D25.} 

\date{\today}

\maketitle
\vspace{-1cm}
 \setcounter{tocdepth}{1} 
\footnotesize
\tableofcontents
\normalsize
\vspace{-1.5cm}

\section{Introduction}
\subsection{Motivation and agenda}

Duality constitutes a central technique in the analysis of Markov processes finding application in such diverse areas as interacting particle systems, non-equilibrium statistical mechanics, stochastic population models and mathematical finance. \label{added:reference-quick} Different notions of duality appear in the literature, ranging from classical duality with respect to measures, which is closely related to time-reversal, Blumenthal and Getoor \cite{blumenthal-getoor}, to dual representations of Laplace transforms that exchange arguments and time indices of the processes, recently studied by Kuznetsov and Wang \cite{kuznetsov-wang}.

The notion considered in this work is sometimes referred to as stochastic duality. Two stochastic processes $X$ and $Y$, with state-spaces $E$ and $F$ respectively, are said to be in duality with respect to a given function $H$ defined on $E\times F$ when, for all times~$t$,
\begin{equation}\label{eq:introHduality}\mathbb{E}_x[H(X_t,y)]=\mathbb{E}^y[H(x,Y_t)],\end{equation}
 the subscript $x$ and superscript $y$ indicating the starting values of $X$ and $Y$. 

Liggett \cite{zbMATH03892344} and Ethier-Kurtz \cite{EthierKurtz} provide foundational treatments of how duality techniques (in the sense of \eqref{eq:introHduality}) are employed in the study of interacting particle systems and martingale problems, see also \cite{zbMATH08044135}. We refer the reader  to Jansen-Kurt's survey \cite{duality} and to Swart \cite{Swart} for general accounts  of duality and  its applications in the setting of Markov chains. In recent years, the theory of stochastic duality has seen substantial development, enriching both its structural foundations and range of applications. Notable advances include algebraic approaches, see Giardin{\`a} and Redig \cite{dualityliebook} and Sturm et al. \cite{zbMATH07279967}, and new duality relationships in discrete and continuous population models, see e.g. Berzunza and Pardo \cite{arXiv:2009.11820}, Cordero et al. \cite{zbMATH07581733}, Foucart et al. \cite{FoMaMa2019,FoRivWi2024} and Gonz{\'a}lez Casanova et al. \cite{zbMATH07458586}.

A well-established setting is the so-called Siegmund duality in which the $H$ of \eqref{eq:introHduality} is defined on $[0,\infty]^2$ and given by the indicator function $H(x,y)=\mathbbm{1}_{\Delta}(x,y)$ with $\Delta:=\{(x,y)\in [0,\infty]^2: x\geq y\}$. For this $H$, it is a result of 
 Siegmund \cite{MR0431386} that there exists a $[0,\infty]$-valued Markov process $Y$ in $H$-duality with a given Markov process $X$ provided  the map $[0,\infty]\ni x\mapsto \mathbb{P}_x(X_t\geq y)$ is right-continuous and nondecreasing for all $y\in [0,\infty]$. When this last condition holds the process $X$ is said to be  \textit{stochastically monotone}.  

Besides Siegmund duality, the framework of \textit{moment duality} has received a great deal of attention. The duality function $H$ is in this case  defined on $[0,1]\times \mathbb{N}_0$ and given by $H(x,n)=x^n$. It is a  setting that arises commonly in theoretical population genetics. A $[0,1]$-valued Wright-Fisher diffusion is for instance  in moment duality with  the block-counting process of a Kingman coalescent. We may refer here to M\"ohle \cite{zbMATH01396203}, Carinci et al. \cite{zbMATH06403246} and Foucart and Zhou \cite{zbMATH07734715}.

In this article we wish to consider Laplace duality of continuous-time and continuous-space positive Markov processes $X$ and $Y$, namely, the property that for all times $t$,  \eqref{eq:introHduality} holds with $H(x,y)=e^{-xy}$, i.e. for all starting points $x$ of $X$ and $y$ of $Y$, one has
\begin{equation}\label{intro:ld}
\EE_x[e^{-X_t y}]=\EE^y[e^{-xY_t}].
\end{equation} 
  Informally speaking, at the level of the generators $\XX$ of $X$ and $\YY$ of $Y$, the semigroup relation \eqref{intro:ld} corresponds
to 
\begin{equation}\label{intro:gen}
\XX_x e^{-xy}=\YY^y e^{-xy},
\end{equation}
the subscript $x$ and superscript $y$   indicating the variable in which $\XX$ and $\YY$ are acting. 

To the best of our knowledge the literature offers no specific treatment of Laplace duality in the sense of \eqref{intro:ld}-\eqref{intro:gen}.   The most well-known  class of processes for which a Laplace duality holds is the one for which the dual  $Y$ in \eqref{intro:ld} is non-random. We may then represent $Y$ under $\mathbb{P}^{y}$ as a deterministic flow $(u_t(y))_{t\in[ 0,\infty)}$ with $u_0(y)=y$, and  \eqref{intro:ld} becomes
\begin{equation*}
\EE_x[e^{-X_t y}]=e^{-xu_t(y)}.
\end{equation*}
The semigroup of the process $X$ admits the representation of the preceding display if and only if the following property holds: 
\begin{equation*}
\EE_{x+x'}[e^{-X_t y}]=\EE_{x}[e^{-X_t y}]\EE_{x'}[e^{-X_t y}].
\end{equation*}
This convolution identity 
is known as the \textit{branching property}; a Markov process satisfying it is referred to in the literature as a continuous-state branching process (CB process, for short). A classical result, due to Lamperti \cite{Lamperti2} and Silverstein \cite[Section 4]{zbMATH03294035}, states that under mild regularity assumptions on the semigroup of $X$ the function $(u_t(y))_{t\in[ 0,\infty)}$ verifies the ordinary differential equation:
\[\frac{\dd}{\dd t}u_t(y)=-\Psi(u_t(y)),\ u_0(y)=y, \quad y\in (0,\infty),\]
with $\Psi$ a L\'evy-Khintchine function of the form
\begin{equation}\label{levy-kintchine-Psi-intro}
    \Psi(y)=\int\big(e^{-uy}-1+uy\mathbbm{1}_{(0,1]}(u)\big)\nu(\dd u)+ay^2-by-c,\quad y\in [0,\infty),
\end{equation}
where $b\in \mathbb{R}$, $a\in [0,\infty)$, $\nu$ is a L\'evy measure on $(0,\infty)$ (so, $\int 1\wedge u^2 \nu(\dd u)<\infty$) and $c \in [0,\infty)$. The function $\Psi$, called the branching mechanism, gives the action of the generator $\XX$ of $X$ on the exponential functions: for all $x,y\in (0,\infty)$,
\begin{equation}
\XX_x e^{-xy}=\lim_{t\downarrow 0}\frac{\EE_x[e^{-X_t y}]-e^{-xy}}{t}
 =x\Psi(y)e^{-xy}=-\Psi(y)\frac{\dd}{\dd y}e^{-xy}=\YY^ye^{-xy},\label{basic-example-cb-determ}
\end{equation}
$\YY$ being the generator (here, a first-order  differential operator) associated to the deterministic process $Y$ with drift (as a function of position) $-\Psi$.\label{duality-for-csbp} 

If a process $X$ admits a Laplace dual $Y$ that is non-deterministic, then the branching property 
no longer holds, and new dynamics -- characterized by different generators $\mathcal{X}$ and $\mathcal{Y}$ -- emerge. Examples of such processes, which generalize CB processes to incorporate phenomena such as competition or collisions have appeared in various contexts throughout the literature. We refer to Alkemper and Hutzenthaler \cite{zbMATH05212987}, Hermann and Pfaffelhuber \cite{zbMATH07188057}, Horridge and Tribe \cite{zbMATH02119574}, Hutzenthaler and Wakolbinger \cite{MR2308333}, Foucart \cite{MR3940763, foucart2021local}, Foucart and Vidmar \cite{foucartvidmar}, Greven et al. \cite{greven2015multitypespatialbranchingmodels}. Further instances of Laplace dualities, for multivariate processes, appear (though not by name) e.g. in Watanabe \cite[Theorem~1]{watanabe-bivariate} for bivariate  branching processes  and in \cite[Definition~2.1]{duffie-etal} for affine processes, but we shall study here only the  univariate setting.\label{added-multivariate-passage}

This paper has three principal objectives.
\begin{enumerate}[(i)]
\item \label{(i)intro} To delineate precisely the notion of Laplace duality \eqref{intro:ld} and to consider the structural properties of processes in such duality, in particular the existence of Laplace duals and the role of the boundary behavior. 
\item \label{(ii)intro} To study the implications of \eqref{intro:ld} for \eqref{intro:gen} and to find a wide explicit class of operators -- candidates for generators of positive Markov processes -- satisfying~\eqref{intro:gen}.
\item \label{(iii)intro} To establish a complex of conditions ensuring that \eqref{intro:gen} can be ``integrated'' into \eqref{intro:ld} and to apply this to a range of new examples.
\end{enumerate}
Our exposition of \ref{(i)intro} can be considered quite definitive, making  no sacrifices on generality. For \ref{(ii)intro}-\ref{(iii)intro} we shall be less exhaustive,  focusing predominantly, but not exclusively, on processes for which $\infty$ is absorbing and, more often than not, for which there is some continuity at $0$.

\subsection{Main results and article structure}\label{subsection:main-results}
Before presenting the highlights of this paper let us outline a key difficulty that we will encounter. 

 We may consider the (potentially) Laplace dual processes $X$ and $Y$ with the end-points $0$, $\infty$ either included or excluded from their respective state-spaces. In the former case we may then have to agree on (natural) conventions for each of the two expressions $e^{-0\cdot \infty}$, $e^{-\infty\cdot 0}$. Now, on the hand, there is no extension of the  map $(0,\infty)^2\ni (x,y)\mapsto e^{-xy}$ to either of the two points $(0,\infty)$, $(\infty,0)$ that is  separately continuous in both variables (since $\lim_{\epsilon\downarrow 0}\epsilon\cdot\infty=\infty \ne 0=\lim_{n\to\infty} 0\cdot n$), which makes it seem safest to restrict to pairs of state-spaces for $X$ and $Y$ that avoid the issue of specifying $e^{-0\cdot\infty}$ and $e^{-\infty\cdot 0}$ altogether. But, as we will see, one  cannot always  work  with a $(0,\infty)$-valued dual process $Y$ (or $X$). We are thus led to taking the full state-space $[0,\infty]$ as the generic choice for both $X$ and $Y$,  with due attention having to be paid to the two natural conventions for $e^{-0\cdot\infty}$ (either  $e^{-0\cdot \infty}=e^{-0}=1$ or $e^{-0\cdot \infty}=e^{-\infty}=0$) and (analogously) for~$e^{-\infty\cdot 0}$.

After the preparatory   Section~\ref{sec:preliminaries}, that recalls some relevant concepts and fixes notation, we explore the consequences of Laplace duality for the associated semigroups in Section~\ref{sec:Laplacedualitysemigroups}. We find that  common to all  processes in Laplace duality is the property of \emph{complete monotonicity}, which means, roughly speaking, that the semigroup of the process  leaves invariant the class of completely monotone functions. Moreover, we establish in  Theorem~\ref{thm:completemonotonicity} that
 \begin{align}\nonumber X  &\text{ admits a Laplace dual under a given set of conventions for $e^{-0 \cdot \infty}$ and $e^{-\infty \cdot 0}$} \\ \nonumber
 \Leftrightarrow &\text{ $X$ is completely monotone and certain weak continuity/absorptivity properties}\\*
 & \text{ hold for it at the boundaries $0$, $\infty$}.\nonumber 
 \end{align}
The weak continuity properties at the boundaries and whether or not these are absorbing will be seen to -- and we shall make it explicit how they -- depend crucially on the different combinations of conventions for $e^{-0 \cdot \infty}$ and $e^{-\infty \cdot 0}$. 

For processes $X$ and $Y$ in Laplace duality a correspondence is established in Corollary~\ref{cor:absorbing-nonsticky}  between the behavior of $X$ at $0$ and $\infty$ (absorbing/non-sticky) and the behavior of $Y$ at $\infty$ and $0$ (non-sticky/absorbing), respectively. The complete monotonicity of processes $X$ and $Y$ in Laplace duality entails also relations between  excessive (resp. invariant) functions of $X$ and  excessive (resp. invariant) measures of $Y$, which is explained in Proposition~\ref{thm:excessivecm}. Classical links between the long-term limiting laws of $X$ and $Y$ are described in Proposition \ref{prop:LToflimitingdistribution}. This achieves our aim \ref{(i)intro}.

In pursuing the second objective \ref{(ii)intro}, to which we turn in Section~\ref{sec:LDgen}, we are confronted with a further obstacle, namely that we start with a general positive Markov process without any a priori information about the structure of its infinitesimal generator. This is particularly problematic when attempting to identify operators $\XX$ and $\YY$ such that \eqref{intro:gen} holds. We therefore establish in Subsection~\ref{sec:courrage}, Theorem~\ref{thm:courrege}, that the  pointwise infinitesimal generator $\XX$ of any Markov process $X$ with state-space $[0,\infty]$, whose domain includes the exponential functions, admits the so-called Courr\`ege form when restricted to them.  On compactly supported twice differentiable functions such a  form is given for the strong infinitesimal generator of a real-valued Feller process in Courrège \cite{zbMATH03249220}, see also Jacob \cite[p.~360, Theorem 4.5.21]{jacob2001pseudo}, but for our level of generality the pointwise generator $\XX$ is more appropriate.\label{add-stuff} As in the Feller case, the action of $\XX$ on the exponentials is composed of two parts: a local one, corresponding to diffusion and drift,  and a non-local one, encoding the jumps -- not necessarily positive -- and governed by a family of L\'evy measures, all encapsulated in what we call the \emph{Laplace symbol} $\psi_X$ of~$X$:
$$\psi_X(x, y) := e^{xy} \mathcal{X}_x e^{-x y} = \lim_{t \downarrow 0} \frac{\mathbb{E}_x[e^{-(X_t - x)y}] - 1}{t}.$$
For instance, the Laplace symbol of a $\mathrm{CB}$ process $X$ with branching mechanism $\Psi$  satisfies, cf. \eqref{basic-example-cb-determ}, $\psi_X(x,y)=x\Psi(y)$.

 Having clarified the possible issues at the boundaries with the help of the conventions for $e^{-0\cdot \infty}$, $e^{-\infty\cdot 0}$ and having identified the form of the  pointwise  generator for any positive Markov process whose domain includes the exponential functions, we are  positioned well  to explore the relationship between \eqref{intro:ld} and \eqref{intro:gen}.

In Subsection~\ref{subsec:geninLaplaceduality} we examine the basic structure of the pointwise generators $\XX$ and $\YY$ of two processes $X$ and $Y$ with associated Laplace symbols $\psi_X$ and $\psi_Y$ that are in Laplace duality. With no surprise, we shall prove in Theorem~\ref{thm:generator-duality-semigroup} that, modulo technical reservations,  
\begin{align*}
&\text{$X$ in Laplace duality \eqref{intro:ld} with $Y$}  \Rightarrow \text{$\XX$ in Laplace duality \eqref{intro:gen} with $\YY$}\\
\shortintertext{and}
&\text{$\XX$  in Laplace duality  \eqref{intro:gen}  with $\YY$} \Leftrightarrow \text{$\psi_X(x,y)=\psi_Y(y,x)$}.
\end{align*}
When $\psi_X(x,y)=\psi_Y(y,x)$ we say that the Laplace symbol $\psi_X$ of $X$ (or, for that matter, $\psi_Y$ of $Y$) is a \textit{Laplace dual symbol}. In Theorem~\ref{theorem:tensor-form} we characterize the Laplace dual symbols $\psi$ that take the separable form $\psi(x, y) = R(x)\Psi(y)$ for some $\Psi$ as in \eqref{levy-kintchine-Psi-intro}  and some function $R$. It will emerge that  $R$ must itself be of L\'evy–Khintchine type. We recover in Theorem~\ref{thm:localgeninduality} a result, first obtained in \cite{foucartvidmar}, characterizing positive Markov processes with no negative jumps that admit for their Laplace dual a diffusion. They are the so-called continuous-state branching processes with collisions. 
It is finally observed in Subsection~\ref{subsec:geninLaplaceduality}  that the symbol of a multiplicative L\'evy process is also Laplace dual.

Building on the Laplace dual symbols identified in Subsection~\ref{subsec:geninLaplaceduality}  we introduce in Definition~\ref{def:LDS} of Subsection~\ref{sec:wideclass} a wide class, denoted $ \mathsf{LDS}$, of Laplace dual symbols, constructed from functions of the L\'evy–Khintchine type, and forming a convex cone, which secures our second goal \ref{(ii)intro}.

Lastly, we address our third aim \ref{(iii)intro} in Sections \ref{sec:fromdualsymboltosemigroups}-\ref{sec:examples}. They contain directly applicable results allowing one to deal with new examples.

We start by designing a set of conditions allowing one to transfer the Laplace duality of two operators $\XX, \YY$ to the level of semigroups, provided that processes exist as solutions to the martingale problems associated to $\XX$ and $\YY$. These conditions are obtained  by first refining  Ethier-Kurtz's theorems on duality \cite[Chapter 4]{EthierKurtz} for a general function in  Theorem~\ref{theorem:duality-from-mtgs} and then by applying it to the specific setting of the exponential duality map in Theorem \ref{thm:generators}. Subsequently, processes whose Laplace symbols belong to a certain subclass of $\mathsf{LDS}$ are  constructed as solutions to well-posed martingale problems in Theorem~\ref{thm:LaplacedualitysemigroupLDS}. The existence of solutions follows from the positive maximum principle and a well-known general result. Laplace duality between  solutions of the martingale problem and its dual is  established using Theorem \ref{thm:generators}.  The duality in turn guarantees uniqueness of the solutions,
as well as their Markov property. 

In Section~\ref{sec:examples} we study specific examples via stochastic equations with jumps. The CB processes and subordinators are for instance revisited through the lens of our duality framework. We  observe that several generalizations of CB processes that have been defined in the literature -- adding phenomena such as collision \cite{foucartvidmar}, immigration \cite{KAW}, or L\'evy random environment \cite{zbMATH06247275,zbMATH06963718,zbMATH06836271} --  admit Laplace duals. We then focus on a specific subclass of completely monotone Markov processes, whose Laplace dual symbols, belonging to $\mathsf{LDS}$, are baptized \textit{decomposable}, providing sufficient conditions for uniqueness and non-explosion of solutions to the associated  stochastic equations.  Theorem \ref{thm:generators}  applies and ensures   duality at the level of semigroups, leading to a new generic family of stochastic equations in Laplace duality.

\section{Preliminaries}\label{sec:preliminaries}
In this section, the reader interested only in the results (not their proofs) concerning Laplace duality need only consult the definitions and pick up the notation. 
\subsection{Miscellaneous general notation and  conventions}\label{miscellaneous} 
We understand $e^{-\infty}:=0$, also $x\cdot \infty:=\infty=:\infty\cdot x$ for $x\in (0,\infty)$. The symbol $0^+\cdot\infty$ shall refer to the use of the convention $0\cdot\infty=\lim_{\epsilon\downarrow 0}\epsilon\cdot\infty=\infty$ \emph{in interpreting  $e^{-0\cdot\infty}$}, the superscript indicating then in what limiting sense the product is to be understood. Analogously we introduce $0\cdot \infty^-$, $\infty^-\cdot 0$ and $\infty\cdot 0^+$. We stress that we can have  $e^{-0\cdot \infty}\ne e^{-\infty\cdot 0}$, e.g. under $0^+\cdot\infty$, $\infty^-\cdot 0$.  As a mnemonic device, notice that  it is the number that is \emph{not} ``adorned'' with a $\pm$ that results from the product, e.g. under $0\cdot \infty^-$, we interpret $e^{-0\cdot \infty}=e^{-0}=1$. Henceforth, whenever referring to conventions for $0\cdot\infty$ and $\infty\cdot 0$, we always implicitly mean them only with reference to interpreting $e^{-0\cdot\infty}$ and $e^{-\infty\cdot 0}$. The standard measure-theoretic convention $0\cdot \infty=0=\infty\cdot 0$  shall continue to prevail in all other respects.

The notation $\uparrow$ (resp. $\uparrow\uparrow$) means nondecreasing, more precisely ``increasing, equalities allowed'' (resp. strictly increasing, i.e. ``increasing, equalities not allowed''); in a similar vein we interpret $\downarrow$ (resp. $\downarrow\downarrow$). When we use e.g.  $\text{$\uparrow$-}\lim$ it means that the limit in question is nondecreasing. The domain  of a function $f$ is denoted $D_f$. For an extended real-valued map $\phi$ we put  $\phi_+:=\max(\phi,0)$ and~$\phi_-:=\max(-\phi,0)$.   We understand $\inf\emptyset:=\infty$.  

For a topological space $E$: 
\begin{align}\nonumber
\BB_E&:=  \text{the Borel $\sigma$-field on $E$},\\\nonumber
\mathsf{M}(E)&:=\{\text{Borel measurable real functions on $E$}\},\\\nonumber
\mathsf{B}(E)&:=\{\text{bounded measurable real functions on $E$}\},\\\nonumber
\mathsf{C}_0(E)&:=\{\text{continuous real maps on $E$ vanishing at infinity}\}  \text{ in case $E$ is locally compact},\\\nonumber
\mathsf{C}(E)&:=\{\text{continuous real maps on $E$}\}  \text{ in case $E$ is compact, and}\\
\mathbb{D}_E&:=\{\omega\in E^{[0,\infty)}:\omega\text{ c\`adl\`ag}\}\text{ endowed with the $\sigma$-field of evaluation maps.}\label{various-E-spaces}
\end{align}  

The Laplace transform  of a  measure $\mu$ on $\BB_{A}$, $A\subset [0,\infty]$, is the map $\widehat \mu:(0,\infty)\to [0,\infty]$ given by \[\widehat{\mu}(x):=\int e^{-xv}\mu(\dd v),\quad x\in (0,\infty). \]   A measure $\mu$ on a measurable space $(S,\Sigma)$ is said to be carried by a $T\in \Sigma$ if $\mu(S\backslash T)=0$.

\subsection{Markov processes, their semigroups, transition kernels and generators}\label{sec:positivemarkov}
Given a measurable space $(E,\mathcal{E})$,  a Markov process $X$ under the probabilities $\PP_\cdot=(\PP_x)_{x\in E}$ (being all of them defined on a common measurable space) valued in $(E,\mathcal{E})$, is for us an $E$-valued process indexed by $[0,\infty)$ (infinite lifetime) such that: 
\begin{enumerate}[(I)]
    \item measurability: the map $(E\ni x\mapsto \PP_x(X_t\in A))$ is measurable for all $t\in [0,\infty)$ and all $A\in\mathcal{E}$;
    \item normality: for all $x\in E$, $\PP_x(X_0=x)=1$;  and 
    \item Markov property: for all real $t\geq s\geq 0$,   $A\in\mathcal{E}$,  $x\in E$, we have
\begin{equation}\label{markov-property}
\PP_x\left(X_t\in A\vert \sigma(X_u:u\in [0,s])\right)=\PP_{X_s}(X_{t-s}\in A) \text{ a.s.-$\PP_x$.}
\end{equation}
\end{enumerate}
For $t\in [0,\infty)$ and $x\in E$ we shall then write the law of $X_t$ under $\mathbb{P}_x$ as
$$p_t(x,\cdot):={X_t}_\star\PP_x.$$
The collection $p:=(p_t(x,\cdot))_{(t,x)\in [0,\infty)\times E}$ is a Markovian family of transition kernels:  the map $(E\ni x\mapsto p_t(x,A))$ is measurable for all $A\in \mathcal{E}$ and $t\in [0,\infty)$ (measurability); $p_0(x,\cdot)=\delta_x$ for all $x\in E$ (normality);  $$p_{t+s}(x,\cdot)=\int p_s(u,\cdot)p_t(x,\dd u),\quad \{t,s\}\subset [0,\infty),\, x\in E \text{ (semigroup property)}.$$ It carries the complete information of the finite-dimensional laws of the process $X$ and its properties exactly parallel the ones listed above for the family of laws $\PP_\cdot$ of $X$. We also define, for $t\in [0,\infty)$ and $f:E\to \mathbb{R}$ bounded, measurable,
\begin{equation}\label{eq:semigroup}
P_tf(x):=\int f(u)p_t(x,\dd u),\quad  x\in E.
\end{equation} The family $P:=(P_t)_{t\in [0,\infty)}$ is  the so-called (transition) semigroup of $X$. Finally, under the pointwise infinitesimal generator  -- henceforth, just generator, except where we will want to emphasize its pointwise nature -- $\XX$ of $X$ we shall understand the linear map with domain $D_\XX$ comprising those bounded measurable $f:E\to \mathbb{R}$ for which the limit (thereafter specified as the value of $\XX$ at $f$)  
\begin{equation}\label{eq:generator}
\XX f:=\lim_{t\downarrow 0}\frac{P_tf-f}{t}
\end{equation} exists pointwisely in $\mathbb{R}$ on $E$. 

Below, when referring to a Markov process $X$ we implicitly take it for granted that it comes equipped with the notation $(\PP_\cdot,p,P,\XX)$ (unless some of these are given, explicitly, a separate meaning, in which case that one supersedes), though we will remind the reader of parts of this notation from time to time. Often $X$ will be accompanied by another Markov process $Y$, which shall then be assumed to be furnished with the analogous pieces of data $(\PP^\cdot,q,Q,\YY)$. The expectations of $X$ (resp. $Y$) we  then write as $(\EE_x)_{x\in [0,\infty]}$ (resp. $(\EE^y)_{y\in [0,\infty]}$).

While the above concepts are very general, we will, for the most part, be concerned only with the case when the state-space $E$ is $[0,\infty]$:
\begin{definition}\label{def:positive-Markov-process}
A positive Markov process is a  Markov process valued in $([0,\infty],\BB_{[0,\infty]})$. If a positive Markov process $X$ has c\`adl\`ag paths, then in saying that it is non-explosive we shall mean that $\zeta_\infty:=\inf \{t\in [0,\infty):X_{t-}\text{ or }X_t=\infty\}=\infty$ a.s.-$\PP_x$ for all $x\in [0,\infty)$.
\end{definition}

\subsection{Duality of Markov processes}\label{subsection:processes-in-duality}
Let $(E, \mathcal{E})$ and $(F, \mathcal{F})$ be measurable spaces and let $H : E \times F \to [-\infty,\infty]$ be a measurable function, bounded from below or from above. The following $H$-duality relationship was  discussed, albeit only informally, already in the Introduction, cf. \eqref{eq:introHduality}.

\begin{definition}\label{def:H-duality}
An $(E,\mathcal{E})$-valued Markov process $X$  is in $H$-duality with an $(F,\mathcal{F})$-valued Markov process $Y$   when $$\EE_x[H(X_t,y)]=\EE^y[H(x,Y_t)]$$ holds for all $t\in [0,\infty)$, $x\in E$,  $y\in F$.
\end{definition}
We submit for consideration the three blanket assumptions (it would perhaps be more natural to have them be symmetric in $E$ and $F$, but technically, for the result that we wish to present, we do not need it):
\begin{enumerate}[(A)]
\item\label{dual:A} $H$ is law-determining on $F$: for probabilities $Q_1$ and $Q_2$ on $F$, if $\int H(x,y)Q_1(\dd y)=\int H(x,y)Q_2(\dd y)$ for all $x\in E$, then $Q_1=Q_2$.
\item\label{dual:B} $H$ is kernel-measurability-determining on $F$: for $q:F\times \mathcal{F}\to [0,1]$ with $q(y,\cdot)$ a probability on $(F,\mathcal{F})$ for all $y\in F$, if the map $F\ni y\mapsto \int H(x,v)q(y,\dd v)$ is measurable for all $x\in E$, then $q$ is a probability kernel, i.e.  (in addition to the assumed property of $q$) $q(\cdot, B):F\to [0,1]$ is measurable for all $B\in \mathcal{F}$.
\item\label{dual:C} $F$ has the Kolmogorov-extension property:  for any index set $\Lambda$ and any consistent family $\rho$  of probabilities on $F^T$, as $T$ ranges over the finite subset of $\Lambda$, there exists a (automatically unique)  probability on $F^\Lambda$ extending $\rho$.
\end{enumerate}
Under \ref{dual:A}, if $X$ admits $H$-duals $Y_1$ and $Y_2$, then the law of $Y_1$ is the same as the law of $Y_2$. 
The following proposition ensures that kernels in duality with Markovian ones are themselves Markovian provided conditions \ref{dual:A}-\ref{dual:C} are satisfied. See also \cite[Proposition~3.6]{duality} and \cite[Theorem 2.1]{zbMATH08044135} for related results.
\begin{propositionstar}\label{proposition:general-dual}
Assume \ref{dual:A}-\ref{dual:C}. Let $X$ be an $(E,\mathcal{E})$-valued Markov process and let, for each $y\in F$ and $t\in [0,\infty)$, $q_t(y,\cdot)$ be a probability on $(F,\mathcal{F})$ such that for all $x\in E$, 
\begin{equation}\label{would-be-dual}
\int H(u,y)p_t(x,\dd u)=\int H(x,v)q_t(y,\dd v).
\end{equation}
Then there exists an $(F,\FF)$-valued Markov process $Y$, unique in law, such that $X$ is in $H$-duality with $Y$; this $Y$ has $q:=(q_t(y,\cdot))_{(t,y)\in [0,\infty)\times F}$ for its family of transition kernels.
\end{propositionstar}
\begin{proof}
We have already remarked that \ref{dual:A} ensures uniqueness in law of $Y$. 
Let us prove existence. As $F$ has the Kolmogorov extension property it will suffice to demonstrate that $q$ is a Markovian family of  kernels. For $y\in F$, by normality of $X$ and~\eqref{would-be-dual},   $$\int H(x,v)\delta_y(\dd v)=H(x,y)=\int H(u,y)p_0(x,\dd u)=\int H(x,v)q_0(y,\dd v),\quad x\in E;$$  since $H$ is law-determining on $F$ we infer that $q_0(y,\cdot)=\delta_y$, which is the normality of $q$. Measurability of $q$ is a direct consequence of the fact that $H$ is kernel-measurability-determining on  $F$ and of \eqref{would-be-dual}. Finally, let us check the semigroup property of $q$.  For $\{t,s\}\subset [0,\infty)$ and $x\in E$,  $y\in F$, on using \eqref{would-be-dual}, Tonelli-Fubini and the semigroup property of $p$:
\begin{align*}
 \int q_{t}(y,\dd v) \int q_{s}(v,\dd v')H(x,v')&= \int q_{t}(y,\dd v) \int p_{s}(x,\dd u)H(u,v)\\
 &=  \int p_{s}(x,\dd u)\int q_{t}(y,\dd v) H(u,v)\\
  &= \int p_{s}(x,\dd u)\int p_{t}(u,\dd u')H(u',y)\\
  &=\int p_{s+t}(x,\dd u)H(u,y)
  =\int q_{s+t}(y,\dd v)H(x,v).
\end{align*}
 Since $H$ is law-determining on $F$ this completes the proof.
\end{proof}

\subsection{L\'evy-Khintchine forms}\label{subsection:levy-khintchin}
We are obliged to provide some background on L\'evy-Khintchine functions. The notation required for the understanding of the results of the continuation of the text can be read off from Table~\ref{table.3}. 

 Let $x\in [0,\infty)$. Given a quadruplet $l=(\nu,a,b,c)$ consisting of a measure $\nu$  on  $\BB_{[-x,\infty)}$ satisfying $\int (1\land u^2)\nu(\dd u)<\infty$ and $\nu(\{0\})=0$, $a\in [0,\infty)$, $b\in \mathbb{R}$ and $c\in [0,\infty)$ (henceforth, an $x$-L\'evy quadruplet), the convex map $\kappa:[0,\infty)\to \mathbb{R}$ specified according to
 \begin{equation}\label{eq:exponents:pkLp}
 \kappa(y):=\int  \left(e^{-uy}-1+uy\mathbbm{1}_{[-1,1]}(u)\right)\nu(\dd u)+ay^2-by-c,\quad y\in [0,\infty),
 \end{equation}
  represents the Laplace exponent of a possibly killed (cemetery: $\infty$) L\'evy process $\xi$ with no jumps of size $<-x$: $\EE[e^{-y\xi_t};\xi_t<\infty]=e^{t\kappa(y)}$ for all $t\in [0,\infty)$ and $y\in [0,\infty)$ \cite[Theorems~1.6 and 3.6, Eq.~(3.11)]{Kyprianoubook}\footnote{The addition of killing at an independent exponential random time of rate $c$ is trivial.}. We denote the class of all $\kappa$ of this form by $\pkLp_{x}$. 
  The $\kappa$ of \eqref{eq:exponents:pkLp} determines the quadruplet $(\nu,a,b,c)$ uniquely: 
  
  \begin{proposition}\label{lemma:laplace-symbol-unique}
Let $x\in [0,\infty)$. If $l_1=(\nu_1,a_1,b_1,c_1)$ and $l_2=(\nu_2,a_2,b_2,c_2)$ are $x$-L\'evy  quadruplets with associated $\kappa_1$ and $\kappa_2$ according to \eqref{eq:exponents:pkLp}, then $\kappa_1=\kappa_2$ implies $l_1=l_2$.
\end{proposition}
\begin{proof}
 Writing $l=(\nu,a,b,c)$ generically for $l_1$ or $l_2$ and $\kappa$ generically for $\kappa_1$ and $\kappa_2$ in parallel, differentiation under the integral sign (via dominated convergence) yields $$\kappa'''(y)=-\int u^3e^{-uy}\nu(\dd u)\in \mathbb{R},\quad y\in (0,\infty).$$ Considering the measure $\gamma:=(\BB_{[0,\infty)}\ni A \mapsto  \int_{A-x} u^3\nu(\dd u))$, noting that finite Laplace transforms  determine signed measures on $\BB_{[0,\infty)}$ and that  $\nu(\{0\})=0$, we infer from $\kappa_{1}=\kappa_{2}$ that $\gamma_1=\gamma_2$, therefore $\nu_1=\nu_2$, and finally $(a_1,b_1,c_1)=(a_2,b_2,c_2)$. 
\end{proof}

Given a  triplet $(\nu,d,c)$ consisting of a  measure $\nu$  on   $\BB_{(0,\infty)}$ satisfying $\int (1\land u)\nu(\dd u)<\infty$, $d\in [0,\infty)$ and $c\in [0,\infty)$ (henceforth, a L\'evy triplet), the concave map  $\Phi:[0,\infty)\to \mathbb{R}$ specified according to \begin{equation}\label{Phi-form}
\Phi(y):=\int  \left(1-e^{-uy}\right)\nu(\dd u)+dy+c,\quad y\in [0,\infty),
\end{equation}  represents the Laplace exponent of a possibly killed (cemetery: $\infty$) subordinator $\xi$: $\EE[e^{-y\xi_t};\xi_t<\infty]=e^{-t\Phi(y)}$ for all $t\in [0,\infty)$ and $y\in [0,\infty)$. It is well-known and easy to verify that $\Phi$  satisfies $\lim_{y\to \infty}\frac{\Phi(y)}{y}=d$ and determines $(\nu,d,c)$ uniquely \cite[Theorem~1.2 and the paragraph just after it]{subordinators}. We denote the class of all such $\Phi$ by $\pksubo$.

We write $\pkspLp:=\pkLp_0$ for the class of Laplace exponents of possibly killed  (cemetery: $\infty$) spectrally positive L\'evy processes. In the representation \eqref{eq:exponents:pkLp} above we may then just as well and do insist that $\nu$ is a  L\'evy measure on $\BB_{(0,\infty)}$, and any $\Psi\in \pkspLp$ takes the form
\begin{equation}\label{eq:LKpsi}
    \Psi(y)=\int\big(e^{-uy}-1+uy\mathbbm{1}_{(0,1]}(u)\big)\nu(\dd u)+ay^2-by-c,\quad y\in [0,\infty),
\end{equation}
$(\nu,a,b,c)$ being the corresponding L\'evy quadruplet. Furthermore, since a two-fold integration by parts yields $  \Psi(y)=y^2\int_0^1e ^{-uy}\int _u^1\nu((v,1])\dd v\dd u-y\int_0^\infty e^{-uy}\nu((u\lor 1,\infty))\dd u+ay^2-by-c$ for $y\in (0,\infty)$, we see by dominated convergence that  such a $\Psi$ satisfies $\lim_{y\to\infty}\frac{\Psi(y)}{y^2}=a\in [0,\infty)$. Also, $\Psi(\infty-)=\infty$ unless $-\Psi\in \pksubo$ \cite[Section~VII.1, penultimate paragraph on p.~188]{Ber96} and therefore $\pkspLp\cap (-\infty,0]^{[0,\infty)}=-\pksubo$.

We further denote by $\spLpnot:=\pkspLp\cap [0,\infty)^{[0,\infty)}$ the family of Laplace exponents of spectrally positive L\'evy processes that are not drifting to $\infty$. So, every $\Sigma\in\spLpnot$ admits the representation 
 \begin{equation}\label{class-sigma}
 \Sigma(y)=\int  \left(e^{-uy}-1+uy\right)\nu(\dd u)+ay^2+dy,\quad y\in [0,\infty),
 \end{equation}
for some measure $\nu$ on $\BB_{(0,\infty)}$ satisfying $\int u\land u^2\nu(\dd u)<\infty$, $a\in [0,\infty)$ and $d\in [0,\infty)$. Via integration by parts $  \Sigma(y)=y\int_0^\infty (1-e^{-uy})\nu((u,\infty))\dd u+ay^2+dy $ for $y\in [0,\infty)$, so that any such  $\Sigma$ satisfies $\Sigma(0)=0$ and, by dominated convergence, $\lim_{y\to 0}\frac{\Sigma(y)}{y}=d\in [0,\infty)$.  Since  only affine maps are convex and concave, it follows that $\pkspLp\cap \pksubo=\spLpnot\cap \pksubo=\{\gamma\mathrm{id}_{[0,\infty)}:\gamma\in [0,\infty)\}$. 

For easier reference we summarize in Table~\ref{table.3}  the various classes of L\'evy-Khintchine forms introduced above.
\begin{table}[htb!]
\begin{center}
    \begin{tabular}{ccc}
\multicolumn{1}{c}{class} &
\multicolumn{1}{c}{generic representative of the class} &
\multicolumn{1}{c}{condition on $\nu$} \\
\hline
    $\pkLp_x$&  $\kappa(y)=\int_{[-x,\infty)}  \left(e^{-uy}-1+uy\mathbbm{1}_{[-1,1]}(u)\right)\nu(\dd u)+ay^2-by-c$ &  $ \int 1\land u^2\nu(\dd u)<\infty$\\
            $\pksubo$ & $\Phi(y)=\int_{(0,\infty)}  \left(1-e^{-uy}\right)\nu(\dd u)+dy+c$ &   $ \int 1\land u\nu(\dd u)<\infty$\\
    $\pkspLp$    &    $ \Psi(y)=\int_{(0,\infty)}\big(e^{-uy}-1+uy\mathbbm{1}_{(0,1]}(u)\big)\nu(\dd u)+ay^2-by-c$  &   $ \int 1\land u^2\nu(\dd u)<\infty$\\
    $\spLpnot$ & $\Sigma(y)=\int_{(0,\infty)}  \left(e^{-uy}-1+uy\right)\nu(\dd u)+ay^2+dy$ &   $\int u\land u^2 \nu(\dd u)<\infty$
    \end{tabular}
    \end{center}
     \caption{The classes of Laplace exponents \eqref{eq:exponents:pkLp}-\eqref{class-sigma}. The domain of definition of the measure $\nu$ is given by the delimiters in the integrals for the expressions of the generic representative of each class. In all cases the parameters satisfy $\{a,d,c\}\subset [0,\infty)$, $b\in \mathbb{R}$. Additionally, $\nu$ does not charge $0$ in the classes $\pkLp_x$, $x\in [0,\infty)$.}\label{table.3}
 \end{table}
 
\begin{remark}\label{rem:decompositionjumpmeasure}
   In \eqref{eq:LKpsi}, instead of working with a single  L\'evy measure $\nu$ and the truncating function $\mathbbm{1}_{(0,1]}$,   it will sometimes be more convenient to decompose $\nu=\nu_1+\nu_2$ into the sum of two  measures $\nu_1$ and $\nu_2$  on $\BB_{(0,\infty)}$ satisfying  $\int 1\wedge  u \nu_1(\dd u)<\infty$, $ \int u \wedge u^2 \nu_2(\dd u)<\infty$, 
also the drift parameter $b=b_1-b_2$  as a difference with $\{b_1,b_2\}\subset [0,\infty)$, allowing us to write instead, for $y\in [0,\infty)$,
 \begin{align*}
 \Psi(y)&=\int  \left(e^{-uy}-1+uy\right)\nu_2(\dd u)+ay^2+b_2y-\left(\int  (1-e^{-uy})\nu_1(\dd u)+b_1y+c\right);
 \end{align*}
 such a decomposition is always possible (for instance by taking for $\nu_1$ the part of $\nu$ outside of $(0,1]$ and  $b_1:=b_+$), though not unique. Note that $\Psi$ can thus be written as $\Sigma-\Phi$ with $\Sigma\in \spLpnot$ and $\Phi\in \pksubo$.
    \end{remark}

We will have an opportunity to use up
\begin{proposition}\label{proposition:closure-under-limits}
For each $x\in [0,\infty)$ the class $\pkLp_x$ is closed for pointwise convergence (of sequences) towards a function $[0,\infty)\to \mathbb{R}$ continuous at zero; moreover, $\pkLp_x\vert_{(0,\infty)}$ is closed for pointwise limits.
\end{proposition}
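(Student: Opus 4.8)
The plan is to establish a Lévy--Khintchine continuity theorem tailored to the one-sided class $\pkLp_x$. Let $(\kappa_n)_n$ be a sequence in $\pkLp_x$, with $\kappa_n$ attached via \eqref{eq:exponents:pkLp} to the $x$-Lévy quadruplet $l_n=(\nu_n,a_n,b_n,c_n)$, and suppose $\kappa_n\to\kappa$ pointwise, either on $(0,\infty)$ (for the second assertion) or on $[0,\infty)$ with $\kappa$ continuous at $0$ (for the first). As a harmless preliminary I would rewrite each $\kappa_n$ using the \emph{continuous} truncation $h(u):=(-1)\vee(u\wedge1)$ in place of $u\mathbbm{1}_{[-1,1]}(u)$: this replaces $b_n$ by $b_n-\nu_n(\{u>1\})+\nu_n(\{u<-1\})$ (a finite shift, since $\nu_n(\{|u|>1\})<\infty$) and leaves the class $\pkLp_x$ unchanged, while making the integrand depend continuously on $u$.

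\emph{Uniform bounds and extraction.} The crucial device is the identity
\[
G_n(y):=\kappa_n(2y)-2\kappa_n(y)=\int\big(e^{-uy}-1\big)^2\,\nu_n(\dd u)+2a_ny^2+c_n,
\]
in which all three summands are nonnegative. Since $G_n(1)\to\kappa(2)-2\kappa(1)\in\mathbb R$, the numbers $G_n(1)$ are bounded; combined with the elementary estimate $(e^{-u}-1)^2\ge\tfrac14(1\wedge u^2)$, valid for every $u\in[-x,\infty)\setminus\{0\}$, this bounds $a_n$, $c_n$, $\int(1\wedge u^2)\,\nu_n(\dd u)$ and $\nu_n(\{|u|>1\})$ uniformly in $n$; feeding these into $\kappa_n(1)$ and using $\kappa_n(1)\to\kappa(1)$ then bounds $|b_n|$ as well. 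Passing to a subsequence I may assume $a_n\to a\in[0,\infty)$, $b_n\to b\in\mathbb R$, $c_n\to c\in[0,\infty)$, and $(1\wedge u^2)\nu_n\to\rho$ weakly as finite measures on the one-point compactification $[-x,\infty]$ of $[-x,\infty)$. Write $\rho=\alpha\delta_0+(1\wedge u^2)\nu+\beta\delta_\infty$, where $\alpha:=\rho(\{0\})$, $\beta:=\rho(\{\infty\})$, and $\nu$ is obtained from $\rho\vert_{[-x,\infty)\setminus\{0\}}$ by division by $1\wedge u^2$; then $\nu(\{0\})=0$ and $\int(1\wedge u^2)\,\nu(\dd u)<\infty$, so $\nu$ is admissible as the Lévy measure of an $x$-Lévy quadruplet.

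\emph{Passage to the limit and conclusion.} For fixed $y>0$ the map $u\mapsto\big(e^{-uy}-1+yh(u)\big)/(1\wedge u^2)$ extends to a continuous function on $[-x,\infty]$ (with values $y^2/2$ at $0$ and $y-1$ at $\infty$), so the weak convergence of $(1\wedge u^2)\nu_n$, together with the convergences of $a_n,b_n,c_n$, gives
\[
\kappa(y)=\int\big(e^{-uy}-1+yh(u)\big)\,\nu(\dd u)+\Big(a+\tfrac{\alpha}{2}\Big)y^2-(b-\beta)y-(c+\beta),\qquad y\in(0,\infty).
\]
Converting $h$ back to $u\mathbbm{1}_{[-1,1]}(u)$ (another finite shift of the drift) exhibits the right-hand side as the restriction to $(0,\infty)$ of the element of $\pkLp_x$ attached to the $x$-Lévy quadruplet $(\nu,\,a+\alpha/2,\,\cdot\,,\,c+\beta)$; this is the second assertion. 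For the first, let $y\downarrow0$ in the displayed identity: dominated convergence (using $|e^{-uy}-1+yh(u)|\le C(1\wedge u^2)$ uniformly for $y\in[0,1]$) gives $\kappa(0+)=-(c+\beta)$, whereas $\kappa(0)=\lim_n\kappa_n(0)=\lim_n(-c_n)=-c$ and $\kappa$ is continuous at $0$, forcing $\beta=0$. Then the displayed identity holds on all of $[0,\infty)$, i.e.\ $\kappa\in\pkLp_x$. (By Proposition~\ref{lemma:laplace-symbol-unique} the whole original sequence of quadruplets then converges, not merely a subsequence.)

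\emph{Main obstacle.} I expect the real work to be concentrated in the uniform bounds --- extracting tightness of the $\nu_n$ and control of $a_n,b_n,c_n$ from bare pointwise convergence --- and in the bookkeeping around the mass $\beta$ that can a priori escape to $+\infty$ under the weak limit: such escaping mass behaves, in the limit, like an extra killing-plus-drift contribution, which explains both why it is invisible on $(0,\infty)$ and why it is ruled out by continuity at $0$. The quadratic functional $G_n=\kappa_n(2\,\cdot)-2\kappa_n(\cdot)$, which simultaneously annihilates the linear part and keeps the three remaining contributions nonnegative, is what makes the uniform bounds cheap.
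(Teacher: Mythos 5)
Your proof is correct, but it takes a genuinely different route from the paper's. You run the classical analytic compactness argument for closedness of L\'evy--Khintchine classes: the second difference $\kappa_n(2\,\cdot)-2\kappa_n(\cdot)$ annihilates the drift and the truncation term while keeping the Gaussian, jump and killing contributions nonnegative, which yields uniform bounds on $a_n$, $c_n$ and $\int(1\wedge u^2)\,\nu_n(\dd u)$ (and then on $|b_n|$); Prokhorov on the compactified half-line $[-x,\infty]$ and a portmanteau passage then identify the limit, with mass escaping to $0$ absorbed into the Gaussian coefficient and mass escaping to $+\infty$ appearing as an extra killing-plus-drift term $\beta(y-1)$, which is exactly what the continuity-at-zero hypothesis eliminates. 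The paper instead argues probabilistically: it attaches to $\kappa_n-\kappa_n(0)$ an infinitely divisible law, performs an exponential (Esscher) tilt at a fixed $\theta>0$, invokes Curtiss's theorem on convergence of moment generating functions in a neighbourhood of zero to obtain weak convergence to an infinitely divisible limit whose L\'evy measure is still carried by $[-x,\infty)$ (citing Sato), and then tilts back. Your version is elementary and self-contained, and it makes explicit the escaping-mass mechanism behind the possibility $\kappa(0+)<\kappa(0)$ that the paper only comments on after the statement; the paper's version is shorter modulo the cited results and sidesteps the truncation and decomposition bookkeeping. Two immaterial points: the drift shift when you pass to the continuous truncation $h$ has the opposite sign (the new drift is $b_n+\nu_n(\{u>1\})-\nu_n(\{u<-1\})$), which is harmless since you only use that it is a finite shift; and your closing parenthetical overreaches --- Proposition~\ref{lemma:laplace-symbol-unique} gives uniqueness of the limiting quadruplet representing $\kappa$, but the original quadruplets $(\nu_n,a_n,b_n,c_n)$ need not converge componentwise (alternate, say, the pure Gaussian $y^2$ with $2n^2(e^{-y/n}-1+y/n)$), though this remark plays no role in your proof.
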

The pointwise limit of convex maps is convex and the limit of nonpositive numbers is nonpositive; each member of $\pkLp_x$ being convex and nonpositive at zero, we see that the assumption of continuity at zero in the preceding is redundant if the limiting function is nonnegative on $(0,\infty)$. In particular, $\spLpnot$ is closed for pointwise convergence of sequences. More generally, given a sequence $(\kappa_n)_{n\in \mathbb{N}}$  in $\pkLp_x$ converging pointwise in $\mathbb{R }$ to some $\kappa:[0,\infty)\to \mathbb{R}$, the convexity ensures that  automatically $\kappa(0+)\leq \kappa(0)$. The possibility of $\kappa(0+)<\kappa(0)$ reflects that we may have extra mass ``escaping to $\infty$'' in the limit, explains why we need the restriction $\vert_{(0,\infty)}$ in the second statement of the proposition and dovetails with  $\pksubo\vert_{(0,\infty)}$, the class of Bernstein functions, being closed for pointwise convergence \cite[Corollary~3.8(ii)]{bernstein}. \label{cf-what-is-meant}
\begin{proof}[Proof of Proposition~\ref{proposition:closure-under-limits}]
Let  $(\kappa_n)_{n\in \mathbb{N}}$ be a sequence in $\pkLp_x$ converging pointwise in $\mathbb{R}$ on $(0,\infty)$ to some  $\kappa: (0,\infty)\to \mathbb{R}$. For each $n\in \mathbb{N}$ consider the  infinitely divisible distribution $Q_n$ on $\BB_\mathbb{R}$ associated to $\kappa_n-\kappa_n(0)$ satisfying $$\int e^{-uy}Q_n(\dd u)=e^{\kappa_n(y)-\kappa_n(0)},\quad y\in [0,\infty)$$ (i.e. $Q_n$ is  the law at time $1$ of the L\'evy process having the Laplace exponent $\kappa_n-\kappa(0)$).
Fix $\theta\in (0,\infty)$ and introduce the exponentially tilted infinitely divisible \cite[Theorem~3.9]{Kyprianoubook} probabilities $Q^\theta_n:=(\mathbb{R}\ni u\mapsto e^{-u\theta-(\kappa_n(\theta)-\kappa_n(0))})\cdot Q_n$, $n\in \mathbb{N}$, whose representing L\'evy measures are still carried by $[-x,\infty)$. We see that
\begin{align*}
\lim_{n\to\infty}\int e^{-uy}Q^\theta_n(\dd u)&=\lim_{n\to\infty}\int e^{-u(y+\theta)-(\kappa_n(\theta)-\kappa_n(0))}Q_n(\dd u)=\lim_{n\to\infty}e^{\kappa_n(y+\theta)-\kappa_n(\theta)}\\
&=e^{\kappa(y+\theta)-\kappa(\theta)},\quad y\in (-\theta,\infty).
\end{align*}
So, the moment generating functions of the $Q^\theta_n$ are convergent towards the specified limit as $n\to \infty$ on the neighborhood $(-\infty,\theta)$ of zero. By a result of Curtiss \cite[Theorem~3]{curtiss} we deduce existence of a probability $Q^\theta_\infty$ on $\BB_\mathbb{R}$ such that $\lim_{n\to\infty} Q_n^\theta=Q^\theta_\infty$ weakly (strictly speaking \cite[Theorem~3]{curtiss} gives the pointwise convergence, as $n\to\infty$, of the  distribution functions of the $Q^\theta_n$ towards the distribution function of $Q^\theta_\infty$ at every continuity point of the latter, but this is equivalent to weak convergence \cite[Theorem~25.8, Eq.~(25.1)]{billingsley}).  In particular \cite[p.~34, Lemma~7.8]{MR3185174}, $Q^\theta_\infty$ is infinitely divisible and, by  \cite[p.~41, Theorem~8.7(1)]{MR3185174} applied with arbitrary $f$ compactly supported in $(-\infty,-x)$, its representing L\'evy measure is carried by $[-x,\infty)$. Besides, the same result \cite[Theorem~3]{curtiss}  of Curtiss also ensures that $$\int e^{-uy}Q^\theta_\infty(\dd u)=\lim_{n\to\infty}\int e^{-uy}Q^\theta_n(\dd u)=e^{\kappa(y+\theta)-\kappa(\theta)},\quad y\in (-\theta,\infty).$$
By weak convergence, for each $m\in \mathbb{N}$, $\int( e^{u\theta}\land m)Q^\theta_\infty(\dd u)=\lim_{n\to\infty}\int( e^{u\theta}\land m)Q^\theta_n(\dd u)\leq \limsup_{n\to\infty}\int e^{u\theta}Q^\theta_n(\dd u)=\limsup_{n\to\infty}e^{-(\kappa_n(\theta)-\kappa_n(0))}\leq \lim_{n\to\infty}e^{-\kappa_n(\theta)}=e^{-\kappa(\theta)}<\infty$; by monotone convergence, on letting $m\to\infty$, we find that $\int e^{u\theta}Q^\theta_\infty(\dd u)<\infty$. In the preceding display we may then pass to the limit $y\downarrow -\theta$ by monotone -- on $[0,\infty)$ -- and bounded -- on $(-\infty,0)$ -- convergence, to find that $\int e^{u\theta}Q^\theta_\infty(\dd u)=e^{\kappa(0+)-\kappa(\theta)}$ with necessarily $\kappa(0+)\in \mathbb{R}$. 
Performing another exponential tilting we get the infinitely divisible probability $Q_\infty:=(\mathbb{R}\ni u\mapsto e^{u \theta-(\kappa(0+)-\kappa(\theta))})\cdot Q^\theta_\infty$, whose representing L\'evy measure continues to be carried by $[-x,\infty)$, satisfying $$\int e^{-uy}Q_\infty(\dd u)=e^{\kappa(y)-\kappa(0+)},\quad y\in (0,\infty).$$ This can only be if there is a $\tilde\kappa\in \pkLp_x$ vanishing at zero for which $\tilde \kappa=\kappa-\kappa(0+)$ on $(0,\infty)$.  The conclusions of the proposition are now immediate.
\end{proof}

 \section{Laplace duality: semigroups}\label{sec:Laplacedualitysemigroups}

\subsection{Completely monotone processes}\label{subsec:cmprocesses}
 The property common to all positive Markov processes in Laplace duality, irrespective of the conventions used for $0\cdot \infty$, $\infty\cdot 0$ is -- as we shall show -- that of 
\begin{definition}\label{definition:cm-main}
A positive Markov process $X$  is said to be completely monotone when $$P_t\cm \subset \cm\text{ for all }t\in [0,\infty),$$ where $\cm$ is the class of $\downarrow$ maps $f:[0,\infty]\rightarrow [0,\infty)$ for which the restriction $f\lvert_{(0,\infty)}$ is completely monotone. 
\end{definition}
Recall here \cite[Definition~1.3]{bernstein} that a function $f:(0,\infty)\to \mathbb{R}$ is completely monotone when it is $C^\infty$ with $(-1)^nf^{(n)}\geq 0$ for all $n\in \mathbb{N}_0$, and remark that the class $\cm$ is closed under pointwise limits (in $[0,\infty)$), this property holding separately true of completely monotone maps \cite[Corollary~1.6]{bernstein} and $\downarrow$ functions.  
\begin{remark}\label{remark:connecton-cm-to-existing}
A Markov process is completely monotone  if and only if $\cm$ is a \textit{cone} for $X$ in the sense of M\"ohle \cite[Definition 2.3]{zbMATH06225378}. Such a property of complete monotonicity is mentioned in Jansen and Kurt \cite[p.~74, Table 1, line 5]{duality}. We stress however that the state-space considered in \cite{duality} is $[0,\infty)$, cf. the forthcoming Remark~\ref{rem:thereasonofconvention}.
\end{remark}
Prime representatives of $\cm$ are the maps of 
\begin{example}\label{example:exponential-maps}
For $y\in [0,\infty]$, no matter what the choice between the conventions $0^+\cdot\infty$, $0\cdot \infty^-$ (when $y=\infty$) or between $\infty\cdot 0^+$, $\infty^-\cdot 0$ (when $y=0$), $$\ee^y:=([0,\infty]\ni u\mapsto e^{-uy})\in \cm.$$  For $y\in (0,\infty)$, for $y=0$ under $\infty^-\cdot 0$, and for $y=\infty$ under $0^+\cdot\infty$, $\ee^y$ is also continuous. However, $\ee^0=\mathbbm{1}_{[0,\infty)}$ under $\infty\cdot 0^+$, while $\ee^\infty=\mathbbm{1}_{\{0\}}$ under $0\cdot \infty^-$, neither of which is continuous everywhere on $[0,\infty]$. Analogous considerations hold true for the maps $$\ee_x:=([0,\infty]\ni v\mapsto e^{-xv}),\quad x\in [0,\infty].$$ It is worth pointing out that $\ee_x(y)=\ee^y(x)$ for $(x,y)\in [0,\infty]^2$ as long as the same conventions are applied on the l.h.s. and on the r.h.s..
\end{example}
\begin{definition}
We retain in what follows the notation $\ee_x$, $x\in [0,\infty]$, and $\ee^y$, $y\in [0,\infty]$, for the exponential maps, cautioning the reader that the interpretation of $\ee_0$, $\ee_\infty$ and $\ee^0$, $\ee^\infty$ depends on the conventions for $0\cdot\infty$ and $\infty \cdot 0$. 
 \end{definition}
 In our first significant result we wish to establish that it suffices to check the property of Definition~\ref{definition:cm-main} on the exponentials $\ee^y$, $y\in (0,\infty)$. This will be the content of Proposition~\ref{prop:cmequiv}. In preparation for the proof thereof recall
\begin{theostar}[Bernstein-Widder]\label{thm:bernstein} 
Any  completely monotone function $f:(0,\infty)\to\mathbb{R}$ is the Laplace transform $\hat\mu$ of a unique measure $\mu$ on  $\BB_{[0,\infty)}$, in which case
\begin{enumerate}[(i)]
\item $\mu(\{0\})=f(\infty-)$, whence $\mu$ is carried by $(0,\infty)$ if and only if $f(\infty-)=0$;
\item $\mu([0,\infty))=f(0+)$, whence $\mu$ is finite if and only if $f$ is bounded. 
\end{enumerate}
When, moreover, $f\leq 1$,  then there exists a unique probability $\nu$ on $\BB_{[0,\infty]}$ such that \[f(x)=\widehat{\nu}(x)
,\quad x\in (0,\infty).\]  Conversely, if $\mu$ is a measure on $\BB_{[0,\infty)}$ with $\hat\mu<\infty$, then $\hat\mu$ is completely monotone. 
\end{theostar}
\begin{proof}
This is basically  \cite[Theorem~1.4]{bernstein}. The only minor point perhaps worth mentioning is that in case $f\leq 1$, then  extending $\mu$ by putting the mass $1-f(0+)$ onto $\infty$, we get the probability $\nu$. 
\end{proof}

The analogue of Bernstein-Widder's theorem pertaining to $\cm$ is the content of
\begin{proposition}\label{cm-represent}
Let $f\in \cm$. Then there exists a unique finite measure $\mu$ on  $\BB_{(0,\infty)}$ such that 
\begin{align}\nonumber
f(x)&=(f(0)-f(0+))\mathbbm{1}_{\{0\}}(x)\\
&\qquad +\int e^{-xv}\mu(\dd v)+(f(\infty-)-f(\infty))\mathbbm{1}_{[0,\infty)}(x)+f(\infty),\quad x\in [0,\infty].\label{eq:cm-represent}
\end{align}
\end{proposition}
\begin{proof}
Uniqueness follows from $\widehat{\mu}=f-f(\infty-)$ on $(0,\infty)$ and from the fact that finite Laplace transforms determine measures on $\BB_{(0,\infty)}$. We turn to existence. Applying Theorem~\ref{thm:bernstein} to $\big(f-f(\infty-)\big)\vert_{(0,\infty)}$ gives a finite measure $\mu$ on  $\BB_{(0,\infty)}$ for which $$f(x)-f(\infty-)=\int e^{-xv}\mu(\dd v),\quad x\in (0,\infty),$$ yielding at once \eqref{eq:cm-represent} for $x\in (0,\infty)$.  Since  $\int e^{-\infty\cdot v}\mu(\dd v)=0$, \eqref{eq:cm-represent} holds true for $x=\infty$, whereas  $f(0+)-f(\infty-)=\int e^{-0\cdot v}\mu(\dd v)$ ensures that it is also verified for $x=0$. 
\end{proof}

\begin{example}\label{example:oh-fuck}
If $f\in \cm$, then, under the conventions $0\cdot\infty^-$, $\infty\cdot 0^+$, so that $\ee^0=\mathbbm{1}_{[0,\infty)}$ and $\ee^\infty=\mathbbm{1}_{\{0\}}$, we can rewrite \eqref{eq:cm-represent} as 
\[f=f(\infty)+\int \ee^v\tilde{\mu}(\dd v),\]
for the measure $\tilde{\mu}$ on $\BB_{[0,\infty]}$ defined by
$$\tilde{\mu}:=\big(f(\infty-)-f(\infty)\big)\delta_0+\iota_\star \mu+\big(f(0)-f(0+)\big)\delta_\infty,$$
$\iota:(0,\infty)\to [0,\infty]$ being the identity inclusion.
\end{example}
Theorem~\ref{thm:bernstein}  also yields

\begin{proposition}\label{lemma:kernels}
Let $X$ be a positive Markov process and let $t\in [0,\infty)$. Assume that  \[P_t\{\ee^y:y\in (0,\infty)\} \subset \cm.\]
Then, as functions of $x\in [0,\infty]$, the expressions $\mathbb{P}_x\left(X_t<\infty\right)$, $\mathbb{E}_x\left[e^{-X_ty}\right]$ for $y\in (0,\infty)$, and $\mathbb{P}_x\left(X_t=0\right)$ all belong to $\cm$; in particular, there exist unique probabilities $q_t(0+,\cdot)$, $q_t(y,\cdot)$ for  $y\in (0,\infty)$, and $q_t(\infty-,\cdot)$ on $\BB_{[0,\infty]}$ such that for all $x\in (0,\infty)$,
\begin{equation}\label{eq-lemma:laplacedualpreform2}
\mathbb{P}_x\left(X_t<\infty\right)=\int e^{-xv}q_t(0+,\dd v),
\end{equation}
\begin{equation}\label{eq-lemma:laplacedualpreform1}
\mathbb{E}_x\left[e^{-X_ty}\right]=\int e^{-xv}q_t(y,\dd v)\text{ for } y\in (0,\infty),
\end{equation}
and
\begin{equation}\label{eq-lemma:laplacedualpreform3}
\mathbb{P}_x\left(X_t=0\right)=\int e^{-xv}q_t(\infty-,\dd v).
\end{equation}
The map $(0,\infty)\ni y \mapsto q_t(y,A)$ is measurable for all  $A\in \BB_{[0,\infty]}$.
\end{proposition}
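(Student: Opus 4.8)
The plan is to read everything off the Bernstein--Widder theorem (Theorem~\ref{thm:bernstein}), the closure of $\cm$ under pointwise limits, and one functional monotone class argument for the measurability in $y$. Start with the interior case: fix $y\in(0,\infty)$. By hypothesis $P_t\ee^y=\EE_\cdot[e^{-X_ty}]$ lies in $\cm$, and, $P_t$ being Markovian and $\ee^y\le 1$, it is $[0,1]$-valued; so its restriction to $(0,\infty)$ is completely monotone and bounded by $1$. Theorem~\ref{thm:bernstein} then produces a probability $q_t(y,\cdot)$ on $\BB_{[0,\infty]}$ with $\widehat{q_t(y,\cdot)}=\EE_\cdot[e^{-X_ty}]$ on $(0,\infty)$, i.e. \eqref{eq-lemma:laplacedualpreform1}. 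Uniqueness of $q_t(y,\cdot)$ subject to \eqref{eq-lemma:laplacedualpreform1} (and likewise of the kernels below subject to their defining identities) follows because $\{\ee_x:x\in(0,\infty)\}$ is a law-determining multiplicative class generating $\BB_{[0,\infty]}$, as recorded in Example~\ref{example:laplace-duality}.

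Next, the boundary cases. For every $x\in[0,\infty]$ and every sample point, as $n\to\infty$ one has $e^{-X_t/n}\uparrow\mathbbm{1}_{\{X_t<\infty\}}$ and $e^{-X_tn}\downarrow\mathbbm{1}_{\{X_t=0\}}$ pointwise (recall $e^{-\infty\cdot y}=0$ for $y\in(0,\infty)$, with no ambiguity since $n$ is finite), so monotone convergence gives
\[
\EE_x[e^{-X_t/n}]\uparrow\PP_x(X_t<\infty)\quad\text{and}\quad\EE_x[e^{-X_tn}]\downarrow\PP_x(X_t=0).
\]
Since each $\EE_\cdot[e^{-X_t/n}]$ and each $\EE_\cdot[e^{-X_tn}]$ belongs to $\cm$ and $\cm$ is closed under pointwise limits in $[0,\infty)$, the maps $\PP_\cdot(X_t<\infty)$ and $\PP_\cdot(X_t=0)$ belong to $\cm$ too; together with the interior case this settles the first assertion. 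Both are $[0,1]$-valued, so a further application of Theorem~\ref{thm:bernstein} yields the probabilities $q_t(0+,\cdot)$ and $q_t(\infty-,\cdot)$ on $\BB_{[0,\infty]}$ obeying \eqref{eq-lemma:laplacedualpreform2} and \eqref{eq-lemma:laplacedualpreform3}.

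Finally, measurability in $y$. For fixed $x\in(0,\infty)$ the map $(0,\infty)\ni y\mapsto\int e^{-xv}q_t(y,\dd v)=\EE_x[e^{-X_ty}]$ is continuous (dominated convergence), hence Borel. Let $\HH$ be the set of bounded $\BB_{[0,\infty]}$-measurable $f:[0,\infty]\to\mathbb{R}$ for which $(0,\infty)\ni y\mapsto\int f(v)\,q_t(y,\dd v)$ is Borel; by (dominated/monotone) convergence $\HH$ is a vector space containing the constants and closed under uniformly bounded monotone limits, and it contains the multiplicative class $\{\ee_x:x\in(0,\infty)\}$, which generates $\BB_{[0,\infty]}$. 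The functional monotone class theorem gives $\HH\supset\mathsf{B}([0,\infty])$, and taking $f=\mathbbm{1}_A$ for $A\in\BB_{[0,\infty]}$ proves the last claim (equivalently, one may cite that Laplace duality satisfies assumption~\ref{dual:B}; cf. Example~\ref{example:laplace-duality}).

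No step here is deep: the proof is essentially bookkeeping around Bernstein--Widder and monotone convergence. The one mildly non-automatic point -- and the closest thing to an obstacle -- is the last one, where it matters that the \emph{law}-determining family $\{\ee_x:x\in(0,\infty)\}$ is \emph{also} a multiplicative \emph{generator} of $\BB_{[0,\infty]}$, so that a single monotone class argument upgrades the evident measurability of $y\mapsto\int\ee_x(v)\,q_t(y,\dd v)$ to arbitrary bounded Borel integrands.
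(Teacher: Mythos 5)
Your proof is correct and follows essentially the same route as the paper's: Bernstein--Widder for each fixed $y\in(0,\infty)$, monotone limits along $y\downarrow 0$ and $y\uparrow\infty$ combined with the closure of $\cm$ under pointwise limits for the boundary cases, uniqueness via Laplace transforms determining probabilities on $\BB_{[0,\infty]}$, and a functional monotone class argument with the multiplicative class $\{\ee_x:x\in(0,\infty)\}$ for the measurability in $y$. The only difference is cosmetic: you spell out the boundary step via the limits $e^{-X_t/n}\uparrow\mathbbm{1}_{\{X_t<\infty\}}$, $e^{-X_tn}\downarrow\mathbbm{1}_{\{X_t=0\}}$ and a fresh application of Bernstein--Widder, which the paper compresses into ``taking limits $y\downarrow 0$ and $y\uparrow\infty$''.
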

\begin{proof}
Notice that we can write $\PP_x(X_t<\infty)=\lim_{y\downarrow 0}\mathbb{E}_x\left[e^{-X_ty}\right]$ and $\PP_x\left(X_t=0\right)=\lim_{y\to \infty}\mathbb{E}_x\left[e^{-X_ty}\right]$ for all $x\in  [0,\infty]$. As $\cm$ is closed for pointwise limits and because by assumption $P_t\ee^y\in \cm$ for all $y\in (0,\infty)$, we deduce that all the maps in question  belong to $\cm$.  Since for each $y\in (0,\infty)$, the map $(0,\infty)\ni x\mapsto \mathbb{E}_x[e^{-X_ty}]$ is completely monotone and bounded by $1$, the existence of the probability measure $q_t(y,\cdot)$ on $[0,\infty]$ follows from Theorem~\ref{thm:bernstein}. Similarly   we get  $q_t(0+,\cdot)$ and $q_t(\infty-,\cdot)$.  Uniqueness of the measures follows from the fact that Laplace transforms determine probabilities on $\BB_{[0,\infty]}$. Finally, for all $x\in (0,\infty)$, the expression $$
\mathbb{E}_x\left[e^{-X_ty}\right]=\int e^{-xv}q_t(y,\dd v)$$
is even completely monotone, but certainly measurable, in  $y\in (0,\infty)$. Since the maps $\ee_x$, $x\in (0,\infty)$, form a multiplicative class of bounded functions that generate $\BB_{[0,\infty]}$,  functional monotone class \cite[Theorem~2.45]{pollard} applies and ensures that  $(0,\infty)\ni y\mapsto q_t(y,A)$ is measurable for all $A\in \BB_{[0,\infty]}$.
\end{proof}

Combining Propositions~\ref{cm-represent} and~\ref{lemma:kernels} we arrive at the announced
\begin{proposition}\label{prop:cmequiv}
A positive Markov process $X$ is completely monotone if and only if  $$P_t\{\ee^y:y\in (0,\infty)\} \subset \cm,\quad t\in [0,\infty).$$
\end{proposition}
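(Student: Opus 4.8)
The plan is as follows. The forward implication is immediate: if $X$ is completely monotone then, since $\ee^y\in\cm$ for every $y\in(0,\infty)$, one has $P_t\ee^y\in P_t\cm\subset\cm$ for all $t$. For the converse, assume $P_t\{\ee^y:y\in(0,\infty)\}\subset\cm$ for every $t$, fix $t\in[0,\infty)$ and $f\in\cm$, and aim to show $P_tf\in\cm$. Using the integral representation of Proposition~\ref{cm-represent} I would decompose $f=f_1+f_2+f_3+f_4$, where $f_1:=\int_{(0,\infty)}\ee^v\,\mu(\dd v)$ (with $\mu$ the representing finite measure on $\BB_{(0,\infty)}$), $f_2:=(f(0)-f(0+))\mathbbm{1}_{\{0\}}$, $f_3:=(f(\infty-)-f(\infty))\mathbbm{1}_{[0,\infty)}$ and $f_4:=f(\infty)$; each summand is nonnegative because $f$ is $\downarrow$ and $[0,\infty)$-valued, and each $P_tf_i\le P_tf\le f(0)<\infty$ is finite. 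Since $\cm$ is a convex cone stable under finite sums (immediate from its definition) and $P_t$ is linear and positivity-preserving, it suffices to prove $P_tf_i\in\cm$ for $i=1,2,3,4$.

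Three of these are routine. $P_tf_4\equiv f(\infty)$ is constant, hence in $\cm$. For the others, observe that $P_tf_2(x)=(f(0)-f(0+))\,\PP_x(X_t=0)$ and $P_tf_3(x)=(f(\infty-)-f(\infty))\,\PP_x(X_t<\infty)$, and that the maps $x\mapsto\PP_x(X_t=0)$ and $x\mapsto\PP_x(X_t<\infty)$ belong to $\cm$ by Proposition~\ref{lemma:kernels} (whose hypothesis is exactly our standing assumption), so scaling by a nonnegative constant keeps us in $\cm$. The substantive case is $f_1$. Writing $f_1(u)=\int_{(0,\infty)}e^{-uv}\mu(\dd v)$ for all $u\in[0,\infty]$ and applying Tonelli gives $P_tf_1(x)=\int_{(0,\infty)}P_t\ee^v(x)\,\mu(\dd v)$ for every $x\in[0,\infty]$. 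For $x\in(0,\infty)$ I would then substitute the kernel representation $P_t\ee^v(x)=\int e^{-xw}q_t(v,\dd w)$ from Proposition~\ref{lemma:kernels}, together with the measurability of $v\mapsto q_t(v,\cdot)$ established there, and use Tonelli once more to obtain $P_tf_1(x)=\int e^{-xw}\rho_t(\dd w)$, where $\rho_t:=\int_{(0,\infty)}q_t(v,\cdot)\,\mu(\dd v)$ is a finite measure on $\BB_{[0,\infty]}$ of total mass $\mu((0,\infty))$. By the converse half of the Bernstein--Widder theorem (Theorem~\ref{thm:bernstein}) this already shows that $P_tf_1\vert_{(0,\infty)}$ is completely monotone.

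It remains to verify that $P_tf_1$ is $\downarrow$ across the two boundary points, i.e.\ $P_tf_1(0)\ge P_tf_1(0+)$ and $P_tf_1(\infty-)\ge P_tf_1(\infty)$, and this is the step I expect to be the only genuine obstacle: $P_t$ does not preserve monotonicity in general, so complete monotonicity on $(0,\infty)$ does not by itself settle the behaviour at $0$ and $\infty$. The resolution is to pass the boundary inequalities valid for the building blocks $\ee^v$ through the mixing measure. Concretely, each $P_t\ee^v\in\cm$, so $P_t\ee^v(0)\ge P_t\ee^v(0+)$ and $P_t\ee^v(\infty-)\ge P_t\ee^v(\infty)$; on the other hand, the identity $P_tf_1(x)=\int P_t\ee^v(x)\,\mu(\dd v)$ at $x\in\{0,\infty\}$ gives $P_tf_1(0)=\int P_t\ee^v(0)\,\mu(\dd v)$ and $P_tf_1(\infty)=\int P_t\ee^v(\infty)\,\mu(\dd v)$ directly, while letting $x\downarrow0$ (resp.\ $x\to\infty$) in $P_tf_1(x)=\int e^{-xw}\rho_t(\dd w)$ and in $P_t\ee^v(x)=\int e^{-xw}q_t(v,\dd w)$, with monotone/dominated convergence, identifies $P_tf_1(0+)=\rho_t([0,\infty))=\int q_t(v,[0,\infty))\,\mu(\dd v)=\int P_t\ee^v(0+)\,\mu(\dd v)$ and $P_tf_1(\infty-)=\rho_t(\{0\})=\int q_t(v,\{0\})\,\mu(\dd v)=\int P_t\ee^v(\infty-)\,\mu(\dd v)$. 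Integrating the pointwise inequalities against $\mu$ yields the two desired inequalities, so $P_tf_1\in\cm$; combining with the three easy pieces gives $P_tf\in\cm$, and since $t\in[0,\infty)$ and $f\in\cm$ were arbitrary, $X$ is completely monotone.
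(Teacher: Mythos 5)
Your proposal is correct and follows essentially the same route as the paper: decompose $f$ via Proposition~\ref{cm-represent}, use Proposition~\ref{lemma:kernels} to replace $P_t\ee^v$, $\PP_\cdot(X_t=0)$ and $\PP_\cdot(X_t<\infty)$ by Laplace transforms of kernels, mix over $\mu$ to get a single positive measure $\rho_t$, and conclude complete monotonicity on $(0,\infty)$ together with the $\downarrow$ property on $[0,\infty]$. The only cosmetic difference is your boundary argument for $f_1$, which the paper dispatches more directly by noting that $P_tf_1(x)=\int P_t\ee^v(x)\,\mu(\dd v)$ holds for all $x\in[0,\infty]$ and each integrand is $\downarrow$ there; your limit identifications amount to the same thing.
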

\begin{proof} Necessity is clear. Take now any $f\in \cm$. It admits the representation of Proposition~\ref{cm-represent}, in particular the finite measure $\mu$ on $\BB_{(0,\infty)}$ is specified. By, and in the notation of  Proposition~\ref{lemma:kernels}, for all $t\in [0,\infty)$, we may introduce the finite measure $\mu_t$ on $\BB_{[0,\infty]}$, $$\mu_t(A):=\int q_t(y,A)\mu(\dd y),\quad A\in \BB_{[0,\infty]},$$ and then further by \eqref{eq:cm-represent} and Tonelli we have,  for all $x\in (0,\infty)$,
\begin{align*}
\mathbb{E}_x[f(X_t)]&=(f(0)-f(0+))\mathbb{P}_x(X_t=0)\\
&\qquad +\int\mathbb{E}_x\left[e^{-X_ty}\right]\mu(\dd y)+(f(\infty-)-f(\infty))\mathbb{P}_x(X_t<\infty)+f(\infty)\\
&=(f(0)-f(0+))\int e^{-xv}q_t(\infty-,\dd v)+\int\int e^{-xv}q_t(y,\dd v)\mu(\dd y)\\
& \qquad+(f(\infty-)-f(\infty))\int e^{-xv}q_t(0+,\dd v)+f(\infty)\\
&=(f(0)-f(0+))\int e^{-xv}q_t(\infty-,\dd v)+\int e^{-xv}\mu_t(\dd v)\\
& \qquad+(f(\infty-)-f(\infty))\int e^{-xv}q_t(0+,\dd v)+f(\infty)\\
&=\text{\footnotesize $\int e^{-xv}\left[\underbrace{(f(0)-f(0+))q_t(\infty-,\cdot)+\mu_t+(f(\infty-)-f(\infty))q_t(0+,\cdot)+f(\infty)\delta_0}_{=:\rho_t}\right](\dd v)$\normalsize}.
\end{align*}
The measure $\rho_t$ being positive (since $f$ is positive $\downarrow$), we deduce at once that $(P_tf)\vert_{(0,\infty)}$ is completely monotone. Finally, as the first equality of the preceding display is  valid for all $x\in [0,\infty]$, not just $x\in (0,\infty)$, and since each term separately on the r.h.s. of this equality is $\downarrow$ in $x\in [0,\infty]$, as was noted in Proposition~\ref{lemma:kernels}, we get also that $P_tf$ is $\downarrow$.
\end{proof}

\begin{corollary}\label{corollary:cm-inherited}
A weak limit (in the sense of one-dimensional distributions) of completely monotone positive Markov processes is completely monotone. 
\end{corollary}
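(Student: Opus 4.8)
The plan is to reduce the assertion, via Proposition~\ref{prop:cmequiv}, to a statement about the exponential functions $\ee^y$ with $y\in(0,\infty)$, and then to exploit that each such $\ee^y$ is bounded and continuous on the \emph{compact} space $[0,\infty]$, so that one-dimensional weak convergence transfers directly into pointwise convergence of the quantities $P_t\ee^y(x)$.

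First I would fix notation: let $(X^{(n)})_{n\in\mathbb{N}}$ be completely monotone positive Markov processes with transition semigroups $P^{(n)}$ and families of transition kernels $p^{(n)}$, converging weakly in the sense of one-dimensional distributions to a positive Markov process $X$ with semigroup $P$ and transition kernels $p$; concretely, for each $t\in[0,\infty)$ and each $x\in[0,\infty]$ the probability $p_t^{(n)}(x,\cdot)$ converges weakly on $[0,\infty]$ to $p_t(x,\cdot)$. Now fix $t\in[0,\infty)$ and $y\in(0,\infty)$. Since $y>0$ and $e^{-\infty}=0$, the map $\ee^y$ is bounded and continuous on $[0,\infty]$; hence, by the definition of weak convergence on $[0,\infty]$,
\[
P_t^{(n)}\ee^y(x)=\int e^{-uy}\,p_t^{(n)}(x,\dd u)\ \xrightarrow[n\to\infty]{}\ \int e^{-uy}\,p_t(x,\dd u)=P_t\ee^y(x),\qquad x\in[0,\infty].
\]

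Next, because each $X^{(n)}$ is completely monotone and $\ee^y\in\cm$, we have $P_t^{(n)}\ee^y\in\cm$ for every $n$. The class $\cm$ is closed under pointwise limits in $[0,\infty)$ (being the intersection of the pointwise-closed class of $\downarrow$ maps with the pointwise-closed class of maps whose restriction to $(0,\infty)$ is completely monotone), so the displayed convergence gives $P_t\ee^y\in\cm$. As $t$ and $y$ were arbitrary, $P_t\{\ee^y:y\in(0,\infty)\}\subset\cm$ for all $t\in[0,\infty)$, and Proposition~\ref{prop:cmequiv} then delivers that $X$ is completely monotone.

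There is essentially no serious obstacle here; the only point requiring a little care is the precise meaning of ``weak limit in the sense of one-dimensional distributions'' together with the observation that, because the state space $[0,\infty]$ is compact, it suffices to test against bounded continuous functions and in particular against $\ee^y$ for $y\in(0,\infty)$. This is exactly what makes the argument go through, and it is why we need not worry about the boundary exponentials $\ee^0$ and $\ee^\infty$ (which are discontinuous and for which the limit need not pass) -- Proposition~\ref{prop:cmequiv} shows that the interior exponentials alone suffice.
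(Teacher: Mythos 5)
Your proof is correct and follows essentially the same route as the paper: the paper's one-line argument is precisely to combine Proposition~\ref{prop:cmequiv} with the closure of $\cm$ under pointwise limits and the continuity of the maps $\ee^y$, $y\in(0,\infty)$, which is exactly what you spell out. The only difference is that you make the weak-convergence-to-pointwise-convergence step and the $\cm$-closure step explicit, which is fine.
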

\begin{proof}
Combine Proposition~\ref{prop:cmequiv} with the fact that $\cm$ is closed under pointwise limits and the continuity of the maps $\ee^y$, $y\in (0,\infty)$.
\end{proof}

\subsection{Processes in Laplace duality}\label{subsec:processesinLaplaceduality}
We define now the notions of Laplace duality relationships associated to the conventions for $0\cdot \infty$ and $\infty\cdot 0$ that were specified in Subsection~\ref{miscellaneous}. 
\begin{definition}\label{def:LD-main}
Let $\star_1\in \{0^+\cdot\infty,0\cdot \infty^-\}$ be a convention for $0\cdot \infty$ and let $\star_2\in \{\infty\cdot 0^+,\infty^-\cdot 0\}$ be a convention for $\infty\cdot 0$. A positive Markov process $X$ is said to be in $(\star_1,\star_2)$-Laplace duality with a positive Markov process $Y$ provided
\begin{equation}\label{LD-main}
\EE_x[e^{-X_ty}]=\EE^y[e^{-xY_t}],\quad t\in [0,\infty),\, (x,y)\in [0,\infty]^2,
\end{equation}
the products in the exponentials being interpreted under the conventions $\star_1$ and $\star_2$.
\end{definition}
The concept of Definition~\ref{def:LD-main} is  the  $H$-duality of $X$ and $Y$ of Definition~\ref{def:H-duality} for $H=\mathsf{e}_{\star_1,\star_2}:[0,\infty]\times [0,\infty]\to [0,1]$, $$\mathsf{e}_{\star_1,\star_2}(x,y):=e^{-xy},\quad (x,y)\in [0,\infty]^2,$$ under the conventions $\star_1$ and $\star_2$. Conditions \ref{dual:A}-\ref{dual:B} of Subsection~\ref{subsection:processes-in-duality} are met for this $H$ because $ \{\mathsf{e}_{\star_1,\star_2}(x,\cdot):x\in (0,\infty)\}$ is a multiplicative class of bounded real functions generating $\BB_{[0,\infty]}$ so that functional monotone class applies; \ref{dual:C} follows because $[0,\infty]$ is endowed with the Borel $\sigma$-field of a Polish space, see e.g. \cite[Theorem~4.5 \& Exercise~2.13]{pollard}. In particular, if $X$ admits Laplace duals $Y_1$ and $Y_2$ of the same kind, then the law of $Y_1$ is the same as the law of $Y_2$.  The same conditions  \ref{dual:A}-\ref{dual:C} are met for $ H=\widehat{\mathsf{e}_{\star_1,\star_2}}$, which results from $\mathsf{e}_{\star_1,\star_2}$ on transposing its first and second coordinate, the fact that $\mathsf{e}_{\star_1,\star_2}$ need not be symmetric notwithstanding.\label{what-was-example-2.2}

We note also that $X$ is in $(0^+\cdot\infty,\infty\cdot 0^+)$-Laplace duality with $Y$ if and only if $Y$ is in $(0^+\cdot\infty,\infty\cdot 0^+)$-Laplace duality with $X$. The same for $(0\cdot \infty^-,\infty^-\cdot 0)$ in lieu of $(0^+\cdot\infty,\infty\cdot 0^+)$. On the other hand, $X$ is in $(0^+\cdot\infty,\infty^-\cdot 0)$-Laplace duality with $Y$ if and only if $Y$ is in $(0\cdot \infty^-,\infty\cdot 0^+)$-Laplace duality with $X$. We shall refer to the facts noted in this paragraph indiscriminately as transposition.\label{page--transposition}

A simple consequence of Proposition~\ref{prop:cmequiv} is
\begin{corollary}\label{corollary:LDimpliesCM} If a positive Markov process $X$ admits a Laplace dual (of any kind), then $X$ is completely monotone.
\end{corollary}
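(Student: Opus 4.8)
The plan is to reduce, via Proposition~\ref{prop:cmequiv}, to proving that $P_t\{\ee^y:y\in(0,\infty)\}\subset\cm$ for every $t\in[0,\infty)$. So fix $t\in[0,\infty)$ and $y\in(0,\infty)$, let $Y$ be the hypothesised Laplace dual of $X$ (of whichever kind $(\star_1,\star_2)$), and write $q_t(y,\cdot)$ for the law of $Y_t$ under $\PP^y$. Since $\ee^y$ is bounded and continuous on $[0,\infty]$ (because $y\in(0,\infty)$), the duality identity \eqref{LD-main} reads, for all $x\in[0,\infty]$,
\[
P_t\ee^y(x)=\EE_x[e^{-X_ty}]=\EE^y[e^{-xY_t}]=\int_{[0,\infty]}e^{-xv}\,q_t(y,\dd v)=:g(x),
\]
the products $xv$ being read under $\star_1,\star_2$ at $(x,v)\in\{(0,\infty),(\infty,0)\}$. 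As the expectation of a $[0,1]$-valued variable, $g$ is $[0,\infty)$-valued; so one must only check that $g$ is nonincreasing on $[0,\infty]$ with $g\vert_{(0,\infty)}$ completely monotone, i.e. that $g\in\cm$.

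First I would handle the restriction $g\vert_{(0,\infty)}$. For $x\in(0,\infty)$ there is no ambiguity in interpreting $e^{-xv}$, and the mass that $q_t(y,\cdot)$ carries at $\infty$ contributes nothing since $e^{-x\cdot\infty}=0$; thus $g(x)=\int_{[0,\infty)}e^{-xv}\,q_t(y,\dd v)$ is the Laplace transform of the finite measure $q_t(y,\cdot)\vert_{[0,\infty)}$. By the converse part of the Bernstein--Widder theorem (Theorem~\ref{thm:bernstein}) --- or, concretely, by differentiating under the integral sign, which is legitimate on each half-line $\{x\geq\delta\}$, $\delta>0$, by dominated convergence --- $g\vert_{(0,\infty)}$ is completely monotone, in particular nonincreasing on $(0,\infty)$.

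It then remains to compare the endpoint values of $g$ with its one-sided limits, and this is the one step at which the conventions for $0\cdot\infty$ and $\infty\cdot0$ matter; I expect it to be the ``hard part'' only in being a short case analysis. Since $g\vert_{(0,\infty)}$ is nonincreasing it suffices to prove $g(0)\geq g(0+)$ and $g(\infty)\leq g(\infty-)$. By dominated convergence, $g(0+)=q_t(y,[0,\infty))=\PP^y(Y_t<\infty)$ and $g(\infty-)=q_t(y,\{0\})=\PP^y(Y_t=0)$. On the other hand, $g(0)=\EE^y[e^{-0\cdot Y_t}]$ equals $\PP^y(Y_t<\infty)$ under the convention $0^+\cdot\infty$ and equals $1$ under $0\cdot\infty^-$, so $g(0)\geq g(0+)$ in either case; symmetrically $g(\infty)=\EE^y[e^{-\infty\cdot Y_t}]$ equals $0$ under $\infty\cdot0^+$ and equals $\PP^y(Y_t=0)$ under $\infty^-\cdot0$, so $g(\infty)\leq g(\infty-)$ in either case. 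Hence $g\in\cm$; as $t\in[0,\infty)$ and $y\in(0,\infty)$ were arbitrary, Proposition~\ref{prop:cmequiv} gives that $X$ is completely monotone. All the substance is the soft complete-monotonicity argument on $(0,\infty)$ together with this endpoint bookkeeping; the remainder is the reduction through Proposition~\ref{prop:cmequiv} and the bare duality identity.
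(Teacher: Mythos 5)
Your proof is correct and follows essentially the paper's route: reduce via Proposition~\ref{prop:cmequiv}, note that the duality exhibits $P_t\ee^y$ on $(0,\infty)$ as the Laplace transform of ${Y_t}_\star\PP^y$ (hence completely monotone), and verify the $\downarrow$ property on all of $[0,\infty]$ under the conventions. The only difference is cosmetic: the paper gets the monotonicity in one stroke by observing that $x\mapsto e^{-xv}$ is $\downarrow$ on $[0,\infty]$ for every $v\in[0,\infty]$ regardless of the conventions, which subsumes your endpoint case analysis comparing $g(0),g(\infty)$ with $g(0+),g(\infty-)$.
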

\begin{proof}
Let $Y$ be the Laplace dual. Apply Proposition~\ref{prop:cmequiv} together with the observation that for all $y\in (0,\infty)$, the expression $ P_t \ee^y(x)=\EE_x[e^{-X_ty}]= \EE^y[e^{-xY_t}]=\int e^{-xv}q_t(y,\dd v)$ is certainly completely monotone in $x\in (0,\infty)$, and also it is $\downarrow$ in $x\in [0,\infty]$ because $e^{-xv}$ is $\downarrow$ in $x\in  [0,\infty]$ for all $v\in [0,\infty]$. This is so no matter what the choice of the conventions for $0\cdot\infty$ and $\infty \cdot 0$.
\end{proof}
Complete monotonicity will be combined with the relevant properties at the boundaries to ensure existence of Laplace duals.

\begin{definition}\label{definition:varying-ct-bodr}
Let $X$ be a positive Markov process. As previously agreed upon, $p=(p_t(x,\cdot))_{(t,x)\in [0,\infty)\times [0,\infty]}$ is its family of transition kernels and  $P=(P_t)_{t\in [0,\infty)}$ is its semigroup. 
\begin{enumerate}[(A)]
\item A boundary point $x\in \{0,\infty\}$  is absorbing (resp. non-sticky) for $X$ when, for all $t\in [0,\infty)$, $p_t(x,\cdot)=\delta_x$ (resp. $p_t(u,[0,\infty]\backslash \{x\})=1$ for all $u\in [0,\infty]\backslash \{x\}$).
\item\label{weak-} $X$ is weakly continuous at $x\in [0,\infty]$ if $P_tf$ is continuous at $x$ for all continuous $f:[0,\infty]\to \mathbb{R}$  for all $t\in [0,\infty)$.
\item\label{weak0} By weak-$[0,\infty)$ continuity at $0$ of $X$ we mean that  for all $t\in [0,\infty)$, $\EE_x[f(X_t);X_t<\infty]$ converges  to $\EE_0[f(X_t);X_t<\infty]$ as $x\downarrow 0$ for all  continuous $f:[0,\infty]\to \mathbb{R}$  (equivalently, all  continuous $f:[0,\infty)\to \mathbb{R}$ having a finite limit at $\infty$). 
\item\label{weakinfty} By weak-$(0,\infty]$ continuity at $\infty$ of  $X$ we mean that for all $t\in [0,\infty)$, $\EE_x[f(X_t);X_t>0]$ converges  to $\EE_\infty[f(X_t);X_t>0]$  as $x\uparrow \infty$ for all  continuous $f:[0,\infty]\to \mathbb{R}$ (equivalently, all  continuous $f:(0,\infty]\to \mathbb{R}$ having a finite limit at $0$). 
\end{enumerate}
We reserve the right to apply the preceding qualifications to any (not necessarily Markovian) $[0,\infty]$-valued process $X$ defined under a family of probabilities $(\PP_x)_{x\in E}$, $E\subset [0,\infty]$, on a common measurable space, as long as the resulting notions are significant. 
\end{definition}

The linear span of $\{\ee^y:y\in (0,\infty)\}\cup \{1\}$ is an algebra containing the constants and separating the points of $[0,\infty]$.\label{page-stone-weierstrass} By Stone-Weierstrass \cite[Theorem~2.4.11]{dudley} for $[0,\infty]$, if a positive Markov process $X$ is completely monotone, then $P_tf$ is continuous on $(0,\infty)$ for all continuous $f:[0,\infty]\to \mathbb{R}$, for all $t\in [0,\infty)$. Put differently, $X$ is weakly continuous at every point of $(0,\infty)$. It is only at the boundaries that the  weak continuity of a completely monotone process can fail. It is also worth emphasizing that the weak continuity notion of \ref{weak0}, resp. \ref{weakinfty}, is  a stronger condition on the process $X$ than would have been mere weak continuity at $0$, resp. at $\infty$, of $X$ in the sense of \ref{weak-}. We further caution the reader regarding the term \textit{absorbing} which  does not, by itself, imply that the process ever actually reaches the boundary in question with strictly positive probability. 
\begin{theorem}\label{thm:completemonotonicity}
The various types of Laplace dualities corresponding to the four possible combinations of conventions are characterized as follows.
\begin{enumerate}[(i)]
    \item \label{thmcompletemonotonicity1} 
A given positive Markov process $X$ is in $(0^+\cdot\infty,\infty\cdot 0^+)$-Laplace duality with some positive Markov process $Y$ if and only if $X$ is completely monotone, weakly-$[0,\infty)$ continuous at $0$ and has $\infty$  absorbing. When so, then $Y$ is also completely monotone, weakly-$[0,\infty)$ continuous at $0$ and has $\infty$  absorbing; besides, under $ 0^+\cdot\infty$, $\infty\cdot 0^+$,
\begin{equation}\label{eq:laplacedualinftyabsorbing0}
\EE_0[e^{-X_ty}]=\PP^y(Y_t<\infty)\text{ and }\PP_x(X_t<\infty)=\EE^0[e^{-xY_t}],\quad (x,y)\in [0,\infty]^2,\, t\in [0,\infty).
\end{equation}
\item \label{thmcompletemonotonicity2} A given positive Markov process $X$ is in $(0\cdot\infty^-,\infty^-\cdot 0)$-Laplace duality with some positive Markov process $Y$ if and only if $X$ is completely monotone, weakly-$(0,\infty]$ continuous at $\infty$ and has $0$  absorbing. When so, then $Y$ is also completely monotone, weakly-$(0,\infty]$ continuous at $\infty$ and has $0$  absorbing; besides, under $0\cdot \infty^-$, $\infty^-\cdot 0$, 
\begin{equation}\label{eq:laplacedualinftyabsorbing1}
\EE_\infty[e^{-X_ty}]=\PP^y(Y_t=0) \text{ and }\PP_x(X_t=0)=\EE^\infty[e^{-xY_t}],\quad (x,y)\in [0,\infty]^2,\, t\in [0,\infty).
\end{equation}
\item \label{thmcompletemonotonicity3} A given positive Markov process $X$ is in $(0\cdot\infty^-,\infty\cdot 0^+)$-Laplace duality with some positive Markov process $Y$ if and only if $X$ is completely monotone and has $0$ and $\infty$ both absorbing. When so, then  $Y$ is completely monotone, weakly continuous at $0$ and $\infty$, and, under $0\cdot \infty^-$, $\infty\cdot 0^+$, 
\begin{equation}\label{eq:laplacedualinftyabsorbing2}
\PP_x(X_t<\infty)=\EE^0[e^{-xY_t}]\text{ and }\PP_x(X_t=0)=\EE^\infty[e^{-xY_t}],\,  x\in [0,\infty],\, t\in [0,\infty).
\end{equation}
\item \label{thmcompletemonotonicity4} A given positive Markov process $X$ is in $(0^+\cdot\infty,\infty^-\cdot 0)$-Laplace duality with some positive Markov process $Y$ if and only if $X$ is completely monotone and weakly continuous at both $0$ and $\infty$. When so, then  $Y$ is completely monotone, has $0$ and $\infty$ both absorbing, and, under $0^+\cdot\infty$, $\infty^-\cdot 0$, 
\begin{equation}\label{eq:laplacedualinftyabsorbing3}
\EE_0[e^{-X_ty}]=\PP^y(Y_t<\infty)\text{ and }\EE_\infty[e^{-X_ty}]=\PP^y(Y_t=0),\,\, y\in [0,\infty],\, t\in [0,\infty).
\end{equation}
\end{enumerate}
\end{theorem} 
 We  prove  Theorem~\ref{thm:completemonotonicity} on p.~\pageref{proof:big-one} after some discussion and illustrating examples.  Tables~\ref{table.1} and~\ref{table.2}  summarize the roles of the conventions. 
Several duality relationships may hold for a given completely monotone positive Markov process, which will be seen in the examples to follow presently. 
\begin{table}[htb!]
\begin{center}
    \begin{tabular}{ccc}
    $(\star_1,\star_2)$ & continuity of $X$ & absorption points of $X$\\\hline
         $(\dashuline{0^{+}\cdot \infty},\underline{\infty\cdot 0^+})$ & weakly-$[0,\infty)$ \dashuline{at $0$} & \underline{$\infty$}  \\
         $(\dotuline{0\cdot\infty^-},\underline{\underline{\infty^{-}\cdot 0}})$ & weakly-$(0,\infty]$ \underline{\underline{at  $\infty$}} & \dotuline{ $0$ } \\
          $(\dotuline{0\cdot\infty^-},\underline{\infty\cdot 0^+})$ &  & \dotuline{ $0 $ }, \underline{$\infty$}\\
          $(\dashuline{0^+\cdot\infty},\underline{\underline{\infty^-\cdot 0}})$ & weakly \dashuline{at $0$} and \underline{\underline{at $\infty$}} & 
    \end{tabular}
    \end{center}
     \caption{Weak continuity/absorption properties of $X$ in $(\star_1,\star_2)$-Laplace duality with $Y$ according to the various conventions. Different  underlinings indicate what convention co-occurs with which property. We see that the properties of $0$ (resp. of $\infty$) for $X$ can be read off from the first (resp. second) entry of the convention, the presence (resp. absence) of $\pm$ at $0$ (resp. $\infty$) indicating weak continuity (resp. absorption) at said point.}\label{table.1}
 \end{table}
 \begin{table}[htb!]
\begin{center} 
       \begin{tabular}{ccc}
    $(\star_1,\star_2)$ & continuity of $Y$ & absorption points of $Y$\\\hline
  $(\underline{0^{+}\cdot \infty},\dashuline{\infty\cdot 0^+})$ & weakly-$[0,\infty)$ \dashuline{at $0$} & \underline{$\infty$}  \\
         $(\underline{\underline{0\cdot\infty^-}},\dotuline{\infty^{-}\cdot 0})$ & weakly-$(0,\infty]$ \underline{\underline{at  $\infty$}} & \dotuline{ $0$ } \\
     $(\underline{\underline{0\cdot\infty^-}},\dashuline{\infty\cdot 0^+})$    & weakly \dashuline{at $0$} and \underline{\underline{at $\infty$}}   & \\
          $(\underline{0^+\cdot\infty},\dotuline{\infty^-\cdot 0})$ &  & \dotuline{ $0 $ }, \underline{$\infty$}\\
    \end{tabular}
\end{center}
     \caption{Same as Table~\ref{table.1} but for the process $Y$ (still $X$ is in $(\star_1,\star_2)$-Laplace duality with $Y$). Now the first (resp. second) entry of the convention has to do with properties of $\infty$ (resp. of $0$) for $Y$; and  then again the presence (resp. absence) of $\pm$ at $\infty$ (resp. $0$) indicates weak continuity (resp. absorption) at said point. }\label{table.2}
 \end{table}

An immediate consequence of  Theorem~\ref{thm:completemonotonicity}, in particular of  the identities \eqref{eq:laplacedualinftyabsorbing0}-\eqref{eq:laplacedualinftyabsorbing3}, is a  correspondence between absorptivity and non-stickiness of processes in Laplace duality.

\begin{corollary}\label{cor:absorbing-nonsticky}
Let $\star_2\in \{\infty\cdot 0^+,\infty^-\cdot 0\}$.      If $X$ is in $(0^+\cdot \infty, \star_2)$-Laplace duality with $Y$, then 
         $0$ is absorbing for $X$ if and only if $\infty$ is non-sticky for $Y$.
If $X$ is in $( 0\cdot\infty^-,\star_2)$-Laplace duality with $Y$,  then 
         $\infty$ is absorbing for $Y$ if and only if $0$ is non-sticky for $X$.\qed
 \end{corollary}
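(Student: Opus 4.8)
The corollary is an immediate consequence of the identities in Theorem~\ref{thm:completemonotonicity}, so the plan is simply to extract the right equations and read off the equivalences. I would begin with the first assertion. Suppose $X$ is in $(0^+\cdot\infty,\star_2)$-Laplace duality with $Y$. If $\star_2 = \infty\cdot 0^+$, then case \ref{thmcompletemonotonicity1} of Theorem~\ref{thm:completemonotonicity} applies and gives, among other things, that $\PP_x(X_t<\infty)=\EE^0[e^{-xY_t}]$ for all $x\in[0,\infty]$ and $t\in[0,\infty)$; if $\star_2 = \infty^-\cdot 0$, then case \ref{thmcompletemonotonicity4} applies and gives $\EE_0[e^{-X_t y}] = \PP^y(Y_t<\infty)$ for all $y\in[0,\infty]$ and $t\in[0,\infty)$. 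In either case one of the two displayed identities encodes the quantity $\EE^0[e^{-xY_t}]$ as a function of $x$, which by the Bernstein--Widder theorem (Theorem~\ref{thm:bernstein}) is the Laplace transform of the law of $Y_t$ under $\PP^0$; such a Laplace transform equals $1$ identically on $(0,\infty)$ (equivalently $\PP_x(X_t<\infty)=1$ for all $x\in(0,\infty)$, i.e. $X$ is non-explosive, which is automatic here, so the relevant statement is about the mass at $\infty$) if and only if $\PP^0(Y_t=\infty)=0$.

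Concretely, for the first equivalence: $0$ is absorbing for $X$ means $p_t(0,\cdot)=\delta_0$ for all $t$, i.e. $\PP_0(X_t=0)=1$, i.e. $\EE_0[e^{-X_t y}]=1$ for all $y\in(0,\infty)$ and all $t$ (using that $X$ takes values in $[0,\infty]$ and $e^{-X_t y}=1$ forces $X_t=0$ when $y>0$, together with the convention $0^+\cdot\infty$ ensuring the formula also reads off correctly at the boundary values of $y$). Under $\star_2=\infty^-\cdot 0$ the identity $\EE_0[e^{-X_t y}]=\PP^y(Y_t<\infty)$ from \eqref{eq:laplacedualinftyabsorbing3} then shows this is equivalent to $\PP^y(Y_t<\infty)=1$ for all $y\in(0,\infty)$ and all $t$, which is precisely the statement that $\infty$ is non-sticky for $Y$ (i.e. $q_t(u,[0,\infty]\setminus\{\infty\})=1$ for all $u\ne\infty$). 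Under $\star_2=\infty\cdot 0^+$ one uses instead $\PP_x(X_t<\infty)=\EE^0[e^{-xY_t}]$ from \eqref{eq:laplacedualinftyabsorbing0}: $0$ absorbing for $X$, combined with the complete monotonicity / boundary structure already established (and the transposition symmetry of this convention), forces $\EE^0[e^{-xY_t}]$, as a completely monotone function of $x$ equal to $\PP_x(X_t<\infty)\le 1$, to have no mass at $v=\infty$, which is $\PP^0(Y_t=\infty)=0$; since $\infty$ being non-sticky for $Y$ is (by a routine application of the Markov property for $Y$, pushing forward from an arbitrary starting point) equivalent to $\PP^0(Y_t=\infty)=0$ for all $t$, we are done. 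The second assertion of the corollary follows in exactly the same way by transposition — or directly by reading \eqref{eq:laplacedualinftyabsorbing1} (case \ref{thmcompletemonotonicity2}, $\star_2=\infty^-\cdot 0$) resp. \eqref{eq:laplacedualinftyabsorbing2} (case \ref{thmcompletemonotonicity3}, $\star_2=\infty\cdot 0^+$), where now the relevant identity is $\PP_x(X_t=0)=\EE^\infty[e^{-xY_t}]$, equating "$0$ non-sticky for $X$" with "$\PP^\infty(Y_t=0)$ having full mass off of $0$", i.e. "$\infty$ absorbing for $Y$".

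**Expected main obstacle.** There is no real difficulty here beyond bookkeeping: the entire content has been packaged into Theorem~\ref{thm:completemonotonicity}. The one point requiring a little care is the passage between the pointwise statement "$\PP^0(Y_t=\infty)=0$" (or "$\PP^0(Y_t=0)=1$", etc., involving only the starting point $0$ or $\infty$) and the genuinely uniform statement of (non-)stickiness, which quantifies over \emph{all} non-boundary starting points. This is bridged by the Markov property of $Y$ at time $0$ together with the structural facts from Theorem~\ref{thm:completemonotonicity} — e.g. that $Y$ is itself completely monotone with the appropriate absorption/continuity behaviour — but it is worth stating explicitly rather than glossing over, since it is the only place where the argument is not a pure substitution.
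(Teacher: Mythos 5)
Your overall route is exactly the paper's: the corollary is meant to be read off from the identities \eqref{eq:laplacedualinftyabsorbing0}--\eqref{eq:laplacedualinftyabsorbing3} of Theorem~\ref{thm:completemonotonicity}, and your treatment of the $(0^+\cdot\infty,\infty^-\cdot 0)$ subcase and of the second assertion is essentially the intended substitution. There is, however, a genuine flaw in your $(0^+\cdot\infty,\infty\cdot 0^+)$ subcase and in the closing ``main obstacle'' remark. There you switch to the second identity of \eqref{eq:laplacedualinftyabsorbing0}, $\PP_x(X_t<\infty)=\EE^0[e^{-xY_t}]$, which only yields information about $Y$ started from $y=0$ (and in the converse direction only yields non-explosion of $X$ from small starting points, not absorption at $0$; note also that $\EE^0[e^{-xY_t}]\equiv 1$ on $(0,\infty)$ would mean $Y_t=0$ a.s.-$\PP^0$, not $Y_t<\infty$ a.s., and non-explosion of $X$ is certainly not ``automatic''). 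To upgrade the single statement $\PP^0(Y_t=\infty)=0$ to non-stickiness of $\infty$ for $Y$ you assert the two are equivalent ``by a routine application of the Markov property''. They are not: the Markov property cannot transport information from the starting point $0$ to starting points $y>0$, and the equivalence is false as stated --- for the pure-killing dual $Y$ of a subordinator (Example~\ref{example:subo}) with $\Phi(0)=0$, $\Phi\not\equiv 0$, one has $\PP^0(Y_t=\infty)=0$ for all $t$ while $\PP^y(Y_t=\infty)=1-e^{-t\Phi(y)}>0$ for $y\in(0,\infty)$ (similarly for the dual of a CBI). Monotonicity also goes the wrong way for this particular jump: $y\mapsto\PP^y(Y_t<\infty)$ is $\downarrow$ by Proposition~\ref{lemma:kernels}, so its value at $y=0$ controls nothing for $y>0$.

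The repair is immediate and removes the case split altogether: in both subcases of the first assertion use the identity $\EE_0[e^{-X_ty}]=\PP^y(Y_t<\infty)$, which is the first identity of \emph{both} \eqref{eq:laplacedualinftyabsorbing0} and \eqref{eq:laplacedualinftyabsorbing3} and holds for all $y\in[0,\infty]$. Then ``$0$ absorbing for $X$'' is equivalent to $\EE_0[e^{-X_ty}]=1$ for all $y\in(0,\infty)$, hence to $\PP^y(Y_t<\infty)=1$ for all $y\in(0,\infty)$; the remaining starting point $y=0$ demanded by the definition of non-stickiness is covered either by evaluating the same identity at $y=0$ (where $\EE_0[e^{-X_t\cdot 0}]=1$ trivially under $\infty^-\cdot 0$, and equals $\PP_0(X_t<\infty)=1$ under $\infty\cdot 0^+$ when $0$ is absorbing), or by noting that the $\downarrow$ map $y\mapsto\PP^y(Y_t<\infty)$ is bounded by $1$ and equal to $1$ on $(0,\infty)$, hence also at $0$. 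Likewise, the endpoint $x=\infty$ in the second assertion (which rests on $\PP_x(X_t=0)=\EE^\infty[e^{-xY_t}]$ from \eqref{eq:laplacedualinftyabsorbing1}--\eqref{eq:laplacedualinftyabsorbing2}) is handled by evaluating the identity at $x=\infty$ or by the $\downarrow$ character of $x\mapsto\PP_x(X_t=0)$ --- the same device the paper employs in the proof of Proposition~\ref{propositon:Feller-property}\ref{propositon:Feller-property:i} --- and not by the Markov property.
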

The parallel statements  in Corollary \ref{cor:absorbing-nonsticky} with the roles of $X$ and $Y$ interchanged also hold true; this follows by transposition. 
\begin{remark}\label{rem:thereasonofconvention}
Let us  explain the necessity of introducing conventions for $0\cdot \infty, \infty\cdot 0$. As we have pointed out in the Introduction, at first sight it may seem best to only deal with pairs of state-spaces for $X$ and $Y$ that avoid the issue of specifying $0\cdot\infty$ and $\infty\cdot 0$. The problem with this is that in considering the existence of a Laplace dual $Y$ for a given $X$, whilst by Bernstein-Widder's theorem~\ref{thm:bernstein} the relation \eqref{intro:ld} surely implies the complete monotonicity of $\EE_x[e^{-X_t y}]$ in $x\in (0,\infty)$, which in turn ensures the existence of a probability $\nu_t(y,\cdot)$ for which 
\begin{equation*}
\EE_x[e^{-X_t y}]=\int e^{-xv}\nu_t(y,\dd v),\quad x\in (0,\infty),
\end{equation*}
in general $\nu_t(y,\cdot)$ is a probability only on $[0,\infty]$, a superset of $(0,\infty)$ with both end-points $0$, $\infty$ included. For this reason we consider the natural domain of Laplace duality to be $[0,\infty]$ with due consideration having to be given to the conventions on $0\cdot\infty$ and $\infty\cdot 0$. Fortunately, this approach is also without loss of generality because Laplace duality on restricted state-spaces can always be prolongated to $[0,\infty]$, as we shall see in Remark~\ref{extended:wlog}. 
\end{remark}

\begin{example}[Subordinators]\label{example:subo}
Let $X$ be a possibly killed subordinator (coffin: $\infty$) under the probabilities $(\PP_x)_{x\in [0,\infty]}$ with Laplace exponent $\Phi\in\pksubo$: $$\EE_x[e^{-X_ty};X_t<\infty]=e^{-xy-t\Phi(y)},\quad \{x,y\}\subset [0,\infty),\, t\in [0,\infty);
$$ under $\PP_\infty$, $X$ is  the constant $\infty$ a.s. We consider below two variants of a Laplace dual $Y$ for $X$. In both cases, for $y\in (0,\infty)$, under $\PP^y$, $Y$ a.s. starts at $y$ and is killed, sent to the coffin $\infty$, at rate $\Phi(y)$, while under $\PP^\infty$ it is the constant $\infty$ a.s.
\begin{itemize}
    \item In the first variant, under $\PP^0$, $Y$ a.s. starts at $0$ and is killed, sent to the coffin $\infty$, at rate $\Phi(0)$. Then
    $X$ is in $(0^+\cdot\infty, \infty\cdot 0^+)$-Laplace duality with  $Y$.

\item In the second variant, under  $\PP^0$, $Y$ is the constant $0$ a.s. Then $X$ is in $(0^+\cdot\infty, \infty^-\cdot 0)$-Laplace duality with $Y$.
\end{itemize}
So, when $\Phi(0)>0$, the subordinator $X$ is in Laplace duality with different $Y$, depending on the choice of the conventions for the products $0\cdot \infty$ and $\infty\cdot 0$. However, except in the trivial case when $\Phi\equiv 0$ (the constant process, which is in Laplace duality of whichever kind with itself), the other two types of Laplace dualities are precluded  ($\because$ $X$ does not have $0$ absorbing if $\Phi\not\equiv 0$). 
\end{example}
In the context of the preceding example the reader may well wonder what changes if, instead of a subordinator, one considers a possibly killed spectrally positive L\'evy process $X$ (that is not a subordinator). Its  Laplace transform then takes the form
\[\EE_x[e^{-X_ty};X_t<\infty]=e^{-xy+t\Psi(y)}, \quad (x,y)\in \mathbb{R}\times[0,\infty),\ t\in [0,\infty),\]
where $\Psi\in \pkspLp\backslash (-\pksubo)$. In this case, for no $t\in (0,\infty)$ can there exist a family of probabilities $q=(q_t(y,\cdot))_{y\in (0,\infty)}$ on $[0,\infty]$ satisfying
\[ \EE_x[e^{-X_ty}]= \int e^{-xv} q_t(y, \dd v),\quad \{x,y\}\subset (0,\infty),
\]
for if there were, letting $x\downarrow 0$,  we would get 
$q_t([0, \infty))\geq  e^{t\Psi(y)} > 1$ for some (all large enough) $y \in (0, \infty)$, which is a contradiction.
\begin{example}[CB processes]\label{example:cb}
Let $X$ be a CB process under the probabilities $(\PP_x)_{x\in [0,\infty]}$  with the branching mechanism $\Psi\in \pkspLp$. According to \cite[Chapter 12]{Kyprianoubook} it means:  $$\EE_x[e^{-X_ty}]=e^{-xu_t(y)},\quad x\in [0,\infty),\, y\in (0,\infty),\, t\in [0,\infty),
$$
where for each $y\in (0,\infty)$, $[0,\infty)\ni t\mapsto u_t(y)\in (0,\infty)$ is the unique $C^1$-solution to 
\begin{equation}\label{ut}
\frac{\dd}{\dd t} u_t(y)=-\Psi(u_t(y)),\ u_0(y)=y;\end{equation} 
under $\PP_\infty$, $X$ is the constant $\infty$ a.s. We consider below three variants of a Laplace dual $Y$ for $X$. In all cases, for $y\in (0,\infty)$, under $\PP^y$, $Y$ a.s. coincides with $(u_t(y))_{t\in [0,\infty)}$.
\begin{itemize}
    \item In the first variant, under $\PP^\infty$ (resp. $\PP^0$)  $Y$ is constant and equal to $\infty$ (resp. $0$) a.s. Then 
$X$ is in $(0^+\cdot\infty,\infty^-\cdot 0)$-Laplace duality with $Y$.

\item In the second variant, for all $t\in [0,\infty)$, a.s.,
\begin{itemize}
    \item under $\PP^\infty$, $Y_t=u_t(\infty-):=\text{$\uparrow$-}\lim_{y\uparrow\infty}u_t(y) \in (0,\infty]$,
    \item under $\PP^0$, $Y_t=u_t(0+):=\text{$\downarrow$-}\lim_{y\downarrow0} u_t(y) \in [0,\infty)$.
\end{itemize}
Then $X$ is in $(0\cdot\infty^-, \infty\cdot 0^+)$-Laplace duality with $Y$.
\item For the third variant  take $0$ absorbing for $Y$ as in the first instance and take $\PP^\infty$  as in the second  variant. Then $X$ is in $(0\cdot\infty^-, \infty^-\cdot 0)$-Laplace duality with $Y$.
\end{itemize}
Thus, the CB process $X$ may be in Laplace duality with different $Y$ depending on the choice of the conventions for the products $0\cdot \infty$ and $\infty\cdot 0$. We return to CB processes in Paragraph~\ref{para:cb-processes} by studying the generators, there with $(0^+\cdot\infty,\infty\cdot 0^+)$-Laplace duality, so that in fact a CB process admits all four types of Laplace duals.
\end{example}
\begin{example}[CBI processes]\label{example:cbi}
Let $X$ be a continuous-state branching process with immigration (CBI process) under the probabilities $(\PP_x)_{x\in [0,\infty]}$  with branching mechanism $\Psi\in\pkspLp$ and immigration mechanism $\Phi\in\pksubo$. One has, see Kawazu and Watanabe \cite[Eq.~(1.1) and Theorem~1.1]{KAW}: $$\EE_x[e^{-X_ty}]=e^{-xu_t(y)-\int_0^t\Phi(u_s(y))\dd s},\quad x \in [0,\infty),\, y\in (0,\infty),\, t\in [0,\infty),
$$
where for each $y\in (0,\infty)$, $[0,\infty)\ni t\mapsto u_t(y)\in (0,\infty)$ satisfies \eqref{ut}, while under $\PP_\infty$, $X$ is the constant $\infty$ a.s. We interpret $u(0):=([0,\infty)\ni t\mapsto u_t(0+))$ in the limiting sense. Under $\PP^\infty$, let $Y$  be the constant $\infty$ a.s. , while for $y\in [0,\infty)$, under $\PP^y$, let  $Y$ a.s. satisfy $Y_t=u_t(y)$ for $t\in [0,\zeta)$, where $\zeta$ is a random time at which $Y$ is killed, sent to $\infty$, and whose law is given by $$\mathbb{P}^y(\zeta>t)=e^{-\int_0^{t}\Phi(u_s(y))\dd s}, \quad t\in[ 0,\infty).$$Then  $X$ is in $(0^+\cdot\infty, \infty\cdot 0^+)$-Laplace duality with~$Y$. 
\end{example}
\begin{proof}[Proof of Theorem~\ref{thm:completemonotonicity}]\label{proof:big-one}
The proofs of the parts are all quite similar, but for clarity we write them all out separately at the cost of some repetition.

 \textbf{Part }\ref{thmcompletemonotonicity1}. We work throughout this part under the conventions $ 0^+\cdot\infty$, $\infty\cdot 0^+$. 

The linear span of $\{\ee^y\vert_{[0,\infty)}:y\in (0,\infty)\}$ is an algebra of maps vanishing at infinity, separating the points of, and vanishing nowhere on $[0,\infty)$. Thanks to the convention  $\infty \cdot 0^+$ and by Stone-Weierstrass' theorem \cite[Corollary~V.8.3]{conway} for $[0,\infty)$, 
\begin{equation}
\label{continuityat0LT}
\begin{split}
\text{$X$ is weakly-$[0,\infty)$ continuous at $0$}
\;\Leftrightarrow\;
\Big(\lim_{x\downarrow 0}\mathbb{E}_x\!\left[e^{-X_t y}\right]
= \mathbb{E}_0\!\left[e^{-X_t y}\right]\\
\hfill
\text{for all } t\in[0,\infty),\ y\in[0,\infty)\Big).
\end{split}
\end{equation}
Suppose first that $X$ is in $(0^+\cdot\infty,\infty\cdot 0^+)$-Laplace duality with $Y$. \eqref{continuityat0LT}, \eqref{LD-main}, $0^+\cdot\infty$ show that $X$ is weakly-$[0,\infty)$ continuous at $0$. \eqref{LD-main}, $\infty\cdot 0^+$ ensure that $X$ has $\infty$ absorbing.  Corollary~\ref{corollary:LDimpliesCM} guarantees the complete monotonicity of $X$. By transposition we deduce that $Y$ has these same three properties.

Conversely, assume $X$ is completely monotone, weakly-$[0,\infty)$ continuous at $0$ and has $\infty$  absorbing.  From, and in the notation of Proposition~\ref{lemma:kernels}\eqref{eq-lemma:laplacedualpreform2}-\eqref{eq-lemma:laplacedualpreform1}, and by $\infty \cdot 0^+$: for  $t\in [0,\infty)$,  writing $q_t(0,\cdot):=q_t(0+,\cdot)$, then for $y\in [0,\infty)$, $x\in (0,\infty)$,  
\begin{equation}\label{fund1}
\int e^{-uy}p_t(x,\dd u)=\int  e^{-xv}q_t(y,\dd v);
\end{equation} but, on passing to the limit $x\downarrow 0$, also for $x=0$ by the assumption of weak-$[0,\infty)$ continuity at $0$ of $X$, \eqref{continuityat0LT} and $0^+\cdot\infty $. Moreover,  \eqref{fund1} holds also, first, for $x=\infty$ due to the absorptivity of $\infty$ for $X$ and the convention $\infty\cdot 0^+$; second, then for $y=\infty$ on specifying $q_t(\infty,\cdot):=\delta_\infty$ and noting this time $ 0^+\cdot\infty$. Thus \eqref{fund1} persists for arbitrary $(x,y)\in [0,\infty]^2$. In order to deduce that $X$ is in $(0^+\cdot\infty,\infty\cdot 0^+)$-Laplace duality with a positive Markov process it remains to apply Proposition~\ref{proposition:general-dual}.

\eqref{eq:laplacedualinftyabsorbing0} is a mere specification of \eqref{LD-main} to $x=0$ using $0^+\cdot\infty$, and separately to $y=0$ using $\infty\cdot 0^+$ (or we apply transposition).

\textbf{Part} \ref{thmcompletemonotonicity2}. We work throughout this part under the conventions $ 0\cdot \infty^-$, $\infty^-\cdot 0$. 

The linear span of $\{(1-\ee^y)\vert_{(0,\infty]}:y\in (0,\infty)\}$ is an algebra of maps vanishing at infinity (which is to say: having limit zero at $0+$), separating the points of,  and vanishing nowhere on $(0,\infty]$.  Thanks to the convention  $0\cdot\infty ^-$ and  Stone-Weierstrass for $(0,\infty]$, 
\begin{equation}
\label{continuityatinftyLT}
\begin{aligned}
\text{$X$ is weakly-$(0,\infty]$ continuous at $\infty$}
&\Leftrightarrow
\Big(\lim_{x\uparrow \infty}\mathbb{E}_x\!\left[e^{-X_t y}\right]
= \mathbb{E}_\infty\!\left[e^{-X_t y}\right]\\
&\qquad\qquad 
\text{for all } t\in[0,\infty),\ y\in(0,\infty]\Big).
\end{aligned}
\end{equation}

Suppose first that $X$ is in $(0\cdot \infty^-,\infty^-\cdot 0)$-Laplace duality with $Y$. \eqref{continuityatinftyLT}, \eqref{LD-main}, $\infty^-\cdot 0$ show that $X$ is weakly-$(0,\infty]$ continuous at $\infty$. \eqref{LD-main}, $0\cdot \infty^-$ ensure that $X$ has $0$ absorbing. Corollary~\ref{corollary:LDimpliesCM} guarantees the complete monotonicity of $X$. By transposition we deduce that $Y$ has these same three properties.

Conversely, assume $X$ is completely monotone, weakly-$(0,\infty]$ continuous at $\infty$ and has $0$  absorbing.  From,  and in the notation of Proposition~\ref{lemma:kernels}\eqref{eq-lemma:laplacedualpreform1}-\eqref{eq-lemma:laplacedualpreform3}, and by $0\cdot \infty^-$: for  $t\in [0,\infty)$,  writing $q_t(\infty,\cdot):=q_t(\infty-,\cdot)$, then for $y\in (0,\infty]$, $x\in (0,\infty)$,  
\begin{equation}\label{fund2}
\int e^{-uy}p_t(x,\dd u)=\int e^{-xv}q_t(y,\dd v);
\end{equation} but, on passing to the limit $x\uparrow \infty$, also for $x=\infty$ by the assumption of weak-$(0,\infty]$ continuity at $\infty$ of $X$, \eqref{continuityatinftyLT} and $\infty^-\cdot 0$. Moreover,  \eqref{fund2} holds also, first, for $x=0$ due to the absorptivity of $0$ for $X$ and $0\cdot \infty^-$; second, then for $y=0$ on specifying $q_t(0,\cdot):=\delta_0$ and noting this time $ \infty^-\cdot 0$. Thus \eqref{fund2} persists for arbitrary $(x,y)\in [0,\infty]^2$. In order to deduce that $X$ is in $(0\cdot \infty^-,\infty^-\cdot 0)$-Laplace duality with a positive Markov process it remains to apply Proposition~\ref{proposition:general-dual}.

\eqref{eq:laplacedualinftyabsorbing1} is a mere specification of \eqref{LD-main} to $x=\infty$ using $\infty^-\cdot 0$, and separately to $y=\infty$ using $0\cdot \infty^-$ (or we apply transposition).

\textbf{Part} \ref{thmcompletemonotonicity3}. We work throughout this part under the conventions $ 0\cdot \infty^-$, $\infty\cdot 0^+$.

The linear span of $\{\ee_x:x\in (0,\infty)\}\cup \{1\}$ is an algebra containing the constants and separating the points of $[0,\infty]$. Thanks to Stone-Weierstrass for $[0,\infty]$ we therefore have: 
\begin{equation}\label{continuityatinftyLTY}
\begin{aligned}
\text{$Y$ is weakly continuous at $\infty$}
&\Leftrightarrow\Big(
\lim_{y\uparrow \infty}\mathbb{E}^y\!\left[e^{-xY_t}\right]
= \mathbb{E}^\infty\!\left[e^{-xY_t}\right]\\
&\qquad \qquad \ \, \, \text{for all } t\in[0,\infty),\ x\in(0,\infty)\Big);
\end{aligned}
\end{equation}
\begin{equation}\label{continuityat0LTY}
\begin{aligned}
\text{$Y$ is weakly continuous at $0$}
&\Leftrightarrow \Big(
\lim_{y\downarrow 0}\mathbb{E}^y\!\left[e^{-xY_t}\right]
= \mathbb{E}^0\!\left[e^{-xY_t}\right]\\
&\qquad \qquad \text{for all } t\in[0,\infty),\ x\in(0,\infty)\Big).
\end{aligned}
\end{equation}
Suppose first that $X$ is in $(0\cdot \infty^-,\infty\cdot 0^+)$-Laplace duality with $Y$. \eqref{LD-main}, $\infty\cdot 0^+$ (resp. \eqref{LD-main}, $0\cdot \infty^-$) ensure that $X$ has $\infty$ (resp. $0$) absorbing. \eqref{LD-main}, \eqref{continuityatinftyLTY}, $0\cdot \infty^-$ (resp. \eqref{LD-main}, \eqref{continuityat0LTY}, $\infty\cdot 0^+$) yield that $Y$ is weakly continuous at $\infty$ (resp. $0$). 
 Corollary~\ref{corollary:LDimpliesCM} guarantees the complete monotonicity of $X$ and $Y$.  

Conversely, assume $X$ is completely monotone, and has $0$, $\infty$ both absorbing. 

 From,  and in the notation of Proposition~\ref{lemma:kernels}\eqref{eq-lemma:laplacedualpreform2}-\eqref{eq-lemma:laplacedualpreform3}, and by $0\cdot \infty^-$, $\infty\cdot 0^+$: for  $t\in [0,\infty)$,  writing $q_t(\infty,\cdot):=q_t(\infty-,\cdot)$ and $q_t(0,\cdot):=q_t(0+,\cdot)$, then for $y\in [0,\infty]$, $x\in (0,\infty)$,  
\begin{equation}\label{fund3.}
\int e^{-uy}p_t(x,\dd u)=\int e^{-xv}q_t(y,\dd v).
\end{equation}
Moreover,  \eqref{fund3.} holds also, first, for $x=0$ due to the absorptivity of $0$ for $X$ and $0\cdot \infty^-$; second,  for $x= \infty$ by the absorptivity of $\infty$ for $X$ and $\infty\cdot 0^+$. Thus \eqref{fund3.} persists for arbitrary $(x,y)\in [0,\infty]^2$. In order to deduce that $X$ is in $(0\cdot \infty^-,\infty\cdot 0^+)$-Laplace duality with a positive Markov process it remains to apply Proposition~\ref{proposition:general-dual}.

\eqref{eq:laplacedualinftyabsorbing2} is a mere specification of \eqref{LD-main} to $y=0$ using $\infty\cdot 0^+$, and separately to $y=\infty$ using $0\cdot \infty^-$.

\textbf{Part} \ref{thmcompletemonotonicity4}. We work throughout this part under the conventions $ 0^+\cdot\infty$, $\infty^-\cdot 0$. 

The necessity of the conditions of the theorem follows from Item~\ref{thmcompletemonotonicity3} on transposing. As for sufficiency, assume then that $X$ is completely monotone and weakly continuous at both $0$ and $\infty$. 

From, and in the notation of Proposition~\ref{lemma:kernels}\eqref{eq-lemma:laplacedualpreform1}: for  $t\in [0,\infty)$,  $y\in (0,\infty)$, $x\in (0,\infty)$,  
\begin{equation}\label{fund3}
\int e^{-uy}p_t(x,\dd u)=\int e^{-xv}q_t(y,\dd v);
\end{equation}
but, on passing to the limit $x\uparrow\infty$ also for $x=\infty$ by the weak continuity of $X$ at $\infty$ and $\infty^-\cdot 0$; likewise, on passing to the limit $x\downarrow 0$ also for $x=0$ by the weak continuity of $X$ at $0$ and $0^+\cdot\infty$. Moreover,  setting $q_t(0,\cdot):=\delta_0$ and $q_t(\infty,\cdot):=\delta_\infty$, \eqref{fund3} continues to hold, first, for $y=0$ due to $\infty^-\cdot 0$; second, for $y=\infty$ due to  $ 0^+\cdot\infty$. Thus \eqref{fund3} persists for arbitrary $(x,y)\in [0,\infty]^2$. In order to deduce that $X$ is in $(0^+\cdot\infty,\infty^-\cdot 0)$-Laplace duality with a positive Markov process it remains to apply Proposition~\ref{proposition:general-dual}.
\end{proof}

In applying Theorem \ref{thm:completemonotonicity}, in the direction of starting with a process $X$ and attempting to conclude the existence of the dual $Y$, one may be led to performing operations of restriction followed by/or just of extension of state-space. The issue of restriction is considered in
\begin{remark}\label{rem:restriction}
Let $(0,\infty)\subset E'\subset [0,\infty]$. 
Suppose the $[0,\infty]$-valued Markov process $X$ has the boundary points of  $[0,\infty]\backslash E'$ non-sticky. Then we may introduce the positive Markov process $X'$ having state-space  $E'$ and the probabilities $\PP_x':=\PP_x\vert_{E'}$ for $x\in E'$. It is, as far as the finite-dimensional distributions are concerned,  the same process when looked at only on points of issue from $E'$: ${X'_t}_\star \PP_x={X'_t}_\star\PP'_x$ for all $t\in [0,\infty)$ and all $x\in E'$. Thus, if $X$ is completely monotone to begin with, then the resulting process $X'$ verifies complete monotonicity on $E'$: for all $t\in [0,\infty)$, for all $\downarrow$  maps $f:E'\to [0,\infty)$ that are completely monotone on $(0,\infty)$, we have that $\EE_x'[f(X'_t)]$ is $\downarrow$ in $x\in E'$, completely monotone in $x\in ( 0,\infty)$. (But $X'$ can have this property without $X$ having been completely monotone to begin with.) Such restriction of the process can also be iterative,  first removing $0$ and then $\infty$ or vice versa.
\end{remark}
As for the problem of extension, let $(0,\infty)\subset E'\subset [0,\infty]$ and suppose $X'$ is, under the probabilities $(\PP_x')_{x\in E'}$, a Markov process valued in $E'$ that is completely monotone on $E'$. Then,  for $E'\subset E\subset [0,\infty]$, the $E$-valued  Markov process $X$ having the probabilities of $X'$ and the points $E\backslash E'$ absorbing  is completely monotone. The next proposition further shows that given complete monotonicity on $(0,\infty)$ of a process $X$, one can  extend $X$ continuously at the boundaries and find a Laplace dual.
\begin{proposition}\label{prop:continuousextensionat0}
Let $X'$ be a $(0,\infty)$-valued Markov process under the probabilities $(\PP'_x)_{x\in (0,\infty)}$, such that  $(0,\infty)\ni x\mapsto \EE_x'[e^{-X_t'y}]$ is completely monotone for all $y\in (0,\infty)$.  Then, for all $t\in[ 0,\infty)$, the probabilities $p_t(x,\cdot):=\PP_x'(X'_t\in \cdot)$  admit a weak limit as $x\downarrow 0$ (resp.  $x\uparrow \infty$) and the latter, call it $p_t(0,\cdot)$ (resp. $p_t(\infty,\cdot)$, is a probability on $[0,\infty)$ (resp. $[0,\infty]$). Moreover, we have the following two assertions.
\begin{enumerate}[(i)]
\item\label{extensionat0}The    family  $p:=(p_t(x,\cdot))_{(t,x)\in [0,\infty)\times [0,\infty)}$ of probabilities on $[0,\infty)$ has  the measurablity, normality  and semigroup properties. The associated $[0,\infty]$-valued Markov process $X$ with transition kernels $p$ on $[0,\infty)$ and the boundary $\infty$ absorbing is weakly-$[0,\infty)$ continuous at $0$ and also completely monotone. 
In particular, $X$ admits a $(0^+\cdot\infty,\infty\cdot 0^+)$-Laplace dual process.
\item \label{extensionatinfinity} The   family  $p:=(p_t(x,\cdot))_{(t,x)\in [0,\infty)\times [0,\infty]}$ of probabilities on $[0,\infty]$ has also the measurability, normality and semigroup properties. The associated Markov process $X$ with transition kernels $p$ is weakly continuous at $0$ and $\infty$ and also completely monotone. In particular, $X$ admits a $(0^+\cdot\infty,\infty^-\cdot 0)$-Laplace dual process.
\end{enumerate}
\end{proposition}
It will be clear from the proof that we have also the following version. 

\emph{Let $X'$ be a $[0,\infty)$-valued Markov process under the probabilities $(\PP'_x)_{x\in[0,\infty)}$, weakly continuous at zero and such that,  for all $y\in (0,\infty)$,  $ \EE_x'[e^{-X_t'y}]$ is completely monotone in $x\in (0,\infty)$, continuous in $x\in  [0,\infty)$. Then, for all $t\in[ 0,\infty)$, as  $x\uparrow \infty$, the probabilities $p_t(x,\cdot):=\PP_x'(X'_t\in \cdot)$  admit a weak limit as a probability $p_t(\infty,\cdot)$ on  $[0,\infty]$ and \ref{extensionatinfinity}  holds.}

The same proof works because the extension to \ref{extensionatinfinity}  is done independently after \ref{extensionat0}. 
\begin{proof}[Proof of Proposition~\ref{prop:continuousextensionat0}]
 By Prokhorov's theorem, the fact that  Laplace transforms  determine probabilities on $\BB_{[0,\infty]}$, and the complete monotonicity ($\therefore$ $\downarrow$) of $(0,\infty)\ni x\mapsto \EE_x'[e^{-X_t'y}]$ for all $y\in (0,\infty)$, we see that for each $t\in [0,\infty)$ the weak limit $p_t(0,\cdot)$ of the probabilities $p_t(x,\cdot)$ on $[0,\infty]$ exists as $x\downarrow 0$. With $x\in (0,\infty)$ arbitrary, since for all $y\in (0,\infty)$, $$\int e^{-uy}p_t(x,\dd u)\leq \int e^{-uy}p_t(0,\dd u)\leq p_t(0,[0,\infty)),$$ on letting $y\downarrow 0$, we deduce that $p_t(0,[0,\infty))=1$. 

By the same set of tokens we see that the weak limit $p_t(\infty,\cdot)$ of the probabilities $p_t(x,\cdot)$ on $[0,\infty]$ as $x\uparrow \infty$ exists as a probability on $[0,\infty]$.  (We stress that here there is possibly a mass at $\infty$.)

\ref{extensionat0}. By what we have just established we may 
consider $p:=(p_t(x,\cdot))_{(t,x)\in [0,\infty)\times [0,\infty)}$ as a family of probabilities on $[0,\infty)$. It is clear that $p_0(0,\{0\})=1$, so $p$ has the normality property. Let us check the semigroup property of $p$ (for point of issue $0$, it being clear for points of issue out of $(0,\infty)$). For $\{s,t\}\subset [0,\infty)$:
\begin{align*}
\int e^{-uy}p_{t+s}(0,\dd u)&=\lim_{x\downarrow 0}\int e^{-uy}p_{t+s}(x,\dd u)=\lim_{x\downarrow 0}\int \overbrace{\int e^{-u'y}p_s(u,\dd u')}^{\text{bdd cts in }u\in [0,\infty)}p_{t}(x,\dd u)\\
&=\int \int e^{-u'y}p_s(u,\dd u')p_{t}(0,\dd u),\quad y\in (0,\infty);
\end{align*}
since Laplace transforms determine probabilities on $\BB_{[0,\infty]}$ the semigroup property follows. Thus we can extend $X'$ to $X$ by asking that $X$ has the transition kernels $p$ on $[0,\infty)$ and $\infty$ absorbing. It is plain that the resulting positive Markov processes is weakly-$[0,\infty)$ continuous at $0$, also completely monotone. Consequently, Theorem~\ref{thm:completemonotonicity}\ref{thmcompletemonotonicity1}  applies and ensures existence of a positive Markov process that is in $(0^+\cdot\infty,\infty\cdot 0^+)$-Laplace duality therewith.

\ref{extensionatinfinity}. Concerning $p$, by the findings of \ref{extensionat0} it remains to check the semigroup property for point of issue $\infty$, but this follows essentially verbatim as it was done in \ref{extensionat0} for $0$. Thus  we end up with a normal measurable Markovian family of kernels $p$. The associated $[0,\infty]$-valued Markov process is weakly continuous both at $0$ and $\infty$ and is completely monotone. In turn, Theorem~\ref{thm:completemonotonicity}\ref{thmcompletemonotonicity4} applies and ensures existence of a positive Markov process that is in $(0^+\cdot\infty,\infty^{-}\cdot 0)$-Laplace duality therewith.
\end{proof}
Remark~\ref{rem:restriction} and Proposition~\ref{prop:continuousextensionat0} can be applied one after the other. We have left in them unaddressed the issue of whether or not the resulting processes have  modifications with nice, say c\`adl\`ag paths: the discussion concerned only the procurement of a Markovian family of kernels restricting/extending the given one. Though, this problem is  trivial at least when extending a given c\`adl\`ag process by absorbing points.

We close this subsection by making precise how the insistance on the state-spaces $[0,\infty]$ in Laplace duality represents no loss of generality relative to the situation in which we would have also allowed $(0,\infty)$, $[0,\infty)$, etc.
\begin{remark}\label{extended:wlog}
Let $X$ and $Y$ be Markov processes with respective state-spaces $E$ and $F$, subsets of $[0,\infty]$, supersets of $(0,\infty)$, that are in Laplace duality on $E\times F$: 
\begin{equation}\label{eq:restricted}
\EE_x[e^{-X_ty}]=\EE^y[e^{-xY_t}],\quad t\in [0,\infty),\, (x,y)\in E\times F,
\end{equation}
under some conventions on $0\cdot\infty$, $\infty\cdot 0$, when, respectively, $(0,\infty)\in E\times F$, $(\infty,0)\in E\times F$. We claim that there exist extensions of $X$ and $Y$ to positive Markov processes, with the outstanding points absorbing, that are in Laplace duality under a suitable choice of the conventions for $0\cdot\infty$, $\infty\cdot 0$ extending the given ones. Indeed, if $\infty$ is not in the state-space of $Y$, then we extend $Y$ to $\infty$ by making the latter absorbing and, if $0$ was in the state-space of $X$, agreeing on the convention $0^+\cdot\infty$. Likewise, if $0$ is not in the state-space of $Y$, then we extend $Y$ to $0$ by making the latter absorbing and, if $\infty$ was in the state-space of $X$, agreeing on the convention $\infty^-\cdot 0$. We repeat this in an analogous manner to extend also $X$ to $[0,\infty]$. 
\end{remark}
\subsection{Feller property, excessive/invariant functions and measures, long-term limiting laws}\label{subsec:Fellerproperty}
Let us provide some technical background on Feller processes, referring  the reader to Kallenberg \cite[Chapter~19]{Kallenberg} and to B\"ottcher et al. \cite[Chapter I]{zbMATH06256582} for  detailed accounts. 

Under a $\mathsf{C}_0(E)$-Feller process with values in a locally compact separable metric space $E$ we shall understand then an $E$-valued Markov process $X$, whose semigroup $P=(P_t)_{t\in [0,\infty)}$ satisfies the two properties:
\begin{align}
&P_t\mathsf{C}_0(E)\subset \mathsf{C}_0(E),\quad t\in [0,\infty);\label{FellerI}\\
&\lim_{t\downarrow 0}P_tf(x)=f(x),\quad x\in E,\, f\in  \mathsf{C}_0(E).\label{FellerII}
\end{align}
When $E$ is compact, we shall also say that $X$ is $\mathsf{C}(E)$-Feller if \eqref{FellerI}-\eqref{FellerII} hold with $\mathsf{C}(E)$ instead of $\mathsf{C}_0(E)$. For a compact $E$, $\mathsf{C}_0(E)=\mathsf{C}(E)$; we make the notational distinction only for emphasis.

Property \eqref{FellerII} is automatic if $X$ is right-continuous and we may allow in the preceding $X$ to be non-conservative. \label{desc.non-cons} A possibly non-conservative $E$-valued Markov process $X=(X_t)_{t\in [0,\zeta)}$ with lifetime $\zeta$  under the probabilities $(\PP_x)_{x\in E}$ obeys \eqref{markov-property} only ``a.s.-$\PP_x$ on $\{s<\zeta\}$'', but has otherwise the same properties of  measurability, and normality ($\PP_x(X_0=x,0<\zeta)=1$ for all $x\in E$). Furthermore, with $p_t(x,\cdot):={X_t}_\star\PP_x\vert_{\{t<\zeta\}}$  for $x\in E$,   $P_t$ is defined out of the family $(p_t(x,\cdot))_{x\in E}$, exactly as in \eqref{eq:semigroup}, this for all $t\in [0,\infty)$. By one-point compactification or addition of an isolated point, according as to the whether $E$ is not or is compact, the non-conservative case reduces to the conservative one \cite[Lemma~19.13]{Kallenberg}. The Feller property of a family of (sub)Markovian transition kernels $p$, or of their associated semigroup $P$, is defined in the clear way without reference to $X$. For a Feller semigroup a process realization thereof having  sample paths in $\mathbb{D}_E$ always exists \cite[Theorem~19.15]{Kallenberg} and the strong Markov and quasi left-continuity properties are automatic \cite[Theorem~19.17, Proposition~25.20]{Kallenberg}.

Recall also the notion of the strong (infinitesimal) generator for a Feller process $X$ with semigroup $P$, state-space $E$: it is the linear operator $\XX_{\mathrm{s}}$ defined on 
\begin{equation*}
D_{\XX_{\mathrm{s}}}:=\left\{f\in \mathsf{C}_0(E): \underset{t\downarrow 0}{\lim} \frac{P_tf-f}{t} \text{ exists as a uniform limit}\right\}
\end{equation*}
and given by
\begin{equation*}
\XX_{\mathrm{s}}f:=\underset{t\downarrow 0}{\lim} \frac{P_tf-f}{t}\text{ for } f\in D_{\XX_{\mathrm{s}}}.
\end{equation*}
The Feller property ensures that it coincides with the pointwise version on restriction to $\mathsf{C}_0(E)$ \cite[Theorem 1.33]{zbMATH06256582}, more precisely
\begin{equation}\label{well-known-generators}
\XX_{\mathrm{s}}=\mathcal{X}\cap (\mathsf{C}_0(E)\times \mathsf{C}_0(E)),
\end{equation} where $\mathcal{X}$ is given by \eqref{eq:generator}. The generator of a Feller process provides the fundamental Dynkin's martingales, see for instance \cite[Theorem~1.36 \& sentence just after Definition~1.35]{zbMATH06256582}: for any $f\in D_{\XX_{\mathrm{s}}}$, the process
\begin{equation}\label{page-dynkin-mtgs}
\left(f(X_t)-\int_0^t\XX_{\mathrm{s}}f(X_s)\dd s\right)_{t\in [0,\infty)} \text{ is a martingale}.
\end{equation} A core for $\XX_{\mathrm{s}}$ is any linear $D\subset D_{\XX_{\mathrm{s}}}$ such that $\XX_{\mathrm{s}}\vert_D$ has closure $\XX_{\mathrm{s}}$ (in $\mathsf{C}_0(E)\times \mathsf{C}_0(E)$, where we endow $\mathsf{C}_0(E)$ with the supremum  norm $\Vert\cdot\Vert_\infty$, i.e. the topology of uniform convergence).

With these reminders out of the way, we first address the right-continuity \eqref{FellerII}:
\begin{equation}\label{felller-duality}
P_tf(x)\underset{t\downarrow 0}{\longrightarrow} f(x),  \quad \ x\in [0,\infty],\, f\in \mathsf{C}([0,\infty]),
\end{equation} for the semigroup  $P$ of a positive Markov process $X$ in Laplace duality. 
\begin{proposition}\label{prop:rightcontinuitysemigroup} 
 Let $X$ (semigroup $P$) be in $(\star_1,\star_2)$-Laplace duality with $Y$ (semigroup $Q$). 
Then $P$ satisfies \eqref{felller-duality} if and only if, respectively depending on the conventions $(\star_1,\star_2$), for all $y\in (0,\infty)$, \begin{equation}\label{cond:rightcontinuityPt}
Q_t\ee_x(y)\underset{t\downarrow 0}{\longrightarrow} \ee_x(y), \quad x\in  (0,\infty),
\end{equation} and
\begin{enumerate}[(i)]
\item when $(\star_1,\star_2)=(0^+\cdot\infty, \infty \cdot 0^+)$: $ q_t(y,[0,\infty))\underset{t\downarrow 0}{\longrightarrow} 1$. 
\item  when $(\star_1,\star_2)=(0\cdot\infty^-, \infty^{-}\cdot 0)$:  $ q_t(y,\{0\})\underset{t\downarrow 0}{\longrightarrow} 0$. 
 \item when $(\star_1,\star_2)=(0\cdot\infty^-, \infty\cdot 0^+)$: no further conditions. 
\item \label{feller-duality:iv}  when $(\star_1,\star_2)=(0^+\cdot\infty, \infty^{-}\cdot 0)$: $q_t(y,\{0\})\underset{t\downarrow 0}{\longrightarrow} 0$ and $q_t\big(y,[0,\infty)\big)\underset{t\downarrow 0}{\longrightarrow} 1 $. 
\end{enumerate}
These conditions are met if $Y$ admits a version that is right-continuous at zero (in particular they are met if $Y$ has a $\mathbb{D}_{[0,\infty]}$-valued version).
\end{proposition}
\begin{proof}
We write it out for $(\star_1,\star_2)=(0^+\cdot\infty, \infty^{-}\cdot 0)$-duality, the other cases being similar (and easier). Recall first that 
by Stone-Weierstrass the linear span of $\{\ee^{y}: y\in (0,\infty)\}\cup \{1\}$ is dense in $(\mathsf{C}([0,\infty]),\Vert\cdot\Vert_\infty)$. Assume that \ref{feller-duality:iv} holds true. By the duality relationship, for all $t\in [0,\infty)$, for all $y\in (0,\infty)$,
\begin{equation}\label{rightcontinuityintermsofq}
\begin{aligned}
&P_t\ee^y(x)= Q_t\ee_x(y)\underset{t\downarrow 0}{\longrightarrow} \ee_x(y)=\ee^y(x),\quad x\in (0,\infty), 
\\
&P_t\ee^y(\infty)=q_t(y,\{0\})\underset{t\downarrow 0}{\longrightarrow}0=\ee^y(\infty) \text{ and } \\
&P_t\ee^y(0)=q_t\big(y,[0,\infty)\big)\underset{t\downarrow 0}{\longrightarrow}1=\ee^y(0).
\end{aligned}
\end{equation}
Now let $f\in \mathsf{C}([0,\infty])$ be arbitrary and pick  $(f_n)_{n\in \mathbb{N}}$, a sequence of functions belonging to the linear span of $\{\ee^{y}: y\in (0,\infty)\}\cup \{1\}$, such that $\lim_{n\to\infty}\Vert f_n-f \Vert_\infty=0$. For all $t\in [0,\infty)$, using the fact that $\Vert P_tf \Vert_{\infty} \leq \Vert f \Vert_\infty$ we see that, for all $x\in [0,\infty]$,
\begin{align*}
|P_tf(x)-f(x)|&\leq \Vert P_tf-P_tf_n\Vert_\infty+|P_tf_n(x)-f_n(x)|+\Vert f_n-f\Vert_\infty  \\
&\leq 2\Vert f_n-f\Vert_\infty+ |P_tf_n(x)-f_n(x)|.
\end{align*}
By \eqref{rightcontinuityintermsofq}, $\lim_{t\downarrow 0}|P_tf_n(x)-f_n(x)|=0$. Thus, by letting $t\downarrow 0$ and thereafter $n\to\infty$ in the preceding display, we get
$\underset{t\rightarrow 0}{\lim} |P_tf(x)-f(x)|=0$, which is  \eqref{felller-duality}.

The fact that \ref{feller-duality:iv} is necessary for \eqref{felller-duality} follows readily on exchanging  the l.h.s. and r.h.s. of the equalities in \eqref{rightcontinuityintermsofq}. 

If now $Y$ is right-continuous at zero then, for all $y\in (0,\infty)$, we have that \eqref{cond:rightcontinuityPt}, $q_t(y,\{0\})=\mathbb{P}^y(Y_t=0)\underset{t\downarrow 0}{\longrightarrow} 0$ and $q_t(y,\{\infty\})=\mathbb{P}^y(Y_t=\infty)\underset{t\downarrow 0}{\longrightarrow} 0$ hold for all $y\in (0,\infty)$ by bounded convergence (since $Y_{0+}=Y_0=y\in (0,\infty)$ a.s.-$\PP^y$), which is \ref{feller-duality:iv}.
\end{proof}

We now study the more restrictive Feller property \eqref{FellerI}.
\begin{proposition}\label{propositon:Feller-property}
 Let $X$ and $Y$ be positive Markov processes, $X$ satisfying \eqref{felller-duality}. 
  We have the following assertions.
\begin{enumerate}[(i)]
\item\label{propositon:Feller-property:i} If $X$ is in $(0^+\cdot\infty,\infty\cdot 0^+)$-Laplace duality with  $Y$, then $X$ is $\mathsf{C}([0,\infty])$-Feller if and only if  $Y$ has $0$ non-sticky, in which case (the possibly submarkovian) $p$ restricted to $[0,\infty)$ has the $\mathsf{C}_0([0,\infty))$-Feller property too. 
\item\label{propositon:Feller-property:ii} If $X$ is in $(0\cdot \infty^-,\infty^-\cdot 0)$-Laplace duality with  $Y$, then $X$ is $\mathsf{C}([0,\infty])$-Feller if and only if  $Y$ has $\infty$ non-sticky, in which case (the possibly submarkovian) $p$ restricted to $(0,\infty]$ has the $\mathsf{C}_0((0,\infty])$-Feller property too. 
\item\label{propositon:Feller-property:iv} If  $X$ is in $(0\cdot \infty^-,\infty\cdot 0^+)$-Laplace duality with $Y$, then $X$ is $\mathsf{C}([0,\infty])$-Feller if and only if $Y$ has $0$ and $\infty$ both non-sticky, in which case (the possibly submarkovian) $p$ restricted to $(0,\infty)$ has the $\mathsf{C}_0((0,\infty))$-Feller property too. 
\item\label{propositon:Feller-property:iii} If  $X$ is in $(0^+\cdot\infty,\infty^-\cdot 0)$-Laplace duality with $Y$, then $X$ is $\mathsf{C}([0,\infty])$-Feller.
\end{enumerate}
\end{proposition}
\begin{proof}
\ref{propositon:Feller-property:i}. If $Y$ has $0$ non-sticky, then under $0^+\cdot\infty$, $e^{-xY_t}$ is continuous in $x\in [0,\infty]$ a.s.-$\PP^y$ for all $y\in (0,\infty)$ and $t\in [0,\infty)$. Combining this with the Laplace duality relation \eqref{LD-main} and bounded convergence we find that, for all $t\in [0,\infty)$, $P_t\{\ee^y:y\in (0,\infty)\}\subset \mathsf{C}([0,\infty])$. But the linear span of $\{\ee^y:y\in (0,\infty)\}\cup \{1\}$ is dense in $(\mathsf{C}([0,\infty]),\Vert\cdot\Vert_\infty)$ and we deduce that $P_t\mathsf{C}([0,\infty])\subset   \mathsf{C}([0,\infty])$. Together with  \eqref{felller-duality} it means that $X$ is  $\mathsf{C}([0,\infty])$-Feller. Conversely, if $X$ is  $\mathsf{C}([0,\infty])$-Feller, then  for all $t\in [0,\infty)$ and  $y\in (0,\infty)$, \eqref{LD-main} under $\infty\cdot 0^+$ and the continuity of $([0,\infty]\ni x\mapsto \EE_x[e^{-X_ty}])$ at $\infty-$ require $\PP^y(Y_t=0)=0$; in addition, the complete monotonicity of $Y$ together with Proposition~\ref{lemma:kernels} ensures that $([0,\infty]\ni y\mapsto \PP^y(Y_t=0))$ is $\downarrow$, so that also $\PP^\infty(Y_t=0)=0$, and thus  $Y$ has $0$ non-sticky. The ``in which case'' statement 
follows on noting the density of the linear span of $\{\ee^y\vert_{[0,\infty)}:y\in (0,\infty)\}$ in $(\mathsf{C}_0([0,\infty)),\Vert\cdot\Vert_\infty)$, the restriction of $p$ to $[0,\infty)$ being in fact a (sub)Markovian family of transition kernels, since $X$ has $\infty$ absorbing. 

\ref{propositon:Feller-property:ii}. Similar to \ref{propositon:Feller-property:i} except that we consider $\{(1-\ee^y)\vert_{(0,\infty]}:y\in (0,\infty)\}$ in lieu of $\{\ee^y\vert_{[0,\infty)}:y\in (0,\infty)\}$, and the $\downarrow$ character of $([0,\infty]\ni y\mapsto \PP^y(Y_t<\infty))$ in lieu of that of $([0,\infty]\ni y\mapsto \PP^y(Y_t=0))$.

\ref{propositon:Feller-property:iv}. For the ``in which case'' part observe that  the linear span of $\{(\ee^{y_1}-\ee^{y_2})\vert_{(0,\infty)}:(y_1,y_2)\in (0,\infty)^2\}$ is an algebra of functions separating the points of $(0,\infty)$ and vanishing nowhere on $(0,\infty)$, therefore is dense in $(\mathsf{C}_0((0,\infty)),\Vert\cdot\Vert_\infty)$ by Stone-Weierstrass. The remainder of the straightforward argument employing \eqref{LD-main} is left to the reader.

\ref{propositon:Feller-property:iii}.  Under $0^+\cdot\infty$, $\infty^-\cdot 0$, $e^{-xY_t}$ is continuous in $x\in [0,\infty]$ for all $y\in (0,\infty)$ and $t\in [0,\infty)$.  We conclude as in \ref{propositon:Feller-property:i}.
\end{proof}

The next proposition details invariant sets  for the semigroup and cores for the generator of a process in Laplace duality.
Set:
\begin{align*}
\mathsf{cm}_{[0,\infty)}^\pm&:=\left\{\int \ee ^y\nu(\dd y):\nu\text{ a finite signed measure on $\BB_{(0,\infty)}$}\right\}\subset \mathsf{C}_0([0,\infty)),\\
\mathsf{cm}_{(0,\infty]}^\pm&:=\left\{\int (1-\ee^y)\nu(\dd y):\nu\text{ a finite signed measure on $\BB_{(0,\infty)}$}\right\}\subset \mathsf{C}_0((0,\infty]),\\
\mathsf{cm}_{(0,\infty)}^\pm&:=\left\{\int (\ee^{y_1}-\ee^{y_2})\gamma(\dd y):\gamma\text{ a finite signed measure on $\BB_{(0,\infty)^2}$}\right\}\subset \mathsf{C}_0((0,\infty)), \text{ and }\\
\mathsf{cm}_{[0,\infty]}^\pm&:=\left\{c+\int \ee^y\nu(\dd y): \ c\in \mathbb{R}, \ \nu\text{ a finite signed measure on $\BB_{(0,\infty)}$}\right\}\subset \mathsf{C}([0,\infty]).
\end{align*}
Notice that, under the conventions $0^+\cdot \infty$, $\infty^{-}\cdot 0$, we may identify
\begin{equation}\label{eq:identify-cm[0,infty]}
\mathsf{cm}_{[0,\infty]}^\pm=\left\{\int \ee^y\nu(\dd y):\nu\text{ a finite signed measure on $\BB_{[0,\infty]}$}\right\}.
\end{equation}

\begin{proposition}\label{prop:cores}
Let $X$ be a positive Markov process. We denote  by $P$ the possibly submarkovian semigroup and by $\XX_{\mathrm{s}}$ the strong generator  of each of the  Feller processes referred to in \ref{prop:cores:i}-\ref{prop:cores:iv} below, i.e. of $X$, but with the indicated restricted state-space (in each case the assumed duality ensures the absorptivity property that renders the restriction meaningful, cf. Proposition~\ref{propositon:Feller-property}). 
\begin{enumerate}[(i)]
    \item\label{prop:cores:i} If $X$ admits a $(0^+\cdot\infty,\infty\cdot 0^+)$-Laplace dual and is $\mathsf{C}_0([0,\infty))$-Feller on restriction to $[0,\infty)$, then  $P_t\mathsf{cm}_{[0,\infty)}^\pm\subset \mathsf{cm}_{[0,\infty)}^\pm$ for all $t\in [0,\infty)$. If further $\mathsf{cm}_{[0,\infty)}^\pm\subset D_{\XX_{\mathrm{s}}}$, then $\mathsf{cm}_{[0,\infty)}^\pm$ is a core for $\XX_{\mathrm{s}}$.
    \item \label{prop:cores:ii} If $X$ 
    admits a $(0\cdot \infty^-,\infty^-\cdot 0)$-Laplace dual and is $\mathsf{C}_0((0,\infty])$-Feller on restriction to $(0,\infty]$, then $P_t\mathsf{cm}_{(0,\infty]}^\pm\subset \mathsf{cm}_{(0,\infty]}^\pm$ for all $t\in [0,\infty)$. If further $\mathsf{cm}_{(0,\infty]}^\pm\subset D_{\XX_{\mathrm{s}}}$, then $\mathsf{cm}_{(0,\infty]}^\pm$ is a core for $\XX_{\mathrm{s}}$.
    \item \label{prop:cores:iii} If $X$ admits  a $(0\cdot \infty^-,\infty\cdot 0^+)$-Laplace dual and is $\mathsf{C}_0((0,\infty))$-Feller on restriction to $(0,\infty)$, then $P_t\mathsf{cm}_{(0,\infty)}^\pm\subset \mathsf{cm}_{(0,\infty)}^\pm$  for all $t\in [0,\infty)$. If further $\mathsf{cm}_{(0,\infty)}^\pm\subset D_{\XX_{\mathrm{s}}}$, then $\mathsf{cm}_{(0,\infty)}^\pm$ is a core for $\XX_{\mathrm{s}}$.
        \item \label{prop:cores:iv} If $X$ 
        admits a $(0^+\cdot \infty,\infty^-\cdot 0)$-Laplace dual, then (it is  $\mathsf{C}([0,\infty])$-Feller by Proposition~\ref{propositon:Feller-property}\ref{propositon:Feller-property:iii} and) $P_t\mathsf{cm}^{\pm}_{[0,\infty]}\subset \mathsf{cm}^{\pm}_{[0,\infty]}$  for all $t\in [0,\infty)$. If further $ \mathsf{cm}^{\pm}_{[0,\infty]}\subset D_{\XX_{\mathrm{s}}}$, then $\mathsf{cm}_{[0,\infty]}^\pm$ is a core for $\XX_{\mathrm{s}}$.
\end{enumerate}
\end{proposition}
\begin{proof}
In each item, the core property of the invariant set follows from the Feller property assumed on the process, the fact that it is contained in the domain  of the strong generator, Stone-Weierstrass and a well-known sufficient condition for cores \cite[Proposition~19.9]{Kallenberg}. We focus therefore on the invariance property, providing full details only for the first case and giving the gist of the argument for the remaining ones. Let $t\in[0,\infty)$ and let $\nu$ be a finite signed measure on $\BB_{(0,\infty)}$. In each of the cases call $Y$ the process for which $X$ is in Laplace duality with $Y$ of the type specified in that case.

\ref{prop:cores:i}.  Work under $0^+\cdot\infty,\infty\cdot 0^+$. Set $f_\nu:=\int \ee^y\nu(\dd y)$. By Tonelli, duality and $0^+\cdot\infty$, we have, for all $ x\in [0,\infty)$,\small
\begin{align*}
    P_tf_\nu(x)&=\int\EE_x[e^{-X_ty}]\nu(\dd y)=\int\EE^y[e^{-xY_t}]\nu(\dd y)=\int e^{-xv}\int \mathbb{P}^y(Y_t\in \dd v,Y_t<\infty)\nu(\dd y).
\end{align*}\normalsize
By assumption $X$ is Feller and so Proposition~\ref{propositon:Feller-property}\ref{propositon:Feller-property:i} ensures that $0$ is non-sticky for $Y$, whence  the finite signed measure $\int \mathbb{P}^y(Y_t\in \cdot,Y_t<\infty)\nu(\dd y)$ is carried by $(0,\infty)$. It follows that $P_tf_\nu\in \mathsf{cm}_{[0,\infty)}^\pm$, and we conclude by Stone-Weierstrass.

\ref{prop:cores:ii}. By Proposition~\ref{propositon:Feller-property}\ref{propositon:Feller-property:ii}, $\infty$ is non-sticky for $Y$, whence  the finite signed measure $\int \mathbb{P}^y(Y_t\in \cdot,Y_t>0)\nu(\dd y)$ is carried by $(0,\infty)$.

\ref{prop:cores:iii}. For a finite signed measure $\gamma$ on $\BB_{(0,\infty)^2}$ and $x\in (0, \infty)$ we may write 
\begin{align*}
&\int e^{-x v}\int \PP^{y_1}(Y_t\in \dd v)\gamma(\dd y)-\int e^{-x v}\int \PP^{y_2}(Y_t\in \dd v)\gamma(\dd y)\\
&\qquad\qquad\qquad=\int e^{-xv_1}-e^{-xv_2}\left[\int \PP^{y_1}(Y_t\in \dd v_1)\PP^{y_2}(Y_t\in \dd v_2)\gamma(\dd y)\right].
\end{align*}
 By Proposition~\ref{propositon:Feller-property}\ref{propositon:Feller-property:iv}, $0$ and $\infty$ are both non-sticky for $Y$, whence  the finite signed measure $\int ({Y_t}_\star \PP^{y_1})\times ({Y_t}_\star \PP^{y_2})\gamma(\dd y)$ is carried by $(0,\infty)^2$.

\ref{prop:cores:iv}. Work under $0^+\cdot \infty,\infty^-\cdot 0$. For a finite signed measure $\nu$ on $[0,\infty]$, by Laplace duality and Tonelli, $P_t\int \ee^y\nu(\dd y)=\int  \ee^v\int \PP^y(Y_t\in \dd v)\nu(\dd y)$. Recalling \eqref{eq:identify-cm[0,infty]}  we find that $P_t\mathsf{cm}^{\pm}_{[0,\infty]}\subset \mathsf{cm}^{\pm}_{[0,\infty]}$. 
\end{proof}

The difficulty in practice is checking that the linear subspaces $\mathsf{cm}$ delineated above belong to the domain of the strong generator. Though, granted the Feller properties, we know from \eqref{well-known-generators} that it sufficies to check that they belong to the domain of the  pointwise generator and that the result of the action of the  pointwise generator on them is contained in the relevant  $\mathsf{C}_0$-space. We illustrate this in
\begin{corollary}\label{prop:cmindomain}  Let the positive Markov process $X$ (semigroup $P$, generator $\XX$) be $\mathsf{C}([0,\infty])$-Feller and let it admit a $(0^+\cdot \infty,\infty^-\cdot 0)$-Laplace dual. If $\{\ee^y: y\in (0,\infty)\}\subset D_{\XX}$, 
$$\XX \ee^y\in \mathsf{C}([0,\infty]) \text{ for all $y\in (0,\infty)$ and }\sup_{(x,y)\in [0,\infty]\times (0,\infty)}\vert\XX \ee^y(x)\vert<\infty,$$
then $\mathsf{cm}^{\pm}_{[0,\infty]}$ is a core for $\XX_{\mathrm{s}}$ (in particular $\ee^y\in D_{\XX_{\mathrm{s}}}$ for all $y\in (0,\infty)$).
\end{corollary}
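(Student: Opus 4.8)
The plan is to deduce everything from Proposition~\ref{prop:cores}\ref{prop:cores:iv}, whose hypotheses are: (a) $X$ admits a $(0^+\cdot\infty,\infty^-\cdot 0)$-Laplace dual, which is given; and (b) $\mathsf{cm}^\pm_{[0,\infty]}\subset D_{\XX_{\mathrm s}}$. So the whole task reduces to verifying (b), i.e. that each element of $\mathsf{cm}^\pm_{[0,\infty]}$ lies in the domain of the \emph{strong} generator of the $\mathsf{C}([0,\infty])$-Feller process $X$. Recall from Subsection~\ref{subsec:Feller} that for a Feller process the strong generator is exactly the pointwise generator restricted to $\mathsf C([0,\infty])$, i.e. $\XX_{\mathrm s}=\XX\cap(\mathsf C([0,\infty])\times\mathsf C([0,\infty]))$; hence it suffices to show that every $f\in\mathsf{cm}^\pm_{[0,\infty]}$ satisfies $f\in D_\XX$ and $\XX f\in\mathsf C([0,\infty])$.

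First I would treat the ``atoms'' $\ee^y$, $y\in(0,\infty)$, and the constant $1$. By hypothesis $\ee^y\in D_\XX$ with $\XX\ee^y\in\mathsf C([0,\infty])$; and trivially $1\in D_\XX$ with $\XX 1=0\in\mathsf C([0,\infty])$ (since $P_t1=1$ as $X$ is conservative). Since $D_\XX$ is a linear space and $\XX$ is linear, every \emph{finite} linear combination $c+\sum_i c_i\ee^{y_i}$ lies in $D_\XX$ with image in $\mathsf C([0,\infty])$. The point of $\mathsf{cm}^\pm_{[0,\infty]}$, though, is that it also contains integrals $c+\int\ee^y\nu(\dd y)$ over arbitrary finite signed measures $\nu$ on $\BB_{(0,\infty)}$, so the remaining work is to pass from finite sums to such integral mixtures.

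For a general $f=c+\int\ee^y\nu(\dd y)\in\mathsf{cm}^\pm_{[0,\infty]}$, the natural candidate for $\XX f$ is $g:=\int\XX\ee^y(\cdot)\,\nu(\dd y)$ (the $c$-term contributes nothing). This integral is well-defined and finite because of the uniform bound $M:=\sup_{(x,y)\in[0,\infty]\times(0,\infty)}|\XX\ee^y(x)|<\infty$ assumed in the statement, so $|g|\le M\,|\nu|((0,\infty))$ pointwise; moreover $g\in\mathsf C([0,\infty])$ by dominated convergence, using that $y\mapsto\XX\ee^y(x)$ is (jointly) measurable and that $x\mapsto\XX\ee^y(x)$ is continuous for each $y$ together with the uniform dominating constant $M$. (Joint measurability of $(x,y)\mapsto\XX\ee^y(x)$ follows since it is a pointwise limit in $t\downarrow0$ of $(P_t\ee^y(x)-\ee^y(x))/t$, each of which is jointly measurable; one takes a countable sequence $t_n\downarrow0$.) It then remains to check $f\in D_\XX$ with $\XX f=g$, i.e. that $(P_tf-f)/t\to g$ pointwise on $[0,\infty]$ as $t\downarrow0$. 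Fix $x\in[0,\infty]$. By Tonelli/Fubini (valid since $\nu$ is finite and $|P_t\ee^y|\le1$), $P_tf(x)=c+\int P_t\ee^y(x)\,\nu(\dd y)$, hence
\begin{equation*}
\frac{P_tf(x)-f(x)}{t}=\int\frac{P_t\ee^y(x)-\ee^y(x)}{t}\,\nu(\dd y).
\end{equation*}
For each $y$ the integrand converges to $\XX\ee^y(x)$ as $t\downarrow0$; I need a dominating function in $y$, uniform in small $t$, to invoke dominated convergence and conclude the limit equals $g(x)$.

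The main obstacle is exactly this last dominated-convergence step: one must bound $|P_t\ee^y(x)-\ee^y(x)|/t$ by a $\nu$-integrable function of $y$, uniformly for $t\in(0,t_0]$. The uniform bound $M$ on $\XX\ee^y=\lim_{t\downarrow0}(P_t\ee^y-\ee^y)/t$ controls the limit but not the pre-limit quotients a priori; the standard remedy is the integral representation for Feller semigroups, namely $P_t\ee^y-\ee^y=\int_0^t P_s\XX_{\mathrm s}\ee^y\,\dd s$. This identity holds once we know $\ee^y\in D_{\XX_{\mathrm s}}$, which — since $X$ is $\mathsf C([0,\infty])$-Feller, $\ee^y\in\mathsf C([0,\infty])\cap D_\XX$ and $\XX\ee^y\in\mathsf C([0,\infty])$ — is precisely the statement that the pointwise and strong generators agree on $\mathsf C([0,\infty])$, cited in Subsection~\ref{subsec:Feller}. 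Granted that, $\|(P_t\ee^y-\ee^y)/t\|_\infty\le\sup_{s\le t}\|P_s\XX_{\mathrm s}\ee^y\|_\infty\le\|\XX\ee^y\|_\infty\le M$ (contractivity of $P_s$ on $\mathsf C([0,\infty])$), giving the uniform-in-$(t,x)$ domination by the constant $M$, which is $|\nu|$-integrable since $\nu$ is finite. Dominated convergence then yields $\XX f=g\in\mathsf C([0,\infty])$ for every $f\in\mathsf{cm}^\pm_{[0,\infty]}$, i.e. $\mathsf{cm}^\pm_{[0,\infty]}\subset D_{\XX_{\mathrm s}}$; in particular $\ee^y\in D_{\XX_{\mathrm s}}$ for all $y\in(0,\infty)$ (take $\nu=\delta_y$, $c=0$). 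Proposition~\ref{prop:cores}\ref{prop:cores:iv} now applies and gives that $\mathsf{cm}^\pm_{[0,\infty]}$ is a core for $\XX_{\mathrm s}$, completing the proof.
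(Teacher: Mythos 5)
Your proof is correct and follows essentially the same route as the paper: first upgrade $\ee^y$ to $D_{\XX_{\mathrm s}}$ via the Feller coincidence of pointwise and strong generators, then dominate the difference quotients $\frac{1}{t}(P_t\ee^y-\ee^y)$ uniformly by the constant $M$ and pass to the limit under the integral against $\nu$ by bounded convergence, concluding with Proposition~\ref{prop:cores}\ref{prop:cores:iv}. The only cosmetic difference is that you obtain the uniform bound from the analytic identity $P_t\ee^y-\ee^y=\int_0^tP_s\XX_{\mathrm s}\ee^y\,\dd s$, whereas the paper writes the same identity probabilistically via the Dynkin martingale $\frac{1}{t}\EE_x[\int_0^t\XX\ee^y(X_s)\dd s]$.
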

\begin{proof}
By the Feller property, the fact that $\ee^y\in D_{\XX}$ and the assumption $\XX \ee^y\in \mathsf{C}([0,\infty]) $, we see that $\ee^y\in D_{\XX_{\mathrm{s}}}$ for all $y\in (0,\infty)$. Therefore, from the Dynkin martingales \eqref{page-dynkin-mtgs} for the  equality and by bounded convergence for the limit,
\begin{equation}\label{bplimit}\frac{1}{t}\left(P_t\ee^y(x)-\ee^y(x)\right)=\frac{1}{t}\mathbb{E}_x\left[\int_{0}^{t}\XX \ee^y(X_s)\dd s\right]\underset{t\downarrow 0}{\longrightarrow} \XX\ee^y(x),\quad x\in [0,\infty],\, y\in (0,\infty).
\end{equation}
Let $f\in \mathsf{cm}^{\pm}_{[0,\infty]}$. By definition of $\mathsf{cm}^{\pm}_{[0,\infty]}$ and Tonelli  there is a finite signed measure $\nu$ on $\BB_{(0,\infty)}$ such that, for all $x\in [0,\infty]$,
\begin{align}\label{prelimitfinD}
\frac{1}{t}(P_tf(x)-f(x))&=\int \frac{1}{t}(P_t\ee^y(x)-\ee^y(x))\nu(\dd y).
\end{align}
Since the limit in \eqref{bplimit} is pointwise and bounded uniformly with respect to the variable $y$, by bounded convergence the quantity \eqref{prelimitfinD} converges  towards
$\int\XX \ee^y(x)\nu(\dd y)$ as $t\downarrow 0$, for all $x\in [0,\infty]$. Furthermore, $\big([0,\infty]\ni x\mapsto \int \XX \ee^y(x)\nu(\dd y)\big)$ is bounded and continuous (by bounded convergence again). This ensures that $\mathsf{cm}^{\pm}_{[0,\infty]}\subset D_{\XX_{\mathrm{s}}}$.  The fact that $\mathsf{cm}^{\pm}_{[0,\infty]}$ is a core for $\XX_{\mathrm{s}}$ follows now from Proposition \ref{prop:cores}\ref{prop:cores:iv}.
\end{proof}

Our next result  links excessive (resp. invariant) measures and excessive (resp. invariant) functions of processes in Laplace duality. Recall that for a Markov process $X$ with values in a measurable space $(E,\mathcal{E})$ and for $\theta\in [0,\infty)$ we call $\theta$-excessive (resp. $\theta$-invariant) for $X$ any  $\sigma$-finite measure $\mu$ on $(E,\mathcal{E})$ such that for all $t\in [0,\infty)$, $\mu P_t\leq e^{\theta t}\mu$ (resp. $\mu P_t= e^{\theta t}\mu$), i.e. for all measurable $\varphi:E\to [0,\infty]$,  $\int P_t\varphi(x)\mu(\dd x)\leq e^{\theta t}\int\varphi(x)\mu(\dd x)$ (resp. $\int P_t\varphi(x)\mu(\dd x)=e^{\theta t} \int\varphi(x)\mu(\dd x)$). 
In a similar fashion, a $\theta$-excessive (resp. $\theta$-invariant) function  for $X$ is any measurable map $f:E\to [0,\infty]$ such that for all $t\in [0,\infty)$ we have $P_tf\leq e^{\theta t}f$ (resp. $P_tf=e^{\theta t}f$).  Excessive (resp. invariant) functions give rise to  supermartingales (resp. martingales), for  if $f$ is $\theta$-excessive (resp. $\theta$-invariant), then for $s\leq t$ from $[0,\infty)$, by the Markov property, $\EE_x[e^{-\theta t}f(X_t)\vert \sigma(X_v:v\in [0,s])]=e^{-\theta t}(P_{t-s}f)(X_s)\leq e^{-\theta s}f(X_s)$ (resp. with equality) a.s.-$\PP_x$ for all $x\in E$, entailing that $(e^{-\theta t}f(X_t))_{t\in [0,\infty)}$ is a supermartingale (resp. martingale)\footnote{We allow it to take on the value $\infty$ and/or to  be non-integrable.\label{foonote:allow}}. They therefore lend themselves well in applications, for instance in solving first-passage problems via optional sampling (see e.g. \cite[Theorem~2.8]{foucartvidmar}) or in studying  asymptotic behaviour through convergence theorems (see e.g. \cite{zbMATH07544388}).
\begin{proposition}\label{thm:excessivecm} Let $X$ be in $(\star_1,\star_2)$-Laplace duality  with $Y$ and $\theta\in [0,\infty)$. If $\mu$ is a  $\theta$-excessive (resp. $\theta$-invariant) measure for $Y$, then the functions $f^\downarrow,f^\uparrow$ defined on $[0,\infty]$ and given by (with $\star_1$, $\star_2$ in force)
\[ f^\downarrow(x):=\int e^{-xy} \mu(\dd y) \text{ and } f^\uparrow(x):=\int(1-e^{-x y}) \mu(\dd y),\quad x\in [0,\infty],\]
are respectively $\downarrow$ and $\uparrow$, $\theta$-excessive (resp. $\theta$-invariant) functions for $X$.
\end{proposition}
For $(0^+\cdot \infty, \infty^{-}\cdot 0)$-Laplace duals,  notice that $f^\uparrow$ does not depend on the value of $\mu(\{0\})$, while $f^\downarrow$ does not depend on $\mu(\{\infty\})$. This dovetails with the fact that in this case the boundaries  $0$ and $\infty$ are absorbing for $Y$  so that the atoms of the measure $\mu$ at $0$ and $\infty$ have no consequence for whether or not $\mu$ is $\theta$-excessive [resp. $\theta$-invariant when $\theta=0$] for $Y$ (automatically $(\mu Q_t)(\{0\})=\mu(\{0\})\leq e^{\theta t}\mu(\{0\})$ and $(\mu Q_t)(\{\infty\})=\mu(\{\infty\})\leq e^{\theta t}\mu(\{\infty\})$ [resp. with equalities when $\theta=0$], while $(\mu Q_t)\vert_{(0,\infty)}=(\mu(\cdot \cap (0,\infty))Q_t)\vert_{(0,\infty)}$ for all $t\in[0,\infty)$). We could make similar comments for the other types of dualities.
    
    A result in the same vein is known for processes in Siegmund duality, see Foucart and M\"ohle \cite[Theorem 4.1]{zbMATH07544388}. We refer also to Cox and R\"osler \cite[Theorem~1]{MR724061} for the link between exit and entrance laws of processes in duality. 
\begin{proof}[Proof of Proposition~\ref{thm:excessivecm}]
 Let $x\in [0,\infty]$ and $t\in [ 0,\infty)$. By Tonelli, the Laplace duality relationship and excessivity (resp. invariance) of  $\mu$ we have
\begin{align*}
    \mathbb{E}_x[f^\uparrow(X_t)]&=\int\mathbb{E}_x[1-e^{-X_t y}] \mu(\dd y)=\int\mathbb{E}^y[1-e^{-x Y_t}] \mu(\dd y)\\
&\leq e^{\theta t}\int (1-e^{-xy}) \mu(\dd y)=e^{\theta t}f^\uparrow(x)
    \end{align*}
  (resp. with equality in the invariant case).  The proof for $f^\downarrow$ is similar.
\end{proof}

We close this (sub)section by putting in evidence a correspondence between the limiting laws of two processes in Laplace duality. 
See e.g. \cite[Section 5]{zbMATH01396203} for a treatment in the context of population genetics, and  \cite{zbMATH03573037} for an early contribution in this direction.
\begin{proposition}\label{prop:LToflimitingdistribution}
    Let $X$ be in Laplace duality (of whichever kind) with $Y$. The following are equivalent: 
    \begin{enumerate}[(a)]
    \item\label{prop:LToflimitingdistribution.a} the weak limit $\nu_x:=\lim_{t\to \infty}{X_t}_\star\PP_x $ exists (as a probability on $[0,\infty]$) and does not depend on $x\in (0,\infty)$;
    \item\label{prop:LToflimitingdistribution.b} for all $y\in (0,\infty)$ the weak limit $\gamma^y:=\lim_{t\to\infty}{Y_t}_\star\PP^y$ exists (as a probability on $[0,\infty]$) and is  carried by $\{0,\infty\}$.
    \end{enumerate}
When \ref{prop:LToflimitingdistribution.a}-\ref{prop:LToflimitingdistribution.b} are met, then the Laplace transform of the common $\nu:=\nu_x$  (any $x\in (0,\infty)$) is given by
    \begin{equation*}\widehat{\nu}(y)=\gamma^y(\{0\}),\quad y\in (0,\infty).
    \end{equation*}
\end{proposition}
Thus, given \ref{prop:LToflimitingdistribution.a}-\ref{prop:LToflimitingdistribution.b}, the map  $((0,\infty)\ni y\mapsto \gamma^y(\{0\}))$ is automatically completely monotone and if further $\widehat{\nu}$ degenerates to the constant function $0$ or $1$ then the limiting law $\nu$ is equal to $\delta_\infty$ or $\delta_0$, respectively.
\begin{proof}[Proof of Proposition~\ref{prop:LToflimitingdistribution}]
Since $[0,\infty]$ is compact, Prokhorov's theorem and the fact that Laplace transforms determine probabilities on $\BB_{[0,\infty]}$ -- or, more directly, the continuity theorem for Laplace transforms on the halfline \cite[p.~431, Section~XIII.1, Theorem~2]{feller} -- coupled with  the duality relation \eqref{LD-main},  ensure that $\nu_x$ exists for all $x\in (0,\infty)$ if and only if $\gamma^y$ exists for all $y\in (0,\infty)$, in which case $$\widehat {\nu_x}(y)=\widehat{\gamma^y}(x),\quad \{x,y\}\subset (0,\infty).$$ This readily entails the desired conclusions.
\end{proof}

\section{Laplace duality: generators}\label{sec:LDgen}

\subsection{Courr\`ege form and Laplace symbol}\label{sec:courrage}
We describe here the form of the generator of a positive Markov process for which the only a priori knowledge is that its domain includes the exponential functions. 
\begin{definition}\label{definition:laplace-symbol}
\begin{enumerate}[(i)]
\item\label{definition:laplace-symbol:i} Let $x\in [0,\infty)$. Given an $x$-L\'evy quadruplet $l=(\nu,a,b,c)$, we introduce the Laplace symbol $\psi_l:(0,\infty)\to \mathbb{R}$ associated to  $l$ as $$\psi_l(y):=\int  \left(e^{-uy}-1+uy\mathbbm{1}_{[-1,1]}(u)\right)\nu(\dd u)+ay^2-by-c,\quad y\in (0,\infty).$$
\item\label{definition:laplace-symbol:ii} Given a L\'evy triplet $l=(\nu,d,c)$, we introduce the Laplace symbol $\psi_l:(0,\infty)\to \mathbb{R}$ associated to  $l$ as $$\psi_l(y):=\int  \left(e^{-uy}-1\right)\nu(\dd u)-dy-c,\quad y\in (0,\infty).$$
\item\label{definition:laplace-symbol:iii} Given a pair $l=(\nu,k)$ consisting of a measure $\nu$ on  $\BB_{(0,\infty)}$ with finite Laplace transform and $k\in [0,\infty)$ (henceforth, a L\'evy pair), we introduce the Laplace symbol $\psi_l:(0,\infty)\to \mathbb{R}$ associated to  $l$ as $$\psi_l(y):=\int  e^{-uy}\nu(\dd u)+k,\quad y\in (0,\infty).$$
\end{enumerate}
A family $l=(l_x)_{x\in [0,\infty]}$ is said to be \emph{L\'evy} if it consists of an $x$-L\'evy quadruplet  for each $x\in (0,\infty)$, a L\'evy triplet $l_0$, a L\'evy pair $l_\infty$, and has $(0,\infty)\ni x\mapsto l_x$  measurable. For such a family we introduce $$\psi_{l}(x,y):=
\psi_{l_x}(y),\quad (x,y)\in [0,\infty]\times (0,\infty).$$
A map of the form $\psi_{l}$ for some L\'evy family $l$ is called a Laplace symbol.
\end{definition}
The $\psi_l$ (resp. the negatives $-\psi_l$) of \ref{definition:laplace-symbol:i} (resp. \ref{definition:laplace-symbol:ii}) constitute precisely $\pkLp_x\vert_{(0,\infty)}$ (resp. $\pksubo\vert_{(0,\infty)}$), and we may recall that $\pkspLp=\pkLp_0$ (see Table~\ref{table.3}).  In \ref{definition:laplace-symbol:i}-\ref{definition:laplace-symbol:ii} (resp. \ref{definition:laplace-symbol:iii}) we could have 
incorporated $c$ (resp. $k$) into $\nu$ as a mass at infinity (resp. zero), but we prefer to keep it separate. The class of Laplace symbols of \ref{definition:laplace-symbol:i}, \ref{definition:laplace-symbol:ii}  and \ref{definition:laplace-symbol:iii} is each (on its own) a convex cone closed for pointwise convergence, which follows respectively from Proposition~\ref{proposition:closure-under-limits}, from the remark concerning $\pksubo\vert_{(0,\infty)}$ just after said proposition and from the fact that the family of completely monotone functions is also closed for pointwise limits \cite[Corollary~1.6]{bernstein}.
\begin{theorem}[Courrège form]\label{thm:courrege}
Assume $X$ is a positive Markov process  and that $\{\ee^y:y\in (0,\infty)\}\subset D_\XX$. Then there exists  a unique L\'evy family $l$
such that 
\begin{equation}\label{eq:courrege}
(\XX\ee^y)(x)=
\begin{cases}
\psi_{l}(x,y)e^{-xy},&x\in [0,\infty),\\
\psi_l(x,y),& x=\infty,
\end{cases}\quad y\in (0,\infty).
\end{equation}
\end{theorem}
It is remarkable that  \eqref{eq:courrege} is automatically continuous in $y$.  
\begin{definition}
Under the assumptions, and in the notation of Theorem~\ref{thm:courrege}, we introduce $\psi_X:=\psi_{l}$, the Laplace symbol of $X$.
\end{definition}
The proof of Theorem~\ref{thm:courrege} is relegated  to Appendix~\ref{appendix}, it stands on its own, but several comments are in order.  Setting  
\begin{equation}\label{the-exponential-domain}
\mathcal{D}:=\mathrm{lin}(\{\ee^y:y\in (0,\infty)\})\subset \mathsf{C}([0,\infty])
 \end{equation} (a piece of notation to remain in force throughout the remainder of this paper), then, writing:
\begin{align*}
l_0&=:(\nu_0,d_0,c_0);\,\,
l_x=:(\nu_x,a_x,b_x,c_x),\ x\in (0,\infty);\,\,
l_\infty=:(\nu_\infty,k_\infty);
\end{align*}
 we may expand \eqref{eq:courrege} as follows. For $f\in \mathcal{D}$:
\begin{equation}
(\XX f)(0)=\int\left(f(u)-f(0)\right)\nu_0(\dd u)+d_0f'(0)-c_0f(0);\label{laplace-representation-at-zero}\end{equation}
\begin{equation} (\XX f)(x)=\int\left(f(x+u)-f(x)-uf'(x)\mathbbm{1}_{[-1,1]}(u)\right)\nu_x(\dd u)+a_xf''(x)+b_xf'(x)-c_xf(x)\label{compare-courrage}\end{equation} for $x\in (0,\infty)$; and 
\begin{equation}(\XX f)(\infty)=\int (f(u)-f(\infty))\nu_\infty(\dd u)+k_\infty \big(f(0)-f(\infty)\big).\label{courrage-inf}\end{equation}

Part \eqref{compare-courrage} corresponds most directly to the so-called Courr\`ege form that is usually established for a real-valued (or Euclidean space-valued) Feller process under the assumption that the set of infinitely differentiable functions with compact support, $C^\infty_c(\mathbb{R})$, is included in the domain of its strong generator, see e.g. B\"ottcher et al. \cite[Theorem~2.21]{zbMATH06256582} and the references therein. The usual statement gives the action of the generator on the set of functions $C^{\infty}_c(\mathbb{R})$ or on the Schwartz space $\mathcal{S}(\mathbb{R})$.  Our approach to the proof of Theorem~\ref{thm:courrege} will use up only elementary arguments.

    An example of processes with a non-degenerate return law from $\infty$, i.e. $l_\infty\neq 0$, can be found in Pakes~\cite{zbMATH00496250} who studied  CB processes jumping back into $(0,\infty)$ after explosion. In our subsequent study of Laplace duality we shall however only encounter  cases for which  $\nu_\infty=0=k_\infty$.
    
    Starting from a sufficiently well-behaved $[0,\infty]$-valued c\`adl\`ag Markov process $X$ with Laplace symbol $\psi_X$, the quantity $\nu_x(A)$, for a given $x \in [0,\infty)$ and a Borel set $A\in \BB_{[-x,\infty)\backslash\{0\}}$, encodes the jump intensity of $X$ from the state $x$ to the set $x + A$. When $x \in (0,\infty)$, an $L^2$-compensation of small jumps is possible, and this is analytically represented by the first-order correction term $-u f'(x)$ in \eqref{compare-courrage}. We refer the reader, for instance, to Benveniste and Jacod~\cite{zbMATH03418463} for the theory of L\'evy systems, which elucidates the relationship between the measures $\nu_x$ and the jump behavior of the process. However, we shall not rely on this theory in the sequel. The structure of jumps will anyway be transparent when working with examples via the help of stochastic equations in Section~\ref{sec:examples}.

 The next definition is modeled directly on Theorem~\ref{thm:courrege} (cf. also \eqref{laplace-representation-at-zero}-\eqref{courrage-inf}): therein, in the terminology to be introduced presently, $\XX\vert_\mathcal{D}$ is the pregenerator associated to $\psi_X$.

\begin{definition}\label{def:pregen-1}
Let $\psi$ be a Laplace symbol. Denote by $l$ the unique L\'evy family for which $\psi=\psi_l$, writing $l_0=:(\nu_0,d_0,x_0)$, $l_\infty=:(\nu_\infty,k_\infty)$ and $l_x=:(\nu_x,a_x,b_x,c_x)$ for $x\in (0,\infty)$.  We introduce the linear operator $\XX_\psi:=(\mathcal{D}\ni f\mapsto \XX_\psi f)$ by insisting that for $f\in \DD$ the value $(\XX_\psi f)(x)$ is given at $x=0$, $x\in (0,\infty)$ and $x=\infty$ respectively by the r.h.s. of \eqref{laplace-representation-at-zero}, \eqref{compare-courrage} and \eqref{courrage-inf}. We call $\XX_\psi$ the pregenerator associated to $\psi$ and we say that:
\begin{itemize}
\item  it is vanishing at infinity when $\nu_\infty=0=k_\infty$  (i.e. $\psi(\infty,\cdot)\equiv 0$); 
\item it is spectrally positive if it is vanishing at infinity and  $\nu_x$ is carried by $(0,\infty)$ for all $x\in (0,\infty)$; 
\item it is continuous at zero if  $\XX_\psi f$ is continuous at zero for all $f\in \DD$ (i.e. $\psi$ is continuous at zero in its first coordinate);
\item it has no killing when $c_x=0$ for all $x\in [0,\infty)$; 
\item it is pure killing when $\nu_x=0$ for all $x\in [0,\infty]$, $k_\infty=0$, $d_0=0$ and $a_x=0=b_x$ for all $x\in [0,\infty)$;
\item it is local if it has $\nu_x=0$ for all $x\in [0,\infty]$ and $k_\infty=0$. 
\end{itemize}
Similarly, we define the extended pregenerator $\XX_{e,\psi}$ associated to $\psi$ by specifying its domain $D_{\XX_{e,\psi}}$ to consist of those $f: [0,\infty]\to \mathbb{R}$ for which the r.h.s. of \eqref{laplace-representation-at-zero}-\eqref{courrage-inf}  are well-defined and finite and asking that for such $f$, $(\XX_{e,\psi} f)(x)$  is given at $x=0$, $x\in (0,\infty)$ and $x=\infty$ respectively by the r.h.s. of \eqref{laplace-representation-at-zero}, \eqref{compare-courrage} and \eqref{courrage-inf}.   A (resp. vanishing at infinity, spectrally positive, etc.) pregenerator means simply a (resp. vanishing at infinity, spectrally positive, etc.) pregenerator associated to some Laplace symbol.
\end{definition}
So, in the context of Definition~\ref{def:pregen-1}, cf. \eqref{eq:courrege}, 
\begin{equation}\label{eq:gen-and-symbol}
(\XX_{e,\psi}\ee^y)(x)=(\XX_\psi\ee^y)(x)=
\begin{cases}
\psi(x,y)e^{-xy},&x\in [0,\infty),\\
\psi(x,y),& x=\infty
\end{cases},\quad y\in (0,\infty).
\end{equation}
In particular, a given pregenerator is associated to a unique Laplace symbol. Hence, thanks to Proposition~\ref{lemma:laplace-symbol-unique}, one may shift freely between L\'evy families, L\'evy symbols and pregenerators (between $l$, $\psi_l$ and $\XX_{\psi_l}$, respectively), whichever is the more convenient. 

\begin{remark}\label{rmk:spec-pos-pregen}
For a spectrally positive pregenerator  $\XX$ we have that  $((0,\infty)\ni y\mapsto \frac{\XX_{\psi}\ee^y(x)}{\ee^y(x)})\in \pkspLp\vert_{(0,\infty)}$ for all $x\in (0,\infty)$.
\end{remark}

The significance of Theorem~\ref{thm:courrege} is in making sure that the action of every generator $\XX$ of a positive Markov process $X$, whose domain $D_\XX$ includes the exponentials $\DD$, is that of a pregenerator as just delineated in Definition~\ref{def:pregen-1}. It does not mean, however, at least not a priori, that any pregenerator $\XX^0$  is associated to a (unique) positive Markov process $X$ whose generator $\XX$ satisfies $\DD\subset D_\XX$ and $\XX\vert_\DD=\XX^0$. Hence the reason for calling them \emph{pre}generators. 

\subsection{Generators and Laplace symbols in duality}\label{subsec:geninLaplaceduality}
We turn to the study of generators in Laplace duality.
\begin{definition}\label{definition:gen-in-duality}
 Maps $\XX$ and $\YY$  that include in their domain  $\{\ee^y:y\in (0,\infty)\}=\{\ee_x:x\in (0,\infty)\}$ and whose outputs are real maps on $[0,\infty]$ are said to be in Laplace duality when 
 \begin{equation}\label{infini-ld}
 (\XX \ee^y)(x)=(\YY\ee_x)(y),\quad \{x,y\}\subset (0,\infty).
 \end{equation} Similarly, if  \begin{equation}\label{def:dualsymbol}
 \psi(x,y)=\hat\psi(y,x),\quad \{x,y\}\subset (0,\infty),\end{equation} then the Laplace symbols $\psi$ and $\hat\psi$ are said to be in duality.
\end{definition}
\emph{Nota bene}: unlike at the level of semigroups \eqref{LD-main}, in   \eqref{infini-ld}-\eqref{def:dualsymbol} we \emph{exclude} from consideration the boundaries $0$ and $\infty$. We take the stance that the behaviour on the full class of exponentials $\{\ee^y:y\in [0,\infty]\}$, respectively $\{\ee_x:x\in [0,\infty]\}$, of the generators $\XX$ and $\YY$ of positive Markov processes $X$ and $Y$  at these two points, which is contingent on the choice of the conventions for $0\cdot \infty$ and $\infty\cdot 0$, is best left outside of duality relations at the generator level, an early indication of which is the fact that in the Courr\`ege form of Theorem~\ref{thm:courrege} we are only able to handle the maps $\ee^y$, $y\in (0,\infty)$. In an informal sense the boundary behaviour of the processes is related to the properties  of the generators at $0$ and $\infty$, but this relationship is in general highly non-trivial and lies largely beyond the intended scope of this investigation.

Rather trivially, but significantly, generators of processes in Laplace duality are themselves in Laplace duality when their domains include the exponential maps:
\begin{theorem}\label{thm:generator-duality-semigroup}
If $X$ is in $(\star_1,\star_2)$-Laplace duality with $Y$ and if (under the conventions $ \star_1$, $\star_2$) for given $(x,y)\in [0,\infty]^2$ we have  $\ee^y\in D_\XX$ and $\ee_x\in D_\YY$, then $\XX\ee^y(x)=\YY\ee_x(y)$. Under the assumption $\{\ee_x:x\in (0,\infty)\}=\{\ee^y:y\in (0,\infty)\}\subset D_\XX\cap D_\YY$:  $\XX$ and $\YY$ are in Laplace duality if and only if  $\psi_X$ and $\psi_Y$  are; if  $X$ is in Laplace duality (of whichever kind) with $Y$, then $\XX$ and $\YY$ are in Laplace duality.
\end{theorem}
\begin{proof}
For the first claim, one has only to note that ($\because$ $\ee^y(x)=\ee_x(y)$, irrespective of which conventions are in force, and by the duality relation itself) $$\frac{(P_t\ee^y)(x)-\ee^y(x)}{t}=\frac{(Q_t\ee_x)(y)-\ee_x(y)}{t},\quad t\in (0,\infty),$$ and pass to the limit $t\downarrow 0$.  As for the second claim,  from Theorem~\ref{thm:courrege} we have that $(\XX \ee^y)(x)=\psi_X(x,y)e^{-xy}$ and $(\YY \ee_x)(y)=\psi_Y(y,x)e^{-xy}$ for $\{x,y\}\subset (0,\infty)$. The asserted equivalence follows now directly from the definitions \eqref{infini-ld}-\eqref{def:dualsymbol}. The final implication is just a specialization of the first claim.
\end{proof}
Conversely, we may start with a pair of pregenerators and attempt to establish the Laplace duality of the associated processes. This is a two step process. First, one wants to establish the Laplace duality of the pregenerators themselves. In the second step, which will be the object of pursuit of Section \ref{sec:fromdualsymboltosemigroups}, one hopes to be able to ``integrate out'' the infinitesimal relation \eqref{infini-ld} to the semigroup level \eqref{LD-main}. We turn our attention now to the first part of this programme: pregenerators in duality.

To start us off, let us endeavour to explore  the landscape of spectrally positive pregenerators in Laplace duality that are continuous at zero. Those that correspond to  time-changed spectrally positive L\'evy processes -- possibly killed -- and that are in Laplace duality with another spectrally positive pregenerator, continuous at zero, can be pinned down precisely. 
\begin{definition}\label{defLPsi}
    Let $\Psi\in \pkspLp$ with the representation \eqref{eq:LKpsi} in terms of the L\'evy quadruplet $(\nu,a,b,c)$. We write $\mathrm{L}^\Psi$ for the linear operator on $\mathcal{D}$ whose values are specified by $\mathrm{L}^\Psi \ee^y(x):=\Psi(y)e^{-xy}$ for $(x,y)\in [0,\infty]\times (0,\infty)$.  We also denote by the same symbol $\mathrm{L}^\Psi$ the operator acting on a real-valued function $f$ with domain $D_f$ a subinterval of $[0,\infty)$ unbounded above, according to \small
$$(\mathrm{L}^\Psi  f)(x):=\int\left(f(x+u)-f(x)-uf'(x)\mathbbm{1}_{(0,1]}(u)\right)\nu(\dd u)+af''(x)+bf'(x)-cf(x),\quad x\in D_f,$$ \normalsize provided the r.h.s. is well-defined and finite (such $f$ we take as constituting the domain $D_{\mathrm{L}^\Psi}$ of this $\mathrm{L}^\Psi$). 
\end{definition}
Context will determine unambiguously which of the two meanings attached to $\mathrm{L}^\Psi$ we intend, but anyway we have the identity $(\mathrm{L}^\Psi f)\vert_{[0,\infty)}=\mathrm{L}^\Psi (f\vert_{[0,\infty)})$ for all $f\in\DD$. Acting on $\DD$ the operator $\mathrm{L}^\Psi$ is (essentially) the strong  generator of a possibly killed spectrally positive L\'evy process with Laplace exponent $\Psi$, see e.g. Sato's book \cite[Theorem~31.5]{MR3185174}. 

For the next theorem, recall the various classes of L\'evy-Khintchine forms, e.g. by consulting Table~\ref{table.3}.
\begin{theorem}\label{theorem:tensor-form}
 Let $\mathcal{X}$ be a spectrally positive pregenerator, continuous at zero, satisfying
 \begin{equation}\label{volkonskii-form}
 \mathcal{X}f(x)=R(x)\mathrm{L}^{\Psi}f(x),\quad x\in [0,\infty),\,  f\in \DD,
 \end{equation}
  for some $\Psi\in \pkspLp$ and some measurable function $R:[0,\infty)\to [0,\infty)$, neither  identically zero. Then, for $\XX$ to be in Laplace duality with some spectrally positive pregenerator $\mathcal{Y}$ that is continuous at zero, it is necessary and sufficient that:
\begin{enumerate}[(i)]
\item\label{clement:i} in case $\Psi\geq 0$: $\Psi=\Sigma$ and $R=\hat{\Sigma}$ for some $\{\Sigma,\hat{\Sigma}\}\subset\spLpnot$ and 
$$\mathcal{Y}\ee_x=\Sigma\mathrm{L}^{\hat{\Sigma}}\ee_x,\quad x\in (0,\infty);$$
\item\label{clement:ii} in case $\Psi\leq 0$: $\Psi=-\Phi$ and
$R=\hat{\Phi}$ for some $\{\Phi,\hat{\Phi}\}\subset \pksubo$  and
$$\mathcal{Y}\ee_x=\Phi\mathrm{L}^{-\hat{\Phi}}\ee_x, \quad   x\in (0,\infty);$$
\item\label{clement:iii} otherwise: $R=\gamma\mathrm{id}_{[0,\infty)}$ for some $\gamma\in (0,\infty)$ and  $$\mathcal{Y}\ee_x=-\Psi\mathrm{L}^{-\gamma\mathrm{id}_{[•0,\infty)}}\ee_x,\quad  x\in (0,\infty).$$
\end{enumerate}
At the level of the Laplace symbols $\psi$ and $\hat\psi$ to which are associated $\XX$ and $\YY$ respectively, for $\{x,y\}\subset (0,\infty)$: in case \ref{clement:i}, $\psi(x,y)=\hat{\Sigma}(x)\Sigma(y)$; in case \ref{clement:ii}, $\psi(x,y)=-\hat{\Phi}(x)\Phi(y)$; and in the last case \ref{clement:iii}, $\psi(x,y)=\gamma x\Psi(y)$.
\end{theorem}
By Volkonskii's theorem \cite[Chapter 6]{EthierKurtz} the generators of time-changed possibly killed spectrally positive L\'evy process  take the  form \eqref{volkonskii-form}.

The pregenerator $\XX$ of Theorem~\ref{theorem:tensor-form} with $R=\gamma\mathrm{id}_{[0,\infty)}$ -- from \ref{clement:iii} -- is the sum of  one from \ref{clement:i} and another one from \ref{clement:ii}  on taking $\hat\Sigma=\gamma\mathrm{id}_{[0,\infty)}=\hat\Phi$ and $\Psi=\Sigma-\Phi$ therein: $$\gamma x\mathrm{L}^\Psi f(x)=\hat\Sigma(x)\mathrm{L}^\Sigma f(x)+\hat\Phi(x)\mathrm{L}^{-\Phi} f(x),\quad x\in [0,\infty),\, f\in \DD$$ (such a decomposition of $\Psi$ into the difference $\Sigma-\Phi$ can always be effected, see Remark~\ref{rem:decompositionjumpmeasure}). These are nothing but the generators of CB processes (acting on $\DD$).
\begin{proof}[Proof of Theorem~\ref{theorem:tensor-form}]
The continuity at zero of $\XX$ and $\Psi\ne 0$ entail that $R$ is continuous at zero. Due to \eqref{volkonskii-form} and the definitions (namely,~\ref{definition:gen-in-duality} and~\ref{defLPsi}) the Laplace duality of $\XX$ and $\YY$ is equivalent to  $$R(x)\Psi(y)\ee_x(y)=\YY \ee_x(y),\quad (x,y)\in (0,\infty)^2.$$ The conditions \ref{clement:i}-\ref{clement:iii} are clearly sufficient. Let us prove necessity. By continuity at zero it suffices to determine the forms $R$ and $\Psi$ on $(0,\infty)$.

\ref{clement:i}. Assume $\Psi\geq 0$. Let $y_0\in (0,\infty)$ be arbitrary such that $\Psi(y_0)>0$. As the pregenerator $\YY$ is spectrally positive, by Remark~\ref{rmk:spec-pos-pregen} the map $$ (0,\infty)\ni x\mapsto \frac{\YY \ee_{x}(y_0)}{\ee_{x}(y_0)}=\Psi(y_0)R(x)$$ is the restriction to $(0,\infty)$  of an element of $\pkspLp$. But $\pkspLp$ is closed for multiplication with $\Psi(y_0)^{-1}$ (indeed $\pkspLp$ is a convex cone), hence $R\vert_{(0,\infty)}\in \pkspLp\vert_{(0,\infty)}$. By continuity at zero  of $R$ and of the members of $\pkspLp$ we deduce that $R\in \pkspLp$. The maps $R$ and (by assumption) $\Psi$ thus both belong to $\pkspLp$;  being nonnegative they must in fact be members of $\spLpnot$.

\ref{clement:ii}. Assume $\Psi\leq 0$. Picking $y_0\in (0,\infty)$ arbitrary such that $\Psi(y_0)<0$ we make an analogous argument, but use the fact that $\pkspLp\cap (-\infty,0]^{[0,\infty)}=-\pksubo$.

\ref{clement:iii}. As in the previous two items we find that $R\in\spLpnot\cap \pksubo=\{\gamma\mathrm{id}_{[0,\infty)}:\gamma\in [0,\infty)\}$.
\end{proof}
We state next a theorem characterizing the spectrally positive pregenerators that are in Laplace duality with local pregenerators, both continuous at zero. 

\begin{theorem}\label{thm:localgeninduality} Let $\mathcal{X}$ be a spectrally positive pregenerator, continuous at zero. It admits a spectrally positive Laplace dual pregenerator $\mathcal{Y}$ that is local, has no killing and is continuous at zero, if and only if 
\begin{equation}\label{condition-for-diffusion}
\mathcal{X}f(x)=x^2\mathrm{L}^{\Sigma}f(x)+x\mathrm{L}^{\Psi}f(x), \quad \ x\in [0,\infty),\, f\in \DD,
\end{equation}
for some $\Psi\in \pkspLp$ and some $\Sigma\in \spLpnot$. When so, then
\begin{equation}\label{condition-for-diffusion-candidate}
\mathcal{Y}f(y)=\Sigma(y)f''(y)-\Psi(y)f'(y), \quad y\in [0,\infty),\, f\in \DD,
\end{equation}
and at the level of the Laplace symbols $\psi$ and $\hat\psi$, to which are associated $\XX$ and $\YY$ respectively, one has
\begin{equation}\label{condition-for-diffusion-symbol}
\psi(x,y)=x^2\Sigma(y)+x\Psi(y)=\hat \psi (y,x), \quad \{x,y\}\subset (0,\infty).
\end{equation}
\end{theorem}
    A version of this result was first observed in \cite[Theorem 2.1]{foucartvidmar}. The associated Markov processes were were baptized in \cite{foucartvidmar} as continuous-state branching processes with \textit{collisions}.  We return to them in Paragraph~\ref{subsec:CBC-CBCI}. 
\begin{proof}[Proof of Theorem~\ref{thm:localgeninduality}]
If \eqref{condition-for-diffusion} is verified, then taking for $\YY$ the spectrally positive pregenerator specified by \eqref{condition-for-diffusion-candidate} -- which is indeed local, has no killing and is continuuos at zero -- we may compute
$$\XX \ee_x(y)=(x^2\Sigma(y)+x\Psi(y))e^{-xy}=\YY \ee^y(x),\quad \{x,y\}\subset (0,\infty),$$
 so that $\XX$ and $\YY$ are in Laplace duality  and we have also checked  \eqref{condition-for-diffusion-symbol}.

Let us now argue that \eqref{condition-for-diffusion} is also necessary. By the duality assumption,
\[\frac{\mathcal{Y}\ee_x(y)}{\ee_x(y)}=\frac{\mathcal{X}\ee^y(x)}{\ee^y(x)},\quad \{x,y\}\subset (0,\infty).\]
By Definitions~\ref{definition:laplace-symbol} and~\ref{def:pregen-1}, for each $x\in (0,\infty)$ there is a $\Psi_x\in \pkspLp$ such that  $$\mathcal{X}\ee^y(x)/\ee^y(x)=\Psi_x(y),\quad y\in (0,\infty).$$
Denote by $\hat{a}:[0,\infty)\to [0,\infty)$ and $\hat{b}:[0,\infty)\to \mathbb{R}$ respectively the diffusion and drift parameters of $\mathcal{Y}$ (for points of issue in $[0,\infty)$) so that \begin{equation*}
\frac{\mathcal{Y}\ee_x(y)}{\ee_x(y)}=\hat{a}_yx^2-\hat{b}_yx,\quad x\in (0,\infty),\, y\in [0,\infty).
\end{equation*}
Combining the three preceding displays, for each $y\in (0,\infty)$ and then also for $y=0$ by the continuity of $\YY$ ($\therefore$ $\hat a$, $\hat b$) and of $\Psi_x$ at zero, 
\begin{equation*}
\hat{a}_yx-\hat{b}_y=\frac{\Psi_x(y)}{x},\quad x\in (0,\infty);
\end{equation*}
the limit of this expression as $x\downarrow 0$ exists and equals $-\hat{b}_y$. Proposition~\ref{proposition:closure-under-limits} now entails that  $-\hat{b}\in \pkspLp$. Dividing the preceding display (again) by $x$, and letting $x\to\infty$ we deduce by the same token that (the nonnegative) $ \hat{a}$ belongs to $\spLpnot$.
\end{proof}
The next proposition sheds light on the Laplace duality at the generators' level between a subordinator and a pure killing process, cf. Example \ref{example:subo}.   
\begin{proposition} \label{proposition:killing-subo}Let $\mathcal{X}$ be a spectrally positive pregenerator, continuous at zero. It admits a Laplace dual spectrally positive pregenerator $\mathcal{Y}$ that is a pure killing operator and is continuous at zero, if and only if 
\begin{equation*}
    \mathcal{X}f(x)=\mathrm{L}^{-\Phi}f(x), \quad x\in [0,\infty),\, f\in \mathcal{D},
\end{equation*}
for some $\Phi\in \pksubo$. When so, then
    \[\mathcal{Y}f(y)=-\Phi(y)f(y), \quad y\in [0,\infty),\, f\in \DD,\]
    and at the level of the Laplace symbols $\psi$ and $\hat\psi$, to which are associated $\XX$ and $\YY$ respectively, one has
    \[\psi(x,y)=-\Phi(y)=\hat\psi(y,x), \quad \{x,y\}\subset (0,\infty).\]
\end{proposition}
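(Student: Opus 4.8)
The plan is to follow the exact same template as the proofs of Theorem~\ref{theorem:tensor-form} and Theorem~\ref{thm:localgeninduality}: sufficiency is a trivial check, and for necessity one exploits the structural constraints that ``spectrally positive pregenerator, continuous at zero'' and ``pure killing operator'' impose, combined with the closure properties of the L\'evy--Khintchine classes from Subsection~\ref{subsection:levy-khintchin}.

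\begin{proof}
Sufficiency is immediate: if $\mathcal{X}f(x)=\mathrm{L}^{-\Phi}f(x)$ for $\Phi\in\pksubo$, then $\psi(x,y)=\mathcal{X}\ee^y(x)/\ee^y(x)=-\Phi(y)$ for $\{x,y\}\subset(0,\infty)$ by Definition~\ref{defLPsi}, which is indeed a Laplace symbol (associated, for each $x$, to the L\'evy triplet of $\Phi$), and it is constant in $x$, hence continuous at zero; its transpose $\hat\psi(y,x)=-\Phi(y)$ is the Laplace symbol of the pure killing operator $\mathcal{Y}f(y)=-\Phi(y)f(y)$ (associated, for each $y\in(0,\infty)$, to the L\'evy pair $(0,\Phi(y))$, which requires $\Phi(y)\in[0,\infty)$ --- true since $\Phi\in\pksubo$), and for $y=0$ one uses $\Phi(0)\in[0,\infty)$. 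The two symbols are transposes of each other on $(0,\infty)^2$, so $\mathcal{X}$ and $\mathcal{Y}$ are in Laplace duality by Definition~\ref{definition:gen-in-duality}, and all three clauses displayed in the statement hold.

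For necessity, suppose $\mathcal{X}$ (spectrally positive pregenerator, continuous at zero) is in Laplace duality with a spectrally positive pregenerator $\mathcal{Y}$ that is a pure killing operator continuous at zero; write $\psi$ and $\hat\psi$ for the associated Laplace symbols. That $\mathcal{Y}$ is pure killing means its L\'evy family has $\nu_y=0$ and no diffusion/drift for every $y\in(0,\infty)$, so $\mathcal{Y}\ee_x(y)=-k_y e^{-xy}$ for some $k_y\in[0,\infty)$, i.e. $\hat\psi(y,x)=-k_y$ does not depend on $x$. By Theorem~\ref{thm:courrege} (or directly Definition~\ref{def:pregen-1}), for each fixed $x\in(0,\infty)$ the map $(0,\infty)\ni y\mapsto \psi(x,y)=\mathcal{X}\ee^y(x)/\ee^y(x)$ is the restriction of an element of $\pkspLp$, call it $\Psi_x$. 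The duality relation \eqref{def:dualsymbol} gives $\Psi_x(y)=\psi(x,y)=\hat\psi(y,x)=-k_y$ for all $\{x,y\}\subset(0,\infty)$; the right-hand side is independent of $x$, so $\Psi_x=\Psi_{x'}$ on $(0,\infty)$ for all $x,x'\in(0,\infty)$, and I denote the common value by $\Psi\in\pkspLp$. Thus $\psi(x,y)=\Psi(y)$ for $\{x,y\}\subset(0,\infty)$, which by Definition~\ref{defLPsi} forces $\mathcal{X}f(x)=\mathrm{L}^{\Psi}f(x)$ for $x\in(0,\infty)$, $f\in\DD$, and then for $x=0$ as well by continuity at zero of both $\mathcal{X}$ and $\mathrm{L}^{\Psi}$ (equivalently, by the uniqueness of the Laplace family in Theorem~\ref{thm:courrege}, the $0$-component of the family being a L\'evy triplet). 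It remains to upgrade $\Psi\in\pkspLp$ to $-\Psi\in\pksubo$: since $k_y\geq 0$ for all $y\in(0,\infty)$, we have $\Psi(y)=-k_y\leq 0$ on $(0,\infty)$, and by the relation $\pkspLp\cap(-\infty,0]^{[0,\infty)}=-\pksubo$ recorded after \eqref{eq:LKpsi}, this means $\Psi=-\Phi$ for some $\Phi\in\pksubo$. Writing everything back out, $\mathcal{X}f=\mathrm{L}^{-\Phi}f$ on $\DD$, $\mathcal{Y}f(y)=-\Phi(y)f(y)$ (reading off $k_y=\Phi(y)$), and $\psi(x,y)=-\Phi(y)=\hat\psi(y,x)$ on $(0,\infty)^2$.
\end{proof}

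The only point requiring a moment's care is the passage from ``$\Psi\in\pkspLp$ and $\Psi\leq 0$ on $(0,\infty)$'' to ``$-\Psi\in\pksubo$''; this is not an obstacle, since it is exactly the identity $\pkspLp\cap(-\infty,0]^{[0,\infty)}=-\pksubo$ supplied in Subsection~\ref{subsection:levy-khintchin}, but it is the one non-formal ingredient, and I would make sure the argument genuinely uses the non-negativity of the killing rates $k_y$ (which is built into the definition of a L\'evy pair, Definition~\ref{definition:laplace-symbol}\ref{definition:laplace-symbol:iii}) rather than assuming it. Everything else is a transcription of the pattern established in the two preceding theorems, so no genuinely new difficulty arises; the proof is essentially a corollary of Theorem~\ref{thm:courrege} together with the bookkeeping of L\'evy classes.
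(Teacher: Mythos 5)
Your argument is correct and follows the same overall template as the paper's proof: sufficiency by direct inspection, and for necessity the observation that duality forces $\Psi_x(y)=\hat\psi(y,x)=-k_y$ to be independent of $x$, so a single $\Psi\in\pkspLp$ governs $\XX$ on $(0,\infty)$, with continuity at zero handling $x=0$. The one place where you diverge is in upgrading $\Psi$ to $-\Phi$ with $\Phi\in\pksubo$: you use the nonnegativity of the killing rates $k_y$ together with the identity $\pkspLp\cap(-\infty,0]^{[0,\infty)}=-\pksubo$ (the route the paper takes in Theorem~\ref{theorem:tensor-form}\ref{clement:ii}), whereas the paper's proof of this proposition instead reads off $\XX\ee^y(0)=-\Phi(y)$ directly from the L\'evy \emph{triplet} at $x=0$ in the Courr\`ege form and then invokes continuity of $\XX$ at zero to force $\Psi=-\Phi$ on $(0,\infty)$; both are valid, and your sign argument is fine since $\Psi(0)=-c\leq 0$ holds automatically for any element of $\pkspLp$, so the identity applies on all of $[0,\infty)$. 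The only point you leave implicit is the value $y=0$ in the displayed formula for $\YY$: reading off $k_y=\Phi(y)$ only covers $y\in(0,\infty)$, and to get the killing parameter $c_0=\Phi(0)$ at $y=0$ you should, as the paper does, invoke the assumed continuity of $\YY$ at zero (together with $\Phi(0+)=\Phi(0)$); this is a one-line patch, not a substantive gap.
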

\begin{proof}
As in Theorems~\ref{theorem:tensor-form}-\ref{thm:localgeninduality} the condition is seen to be sufficient by direct inspection. We turn to necessity. Verbatim as in the proof of Theorem~\ref{thm:localgeninduality} we find that 
\[\frac{\mathcal{Y}\ee_x(y)}{\ee_x(y)}=\frac{\mathcal{X}\ee^y(x)}{\ee^y(x)},\quad \{x,y\}\subset (0,\infty);\]
also that for each $x\in (0,\infty)$ there is a $\Psi_x\in \pkspLp$ such that  $$\mathcal{X}\ee^y(x)/\ee^y(x)=\Psi_x(y),\quad y\in (0,\infty),$$
while $$\mathcal{X}\ee^y(0)=-\Phi(y),\quad y\in (0,\infty),$$ for some $\Phi\in\pksubo$.
Denoting by $\hat{c}:[0,\infty)\to [0,\infty)$ the killing parameters of $\mathcal{Y}$ for points of issue in $[0,\infty)$ we identify \begin{equation*}
\frac{\mathcal{Y}\ee_x(y)}{\ee_x(y)}=-\hat{c}_y,\quad x\in (0,\infty),\, y\in[0,\infty).
\end{equation*}
Thus, for each $y\in (0,\infty)$, 
\begin{equation*}
-\hat{c}_y=\Psi_x(y),\quad x\in (0,\infty).
\end{equation*}
The l.h.s. of the preceding relation does not depend on $x$, therefore neither does the r.h.s. and we deduce that there is a single $\Psi\in \pkspLp$ for which $-\hat{c}=\Psi$ on $(0,\infty)$. Continuity at zero of $\XX$ requires $\Psi=-\Phi$, so that $\hat c=\Phi$ on $(0,\infty)$. Continuity of $\YY$ at zero extends the latter equality to the point zero, completing the proof.
\end{proof}
So far we only have encountered spectrally positive pregenerators. In order to widen our body of available examples we provide now a class pregenerators incorporating negative jumps and satisfying Laplace duality. 
\begin{definition}\label{eq:multiplicativelevy}
For a $\kappa\in \pkLp_1$  with the representation \eqref{eq:exponents:pkLp} in terms of a $1$-L\'evy quadruplet  $(\nu, a, b, c)$, define $\mathrm{L}_\kappa f:[0,\infty)\to \mathbb{R}$ as \small
\begin{equation*}
\mathrm{L}_\kappa f(x):=\int\big(f(x(1+m))-f(x)-xmf'(x)\mathbbm{1}_{[-1,1]}(m)\big)\nu(\dd m)+ax^2f''(x)+{b}xf'(x)- c f(x)
\end{equation*}\normalsize
for those  $f:[0,\infty)\to \mathbb{R}$ for which the right-hand side  is well-defined and finite for all $x\in [0,\infty)$ (such $f$ we take as constituting the domain $D_{\mathrm{L}_\kappa}$ of this $\mathrm{L}_\kappa$); also, by an abuse of notation, $\mathrm{L}_\kappa f:=\mathbbm{1}_{[0,\infty)}\mathrm{L}_\kappa (f\vert_{[0,\infty)})$ for $f\in \DD$, giving us the map $\mathrm{L}_\kappa$ defined on $\DD$, returning functions mapping $[0,\infty]\to\mathbb{R}$. We allow context to determine the distinction between the two meanings that we attach to $\mathrm{L}_\kappa$ (cf. Definition~\ref{defLPsi}).
\end{definition}
The next statement follows by direct inspection.
\begin{proposition}\label{proposition:mult-duality}
Let $\kappa\in \pkLp_1$. The operator $\XX:=(\mathrm{L}_\kappa\text{ on }\DD)$ is a continuous-at-zero vanishing-at-infinity pregenerator in Laplace duality with itself and associated to the  Laplace symbol $\psi$ that has
\begin{equation*}
\psi(x,y)=\kappa(xy),\quad (x,y)\subset (0,\infty). \tag*{\qed}
\end{equation*}
 \end{proposition}
We shall see in Paragraph~\ref{paragraph:CBRE} that the pregenerators of Proposition~\ref{proposition:mult-duality} are related to multiplicative L\'evy processes. 

The  concept to be introduced presently should appear quite natural at this point.
\begin{definition}\label{def:laplacesymboldualfamily}
Let  $\psi:[0,\infty]\times [0,\infty]\to \mathbb{R}$. We define  $$\hat\psi(x,y):=\psi(y,x),\quad (x,y)\in [0,\infty]^2,$$ and refer to $\hat\psi:[0,\infty]\times [0,\infty]\to \mathbb{R}$ as  the dual of $\psi$. When  $\psi\vert_{[0,\infty]\times (0,\infty)}$ and $\hat\psi\vert_{[0,\infty]\times (0,\infty)}$ are both  Laplace symbols (whence in Laplace duality) $\psi $ is called a Laplace dual symbol. We say $\psi$: vanishes at infinity if $\psi=0$ on $(\{\infty\}\times [0,\infty])\cup ([0,\infty]\times \{\infty\})$;  is continuous at zero if it is separately continuous at zero in each of the two variables.
\end{definition}
In principle the notion of a Laplace dual symbol involves some redundancy -- in general, in keeping with what we have said following Definition~\ref{definition:gen-in-duality}, it would be most naturally defined on $([0,\infty]\times (0,\infty))\cup ((0,\infty)\times [0,\infty])$ -- however we yield to simplicity. As we will see, for the theory that we have in mind, as well as for the examples that we will have occasion to encounter, it is actually quite useful, and certainly no impediment, to have $\psi$ defined on the whole of $[0,\infty]^2$: if a map $\psi$ is given initially on $([0,\infty]\times (0,\infty))\cup ((0,\infty)\times [0,\infty])$, has a limit at $(0,0)$, is such that both $\psi$ and $\hat \psi$ are continuous at zero and vanish at infinity,  and $\psi=\hat\psi$ on $(0,\infty)^2$ (in the clear meaning of these qualifications and pieces of notation), then it is natural to extend $\psi$ to $[0,\infty]^2$ by insisting on the vanishing at infinity and continuity at zero properties, such extension being unique and yielding a Laplace dual symbol. In this way, the basic instances of pregenerators in Laplace duality witnessed above correspond indeed all to Laplace dual symbols, continuous at zero and vanishing at infinity, as summarized in Table~\ref{table:fund-gen-dualities}. It appears this is the most parsimonious way of encoding the structure of pregenerators in Laplace duality. 
\begin{table}[!hbt]
\begin{center}
\begin{tabular}{cc}
$\psi(x,y)$ for $(x,y)\in [0,\infty)^2$ & References\\\hline
$\hat \Sigma(x)\Sigma(y)$ & Theorem~\ref{theorem:tensor-form}\ref{clement:i} and Example~\ref{exampleSigmaSigma}\\
$-\hat\Phi(x)\Phi(y)$ & Theorem~\ref{theorem:tensor-form}\ref{clement:ii} and Example~\ref{example:dual-subo}\\
$ x\Psi(y)$ & Theorems~\ref{theorem:tensor-form}\ref{clement:iii} and~\ref{thm.cbs-classical}\\
$\kappa(xy)$ & Proposition~\ref{proposition:mult-duality} and Paragraph~\ref{paragraph:CBRE}
\end{tabular}
\end{center}
     \caption{Fundamental Laplace dual symbols $\psi$ as $\Psi\in\pkspLp$, $\{\Sigma, \hat\Sigma\}\subset \spLpnot$, $\{\Phi, \hat \Phi\}\subset\pksubo$, $\kappa\in \pkLp_1$ range in their respective families. We insist that these $\psi$ vanish at infinity. The connection to the notation $\XX$ and $\YY$ of the references is that these are the pregenerators associated to $\psi\vert_{[0,\infty]\times (0,\infty)}$ and $\hat\psi\vert_{[0,\infty]\times (0,\infty)}$, respectively. Except for the last line, all the classes correspond to spectrally positive pregenerators. The first, second and last family are  in  ``self-duality'' (passing from $\psi$ to $\hat\psi$ we remain in the class).}\label{table:fund-gen-dualities}
 \end{table}

The class of Laplace dual symbols is a convex cone closed under pointwise limits. If $\psi$ is a Laplace dual symbol, then the (extended) pregenerators $\XX$ and $\YY$ associated to $\psi\vert_{[0,\infty]\times (0,\infty)}$ and $\hat\psi\vert_{[0,\infty]\times (0,\infty)}$ respectively are in Laplace duality. 

\subsection{A wide class of   Laplace dual symbols}\label{sec:wideclass}
As a last order of business in this section we provide a large family of Laplace dual symbols, vanishing at infinity and continuous at zero.

A subclass of the one that we will ultimately want to consider is given by 

\begin{proposition}\label{prop:nondecomposable-SpCMP} 
Let  $\bm{\nu}$ and $\bm{\eta}$ be two measures defined respectively on $\BB_{(0,\infty]^{2}}$ and $\BB_{(0,\infty)^{2}}$ such that 
\begin{equation}\label{eq:nondecomposable-SpCMP-}
\int(1\wedge v)(1\wedge u)\bm{\nu}(\dd v, \dd u)<\infty  \text{ and } \int(v\wedge v^2)(u\wedge u^2)\bm{\eta}(\dd v, \dd u)<\infty.
\end{equation}  For $(x,y)\in  [0,\infty)^2$ set
\begin{align}
\mathbf{\Sigma}(x,y)&:=\int (e^{-xv}-1+xv)(e^{-uy}-1+uy)\bm{\eta}(\dd v, \dd u) \text{ and }\label{symbolxySigma} \\ 
\mathbf{\Phi}(x,y)&:=\int(1-e^{-xv}\mathbbm{1}_{(0,\infty)}(v))(1-e^{-uy}\mathbbm{1}_{(0,\infty)}(u))\bm{\nu}(\dd v, \dd u). \label{symbolxyPhi}
\end{align}
Then  $\psi:=\mathbf{\Sigma}-\mathbf{\Phi}$, extended by $0$ at $\infty$, introduces a Laplace dual symbol that is continuous at zero, the corresponding action of the pregenerators $\XX$ and $\YY$ being given by
\begin{equation*}
\XX\ee^{y}(x)=\big(\mathbf{\Sigma}(x,y)-\mathbf{\Phi}(x,y)\big)e^{-xy}=\mathcal{Y}\ee_{x}(y), \quad \{x,y\}\subset (0,\infty). \tag*{\qed}
\end{equation*}
\end{proposition}
Note that the possible mass of $\bm\nu$ on $\{\infty\}\times (0,\infty]$ allows  for a dynamic of \emph{immigration} in $\mathcal{X}$, which, informally speaking, the Laplace duality puts  in correspondence with  \textit{killing}  in $\mathcal{Y}$, and vice-versa.  The proof of Proposition~\ref{prop:nondecomposable-SpCMP}  is  an elementary inspection. We explain instead how these pregenerators naturally emerge. 

Given two non-local spectrally positive pregenerators $\mathcal{X}$ and $\mathcal{Y}$, for simplicity without killing, and the decomposition of their L\'evy measures into two measures on $\BB_{(0,\infty)}$, $\nu_x=\nu^1_x+\nu^2_x$ and $\tilde{\nu}_y=\tilde{\nu}^1_y+\tilde{\nu}^2_y$ respectively, according to Remark~\ref{rem:decompositionjumpmeasure},
one notices that $\mathcal{X}$ and $\mathcal{Y}$ are in Laplace duality if and only if, for  $\{x,y\}\subset(0,\infty)$,\small
\begin{equation*}
\int \left(e^{-uy}-1+uy\right)\nu^2_x(\dd u)-\int \left(e^{-xv}-1+xv\right)\tilde{\nu}^2_y(\dd v)=\int (1-e^{-uy})\nu^1_x(\dd u)-\int(1-e^{-xv})\tilde{\nu}^1_y(\dd v).
\end{equation*}\normalsize
In the setting of Proposition \ref{prop:nondecomposable-SpCMP} it corresponds to:
\begin{align}\label{eq:nu1_xXXsigmaphi}
&\nu^1_x(\dd u):=\int(1-e^{-xv}\mathbbm{1}_{(0,\infty)}(v))\bm{\nu}(\dd v, \dd u) , \quad\tilde{\nu}^1_y(\dd v):=\int(1-e^{-uy}\mathbbm{1}_{(0,\infty)}(u))\bm{\nu}(\dd v, \dd u),\\\label{eq:nu2_xXXsigmaphi}
&\nu^2_x(\dd u):=\int(e^{-xv}-1+xv)\bm{\eta}(\dd v, \dd u) \text{ and }\tilde{\nu}^2_y(\dd v):=\int(e^{-uy}-1+uy)\bm{\eta}(\dd v, \dd u).
\end{align}

Another approach leading to candidates of such Laplace dual symbols is to take  mixtures:
\begin{equation}\label{eq:mixtures-for-laplace-dual}
\psi(x,y):=\int \big(\hat{\Sigma}_r(x)\Sigma_r(y)-\hat{\Phi}_r(x)\Phi_r(y)\big)\alpha(\dd r),\quad \{x,y\}\subset [0,\infty),
\end{equation}
with $\alpha$ some measure on an arbitrary parameter set $\mathcal{P}$ and $\{\Sigma_r,\hat\Sigma_r\}\subset \spLpnot$, $\{\Phi_r,\hat\Phi_r\}\subset\pksubo$ for $r\in \mathcal{P}$ (we omit technical reservations having to do with measurability, integrability etc.). Of course to stay within the confines of Proposition~\ref{prop:nondecomposable-SpCMP}  we are obliged to choose the $\Sigma_r,\hat\Sigma_r,\Phi_r,\hat\Phi_r$ all of ``pure jump'' type (we do not make it precise, but the reader can undoubtedly decipher what we mean). The value of the integrand in \eqref{eq:mixtures-for-laplace-dual} at each fixed $r$ then corresponds to $\bm\eta$ and $\bm\nu$ being product measures. 

The following are examples of (new) Laplace dual symbols obtained by ``mixing" stable or gamma cases according to \eqref{eq:mixtures-for-laplace-dual} with $\mathcal{P}=(0,1)$ for which the integrals can  be effected in closed form.

\begin{example}\label{ex:mixedsigmastable}
Take  $\alpha(\dd r):=\dd r$,  $\hat{\Sigma}_r(x):=x^{r+1}$ and $\Sigma_r(y):=y^{r+1}$  for $r\in (0,1)$, $\{x,y\}\subset [0,\infty)$. We have
\[\psi(x,y)=\mathbf{\Sigma}(x,y):=\int_{0}^{1} \hat{\Sigma}_r(x)\Sigma_r(y)\dd r=\int_{0}^{1}(xy)^{r+1}\dd r=\frac{xy(xy-1)}{\ln xy} ,
\quad \{x,y\}\subset [0,\infty), 
\] the expression being understood in the limiting sense for $xy=1$ and $xy=0$. 
\end{example} 

\begin{example}
 Take  now $\alpha(\dd r):=\dd r$, $\hat \Phi_r(x):=x^{r}$ and $
\Phi_r(y):=y^{1-r}$ for $r\in (0,1)$, $\{x,y\}\subset [0,\infty)$. We obtain
\[\psi(x,y)=\mathbf{\Phi}(x,y):=\int_{0}^{1} \Phi_r(x)\hat{\Phi}_r(y)\dd r=\int_{0}^{1}x^ry^{1-r}\dd r=y\frac{\frac{x}{y}-1}{\ln \frac{x}{y}},\quad \{x,y\}\subset [0,\infty), 
\]   the expression being understood in the limiting sense for $\frac{x}{y}=1$ and $\frac{x}{y}=0$ when $y\ne 0$. 
\end{example} 

\begin{example}\label{ex:mixedgamma}
Fix $\gamma\in (-1,\infty)$. Take  $\alpha(\dd r):=r^{\gamma}\dd r$, $\hat{\Phi}_r(x):=\log(1+xr)$ and $\Phi_r(y):=\log(1+y/r)$, for $r\in (0,1)$, $\{x,y\}\subset [0,\infty)$. It gives us  \[\psi(x,y)=\mathbf{\Phi}(x,y):=\int_0^1\log(1+xr)\log(1+y/r)r^{\gamma}\dd r, \quad \{x,y\}\subset [0,\infty).\] 
Notice that $$\frac{\partial^2\mathbf{\Phi}}{\partial x\partial y}(x,y)=\int_0^1\frac{r^{\gamma}}{(1+xr)(1+y/r)}\dd r, \quad \{x,y\}\subset [0,\infty).$$
In particular, \begin{equation}\label{eq:momentcondgamma} \frac{\partial^2\mathbf{\Phi}}{\partial x\partial y}(0,0)=\frac{1}{1+\gamma}<\infty,
\end{equation}
which will be needed later.
\end{example}
We proceed with delineating the announced large class of Laplace dual symbols. It subsumes those of Table~\ref{table:fund-gen-dualities} as well as Proposition~\ref{prop:nondecomposable-SpCMP}, indeed it is nothing but the smallest convex cone containing all of them.
\begin{definition}\label{def:LDS} 
We denote by $\mathsf{LDS}$  the set of Laplace dual symbols $\psi$ that are  vanishing at infinity, continuous at zero and such that
\begin{equation}\label{def:LDSpsi}\psi(x,y)=x\Psi(y)+x^2\Sigma(y)+\mathbf{\Sigma}(x,y)-\mathbf{\Phi}(x,y)+\hat{\Sigma}(x)y^2+\hat{\Psi}(x)y+\kappa(xy), \ \ \{x,y\}\subset [0,\infty),\end{equation}
for $\{\Psi,\hat\Psi\}\subset\pkspLp$, $\{\Sigma, \hat\Sigma\}\subset \spLpnot$, $\kappa\in \pkLp_1$ and $\mathbf{\Sigma}, \mathbf{\Phi}$ as in \eqref{symbolxySigma}-\eqref{symbolxyPhi}. 
We say that a $\psi\in \mathsf{LDS}$ is: \textit{simple} if it is of the form
\begin{equation*}
\psi(x,y)=\hat{\Sigma}(x)\Sigma(y)\text{ or }\psi(x,y)=-\hat{\Phi}(x)\Phi(y), \quad  \{x,y\}\subset [0,\infty),
\end{equation*}
for $\{\Sigma, \hat\Sigma\}\subset \spLpnot$, $\{\Phi, \hat \Phi\}\subset\pksubo$; \textit{decomposable} if it is a finite sum of simple symbols. 
\end{definition}
The terminology decomposable is borrowed from Kolokoltsov \cite[Section 6.5]{zbMATH05875699}; all $\psi$ of the form \eqref{def:LDSpsi} for which the $\bm{\eta}$ and $\bm\nu$ of \eqref{symbolxySigma}-\eqref{symbolxyPhi} (responsible for the terms $\mathbf{\Sigma}, \mathbf{\Phi}$)  are finite nonnegative sums of product measures, whilst $\kappa\equiv 0$, are decomposable. It is also plain that $\mathsf{LDS}$ is closed under the operation of taking the dual.  

In Subsection~\ref{subsection:LDS-via-MPs} we construct through martingale problems positive Markov processes whose Laplace dual symbols belong to $\mathsf{LDS}$, under suitable assumptions. Subsection~\ref{sec:decomposable} examines decomposable Laplace symbols via stochastic equations. 

\begin{question}\label{question:LDS}
Are the Laplace dual symbols, continuous at zero and vanishing at infinity, all in $\mathsf{LDS}$?
\end{question}

\section{From Laplace dual symbols to completely monotone Markov processes}\label{sec:fromdualsymboltosemigroups}
Below, when   an a.e. qualifier appears without an explicit reference to a measure, we intend it relative to the Lebesgue measure on the Borel sets of the real line.

\subsection{Martingale problems and duality}\label{sec:martingaleproblem}  Let $E$ be a topological space and recall the notation \eqref{various-E-spaces}. For  
a linear   $A\subset \mathsf{M}(E)\times \mathsf{M}(E)$ (usually, $A\subset \mathsf{B}(E)\times \mathsf{B}(E)$):
\begin{itemize}
\item we say $A$ satisfies the positive maximum principle (PMP), when
\begin{equation*}
\left(\text{$(f,g)\in A$, $x_0\in E$ and $\sup_{x\in E} f(x)=f(x_0)\geq 0$}\right) \Rightarrow g(x_0)\leq 0;
\end{equation*}
\item by a solution $(X,\PP)$ to the martingale problem (MP) for $(A,\mu)$ (resp. relative to a filtration $\GG$),  $\mu$ a given probability on $(E,\BB_E)$, we intend \cite[p.~173]{EthierKurtz} an $E$-valued (resp. $\GG$-) progressively measurable stochastic process $X=(X_t)_{t\in [0,\infty)}$ defined under a probability $\PP$,  such that ${X_0}_\star\PP=\mu$ and such that for each $(f,g)\in A$, \small $$\left(f(X_t)-\int_0^tg(X_s)\dd s\right)_{t\in [0,\infty)}\text{ is  a martingale  in the natural filtration of $X$ (resp. in $\GG$)},$$\normalsize  where it is implicit that we insist that the Lebesgue integrals are well-defined and finite at least $\PP$-a.s.; 
\item by a solution to the MP for $A$ we intend an $X$ together with a family of probabilities $(\PP_x)_ {x\in E}$ all defined on a common measurable space, such that for all $x\in E$, $(X,\PP_x)$ is a solution to the MP for $(A,\delta_x)$.
\end{itemize}
 When $E$ is locally compact but not compact, picking an arbitrary ``cemetery'' $\triangle\notin E$ we: 
\begin{itemize}
\item put $E^\triangle:=E\cup\{\triangle\}$ for the one-point compactification of $E$;
\item set, for $f\in \mathbb{D}_{E^\triangle}$, 
\begin{align*}
\zeta_\triangle(f)&:=\inf\{t\in [0,\infty):f_{t-}=\triangle\text{ or }f_t=\triangle\}\\
&\quad \text{ for the explosion time of $f$ at $\triangle$ } (f_{0-}:=f_0)\text{; and }\\
\mathbb{D}_{E^\triangle}^{\mathrm{m}\triangle}&:=\{f\in \mathbb{D}_{E^\triangle}:\text{if $\zeta_\triangle(f)<\infty$, then }f(t)=\triangle\text{ for all $t\in [\zeta_\triangle(f),\infty)$}\};
\end{align*}
\item for an $f\in \mathbb{D}_{E^\triangle}^{\mathrm{m}\triangle}$ say that it does not explode continuously at $\triangle$, when $$\zeta_\triangle(f)<\infty\Rightarrow f(\zeta_\triangle(f)-)\in E.$$  
\end{itemize}
The superscript $\mathrm{m}\triangle$ stands for ``minimal at $\triangle$'', which is a relatively standard usage in this context. We shall sometimes also say that paths   have $\triangle$ strongly absorbing when they belong to (the measurable subset) $\mathbb{D}_{E^\triangle}^{\mathrm{m}\triangle}$ (of $\mathbb{D}_{E^\triangle}$). For a compact $E$ we set $E^\triangle:=E$.

A sufficiently well-behaved linear operator satisfying the PMP always admits a solution to its associated MP (the not-compact case reduces to the compact one, but is spelled out for visibility) \cite[ p.~199, Theorem~5.4]{EthierKurtz}:
\begin{theostar}\label{thm:mp-exists}
Let $E$ be a separable locally compact metric space and  let $A$ be a linear operator defined on  a dense linear subset of $(\mathsf{C}_0(E),\Vert\cdot\Vert_\infty)$, mapping into $\mathsf{C}_0(E)$ and satisfying the PMP. If $E$ is compact assume $(1,0)\in A$ and set $A^\triangle:=A$; when $E$ is not compact, introduce the operator $A^\triangle$ on $D_{A^\triangle}:=\{f\in \mathsf{C}(E^\triangle):(f-f(\triangle))\vert_E\in D_A\}$ as follows: $$(A^\triangle f)(x):=\mathbbm{1}_E(x)\Big(A\big((f-f(\triangle))\vert_E\big)\Big)(x),\quad x\in E^\triangle,\, f\in D_{A^\triangle}.$$ Then for all probabilities $\mu$ on $(E^\triangle,\BB_{E^\triangle})$ there exists a $\mathbb{D}_{E^\triangle}$-valued solution  $(X,\PP)$ to the MP for $(A^\triangle,\mu)$. If $E$ is not compact but is $\sigma$-compact, then we may ask  that the solution is $\mathbb{D}_{E^\triangle}^{\mathrm{m}\triangle}$-valued.  \qed
\end{theostar}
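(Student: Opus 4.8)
\emph{The plan} is to deduce the statement from the cited theorem of Ethier and Kurtz, reducing the non-compact case to the compact one by passing to the one-point compactification $E^\triangle$. \emph{Compact case.} Here $E^\triangle=E$ and $A^\triangle=A$, and the assertion is \cite[p.~199, Theorem~5.4]{EthierKurtz} directly: $A$ is linear, has a domain dense in $(\mathsf{C}(E),\Vert\cdot\Vert_\infty)=(\mathsf{C}_0(E),\Vert\cdot\Vert_\infty)$, maps into $\mathsf{C}(E)$, and satisfies the PMP, so a $\mathbb{D}_E$-valued solution to the MP for $(A,\mu)$ exists; the extra hypothesis $(1,0)\in A$ is exactly what guarantees that this solution is honestly $E$-valued (non-defective), since the martingale associated with the pair $(1,0)$ is then $t\mapsto \mathbbm{1}_E(X_t)$, a $[0,1]$-valued martingale started from $1$, hence a.s.\ identically $1$.

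\emph{Non-compact case.} I would check that $A^\triangle$, regarded as a linear operator on the compact metric space $E^\triangle$, inherits from $A$ the hypotheses of the compact case, and then apply the latter. Linearity of $A^\triangle$ is immediate from that of $A$ and of $f\mapsto (f-f(\triangle))\vert_E$. Identifying each $g\in\mathsf{C}_0(E)$ with its extension by $0$ to $E^\triangle$, one has $\mathsf{C}(E^\triangle)=\mathbb{R}\mathbbm{1}\oplus\mathsf{C}_0(E)$ and $D_{A^\triangle}\supseteq \mathbb{R}\mathbbm{1}+D_A$, which is dense in $\mathsf{C}(E^\triangle)$ because $D_A$ is dense in $\mathsf{C}_0(E)$ (given $h\in\mathsf{C}(E^\triangle)$, approximate $(h-h(\triangle))\vert_E\in\mathsf{C}_0(E)$ by elements of $D_A$ and add back $h(\triangle)\mathbbm{1}$); for $f\in D_{A^\triangle}$, $A^\triangle f$ is by definition the extension by $0$ to $E^\triangle$ of $A\big((f-f(\triangle))\vert_E\big)\in\mathsf{C}_0(E)$, hence lies in $\mathsf{C}(E^\triangle)$; and $(1,0)\in A^\triangle$ since $(1-1)\vert_E=0\in D_A$ with $A0=0$. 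For the PMP: let $(f,A^\triangle f)\in A^\triangle$ and $x_0\in E^\triangle$ with $\sup_{E^\triangle}f=f(x_0)\geq 0$. If $x_0=\triangle$ then $A^\triangle f(\triangle)=0\leq 0$. If $x_0\in E$, put $g:=(f-f(\triangle))\vert_E\in D_A$; then $g(x)=f(x)-f(\triangle)\leq f(x_0)-f(\triangle)=g(x_0)$ for all $x\in E$, so $\sup_E g=g(x_0)$, while $g(x_0)=f(x_0)-f(\triangle)\geq 0$ because $f(x_0)=\sup f\geq f(\triangle)$; the PMP for $A$ then yields $A^\triangle f(x_0)=Ag(x_0)\leq 0$. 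The compact case applied to $A^\triangle$ on $E^\triangle$ now furnishes, for every probability $\mu$ on $(E^\triangle,\BB_{E^\triangle})$, a $\mathbb{D}_{E^\triangle}$-valued solution $(X,\PP)$ of the MP for $(A^\triangle,\mu)$.

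\emph{The $\sigma$-compact refinement} is the one genuinely delicate point — and, I should note, it applies automatically once $E$ is separable, locally compact and metric. One wants the solution valued in $\mathbb{D}_{E^\triangle}^{\mathrm{m}\triangle}$, i.e.\ with $\triangle$ strongly absorbing; the naive fix of freezing $X$ at $\triangle$ from its first visit there need not preserve the martingale property, because $X$ might jump to $\triangle$ out of $E$, so that $f(X_{\zeta_\triangle(X)-})\neq f(\triangle)$ for some $f\in D_{A^\triangle}$. What is really needed is that the solution can be chosen to reach $\triangle$, if at all, only continuously; this is precisely the content of the last clause of \cite[p.~199, Theorem~5.4]{EthierKurtz}, whose proof builds the solution as a weak limit of processes exhausting $E$ along compacts $K_n\subseteq\mathrm{int}(K_{n+1})$, $\bigcup_nK_n=E$, and which can be taken absorbed at $\triangle$. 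I would simply invoke this; everything else above is a routine verification of hypotheses.
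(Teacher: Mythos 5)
Your proposal is correct and is essentially the paper's own route: the paper offers no independent argument for Theorem~\ref{thm:mp-exists} but simply records it as Ethier--Kurtz \cite[p.~199, Theorem~5.4]{EthierKurtz} (the statement itself being that theorem ``spelled out for visibility''), and your proof amounts to the same citation together with the routine verifications (density, PMP and $(1,0)\in A^\triangle$ for the one-point compactification, and deferring the $\mathbb{D}_{E^\triangle}^{\mathrm{m}\triangle}$ refinement to the last clause of that theorem). Your side remark that $\sigma$-compactness is automatic for a separable locally compact metric space is also correct, so that clause of the hypothesis is indeed redundant under the standing assumptions.
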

The next proposition dealing with ``adding independent killing'' to a MP is certainly part of folklore, but we could not find in the literature a rendering of it that meets our puposes, so we spell it out, deferring its proof to  Appendix~\ref{appendix:reductions}.
\begin{propositionstar}\label{lemma:reductions}
Let $E$ be a locally compact but not compact topological space, let $\mu$ be  probability on $(E^\triangle,\BB_{E^\triangle})$, $A\subset \mathsf{B}(E^\triangle)\times \mathsf{B}(E^\triangle)$ linear, $(1,0)\in A$,  $g(\triangle)=0$ for all $(f,g)\in A$, and let  $(X,\PP)$ be any $\mathbb{D}_{E^\triangle}^{\mathrm{m}\triangle}$-valued solution to the MP for $(A,\mu)$. Let $k:E\to [0,\infty)$ be locally bounded and Borel. Possibly under an extension of $\PP$, let $\mathbbm{e}$ be a $(0,\infty)$-valued exponential random time of mean one independent of $X$, and set, with the interpretation $k(\triangle):=0$:
\begin{align}\label{reductions:one}
\zeta&:= \inf  \left\{t\in [0,\infty):\int_0^t k(X_s)\dd s>\mathbbm{e}\right\};\\
X_t'&:=
\begin{cases}
X_t,&t\in [0,\zeta),\\
\triangle,& t\in [\zeta,\infty).
\end{cases}\label{reductions:two}
\end{align}
Then $(X',\PP)$ is a $\mathbb{D}_{E^\triangle}^{\mathrm{m}\triangle}$-valued solution to the MP for $(A',\mu)$, where $A':=\{(f,g- (f-f(\triangle))k):(f,g)\in A\text{ for which }(f-f(\triangle))k\in \mathsf{B}(E^\triangle)\}$.
\end{propositionstar}

In many situations one cannot directly observe a semigroup  duality between two processes, the only data available being their infinitesimal dynamics. 
Famous results by Ethier and Kurtz \cite[Section 4.4]{EthierKurtz} provide a set of sufficient conditions to go from generator duality to semigroup duality via martingale problems; we refer also to \cite[Proposition 1.2]{duality} for a theoretical criterion involving the domains of the generators  ensuring semigroup duality. We shall have occasion to apply a refinement of Ethier-Kurtz's \cite[Theorem~4.4.11, Corollary~4.4.13]{EthierKurtz}, which is the content of

\begin{theostar}\label{theorem:duality-from-mtgs}
Let $X=(X_t)_{t\in [0,\infty)}$ together with a random time $\sigma$ and $Y=(Y_t)_{t\in [0,\infty)}$ together with a random time $\tau$ be measurable processes valued in $E$ and $F$ respectively, $(X,\sigma)$ being independent of $(Y,\tau)$.  Suppose further $H:E \times F\to \mathbb{R}$ is bounded measurable, and let $\phi:E\times F\to \mathbb{R}$ be a measurable map such that 
\begin{equation}\label{eq:weird-condition}
\int_0^T\int_0^{T}\mathbb{E}[\phi_\pm(X_{s\land \sigma},Y_{t\land\tau})]\mathbbm{1}_{[0,T]}(s+t)\dd s\dd t<\infty,\quad T\in [0,\infty)
\end{equation}
(we intend it to hold with a $+$ or with a $-$, not necessarily both!), such that
\begin{equation}\label{eq:weird-condition++}
\EE[\vert \phi(X_\sigma,Y_\tau) \vert;\sigma+\tau\leq t]<\infty\text{ for a.e. $t\in [0,\infty)$},
\end{equation}
and such that
\begin{equation}\label{eq:mtg-one}
\left(H(X_{t\land \sigma},y)-\int_0^{t\land \sigma}\phi(X_s,y)\dd s\right)_{t\in [0,\infty)}\text{ is a martingale for all $y\in F$},\end{equation}
while 
\begin{equation}\label{eq:mtg-two}
\left(H(x,Y_{t\land \tau})-\int_0^{t\land \tau}\phi(x,Y_s)\dd s\right)_{t\in [0,\infty)}\text{ is a martingale for all $x\in E$},
\end{equation} 
where the Lebesgue integrals in \eqref{eq:mtg-one}-\eqref{eq:mtg-two} are assumed to be well-defined and finite a.s.

Then, for a.e. $t\in [0,\infty)$,
\begin{align}\nonumber
\mathbb{E}[H(X_{t\land \sigma},Y_0)]-\mathbb{E}[H(X_0,Y_{t\land\tau})]&=\EE[H(X_{(t-\tau)\land \sigma},Y_\tau)-H(X_0,Y_\tau);\tau\leq t]\\\nonumber
&\quad -\EE[H(X_\sigma,Y_{(t-\sigma)\land \tau})-H(X_\sigma,Y_0);\sigma\leq t]\\
&\quad +\EE\Big[\phi\big(X_\sigma,Y_\tau\big)\big(((t-\tau)_+-\sigma)_+-((t-\sigma)_+-\tau)_+\big)\Big],
\label{thmD:general}
\end{align}
 all the expectations being absolutely convergent.  In particular, if  $\sigma\equiv\infty\equiv\tau$, then \eqref{thmD:general} becomes \begin{equation}\label{thmD:particular}
\mathbb{E}[H(X_t,Y_0)]=\mathbb{E}[H(X_0,Y_t)]\text{ for a.e. $t\in [0,\infty)$.}
\end{equation}
\end{theostar}
Conditions \eqref{eq:weird-condition}-\eqref{eq:weird-condition++} weaken \cite[p. 192, Eq.~(4.50)]{EthierKurtz}, which requires  uniform controls of the form
\[\sup_{s,t\leq T}|H(X_s,Y_t)|\leq \Gamma_T \text{ and } \sup_{s,t\leq T}|\phi(X_s,Y_t)|\leq \Gamma_T,\]
for some integrable random variable $\Gamma_T$. We may mention also \cite[p.~39]{greven2015multitypespatialbranchingmodels} where the authors observed that some simplification could be made in Ethier-Kurtz's condition. 

The martingale conditions \eqref{eq:mtg-one}-\eqref{eq:mtg-two} can be further relaxed to the the integrability and constancy of the expectations of the processes in question. In case $\sigma=\infty$ we may formulate \eqref{eq:mtg-one} as $X$  solving (under the ambient probability $\PP$) the martingale problem for $\{(H(\cdot,y),\phi(\cdot,y)):y\in F\}$ (with initial law ${X_0}_\star \PP$), and anyway the latter implies \eqref{eq:mtg-one} by optional stopping if $\sigma$ is a stopping time (assuming sufficient regularity so that optional stopping applies).

The proof of Theorem \ref{theorem:duality-from-mtgs} can be found in Appendix \ref{sec:proofoftheorem:duality-from-mtgs}.

\subsection{Application of Theorems~\ref{thm:mp-exists} and~\ref{theorem:duality-from-mtgs} to the case of Laplace duality}\label{subsection:applications-CE-to-laplace}
Appealing to Theorem~\ref{thm:mp-exists} we may put in evidence a result on the existence of a process associated to the MP of a given Laplace symbol. For simplicity we restrict attention to the case in which the Laplace symbol vanishes at the point $\infty$. Recall from \eqref{the-exponential-domain} that we denote by $\mathcal{D}$ the linear span of $\{\ee^y, \ y\in (0,\infty)\}$ as well as Definition~\ref{def:pregen-1}.
\begin{proposition}\label{prop:existencesolutionMP}
Let $\psi$ be a Laplace symbol with $\psi(\infty,\cdot)\equiv 0$ and set $\XX_{e,\psi}':=\XX_{e,\psi}\cap (\{f\in \mathsf{C}([0,\infty]):f(\infty)=0\}\times \mathsf{C}([0,\infty]))$.
 Assume $\mathcal{D}\subset D_{\XX_{e,\psi}'}$. Then, for every $x\in [0,\infty]$, there exists a $\mathbb{D}_{[0,\infty]}^{\mathrm{m}\infty}$-valued solution to the MP for $(\XX_{e,\psi}',\delta_x)$.
\end{proposition}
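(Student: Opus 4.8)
The plan is to apply Theorem~\ref{thm:mp-exists} (existence of solutions to martingale problems from the positive maximum principle) to an operator extracted from $\XX_{e,\psi}'$, after first checking that the pregenerator satisfies the PMP and maps into the right space. Concretely, set $E:=(0,\infty)$, so that $E^\triangle$ can be identified with $[0,\infty]$ by sending the adjoined point $\triangle$ to $\infty$ (note $[0,\infty]$ is the one-point compactification of $(0,\infty)$ together with $0$ --- more precisely, $E$ should be taken as $[0,\infty)$ with $\triangle=\infty$, so that $E^\triangle=[0,\infty]$, $E$ being locally compact, separable, metric and $\sigma$-compact but not compact). The operator we feed to Theorem~\ref{thm:mp-exists} is $A:=\XX_{e,\psi}\cap\big((\mathcal{D}+\mathbb{R}1)\vert_{[0,\infty)}\times \mathsf{C}_0([0,\infty))\big)$, or more simply the operator with domain $\{(f-f(\infty))\vert_{[0,\infty)}: f\in\mathcal{D}+\mathbb{R}1\}$; since $\psi(\infty,\cdot)\equiv 0$ we have $(1,0)$ in the natural compactified extension and $\mathcal{D}+\mathbb{R}1$ is dense in $(\mathsf{C}([0,\infty]),\Vert\cdot\Vert_\infty)$ by Stone--Weierstrass, hence its ``restriction to $[0,\infty)$ after subtracting the value at $\infty$'' is dense in $\mathsf{C}_0([0,\infty))$.

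First I would verify the positive maximum principle for $\XX_{e,\psi}$ (equivalently for $\XX_\psi$ on $\mathcal{D}+\mathbb{R}1$). This is the routine but essential computation: if $f\in\mathcal{D}+\mathbb{R}1$ attains a nonnegative maximum at $x_0$, one inspects the three cases of \eqref{laplace-representation-at-zero}--\eqref{courrage-inf}. At an interior point $x_0\in(0,\infty)$: $f'(x_0)=0$, $f''(x_0)\leq 0$, $f(x_0+u)-f(x_0)\leq 0$ for all admissible $u$, and $a_{x_0}\geq 0$, $c_{x_0}\geq 0$, $f(x_0)\geq 0$, so every term in $(\XX_\psi f)(x_0)$ is $\leq 0$. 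At $x_0=0$: $d_0\geq 0$ and $f'(0)\leq 0$ (since $0$ is a maximum of $f$ restricted to $[0,\infty)$), $f(u)-f(0)\leq 0$, $c_0\geq 0$, $f(0)\geq 0$, so again $(\XX_\psi f)(0)\leq 0$. At $x_0=\infty$: $(\XX_\psi f)(\infty)=0$ by hypothesis. Hence $A$ satisfies the PMP. Next I would check $A$ maps into $\mathsf{C}_0([0,\infty))$: the hypothesis $\mathcal{D}\subset D_{\XX_{e,\psi}'}$ says precisely that $\XX_{e,\psi}\ee^y\in\mathsf{C}([0,\infty])$ with value $0$ at $\infty$ for every $y\in(0,\infty)$; by linearity the same holds on $\mathcal{D}$, hence the restriction to $[0,\infty)$ lies in $\mathsf{C}_0([0,\infty))$, and adding constants changes nothing since $\psi(\infty,\cdot)\equiv 0$ forces the constant's image to be $0$.

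Then Theorem~\ref{thm:mp-exists} (with $E$ $\sigma$-compact and not compact) yields, for every probability $\mu$ on $\BB_{E^\triangle}=\BB_{[0,\infty]}$, a $\mathbb{D}_{[0,\infty]}^{\mathrm{m}\infty}$-valued solution $(X,\PP)$ to the MP for $(A^\triangle,\mu)$, where $A^\triangle$ is the compactified operator with domain $D_{A^\triangle}=\{f\in\mathsf{C}([0,\infty]):(f-f(\infty))\vert_{[0,\infty)}\in D_A\}\supset \mathcal{D}+\mathbb{R}1$ and $(A^\triangle f)(x)=\mathbbm{1}_{[0,\infty)}(x)\,(A((f-f(\infty))\vert_{[0,\infty)}))(x)$. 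Taking $\mu=\delta_x$ for $x\in[0,\infty]$ handles every starting point. The last thing to reconcile is that $A^\triangle$ agrees with $\XX_{e,\psi}'$ on $\mathcal{D}$: for $f\in\mathcal{D}$ and $x\in[0,\infty)$ we have $(A^\triangle f)(x)=(A((f-f(\infty))\vert_{[0,\infty)}))(x)=(\XX_{e,\psi}(f-f(\infty)))(x)=(\XX_{e,\psi}f)(x)$, the last equality because $\XX_{e,\psi}$ annihilates constants (each of \eqref{laplace-representation-at-zero}, \eqref{compare-courrage}, \eqref{courrage-inf} applied to the constant $1$ gives $-c_0$, $-c_x$, $0$ respectively --- wait, this is \emph{not} zero in general). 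This is the one subtlety: a pregenerator need not kill constants because of the killing terms $c_0,c_x$. The clean fix is to note that we do not need $A^\triangle=\XX_{e,\psi}'$ on all of $\mathcal{D}$ as operators on $[0,\infty]$; rather, the definition of ``solution to the MP for $(\XX_{e,\psi}',\delta_x)$'' only requires the Dynkin-type martingales for pairs $(f,g)\in\XX_{e,\psi}'$, i.e. with $f\in\mathsf{C}([0,\infty])$, $f(\infty)=0$, $g\in\mathsf{C}([0,\infty])$. For such $f$ (in particular $f\in\mathcal{D}$ with $f(\infty)=0$ automatically, since $\ee^y(\infty)=0$ under the operative convention) we have $f(\infty)=0$, hence $(A^\triangle f)(x)=\mathbbm{1}_{[0,\infty)}(x)(\XX_{e,\psi}f)(x)=(\XX_{e,\psi}f)(x)$ for all $x\in[0,\infty]$ --- using $(\XX_{e,\psi}f)(\infty)=\psi(\infty,\cdot)$-driven $=0$. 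So the $A^\triangle$-martingales for $f\in\mathcal{D}$ are exactly the $\XX_{e,\psi}'$-martingales, and by linearity and the fact that $\XX_{e,\psi}'$ has domain contained in $\mathrm{lin}(\mathcal{D})$ plus possibly more --- here I would simply restrict attention to the core $\mathcal{D}$ and note that a solution to the MP for $(\XX_{e,\psi}'\vert_{\mathcal{D}},\delta_x)$ is what one typically means, or invoke that $\mathcal{D}$ determines $\XX_{e,\psi}'$ in the relevant sense --- we conclude $(X,\PP)$ solves the MP for $(\XX_{e,\psi}',\delta_x)$. I expect this bookkeeping around constants, the value at $\infty$, and the precise meaning of the MP for the (not necessarily constant-preserving) operator $\XX_{e,\psi}'$ to be the main obstacle; everything else is a direct citation of Theorem~\ref{thm:mp-exists} once the PMP and the $\mathsf{C}_0$-mapping property are in hand.
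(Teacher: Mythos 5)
Your overall strategy -- verify the positive maximum principle and invoke Theorem~\ref{thm:mp-exists} with $E=[0,\infty)$, $\triangle=\infty$ -- is exactly the paper's, and your PMP computation together with the use of $\mathcal{D}\subset D_{\XX_{e,\psi}'}$ for the $\mathsf{C}_0$-mapping property and density are fine. The genuine gap is in the choice of the operator you feed into Theorem~\ref{thm:mp-exists}. You take $D_A$ to be (essentially) $\mathcal{D}\vert_{[0,\infty)}$, so that $D_{A^\triangle}=\mathcal{D}+\mathbb{R}1$. But the proposition asserts a solution to the MP for $\XX_{e,\psi}'$, whose domain consists of \emph{all} $f\in\mathsf{C}([0,\infty])$ with $f(\infty)=0$, $f\in D_{\XX_{e,\psi}}$ and $\XX_{e,\psi}f\in\mathsf{C}([0,\infty])$ -- in general a strictly larger class than $\mathcal{D}$ (e.g.\ infinite mixtures of exponentials, and under mild extra hypotheses all of $\mathcal{D}_e$, cf.\ Proposition~\ref{lem:extendedpregen}). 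With your $A$, Theorem~\ref{thm:mp-exists} only delivers the Dynkin martingales for $f\in\mathcal{D}+\mathbb{R}1$, i.e.\ a solution to the MP for $\XX_{e,\psi}'\vert_{\mathcal{D}}$; you acknowledge this at the end (``restrict attention to the core $\mathcal{D}$\dots or invoke that $\mathcal{D}$ determines $\XX_{e,\psi}'$ in the relevant sense'') but do not close it, and there is no cheap closure argument: sup-norm density of $\mathcal{D}$ says nothing about convergence of the images under the unbounded operator $\XX_{e,\psi}$, so the martingale property does not automatically transfer from $\mathcal{D}$ to the rest of $D_{\XX_{e,\psi}'}$.

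The fix is to apply Theorem~\ref{thm:mp-exists} to the \emph{maximal} operator
$A:=\{(f,(\XX_{e,\psi}(f\mathbbm{1}_{[0,\infty)}))\vert_{[0,\infty)}): f\in\mathsf{C}_0([0,\infty)),\ f\mathbbm{1}_{[0,\infty)}\in D_{\XX_{e,\psi}},\ (\XX_{e,\psi}(f\mathbbm{1}_{[0,\infty)}))\vert_{[0,\infty)}\in\mathsf{C}_0([0,\infty))\}$,
which is what the paper does. Your PMP verification goes through verbatim on this larger domain -- the maximum-principle argument uses only the sign structure of the L\'evy family $l_x$ (with the usual caveats $f'(x_0)=0$ if $b_{x_0}\neq 0$, $f''(x_0)\leq 0$ if $a_{x_0}\neq 0$; membership in $D_{\XX_{e,\psi}}$ guarantees the relevant expressions are defined) -- and density of $D_A$ in $\mathsf{C}_0([0,\infty))$ still follows from $\mathcal{D}\subset D_{\XX_{e,\psi}'}$ and Stone--Weierstrass. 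The payoff is that now $A^\triangle\supset\XX_{e,\psi}'$: for $(f,g)\in\XX_{e,\psi}'$ one has $f(\infty)=0$ and, since $\psi(\infty,\cdot)\equiv 0$ forces $l_\infty=(0,0)$, also $g(\infty)=0$, so $(f\vert_{[0,\infty)},g\vert_{[0,\infty)})\in A$ and $(f,g)\in A^\triangle$; hence every martingale required by the MP for $\XX_{e,\psi}'$ is among those Theorem~\ref{thm:mp-exists} provides. Your side-worry about constants is harmless, since $A^\triangle$ subtracts $f(\triangle)$ before applying $A$, so the (generally nonzero) action of the pregenerator on constants never enters; and the starting point $x=\infty$ is covered either by $\mu=\delta_\infty$ in Theorem~\ref{thm:mp-exists} or, as in the paper, by the trivial process constantly equal to $\infty$.
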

Since $\psi(\infty,\cdot)\equiv 0$, $D_{\XX_{e,\psi}'}$ actually consists of those continuous $f\in D_{\XX_{e,\psi}}$ that are equal to $0$ at $\infty$ for which the output $\XX_{e,\psi}f$ is also continuous \emph{and} equal to $0$ at $\infty$. Remark also that under the conditions of the proposition $\XX_\psi$ is continuous at zero.
\begin{proof}[Proof of Proposition~\ref{prop:existencesolutionMP}]
The case $x=\infty$ is elementary (just take the process identically equal to $\infty$). From now on $x<\infty$. Denote by $l$ the unique L\'evy family for which $\psi=\psi_l$, writing $l_0=:(\nu_0,d_0,x_0)$ and $l_x=:(\nu_x,a_x,b_x,c_x)$ for $x\in (0,\infty)$ (since $\psi(\infty,\cdot)\equiv 0$, $l_\infty=(0,0)$). Put $A:=\{(f,(\mathcal{X}_{e,\psi}(f\mathbbm{1}_{[0,\infty)}))\vert_{[0,\infty)}):f\in \mathsf{C}_0([0,\infty))\text{ such that } f\mathbbm{1}_{[0,\infty)}\in D_{\XX_{e,\psi}}\text{ and }(\mathcal{X}_{e,\psi}(f\mathbbm{1}_{[0,\infty)}))\vert_{[0,\infty)}\in  \mathsf{C}_0([0,\infty))\}$.
     We claim the operator $A$ satisfies the PMP: 
\begin{equation}
{\footnotesize \left(\text{$x_0\in [0,\infty)$ and $\sup_{x\in [0,\infty)} f(x)=f(x_0)\geq 0$}\right) \Rightarrow (\XX_{e,\psi} (f\mathbbm{1}_{[0,\infty)}))(x_0)\leq 0,\quad f\in D_A;}
\end{equation}
let us check it. On the one hand, for $x_0\in (0,\infty)$, the condition $\sup_{x\in [0,\infty)} f(x)=f(x_0)\geq 0$ requires $f'(x_0)=0$ if $b_{x_0}\ne 0$, $f''(x_0)\leq 0$ if $a_{x_0}\ne 0$, and $f(x_0+u)-f(x_0)\leq 0$ for all $u\in [-x_0,\infty]$, so that indeed \small$$(\XX_{e,\psi} (f\mathbbm{1}_{[0,\infty)}))(x_0)=\int\left(f(x_0+u)-f(x_0)\right)\nu_{x_0}(\dd u)+a_{x_0}f''(x_0)+b_{x_0}f'(x_0)-c_{x_0}f(x_0)\leq 0,$$ \normalsize on using also $\{a_{x_0},c_{x_0},f(x_0)\} \subset [0,\infty)$ and the fact that $\nu_{x_0}$ is a measure on $\BB_{[-x_0,\infty)}$. On the other hand, for $x_0=0$, $\sup_{x\in [0,\infty)} f(x)=f(0)\geq 0$ requires $f'(0)\leq 0$ if $d_{x_0}\ne 0$, and $f(u)-f(0)\leq 0$ for all $u\in (0,\infty)$, which in turn  renders
$$(\XX_{e,\psi} (f\mathbbm{1}_{[0,\infty)}))(0)=\int\left(f(u)-f(0)\right)\nu_{0}(\dd u)+d_0f'(0)-c_0f(0)\leq 0,$$
on using also $\{d_0,c_0,f(0)\}\subset [0,\infty)$ and the fact that $\nu_0$ is a measure on $\BB_{(0,\infty)}$. Thus the PMP for $A$ has been verified. Moreover, $D_A$ is dense in $(\mathsf{C}_0([0,\infty)),\Vert\cdot\Vert_\infty)$ by Stone-Weierstrass and since $\mathcal{D}\subset D_{\XX_{e,\psi}'}$.
Theorem~\ref{thm:mp-exists} with $E=[0,\infty)$, $\triangle=\infty$, $A$ as itself now assures  us that  there exists a $\mathbb{D}_{[0,\infty]}^{\mathrm{m}\infty}$-valued solution to the MP for  $(\XX_{e,\psi}',\delta_x)$, indeed, in the notation of said theorem, $A^\triangle\supset \XX_{e,\psi}'$.
\end{proof}    

We now specialize Theorem~\ref{theorem:duality-from-mtgs}  to the setting of Laplace duality. Recall Definition~\ref{def:laplacesymboldualfamily}.
\begin{theorem}\label{thm:generators}-
Work under $0^+\cdot\infty$, $\infty\cdot 0^+$. Let $\psi$ be a Laplace dual symbol,  vanishing at infinity. Put 
\begin{equation}\label{thm:generators-phi}
\phi(x,y):=\psi(x,y)e^{-xy},\quad (x,y)\in [0,\infty]^2,
\end{equation} and introduce the conditions
 \begin{align}
 &\phi(x,\cdot)\text{ is continuous at zero for all $x\in [0,\infty)$};\label{phi:0}\\
&\lim_{y\to \infty}\phi(x,y)=0\text{ for }x\in  (0,\infty);\label{phi:2}\\
&\sup_{(x,y)\in [0,m]^2}\vert\phi(x,y)\vert<\infty\text{ for all $m\in [0,\infty)$};\label{phi:4}\\
&\sup_{(x,y)\in [0,m]\times(\epsilon,n)}\vert\phi(x,y)\vert<\infty\text{ for all }(m,n,\epsilon)\in (0,\infty)^3;\label{phi:variation}\\
&\phi(0,\cdot)\text{ vanishes on $[0,\infty]$}\label{phi:variation'}.
\end{align}
If we intend, say, \eqref{phi:0} to hold with $\hat\phi$ in lieu of $\phi$, we will write $\widehat{\eqref{phi:0}}$.  

 Fix next $\{x_0,y_0\}\subset  [0,\infty]$. Denoting by $\XX$ and $\YY$ the  pregenerators associated to $\psi\vert_{[0,\infty]\times (0,\infty)}$ and $\hat\psi\vert_{[0,\infty]\times (0,\infty)}$, respectively, let $(X,\PP_{x_0})$ be a $\mathbb{D}_{[0,\infty]}^{\mathrm{m}\infty}$-valued solution to the MP for $(\XX,\delta_{x_0})$ and let $(Y,\PP^{y_0})$  be a $\mathbb{D}_{[0,\infty]}^{\mathrm{m}\infty}$-valued solution to the MP for $(\YY,\delta_{y_0})$.   Set 
 \begin{align*}
 \sigma_m^{+}&:= \inf\{t\in [0,\infty):X_t> m\}\text{ for }m\in \mathbb{N},\, \sigma_{\delta}^{-}:= \inf\{t\in [0,\infty):X_t<\delta\}\text{ for }\delta\in (0,\infty),\\
 \tau_n^+:&= \inf\{t\in [0,\infty):Y_t> n\}\text{ for }n\in \mathbb{N}\text{ and } \tau^-_{\epsilon}:=\inf\{t\in [0,\infty):Y_t<\epsilon\}\text{ for }\epsilon\in (0,\infty).
 \end{align*}
 
 Write $\PP:=\PP_{x_0}\times \PP^{y_0}$ and $\EE$ for the  associated expectation. For random times $\sigma$ and $\tau$ under the probabilities $\PP_{x_0}$ and $\PP^{y_0}$, respectively, we refer below to the conditions 
\begin{equation}\label{eq:weird-conditionLD}
\int_0^T\int_0^{T}\mathbb{E}[\phi_\pm(X_{s\land \sigma},Y_{t\land\tau})]\mathbbm{1}_{[0,T]}(s+t)\dd s\dd t<\infty,\quad T\in [0,\infty),
\end{equation}
(we mean it to hold with a ``$+$'' for all $T\in [0,\infty)$ \emph{or}  with a ``$-$'' for all $T\in [0,\infty)$) and
\begin{equation}\label{eq:weird-condition++LD}
\EE[\vert \phi(X_\sigma,Y_\tau) \vert;\sigma+\tau\leq t]<\infty\text{ for a.e. $t\in [0,\infty)$}.
\end{equation}

Suppose  one of \ref{thm:generators:0}-\ref{thm:generators:III} below is met.

\begin{enumerate}[(I)]
\item\label{thm:generators:0} \eqref{eq:weird-conditionLD} is verified with $\sigma\equiv \infty\equiv \tau$. The process $(\mathbbm{1}_{\{Y_t<\infty\}}-\int_0^t\phi(0,Y_s)\dd s)_{t\in [0,\infty)}$ is a martingale, $Y$ has $0$ non-sticky and $y_0>0$. 
\item\label{thm:generators:I} For all $(m,n)\in \mathbb{N}^2$, \eqref{eq:weird-conditionLD}-\eqref{eq:weird-condition++LD} are met with $\sigma=\sigma_m^{+}$, $\tau=\tau_n^+$. The function $\phi$ satisfies \eqref{phi:2}, $\widehat{\eqref{phi:2}}$. Also, $X$ and $Y$ have $0$ non-sticky and $x_0>0$, $y_0>0$.
\item\label{thm:generators:II}  For all $(n,\epsilon)\in \mathbb{N}\times (0,\infty)$, \eqref{eq:weird-conditionLD} is met with $\sigma=\infty$, $\tau=\tau_n^+\land \tau^-_{\epsilon}$. The function $\phi$ satisfies  $\widehat{\eqref{phi:0}}$, \eqref{phi:variation} and  $\widehat{\eqref{phi:variation'}}$. Also,  $X$ has $\infty$ non-sticky, there is a.s. no continuous explosion for $Y$ at $\infty$ and $Y$ is $\mathbb{D}_{[0,\infty]}^{\mathrm{m}0}$-valued. 
\item\label{thm:generators:III} For all $(\delta,\epsilon)\in (0,\infty)^2$, \eqref{eq:weird-conditionLD}-\eqref{eq:weird-condition++LD} are met with $\sigma=\sigma_{\delta}^{-}$, $\tau= \tau^-_{\epsilon}$. The function $\phi$ satisfies  \eqref{phi:0}, $\widehat{\eqref{phi:0}}$,  \eqref{phi:4} \eqref{phi:variation'} and $\widehat{\eqref{phi:variation'}}$. Also,  $X$ and $Y$ are both $\mathbb{D}_{[0,\infty]}^{\mathrm{m}0}$-valued and have $\infty$ non-sticky.
\end{enumerate}
Then
\begin{equation}\label{eq:ld-apriori}
\EE_{x_0}[e^{-X_ty_0}]=\EE^{y_0}[e^{-x_0Y_t}],\quad  t\in [0,\infty).
\end{equation}
\end{theorem}
In the proof, \eqref{eq:weird-conditionLD}-\eqref{eq:weird-condition++LD} will correspond to \eqref{eq:weird-condition}-\eqref{eq:weird-condition++} of Theorem~\ref{theorem:duality-from-mtgs} applied with $E=[0,\infty]=F$ (unless otherwise indicated) and $H(x,y)=e^{-xy}$ for $(x,y)\in E\times F$ (under the conventions $0^+\cdot\infty$, $\infty\cdot 0^+$). The conditions \ref{thm:generators:0}-\ref{thm:generators:III}  are set up so that, possibly after stopping the processes $X$ and $Y$ on passing below and/or above a given level, we are in a position to apply Theorem~\ref{theorem:duality-from-mtgs}, and then to remove the stopping by a passage to the limit.

\begin{remark}\label{rmk:gen-to-duality}
With reference to \ref{thm:generators:0}-\ref{thm:generators:I}, if, holding $x_0$ fixed, \eqref{eq:ld-apriori} has been established for all $y_0\in (0,\infty]$, say, and if $Y$ is weakly-$[0,\infty)$ continuous at $0$  under the family of probabilities $(\PP^{y_0})_{y_0\in [0,\infty]}$, then it holds also for $y_0=0$ by passing to the limit and taking note of \eqref{continuityat0LT}. Similarly with the roles of $X$ and $Y$ transposed. 
\end{remark}

Concerning \ref{thm:generators:II}, the proof will show that in lieu of $\widehat{\eqref{phi:variation'}}$ one can assume $Y$ a.s. does not jump to zero from positive levels and the same conclusion prevails provided $y_0>0$. Similarly, in \ref{thm:generators:0}, we can drop the martingale condition and assume instead that $X$ and $Y$ have both $0$ non-sticky and $x_0>0$, $y_0>0$.

\begin{proof}[Proof of Theorem~\ref{thm:generators}]
\eqref{eq:ld-apriori} is trivial for $x_0=\infty$ or $y_0=\infty$ ($\because$ $0^+\cdot\infty$, $\infty\cdot 0^+$ and  $\because$ $\infty$ is absorbing for $X$ and $Y$); restrict to  $x_0\in [0,\infty)$  and $y_0\in [0,\infty)$. 

\ref{thm:generators:0}. Here we are in a position to apply directly Theorem~\ref{theorem:duality-from-mtgs}\eqref{thmD:particular}. Notice that we can take $F=(0,\infty]$ therein due to the non-stickiness of $0$ for $Y$ and $y_0>0$. This is why we do not need the corresponding martingale $(\mathbbm{1}_{\{X_t<\infty\}}-\int_0^t\phi(X_s,0)\dd s)_{t\in [0,\infty)}$. True, we have \eqref{eq:ld-apriori} a priori only for a.e. $t\in [0,\infty)$, but for general $t$ we may pass to the limit via bounded convergence, exploiting right-continuity of $X$ and $Y$ together with $X$, $Y$ having $\infty$ strongly absorbing.

 \ref{thm:generators:I}. Remark that $\sigma_n^{+}$ (resp. $\tau_n^+$)  is a stopping time in the right-continuous modification of the natural filtration of $X$ (resp. $Y$) for all $n\in \mathbb{N}$. Note also that if there is a.s. no continuous explosion for $Y$ at $\infty$, then  $Y_{\tau_n^+}=\infty$ for all large enough $n\in \mathbb{N}$ a.s.-$\PP^{y_0}$ on $\{\lim_{n\to\infty}\tau_n^+<\infty\}$. 
 
For $m\in \mathbb{N}$, by optional stopping, the process
$\left(e^{-X_{t\land \sigma_m^{+}}y}-\int_0^{t\land \sigma_m^{+}}\phi(X_s,y)\dd s\right)_{t\in [0,\infty)}$
is a $\PP_{x_0}$-martingale for $y\in (0,\infty)$; but it is so also for  $y=\infty$ (trivially, due to $\phi(\cdot,\infty)=0$, $0^+\cdot\infty$). Analogously we avail ourselves of the observation that so too  the processes
$\left(e^{-xY_{t\land \tau_n^+}}-\int_0^{t\land \tau_n^+}\phi(x,Y_s)\dd s\right)_{t\in [0,\infty)}$ for $ x\in (0,\infty]$ are $\PP^{y_0}$-martingales for all $n\in \mathbb{N}$.  As $x_0>0$ and $y_0>0$ we are now in a position to apply Theorem~\ref{theorem:duality-from-mtgs}\eqref{thmD:general} with $E=(0,\infty]=F$, obtaining, for a.e. $t\in [0,\infty)$, for all $(m,n)\in \mathbb{N}^2$, 
\begin{align*}
&\EE_{x_0}[e^{-X_{t\land \sigma_m^{+}}y_0}]-\EE^{y_0}[e^{-x_0 Y_{t\land \tau_n^+}}]\\
&\quad =\EE[e^{-X_{(t-\tau_n^+)_+\land \sigma_m^{+}}Y_{\tau_n^+}}-e^{-x_0Y_{\tau_n^+}};\tau_n^+\leq t]-\EE[e^{-X_{\sigma_m^{+}}Y_{(t-\sigma_m^{+})_+\land \tau_n^+}}-e^{-X_{\sigma_m^{+}}y_0};\sigma_m^{+}\leq t]\\
&\qquad +\EE\Big[\phi\Big(X_{\sigma_m^{+}},Y_{\tau_n^+}\Big)\Big(\big((t-\tau_n^+)_+-\sigma_m^{+}\big)_+-((t-\sigma_m^{+})_+-\tau_n^+)_+\Big)\Big].
\end{align*}
 We now pass to the limit $m\to\infty$ and then $n\to\infty$ (or vice versa) in the preceding and deduce, via bounded convergence,   that  \eqref{eq:ld-apriori} holds  for a.e. $t\in [0,\infty)$. Here, on the r.h.s. we apply \eqref{phi:2}, $\widehat{\eqref{phi:2}}$ for the last term, whereas for the first two terms we note crucially the non-stickiness of $0$ for $X$ and $Y$ and $x_0$, $y_0>0$. On the l.h.s. the convergence is plain due to $x_0,y_0>0$.

 We have established that \eqref{eq:ld-apriori} holds  for a.e. $t\in [0,\infty)$. But then it holds for all $t\in [0,\infty)$ by right-continuity and bounded convergence. 
 
 \ref{thm:generators:II}. Still $\tau_n^+$  is a stopping time in the right-continuous modification of the natural filtration of $Y$ for all $n\in \mathbb{N}$, the same is true of $\tau^-_{\epsilon}$ for all $\epsilon\in (0,\infty)$. 
 By the non-stickiness of $\infty$ for $X$ and since $0$ is absorbing for $Y$ the claim is trivial for $y_0=0$ so assume $y_0>0$.   
 
 The process $\left(e^{-X_{t}y}-\int_0^{t}\phi(X_s,y)\dd s\right)_{t\in [0,\infty)}$ 
is a $\PP_{x_0}$-martingale for $y\in (0,\infty)$, but also for $y=0$ (due to $\widehat{\eqref{phi:variation'}}$, $X$ having $\infty$ non-sticky) and for $y=\infty$ ($\because$ $\phi(\cdot,\infty)=0$, $0^+\cdot\infty$). For all $(n,\epsilon)\in \mathbb{N}\times (0,\infty)$, we infer by optional stopping, that the process
$\left(e^{-xY_{t\land \tau_n^+\land \tau^-_{\epsilon}}}-\int_0^{t\land \tau_n^+\land \tau^-_{\epsilon}}\phi(x,Y_s)\dd s\right)_{t\in [0,\infty)}$
is a $\PP^{y_0}$-martingale for $x\in (0,\infty)$, but also for $x=\infty$ ($\because$ $\phi(\infty,0)=0$, $\infty\cdot 0^+$) and $x=0$ on passing to the limit $x\downarrow 0$ by bounded convergence, $0^+\cdot\infty$, $\widehat{\eqref{phi:0}}$ and \eqref{phi:variation}.

Applying Theorem~\ref{theorem:duality-from-mtgs}\eqref{thmD:general} we deduce that for all $(n,\epsilon)\in \mathbb{N}\times (0,\infty)$, for a.e. $t\in [0,\infty)$, 
$$\EE_{x_0}[e^{-X_{t}y_0}]-\EE^{y_0}[e^{-x_0 Y_{t\land \tau_n^+\land \tau^-_{\epsilon}}}]=\EE[e^{-X_{(t-\tau_n^+\land \tau^-_{\epsilon})_+}Y_{\tau_n^+\land \tau^-_{\epsilon}}}-e^{-x_0Y_{\tau_n^+\land \tau^-_{\epsilon}}};\tau_n^+\land \tau^-_{\epsilon}\leq t].$$
  Due to the there being  a.s. no continuous explosion for $Y$ at $\infty$ and $0^+\cdot\infty$ we may now pass to the limit $n\to\infty$ to obtain
$$\EE_{x_0}\left[e^{-X_{t}y_0}\right]-\EE^{y_0}\left[e^{-x_0 Y_{t\land \tau^+_\infty\land \tau^-_{\epsilon}}}\right]=\EE\left[e^{-X_{(t-\tau^+_\infty\land \tau^-_{\epsilon})_+}Y_{\tau^+_\infty\land \tau^-_{\epsilon}}}-e^{-x_0Y_{\tau^+_\infty\land \tau^-_{\epsilon}}};\tau^+_\infty\land \tau^-_{\epsilon}\leq t\right]$$ (with $\tau^+_\infty:=\lim_{n\to\infty}\tau_n^+$),  and then ($\because$ $Y$ is $\mathbb{D}_{[0,\infty]}^{\mathrm{m}0}\cap \mathbb{D}_{[0,\infty]}^{\mathrm{m}\infty}$-valued, $X$ has $\infty$ non-sticky, $0^+\cdot\infty$) $\epsilon\downarrow 0$ in the preceding (in this order) and deduce, via bounded convergence,   that  \eqref{eq:ld-apriori} holds  for a.e. $t\in [0,\infty)$. Therefore it holds in fact for all $t\in [0,\infty)$ by right-continuity, bounded convergence and $X$ (resp. $Y$) having $\infty$ non-sticky (resp. strongly absorbing).

 \ref{thm:generators:III}.  By the non-stickiness of $\infty$ for $X$ and $Y$ the claim is trivial for $y_0=0$ or $x_0=0$ so assume $y_0>0$ and $x_0>0$.

 For $\delta\in (0,\infty)$, by optional stopping, the process $\left(e^{-X_{t\land \sigma_{\delta}^{-}}y}-\int_0^{t\land \sigma_{\delta}^{-}}\phi(X_s,y)\dd s\right)_{t\in [0,\infty)}$
is a $\PP_{x_0}$-martingale for $y\in (0,\infty)$; but it is so also for $y=0$ ($\because$ $\widehat{\eqref{phi:variation'}}$, $X$ has $\infty$ non-sticky) and for $y=\infty$ (trivially, due to $\phi(\cdot,\infty)=0$, $0^+\cdot\infty$). Analogously we avail ourselves of the observation that the processes
$\left(e^{-xY_{t\land \tau^-_{\epsilon}}}-\int_0^{t\land \tau^-_{\epsilon}}\phi(x,Y_s)\dd s\right)_{t\in [0,\infty)}$ for $ x\in [0,\infty]$
are $\PP^{y_0}$-martingales for all $\epsilon\in (0,\infty)$. From Theorem~\ref{theorem:duality-from-mtgs}\eqref{thmD:general} we deduce that,  for $(\delta,\epsilon)\in (0,\infty)^2$, for a.e. $t\in [0,\infty)$, 
\begin{align*}
&\EE_{x_0}[e^{-X_{t\land \sigma_{\delta}^{-}}y_0}]-\EE^{y_0}[e^{-x_0 Y_{t\land \tau^-_{\epsilon}}}]\\
&\quad =\EE[e^{-X_{(t-\tau^-_{\epsilon})_+\land \sigma_{\delta}^{-}}Y_{\tau^-_{\epsilon}}}-e^{-x_0Y_{\tau^-_{\epsilon}}};\tau^-_{\epsilon}\leq t]-\EE[e^{-X_{\sigma_{\delta}^{-}}Y_{(t-\sigma_{\delta}^{-})_+\land \tau^-_{\epsilon}}}-e^{-X_{\sigma_{\delta}^{-}}y_0};\sigma_{\delta}^{-}\leq t]\\
&\qquad +\EE\Big[\phi\big(X_{\sigma_{\delta}^{-}},Y_{\tau^-_{\epsilon}}\big)\big(((t-\tau^-_{\epsilon})_+-\sigma_{\delta}^{-})_+-((t-\sigma_{\delta}^{-})_+-\tau^-_{\epsilon})_+\big)\Big].
\end{align*}
We now pass to the limit $\epsilon\downarrow 0$ and then $\delta\downarrow 0$ (or vice versa) in the preceding and deduce, via bounded convergence,   that  \eqref{eq:ld-apriori} holds  for a.e. $t\in [0,\infty)$, as follows. On the l.h.s. there is convergence due to $X$ and $Y$ being $\mathbb{D}_{[0,\infty]}^{\mathrm{m}0}$-valued. On the r.h.s. we apply \eqref{phi:0}, $\widehat{\eqref{phi:0}}$, \eqref{phi:4} for the last term, whereas for the first two terms we note crucially the non-stickiness of $\infty$ for $X$ and $Y$.

 We have established that \eqref{eq:ld-apriori} holds  for a.e. $t\in [0,\infty)$. But then it holds for all $t\in [0,\infty)$ by right-continuity, bounded convergence and the non-stickiness of $\infty$ for $X$ and $Y$. 
\end{proof}
As an immediate illustration of the power of Theorem~\ref{thm:generators} we present the class of Laplace dualities corresponding to Theorem~\ref{theorem:tensor-form}\ref{clement:ii}. 
\begin{example}\label{example:dual-subo}
Take any $\{\Phi,\hat\Phi\}\subset\pksubo$ and put $$\psi(x,y):=-\mathbbm{1}_{[0,\infty)^2}(x,y)\Phi(y)\hat\Phi(x),\quad (x,y)\in [0,\infty]^2.$$ 
Denote by $\XX$ and $\YY$ the  pregenerators associated to $\psi\vert_{[0,\infty]\times (0,\infty)}$ and $\hat\psi\vert_{[0,\infty]\times (0,\infty)}$ respectively. We agree to interpret $\frac{1}{0}:=\infty$, $\xi_\infty:=\xi_{\infty-}$.

Let $\xi$ be a possibly killed subordinator with Laplace exponent $\Phi$ under the probability $\PP$, $\uparrow$ and vanishing at zero with certainty. For $x\in (0,\infty)$ define the continuous time-change $$\rho_t^x:=\inf \left\{s\in [0,\infty):\int_0^s \frac{\dd u}{\hat \Phi(x+\xi_u)\mathbbm{1}_{[0,\infty)}(\xi_u)}>t\right\},\quad t\in [0,\infty),$$ and set $\PP_x:={X^x}_\star \PP$ for the  process $X^x:=(x+\xi_{\rho_t^x})_{t\in [0,\infty)}$, a law on $\mathbb{D}^{\mathrm{m}\infty,\uparrow}_{[0,\infty]}:=\mathbb{D}^{\mathrm{m}\infty}_{[0,\infty]}\cap \{\uparrow \,\text{paths}\}$. Additionally we introduce $\PP_\infty$ as the only probability on the space $\mathbb{D}^{\mathrm{m}\infty}_{[0,\infty]}$ under which the canonical process $X$ stays at $\infty$ a.s., i.e. $\PP_\infty=\delta_{\underline{\infty}}$, where $\underline{\infty}$ is the constant path identically equal to $\infty$. As for the point of issue $0$ we notice that $(x+\xi_{\rho_t^x})_{t\in[0,\infty)}$ is $\uparrow$ in $x\in (0,\infty)$, therefore we may unambiguously ask that $\PP_0:={X^0}_\star\PP$ for $X^0:=(\text{$\downarrow$-$\lim$}_{x\downarrow 0}(x+\xi_{\rho_t^x}))_{t\in [0,\infty)}$, which  still takes its values in $\mathbb{D}^{\mathrm{m}\infty,\uparrow}_{[0,\infty]}$ (right-continuity follows because we can interchange monotone double limits). In this manner we construct the positive Markov process $X$ under the probabilities $(\PP_x)_{x\in [0,\infty]}$ (the Markov property of $X$ follows from the strong Markov property of $\xi$). 

From the martingale $\left(e^{-(x+\xi_s) y}+\Phi(y)\int_0^se^{-(x+\xi_v) y}\dd v\right)_{s\in [0,\infty)}$ for $\xi$ under $\PP$ we get by time-change (optional sampling and a change of variables in the Lebesgue integral) the deterministically-locally-bounded martingale $\left(e^{-X^x_t y}+\int_0^t\Phi(y)\hat\Phi(X^x_u)e^{-X^x_u y}\dd u\right)_{t\in [0,\infty)}$ under $\PP$, this for all $y\in (0,\infty)$ and all $x\in (0,\infty)$, but also for $x=0$ in the limit (by bounded convergence). Put another way, for all $x\in [0,\infty]$ (the case $x=\infty$ is trivial) $(X,\PP_x)$ is a solution to the martingale problem for $(\XX,\delta_x)$. It also follows easily that $\XX$ is the restriction of the generator of $X$ to $\DD$, which is to say that 
\begin{equation*}
\text{the Laplace symbol of $X$ is $\psi\vert_{[0,\infty]\times (0,\infty)}$.}
\end{equation*}

Interchanging the roles of $\Phi$ and $\hat\Phi$ we avail ourselves of the $\mathbb{D}^{\mathrm{m}\infty,\uparrow}_{[0,\infty]}$-valued positive Markov process $Y$ under the probabilities $(\QQ_y)_{y\in [0,\infty]}$, 
\begin{equation*}
\text{the Laplace symbol of $Y$ being $\hat\psi\vert_{[0,\infty]\times (0,\infty)}$,}
\end{equation*}
and such that for all $y\in [0,\infty]$, $(Y,\QQ_y)$ is a solution to the martingale problem for $(\YY,\delta_y)$. 

Applying Theorem~\ref{thm:generators}\ref{thm:generators:I} (it is trivial to check the conditions because the $\Phi$, $\hat \Phi$ are bounded by linear growth, because the $\phi$ of \eqref{thm:generators-phi} does not change sign, and since $X$ and $Y$ are $\uparrow$) together with Remark~\ref{rmk:gen-to-duality} we deduce that 
\begin{equation*}
\text{$X$ is in $(0^+\cdot \infty, \infty \cdot 0^+)$-Laplace duality with $Y$.}
\end{equation*}
We emphasize the great simplification in the argument afforded by the fact that $\phi\leq 0$ everywhere, indeed \eqref{eq:weird-conditionLD} holds just because $\phi_+\equiv 0$.
\end{example}
In a similar fashion, for  $\{\Sigma, \hat\Sigma\} \subset \spLpnot$, we could define
\begin{equation*}
\psi(x,y) := \mathbbm{1}_{[0,\infty)^2}(x,y)\,\Sigma(y)\hat{\Sigma}(x), \quad (x,y) \in [0,\infty]^2,
\end{equation*}
coresponding to Theorem~\ref{theorem:tensor-form}\ref{clement:i};  construct, via time-change, two processes $X$ and $Y$ with symbols $\psi\vert_{[0,\infty]\times (0,\infty)}$ and $\hat\psi\vert_{[0,\infty]\times (0,\infty)}$, respectively; and establish that they lie in $(0^+\cdot \infty, \infty \cdot 0^+)$-Laplace duality by applying Theorem~\ref{thm:generators}\ref{thm:generators:III}. We omit the details here, as this case will be covered in the next subsection, cf. forthcoming Example~\ref{exampleSigmaSigma}.
 
Excepting the cases in which the Laplace dual symbol maintains a constant sign, the most challenging conditions to verify in Theorem \ref{thm:generators} are typically \eqref{eq:weird-conditionLD}-\eqref{eq:weird-condition++LD}. By contrast, conditions \eqref{phi:0}-\eqref{phi:variation} are always satisfied for any element of $ \mathsf{LDS}$ (and, naturally, also for its dual), except \eqref{phi:2} and even that fails only when in the representation \eqref{def:LDSpsi} the L\'evy measure of $\kappa$ has an atom at $-1$.

\subsection{Processes with Laplace symbols in $\mathsf{LDS}$}\label{subsection:LDS-via-MPs}
We seek here sufficient conditions on a function $\psi\in \mathsf{LDS}$ that ensure the existence and uniqueness in law of positive Markov processes $X$ and $Y$, whose  symbols are $\psi\vert_{[0,\infty]\times (0,\infty)}$ and $\hat{\psi}\vert_{[0,\infty]\times (0,\infty)}$, respectively, and which satisfy $(0^+\cdot \infty, \infty \cdot 0^+)$-Laplace duality  at the level of their semigroups. Recall that by Definition~\ref{def:LDS} of the class $\mathsf{LDS}$, any such $\psi$ is vanishing at infinity and continuous at zero.

Let us begin our analysis with a technical result having to do with the nature of the extended pregenerator of Definition~\ref{def:pregen-1} associated to a Laplace dual symbol from $\mathsf{LDS}$. Before that, some more notation is in order.
\begin{definition}\label{definition:more-generators}
 Introduce  
 \begin{align*}
 \mathcal{D}_{e}:=\{f\in \mathbb{R}^{[0,\infty]}&:f(\infty)=0,\, f\vert_{[0,\infty)}\in \mathsf{C}^2([0,\infty))\\*
 &\qquad\text{and } \lim_{x\to\infty} x^2(\vert f(x)\vert+\vert f'(x)\vert+\vert f''(x)\vert )=0\}.\end{align*} Given a Laplace dual symbol $\psi$, write $\XX_\psi:=\XX_{\psi\vert_{[0,\infty]\times (0,\infty)}}$ (resp. $\XX_{e,\psi}:=\XX_{e,\psi\vert_{[0,\infty]\times (0,\infty)}}$) for the  (resp. extended) pregenerator associated with $\psi$, also $\XX_{e,\psi}':=\XX_{e,\psi}\cap (\{f\in \mathsf{C}([0,\infty]):f(\infty)=0\}\times \mathsf{C}([0,\infty]))$ (cf. Proposition~\ref{prop:existencesolutionMP}).
  \end{definition}
  Notice that $\DD\subset \mathcal{D}_e$.

\begin{proposition}\label{lem:extendedpregen} Let $\psi\in \mathsf{LDS}$. We have $D_{\XX_{e,\psi}}\supset \DD_e$.  Moreover, if in the representation \eqref{def:LDSpsi} of $\psi$ the   L\'evy measure of $\kappa$ has no atom at $-1$, then $(\XX_{e,\psi}f)\vert_{[0,\infty)}\in \mathsf{C}_0([0,\infty))$ for all $f\in \mathcal{D}_{e}$ and therefore $\DD_e\subset D_{\XX_{e,\psi}'}$.
\end{proposition}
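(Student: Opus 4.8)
\textbf{Proof plan for Proposition~\ref{lem:extendedpregen}.}
The plan is to verify, term by term in the representation \eqref{def:LDSpsi}, that each summand of $\XX_{e,\psi}$ is well-defined and finite on $\DD_e$, and then (under the no-atom-at-$-1$ hypothesis) that it maps $\DD_e$ into $\mathsf{C}_0([0,\infty))$ upon restriction to $[0,\infty)$. Throughout, fix $f\in \DD_e$; the defining decay $x^2(|f(x)|+|f'(x)|+|f''(x)|)\to 0$ as $x\to\infty$, together with $f\in\mathsf C^2([0,\infty))$, is what we exploit repeatedly. I would first dispose of the purely ``local'' pieces: the terms $x\mathrm L^\Psi f(x)$, $x^2\mathrm L^\Sigma f(x)$, $\hat\Sigma(x)y^2\leftrightarrow$ (i.e. the diffusion term $\hat a(x)f''(x)$ coming from $\hat\Sigma$), $\hat\Psi(x)y\leftrightarrow$ (the drift term $\hat b(x)f'(x)$), and the killing parts. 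For these one writes them out explicitly via \eqref{compare-courrage}, using the representations in Subsection~\ref{subsection:levy-khintchin}: the jump integrals $\int(f(x+u)-f(x)-uf'(x)\mathbbm 1_{[-1,1]}(u))\nu(\dd u)$ are controlled near $0$ by $\sup|f''|$ on a neighborhood of $x$ times $\int_{|u|\le 1}u^2\nu(\dd u)<\infty$, and away from $0$ by $\|f\|_\infty\nu(\{|u|>1\})<\infty$ plus $\int_{1<|u|}|u|\nu(\dd u)\cdot\sup|f'|$ where needed; continuity in $x$ follows from dominated convergence, using that $f,f',f''$ are continuous and bounded. The polynomial prefactors $x$, $x^2$, $\hat\Sigma(x)\sim $ (at most quadratic growth by $\spLpnot$), $\hat\Psi(x)$ (at most quadratic by $\pkspLp$) are exactly matched by the decay built into $\DD_e$, so each such product is continuous on $[0,\infty)$ and tends to $0$ at $\infty$.

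Next I would handle the non-local cross terms $\mathbf\Sigma(x,y)$ and $\mathbf\Phi(x,y)$. These enter $\XX_{e,\psi}$ as the $x$-marginal integral operators whose $y$-Laplace action reproduces \eqref{symbolxySigma}-\eqref{symbolxyPhi}; concretely, via \eqref{eq:nu2_xXXsigmaphi}-\eqref{eq:nu1_xXXsigmaphi} the associated L\'evy measures are $\nu^2_x(\dd u)=\int(e^{-xv}-1+xv)\bm\eta(\dd v,\dd u)$ and $\nu^1_x(\dd u)=\int(1-e^{-xv}\mathbbm 1_{(0,\infty)}(v))\bm\nu(\dd v,\dd u)$, together with immigration/killing contributions from the atoms of $\bm\nu$ on $\{\infty\}\times(0,\infty]$. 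The key estimates are $0\le e^{-xv}-1+xv\le (xv)\wedge \tfrac12(xv)^2\le x(v\wedge v^2)(1\vee x)$ and $0\le 1-e^{-xv}\le (xv)\wedge 1\le (1\vee x)(v\wedge 1)$ (the latter also giving a uniform-in-$x$ bound on bounded $x$-intervals), which combine with the moment hypotheses $\int(v\wedge v^2)(u\wedge u^2)\bm\eta<\infty$ and $\int(1\wedge v)(1\wedge u)\bm\nu<\infty$ and the Tonelli theorem (to interchange the $\bm\eta$- resp. $\bm\nu$-integral with the $\nu_x$-integral acting on $f$) to show the relevant integrals converge absolutely; continuity in $x$ and vanishing at $\infty$ again come from dominated convergence plus the $\DD_e$-decay soaking up the at-most-linear-in-$x$ blow-up of the inner integrands. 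Finally the $\kappa(xy)$ term: $\mathrm L_\kappa f(x)$ from Definition~\ref{eq:multiplicativelevy} involves $\int(f(x(1+m))-f(x)-xmf'(x)\mathbbm 1_{[-1,1]}(m))\nu(\dd m)+ax^2f''(x)+bxf'(x)-cf(x)$ with $\nu$ on $\BB_{[-1,\infty)}$. Here the large-$m$ part is dominated using $|f(x(1+m))|\le $ [decay of $f$] which for $m$ large makes $x(1+m)$ large and hence $|f(x(1+m))|\le \varepsilon/(x(1+m))^2$, integrable against $\int_{m>1}\nu(\dd m)<\infty$ after noting $(x(1+m))^{-2}\le (xm)^{-2}$; the small-$m$ part uses $\sup_{[x/2,2x]}|f''|$ times $x^2\int_{|m|\le1}m^2\nu(\dd m)<\infty$.

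The main obstacle I anticipate is the boundary behavior at $x=0$ and at $x=\infty$ of the $\kappa(xy)$ contribution, and this is exactly why the no-atom-at-$-1$ hypothesis appears. At $m=-1$ the jump $x(1+m)=0$, so a positive mass $\nu(\{-1\})=:\lambda>0$ contributes a term $\lambda(f(0)-f(x)+xf'(x)\mathbbm 1_{\{-1\in[-1,1]\}})$; since $f(0)$ need not be $0$ while $f(\infty)=0$ is required for membership in the range space $\{g:g(\infty)=0\}$, such a term does \emph{not} vanish as $x\to\infty$ (it tends to $\lambda f(0)\neq 0$ in general), which would break $(\XX_{e,\psi}f)\vert_{[0,\infty)}\in\mathsf C_0([0,\infty))$. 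Ruling this out is precisely the content of the second hypothesis, and once $\nu(\{-1\})=0$ the remaining mass near $m=-1$ is integrated against $f$ evaluated at points tending to $0$, giving a genuinely vanishing-at-infinity, continuous contribution by dominated convergence (using $\int_{m\in[-1,-1+\delta]}(1+m)^2\nu(\dd m)<\infty$ from the L\'evy condition and the $\mathsf C^2$ regularity of $f$ near $0$). Assembling the finitely many terms, $D_{\XX_{e,\psi}}\supset\DD_e$ unconditionally, and $(\XX_{e,\psi}f)\vert_{[0,\infty)}\in\mathsf C_0([0,\infty))$ for $f\in\DD_e$ under the extra hypothesis; since such $f$ are continuous on $[0,\infty]$ with value $0$ at $\infty$ and $\XX_{e,\psi}f$ then extends continuously to $[0,\infty]$ with value $0$ at $\infty$, we get $\DD_e\subset D_{\XX_{e,\psi}'}$, completing the proof. $\square$
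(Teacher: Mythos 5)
Your proposal is correct and follows essentially the same route as the paper: a term-by-term verification in which the quadratic decay built into $\DD_e$ absorbs the at-most-quadratic growth of the coefficients, with the no-atom-at-$-1$ hypothesis entering exactly as you identify (an atom would leave a residual contribution $\nu(\{-1\})f(0)$ as $x\to\infty$, destroying the vanishing at infinity). One small point to tidy: your Taylor bound $\sup_{[x/2,2x]}\vert f''\vert\cdot x^2\int_{\vert m\vert\le 1}m^2\nu(\dd m)$ is only valid for $m\ge -1/2$, so the multiplicative-jump integral must be split at a point strictly inside $(-1,0)$ (the paper splits at some $s\in(-1,0)$ and treats $(-1,s)$ by bounded convergence, using that $\nu$ has finite mass there), which is what your concluding paragraph on the mass near $m=-1$ accomplishes in effect.
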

 Since $(\XX_{e,\psi}f)(\infty)=0$, the last assertion is equivalent to maintaining that $\XX_{e,\psi}f\in \mathsf{C}([0,\infty])$.
\begin{proof}[Proof of Proposition~\ref{lem:extendedpregen}]
Directly from the definitions it follows that $\mathcal{D}_{e}\subset D_{\XX_{e,\psi}}$. Let now $f\in \mathcal{D}_{e}$ and write out $\XX_{e,\psi}f$ in the representation \eqref{def:LDSpsi} of $\psi$: \small
$$\XX_{e,\psi}f(x)=\hat{\Sigma}(x)f''(x)-\hat{\Psi}(x)f'(x)+x\mathrm{L}^{\Psi}f(x)+x^2\mathrm{L}^{\Sigma}f(x)+\XX_{e,\mathbf{\Sigma}-\mathbf{\Phi}}f(x)+\mathrm{L}_{\kappa}f(x),\quad  x\in [0,\infty).$$ \normalsize
It is not difficult to see that $(\XX_{e,\psi}f)\vert_{[0,\infty)}$ is continuous (the continuity of the various integrals appearing in $\mathrm{L}^\Psi$, $\mathrm{L}^\kappa$ and $\XX_{e,\mathbf{\Sigma}-\mathbf{\Phi}}$ is handled e.g. via dominated convergence, directly by boundedness for integrals on $(1,\infty)$, and via a Taylor expansion on $[-1,1]$). It remains to verify that $(\XX_{e,\psi}f)\vert_{[0,\infty)}$ vanishes at infinity.   To this end we  treat separately the various terms in the display just above. 

\emph{Local parts}. The first two terms, driven by $\hat{\Sigma}$ and $\hat{\Psi}$, are easily studied. From Subsection~\ref{subsection:levy-khintchin} we know that $\limsup_{x\to\infty}\frac{\hat\Sigma(x)+\vert \hat\Psi(x)\vert}{x^2}<\infty$. On using the assumption  $f\in \mathcal{D}_e$ we see at once that in fact $\hat{\Sigma}(x)f''(x)-\hat{\Psi}(x)f'(x)\underset{x\rightarrow \infty}{\longrightarrow} 0$.

\emph{Continuous-state branching processes with collisions (CBCs)}. The second two terms correspond to the extended generators of CBCs. It has been established in \cite[Corollary 3.1]{foucartvidmar} that for $f\in \mathcal{D}_e$, $x^2\mathrm{L}^{\Sigma}f(x)$ and $x\mathrm{L}^{\Psi}f(x)$ both tend to $0$ as $x\to \infty$.

\emph{The term involving $\mathcal{X}_{e,\mathbf{\Sigma}-\mathbf{\Phi}}$}. The L\'evy measures $\nu_x$, $x\in [0,\infty)$, in the associated Courr\`ege form are given by \eqref{eq:nu1_xXXsigmaphi}-\eqref{eq:nu2_xXXsigmaphi} (for notational ease we incorporate killing terms as masses at infinity):\small
\begin{equation*}
\nu_x(\dd u)=\mathbbm{1}_{(0,\infty)}(u)\int(e^{-xv}-1+xv)\bm{\eta}(\dd v, \dd u)+\int(1-e^{-xv}\mathbbm{1}_{(0,\infty)}(v))\bm{\nu}(\dd v, \dd u),\quad u\in (0,\infty].
\end{equation*}\normalsize
The main idea is to estimate the integrands in the preceding integrals and to combine this with estimates,  in terms of the derivatives of $f$, for the expressions  $f(x+u)-f(x)$ and $f(x+ u)-uf'(x)-f(x)$ that appear in the action of $\XX_{e,\mathbf{\Sigma-\Phi}}$ on a map $f\in \DD_e$. Said estimates must be such that ultimately we can appeal to \eqref{eq:nondecomposable-SpCMP-}. In this direction, first, by Taylor expansion or directly we see that there exists a constant $C<\infty$ such that $$e^{-xv}-1+xv\leq Cx^2(v^2\wedge v) \text{ and } 1-e^{-xv}\leq Cx(v\wedge 1),\quad  v\in (0,\infty),\, x\in [1,\infty).$$ Set $$\underline{\bm{\eta}}(\dd u):=\int(v^2\wedge v)\bm{\eta}(\dd v, \dd u),\quad u\in (0,\infty),$$ which, cf. \eqref{eq:nondecomposable-SpCMP-},  satisfies $\int (u^2\land u) \underline{\bm{\eta}}(\dd u)<\infty$, also $$ \underline{\bm\nu}(\dd u):=\int \big(v\wedge 1\big)\bm{\nu}(\dd v, \dd u),\quad u\in (0,\infty],$$ for which, cf. again \eqref{eq:nondecomposable-SpCMP-}, $\int (u\land 1) \underline{\bm\nu}(\dd u)<\infty$.  Then, for $x\in [1,\infty)$,\small
$$ |\XX_{e,\mathbf{\Sigma-\Phi}}f(x)|\leq C\left(x^2 \int \vert f(x+ u)-uf'(x)-f(x)\vert \underline{\bm\eta}(\dd u)+x\int \vert f(x+ u)-f(x)\vert \underline{\bm\nu}(\dd u)\right).$$\normalsize
 In the preceding display the passage to the limit $x\to\infty$ is now evident for the integrals over $\underline{\bm\eta}(\dd u)$ in $u\in (1,\infty)$ and over $\underline{\bm\nu}(\dd u)$ in $u\in (1,\infty]$, whereas on $u\in (0,1]$ we write out  $$f(x+u)-f(x)=\int_x^{x+u}f'(r)\dd r\text{ and } f(x+ u)-uf'(x)-f(x)=\int_x^{x+u}\int_x^rf''(w)\dd w\dd r,$$ and effect some simple triangle inequality estimates first, together with bounds on the derivatives of $f$ coming out of $f\in \mathcal{D}_e$ (we will write out in detail the analogous string of inequalities for the final case to follow presently). In any event we find that $\lim_{x\to\infty}\XX_{e,\mathbf{\Sigma-\Phi}}f(x)=0$ as required.
    \item \emph{Study of $\mathrm{L}_{\kappa}$}. Recall the form of the operator in Definition~\ref{eq:multiplicativelevy}. In the notation thereof decompose $\mathrm{L}_\kappa f=\mathrm{L}_\kappa^{1}f+\mathrm{L}_\kappa^{2}f$ with, fixing $s\in (-1,0)$,
\begin{align*}
\mathrm{L}_\kappa^{1}f(x)&:=\int_{(-1,s)}\big(f(x(1+m))-f(x)-xmf'(x)\big)\nu(\dd m)\\
&\quad +\int_{(1,\infty)}\big(f(x(1+m))-f(x)\big)\nu(\dd m)+x^2f''(x)+{b}xf'(x)- c f(x)
\end{align*}    
and
\begin{equation*}
\mathrm{L}_\kappa^{2}f(x):=\int_{[s,1]}\big(f(x(1+m))-f(x)-xmf'(x)\big)\nu(\dd m)
\end{equation*}    
for $x\in [0,\infty)$ (we have involved here the supposition that $\nu$ has no atom at $-1$). 
By bounded convergence trivially $\mathrm{L}_\kappa^{1}f(x)\underset{x\rightarrow \infty}{\longrightarrow} 0$. On the other hand, 
$$\vert \mathrm{L}_\kappa^{2}f(x)\vert\leq \int_{[s,1]}\int_0^{xm}\int_0^r\vert f''(x+w)\vert\dd w\dd r\nu(\dd m)\leq \frac{x^2\sup_{[(1+s)x,\infty)}\vert f''\vert}{2}\int_{[-1,1]} m^2\nu(\dd m)$$ and evidently this is $\to 0$ as $x\to\infty$ since $f\in \DD_e$.
\end{proof}
We are ready to formulate and prove the main result of this subsection.
\begin{theorem}\label{thm:LaplacedualitysemigroupLDS} Let $\psi\in \mathsf{LDS}$ vanish at zero, i.e.  $\psi(0,\cdot)\equiv 0\equiv \psi(\cdot,0)$, and let it satisfy  \begin{equation}
\sup_{(x,y)\in [0,\infty)^2}\psi_-(x,y)e^{-xy}<\infty.\label{eq:controlmeanC0}
   \end{equation}
Suppose also that  in the representation \eqref{def:LDSpsi} of $\psi$ the  L\'evy measure of $\kappa$ has no atom at $-1$. Then there exist unique-in-law  $\mathbb{D}^{\mathrm{m}\infty}_{[0,\infty]}\cap  \mathbb{D}_{[0,\infty]}^{\mathrm{m}0}$-valued processes, $X$ under the probabilities $(\PP_{x_0})_{x_0\in [0,\infty]}$  and $Y$ under the probabilities  $(\PP^{y_0})_{y_0\in [0,\infty]}$, solving the MPs for $\XX:=\XX_\psi$ and $\YY:=\XX_{\hat\psi}$ respectively. The processes are in fact strong Markov, non-explosive (i.e. conservative) for finite starting values, and solve the MPs  for $\XX':=\XX_{e,\psi}'$ and $\YY':=\XX_{e,\hat\psi}'$ respectively. Lastly, the processes $X$ and $Y$ are in $(0^+\cdot \infty, \infty \cdot 0^+)$-Laplace duality. 
\end{theorem}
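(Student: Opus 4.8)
The argument has two logically separate halves: first, \emph{existence} of $\mathbb{D}^{\mathrm{m}\infty}_{[0,\infty]}\cap\mathbb{D}^{\mathrm{m}0}_{[0,\infty]}$-valued solutions to the two martingale problems; second, the \emph{Laplace duality}, from which uniqueness in law, the strong Markov property, and conservativeness will be deduced \emph{a posteriori}. For existence, I would apply Proposition~\ref{prop:existencesolutionMP} to $\psi$ and separately to $\hat\psi$. Its hypothesis $\mathcal{D}\subset D_{\XX_{e,\psi}'}$ is exactly what Proposition~\ref{lem:extendedpregen} supplies: under the standing assumption that the L\'evy measure of $\kappa$ in \eqref{def:LDSpsi} has no atom at $-1$, one has $\DD_e\subset D_{\XX_{e,\psi}'}$ (and $\DD\subset\DD_e$), and $\mathsf{LDS}$ is closed under duality so the same applies to $\hat\psi$; this yields, for each $x_0\in[0,\infty]$, a $\mathbb{D}^{\mathrm{m}\infty}_{[0,\infty]}$-valued solution to the MP for $(\XX_{e,\psi}',\delta_{x_0})$, hence a fortiori for $(\XX_\psi,\delta_{x_0})$. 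The refinement to $\mathbb{D}^{\mathrm{m}0}_{[0,\infty]}\cap\mathbb{D}^{\mathrm{m}\infty}_{[0,\infty]}$ — i.e. that $0$ is also strongly absorbing — follows because $\psi$ vanishes at zero, $\psi(0,\cdot)\equiv0\equiv\psi(\cdot,0)$: the point $0$ is then an absorbing point of $\XX_{e,\psi}$ (all the coefficients in \eqref{laplace-representation-at-zero} vanish, $d_0=c_0=0$, $\nu_0=0$), so by a standard argument (e.g. via Proposition~\ref{lemma:reductions}-type localization, or directly: $f(X_{t})-f(X_0)=\int_0^t\XX_{e,\psi}f(X_s)\dd s$ forces $X$ to be constant once it hits $0$, testing against a suitable $f\in\DD$ peaked at $0$) we may take the solution to be $\mathbb{D}^{\mathrm{m}0}_{[0,\infty]}$-valued.

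For the duality, the plan is to invoke Theorem~\ref{thm:generators}. We are in the regime $0^+\cdot\infty$, $\infty\cdot0^+$; $\psi$ is a Laplace dual symbol vanishing at infinity (Definition~\ref{def:LDS}), and $\XX,\YY$ are precisely the associated pregenerators. Fix $\{x_0,y_0\}\subset[0,\infty]$ and take the solutions constructed above. I would use branch~\ref{thm:generators:III} of Theorem~\ref{thm:generators}, with $\sigma_\delta^-=\inf\{t:X_t<\delta\}$ and $\tau_\epsilon^-=\inf\{t:Y_t<\epsilon\}$. The structural conditions \eqref{phi:0}, $\widehat{\eqref{phi:0}}$, \eqref{phi:variation'}, $\widehat{\eqref{phi:variation'}}$ hold for every member of $\mathsf{LDS}$ vanishing at zero (as the text remarks just before \S\ref{subsection:LDS-via-MPs}), and \eqref{phi:4} — boundedness of $\phi=\psi e^{-xy}$ on $[0,m]^2$ — holds because each term of \eqref{def:LDSpsi} times $e^{-xy}$ is locally bounded (the only possible failure, \eqref{phi:2}, being irrelevant here, and in any case excluded once $\kappa$ has no atom at $-1$). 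The non-stickiness of $\infty$ for $X$ and $Y$ needed in~\ref{thm:generators:III}, and the $\mathbb{D}^{\mathrm{m}0}_{[0,\infty]}$-valued property, are exactly what we arranged in the existence step. The crux is verifying \eqref{eq:weird-conditionLD}–\eqref{eq:weird-condition++LD} with these stopping times: here the hypothesis \eqref{eq:controlmeanC0}, $\sup_{(x,y)\in[0,\infty)^2}\psi_-(x,y)e^{-xy}<\infty$, is decisive. It gives $\phi_-\le M$ on $[0,\infty)^2$ for a finite constant $M$, hence $\EE[\phi_-(X_{s\land\sigma_\delta^-},Y_{t\land\tau_\epsilon^-})]\le M$ uniformly, making the ``$-$'' version of \eqref{eq:weird-conditionLD} trivially finite, and likewise $\EE[\phi_-(X_{\sigma_\delta^-},Y_{\tau_\epsilon^-});\sigma_\delta^-+\tau_\epsilon^-\le t]<\infty$; for \eqref{eq:weird-condition++LD} one then needs in addition that $\EE[\phi_+(X_{\sigma_\delta^-},Y_{\tau_\epsilon^-});\sigma_\delta^-+\tau_\epsilon^-\le t]<\infty$, which follows from \eqref{phi:4} since before time $\sigma_\delta^-$ (resp. $\tau_\epsilon^-$) the process $X$ (resp. $Y$) is bounded below by $\delta$ (resp. $\epsilon$) and we may also bound it above using the non-explosion — more carefully, one first establishes non-explosion of $X$ and $Y$ for finite starting values (see below) so that $X_{\sigma_\delta^-},Y_{\tau_\epsilon^-}$ are finite a.s., then $\phi$ restricted to $[\delta,\infty)\times[\epsilon,\infty)$ is bounded by a combination of \eqref{phi:4} and the exponential decay $e^{-xy}\le e^{-\delta\epsilon}$ off compacts. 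Thus Theorem~\ref{thm:generators}\ref{thm:generators:III} applies and yields \eqref{eq:ld-apriori}: $\EE_{x_0}[e^{-X_ty_0}]=\EE^{y_0}[e^{-x_0Y_t}]$ for all $t\in[0,\infty)$. Since $\{x_0,y_0\}$ was arbitrary, $X$ is in $(0^+\cdot\infty,\infty\cdot0^+)$-Laplace duality with $Y$.

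It remains to close the loop on uniqueness in law, the Markov/strong-Markov property, and conservativeness, and this is where non-explosion must actually be proved rather than assumed. For non-explosion: the Laplace duality relation applied at $y_0\downarrow0$, together with $\psi(\cdot,0)\equiv0$ (so $\phi(\cdot,0)\equiv0$) and the fact that the constant-$1$-minus-martingale argument in Theorem~\ref{thm:generators}\ref{thm:generators:0}-style reasoning gives $\PP_{x_0}(X_t<\infty)=\lim_{y_0\downarrow0}\EE_{x_0}[e^{-X_ty_0}]=\lim_{y_0\downarrow0}\EE^{y_0}[e^{-x_0Y_t}]=\EE^{0}[e^{-x_0Y_t}]=e^{-x_0\cdot0}=1$ for $x_0<\infty$ (using $Y$ has $0$ absorbing and $\hat\psi(0,\cdot)\equiv0$, so $Y$ started at $0$ stays at $0$) — hence $X$ is non-explosive from finite states, and symmetrically $Y$; this is a clean bootstrap off the duality we have just proved. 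For uniqueness in law: the transition kernels of \emph{any} $\mathbb{D}^{\mathrm{m}\infty}_{[0,\infty]}\cap\mathbb{D}^{\mathrm{m}0}_{[0,\infty]}$-valued solution $X$ to the MP for $\XX_\psi$ satisfy, by the same application of Theorem~\ref{thm:generators}\ref{thm:generators:III} against \emph{any fixed} solution $Y$ of the dual MP, $\EE_{x_0}[e^{-X_ty}]=\EE^{y}[e^{-x_0Y_t}]$ for all $(x_0,y)$, and since exponentials are law-determining on $[0,\infty]$ (Example~\ref{example:laplace-duality}), the one-dimensional marginals of $X_t$ under $\PP_{x_0}$ are pinned down by the (fixed) laws of $Y$; an induction on the finite-dimensional distributions using the Markov property of a putative solution — or rather, the standard fact that one-dimensional-marginal duality plus the martingale-problem structure forces the full law (cf. \cite[Chapter~4]{EthierKurtz}) — gives uniqueness in law. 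Uniqueness in law for the MP in turn delivers the Markov and strong Markov properties by the usual transfer argument \cite[Theorem~4.4.2]{EthierKurtz}. Finally, $\DD\subset\DD_e\subset D_{\XX_{e,\psi}'}$ by Proposition~\ref{lem:extendedpregen} shows any solution of the MP for $\XX_\psi$ is also one for $\XX_{e,\psi}'$, completing the last assertion. I expect the genuinely delicate point to be the verification of \eqref{eq:weird-condition++LD} and the handling of the boundary limits $\delta\downarrow0$, $\epsilon\downarrow0$ in Theorem~\ref{thm:generators}\ref{thm:generators:III} — everything else is bookkeeping once \eqref{eq:controlmeanC0} is in hand — and, secondarily, making the non-explosion bootstrap airtight without circularity (one should first run the duality only for $y_0\in(0,\infty)$, deduce non-explosion, then extend to $y_0=0$).
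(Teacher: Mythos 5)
There is a genuine gap: your argument is circular at the non-explosion step. You invoke Theorem~\ref{thm:generators}\ref{thm:generators:III}, whose hypotheses include that \emph{both} $X$ and $Y$ have $\infty$ non-sticky (i.e., essentially, are conservative from finite starting points), and then you propose to \emph{deduce} non-explosion afterwards as a "bootstrap off the duality we have just proved." Your closing remark (run the duality first only for $y_0\in(0,\infty)$, then extend to $y_0=0$) does not repair this: the non-stickiness of $\infty$ is required by branch~\ref{thm:generators:III} for every choice of $y_0$, so the duality cannot be established by that route before conservativeness is known, and conservativeness, on your plan, is only available after the duality. The paper breaks this loop by proving conservativeness \emph{first and independently of any duality}: for an arbitrary solution of the MP for $(\XX,\delta_{x_0})$, one stops at the explosion time $\zeta_\infty$, applies the martingale identity with the test function $\ee^y$, bounds the integrand from below by $-\psi_-(X_s,y)e^{-X_sy}$, and lets $y\downarrow 0$ — monotone convergence on the left, bounded convergence on the right via \eqref{eq:controlmeanC0}, together with $\psi(\cdot,0)\equiv 0$ and continuity of $\psi$ at zero — to conclude $\PP_{x_0}(X_{t\wedge\zeta_\infty}<\infty)=1$. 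With conservativeness of both processes in hand, the paper then applies branch~\ref{thm:generators:II} (with $\sigma=\infty$, $\tau=\tau_n^+\wedge\tau_\epsilon^-$), whose hypotheses ($\infty$ non-sticky for $X$, no continuous explosion for $Y$, $Y$ absorbed at $0$) are then immediate; your branch~\ref{thm:generators:III} would also become usable at that point, but only because of the independent non-explosion argument you did not supply. You must add that argument (or an equivalent one not relying on the duality) for the proof to go through.

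Two smaller points. First, your justification that $0$ can be taken strongly absorbing ("testing against a suitable $f\in\DD$ peaked at $0$ forces $X$ to be constant") is not a proof; the correct move, as in the paper, is to \emph{redefine} the solution to sit at $0$ from the hitting time $\sigma_0$ onwards and check that all martingales are preserved, using $\phi(0,\cdot)\equiv 0$ and optional stopping along $\sigma_\epsilon^-\uparrow\sigma_0$. Second, your last step is stated backwards: $\DD\subset D_{\XX_{e,\psi}'}$ gives $\XX\subset\XX'$, so solving the MP for $\XX'$ implies solving it for $\XX$, not conversely; to conclude that the constructed solutions solve the MP for $\XX'$ one argues, as the paper does, that $\XX'$-solutions exist, solve the $\XX$-MP, hence by the already-established uniqueness in law share the law of the $\XX$-solutions, and membership in a martingale problem depends only on the law.
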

A more hands-on condition entailing the vanishing at zero of $\psi$ and \eqref{eq:controlmeanC0} is given in the forthcoming Proposition \ref{prop:momentcond}.

\begin{proof}[Proof of Theorem~\ref{thm:LaplacedualitysemigroupLDS}]
Notice that by Proposition~\ref{lem:extendedpregen} $\XX\subset \XX'$ and $\YY\subset \YY'$, in particular $\DD\subset D_{\XX'}\cap D_{\YY'}$, where we have taken into account the ``no atom at $-1$'' assumption. Let us establish existence of $\mathbb{D}^{\mathrm{m}\infty}_{[0,\infty]}\cap  \mathbb{D}_{[0,\infty]}^{\mathrm{m}0}$-valued solutions to the MP for  $\XX'$. The MP for $\YY'$ is  entirely analogous.

The cases of the starting values $x_0=0$ and $x_0=\infty$ are trivial. Let then $x_0\in (0,\infty)$. According to Proposition~\ref{prop:existencesolutionMP} there exists a $\mathbb{D}_{[0,\infty]}^{\mathrm{m}\infty}$-valued solution $(X',\PP_{x_0})$ to the MP for $(\XX',\delta_{x_0})$. Setting $\sigma_0:=\inf\{t\in (0,\infty):0\in \{X'_t,X_{t-}'\}\}$ it remains to pass from $X'$ to the  process $X$ that concides with $X'$ on $[0,\sigma_0)$ and is identically zero on $[\sigma_0,\infty)$ (the martingales are preserved due to $\phi(0,\cdot)\equiv 0$ and by applying optional stopping -- in the right-continuous modification of the natural filtration of $X$ -- at the times $\sigma_\epsilon^-:=\inf\{t\in [0,\infty):X'_t<\epsilon\}\uparrow \sigma_0$ as $\epsilon\downarrow 0$). 

For $x_0\in [0,\infty)$ let now $(X,\PP_{x_0})$ be any $\mathbb{D}^{\mathrm{m}\infty}_{[0,\infty]}\cap  \mathbb{D}_{[0,\infty]}^{\mathrm{m}0}$-valued solution to the MP for $(\XX,\delta_{x_0})$. Set  $$\zeta_\infty=\inf \{t\in [0,\infty):X_{t-}\lor X_t=\infty\}.$$  By the martingale problem (after optional stopping at $\zeta_\infty$), for all $t\in [0,\infty)$,
 \begin{align*}
 \EE_{x_0}\left[e^{-X_{t\land \zeta_\infty}y}\right]&=e^{-x_0y}+\EE_{x_0}\left[\int_0^{t\land \zeta_\infty}\psi(X_s,y)e^{-X_sy}\dd s\right]\\
 &\geq e^{-x_0y}+\EE_{x_0}\left[\int_0^{t\land \zeta_\infty}\psi_-(X_s,y)e^{-X_sy}\dd s\right],\quad y\in (0,\infty).
 \end{align*}
Passing to the limit $y\downarrow 0$, by monotone (on the l.h.s.) and bounded (on the r.h.s., noting \eqref{eq:controlmeanC0}) convergence we obtain, thanks to  $\psi(\cdot,0)\equiv 0$ and the continuity of $\psi$ at zero, that $\PP_{x_0}(X_{t\land \zeta_\infty}<\infty)\geq 1$ (and therefore $=1$). This being so for all $t\in [0,\infty)$ we find that $X$ is conservative. 

Let next $X$ under the probabilities $(\PP_{x_0})_{x_0\in [0,\infty]}$ be any $\mathbb{D}_{[0,\infty]}^{\mathrm{m}\infty}\cap  \mathbb{D}_{[0,\infty]}^{\mathrm{m}0}$-valued solution to the MP for $\XX$ and let $Y$  under the probabilities $(\PP^{y_0})_{y_0\in [0,\infty]}$ be any $\mathbb{D}_{[0,\infty]}^{\mathrm{m}\infty}\cap  \mathbb{D}_{[0,\infty]}^{\mathrm{m}0}$-valued solution to the MP for $\YY$ (they exist by the above; we want any). 

 We verify the conditions for Theorem~\ref{thm:generators}\ref{thm:generators:II}. Trivially, \eqref{eq:controlmeanC0} ensures that \eqref{eq:weird-conditionLD} holds with $\sigma=\infty$ and $\tau=\tau_n^+\wedge \tau_\epsilon^{-}$ (even with $\tau=\infty$).  For all $y\in [0,\infty)$, the function $[0,\infty)\ni x\mapsto \phi(x,y)=e^{-xy}\psi(x,y)$, cf. \eqref{thm:generators-phi}, is  continuous at $0$ (indeed, everywhere), i.e. $\widehat{\eqref{phi:0}}$ is satisfied. Since $\psi(\cdot,0)\equiv 0$ we get $\widehat{\eqref{phi:variation'}}$. \eqref{phi:variation} would be valid even if we had merely $\psi\in \mathsf{LDS}$. By the preceding both processes $X$ and $Y$ are conservative so certainly $X$ has $\infty$ non-sticky  and certainly there is a.s. no continuous explosion in $Y$. By choice the process $Y$ is strongly absorbed at $0$. Altogether, via Theorem~\ref{thm:generators}\ref{thm:generators:II}, this gives us the 
$(0^+\cdot \infty, \infty \cdot 0^+)$-Laplace duality and thereby uniqueness in law. 

In order to see that $X$ and $Y$ solve even the MPs for $\XX'$ and $\YY'$ respectively,  one has simply to note that any two solutions $X'$ and $Y'$ to $\XX'$ and $\YY'$, valued in  $\mathbb{D}_{[0,\infty]}^{\mathrm{m}\infty}\cap  \mathbb{D}_{[0,\infty]}^{\mathrm{m}0}$ -- which exist by the above -- solve also the MPs for $\XX$ and $\YY$, respectively. By what we have just shown their laws are uniquely determined and must be the same as the laws of the solutions to the MPs for respectively $\XX$ and $\YY$. But solving a martingale problem concerns only the laws of the processes so the matter is thereby settled.

For the strong Markov property we refer to \cite[p.~184, Theorem 4.2(c)]{EthierKurtz}. Its application requires checking the measurability of the map $([0,\infty]\ni x_0\mapsto \PP_{x_0}(X\in A))$ for measurable $A\subset \mathbb{D}_{[0,\infty]}$ (and analogously for the process $Y$). Since $\{\ee^y:y\in (0,\infty)\}$ is a multiplicative class of maps generating the Borel $\sigma$-field on $[0,\infty]$, for $t\in [0,\infty)$, the Laplace duality relation between $X$ and $Y$ and functional monotone class entail that $([0,\infty]\ni x_0\mapsto \EE_{x_0}[f(X_t)])$ is measurable for all Borel $f:[0,\infty]\to [0,\infty]$. By induction and the Markov property of $X$ -- which is ensured by \cite[p.~184, Theorem 4.2(a)]{EthierKurtz} -- we find that $([0,\infty]\ni x_0\mapsto \EE_{x_0}[f_1(X_{t_1})\cdots f_n(X_{t_n})])$ is measurable for all real $0\leq t_1<\cdots<t_n$, all Borel $f_i:[0,\infty]\to [0,\infty]$ for $i\in \{1,\ldots,n\}$, all $n\in \mathbb{N}$. A final application of Dynkin's lemma \cite[Theorem~2.38]{pollard} concludes the measurability for arbitrary measurable $A$ (since the $\sigma$-field on $\mathbb{D}_{[0,\infty]}$ is generated by the coordinate process).
\end{proof}
\begin{proposition}\label{prop:momentcond}
Let $\psi\in \mathsf{LDS}$ with the representation \eqref{def:LDSpsi}. Assume that 
$$\Psi(0)=0=\hat{\Psi}(0)\text{  and }\mathbf{\Phi}(0,\cdot)\equiv 0\equiv \mathbf{\Phi}(\cdot,0)$$ (which implies that $\psi$ vanishes at $0$) and that 
   \begin{equation*}\label{eq:boundforallbutkappa}|\Psi'(0)|+|\hat{\Psi}'(0)|+\frac{\partial^2\mathbf{\Phi} }{\partial x \partial y}(0,0)<\infty.\end{equation*}
Then \eqref{eq:controlmeanC0} is satisfied. 
\end{proposition}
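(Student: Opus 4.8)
The quantity to bound is $\psi_-(x,y)e^{-xy}$, uniformly over $(x,y)\in[0,\infty)^2$. Since $\psi$ decomposes as in \eqref{def:LDSpsi}, and $e^{-xy}\leq 1$, it suffices to bound the negative part of each summand individually, because $\psi_-\le (x\Psi(y))_-+(x^2\Sigma(y))_-+\mathbf\Sigma_-(x,y)+\mathbf\Phi_+(x,y)+(\hat\Sigma(x)y^2)_-+(\hat\Psi(x)y)_-+\kappa_-(xy)$, and we multiply each by $e^{-xy}$ (occasionally by $e^{-x y_1}$ or $e^{-x_1 y}$ as convenient, using $e^{-xy}\le e^{-x v y}$-type bounds or splitting $e^{-xy}$). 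The terms $x^2\Sigma(y)$, $\hat\Sigma(x)y^2$, $\mathbf\Sigma(x,y)$ are all nonnegative (each is built out of a function in $\spLpnot$, or a product thereof, hence $\ge 0$ on $[0,\infty)^2$), so their negative parts vanish. Likewise $\mathbf\Phi(x,y)\ge 0$, but here we need $-\mathbf\Phi$ bounded below after multiplication by $e^{-xy}$, i.e. $\mathbf\Phi(x,y)e^{-xy}$ bounded above — that is the one genuinely two-variable estimate.

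First I would dispose of $x\Psi(y)e^{-xy}$ and $\hat\Psi(x)y e^{-xy}$. Write $\Psi=\Sigma_\Psi-\Phi_\Psi$ with $\Sigma_\Psi\in\spLpnot$, $\Phi_\Psi\in\pksubo$ as in Remark~\ref{rem:decompositionjumpmeasure}; since $\Sigma_\Psi\ge 0$ we get $(x\Psi(y))_-\le x\Phi_\Psi(y)$. The hypothesis $\Psi(0)=0$ forces $c=0$ in the triplet of $\Phi_\Psi$ and, more importantly, $\Phi_\Psi(y)\le \Phi_\Psi'(0)\,y$ for all $y\ge 0$ because a Bernstein function vanishing at $0$ is concave and dominated by its tangent line at $0$; the assumption $|\Psi'(0)|<\infty$ gives $\Phi_\Psi'(0)<\infty$. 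Hence $(x\Psi(y))_-e^{-xy}\le \Phi_\Psi'(0)\,xy\,e^{-xy}\le \Phi_\Psi'(0)/e$, using $\sup_{t\ge 0}te^{-t}=1/e$. The term $\hat\Psi(x)ye^{-xy}$ is handled identically with the roles of $x,y$ swapped, using $|\hat\Psi'(0)|<\infty$. For $\kappa(xy)e^{-xy}$: $\kappa\in\pkLp_1$ with no atom of its L\'evy measure at $-1$; since $\kappa$ is convex with $\kappa(0)\le 0$, its negative part satisfies $\kappa_-(t)\le \kappa(0)_- + (\text{linear growth at }0)$... more simply, $\kappa$ is bounded below on every compact and, by convexity, $\kappa(t)\ge \kappa(1)+\kappa'(1^-)(t-1)$ for $t\ge 1$ say, so $\kappa_-(t)\le C_1+C_2 t$ for constants depending only on $\kappa$; then $\kappa_-(xy)e^{-xy}\le C_1+C_2\,xy\,e^{-xy}\le C_1+C_2/e$. (The absence of the atom at $-1$ is what is used in Proposition~\ref{lem:extendedpregen} for continuity, not here; but it does no harm.)

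The remaining and main point is $\mathbf\Phi(x,y)e^{-xy}\le C$. Recall $\mathbf\Phi(x,y)=\int(1-e^{-xv}\mathbbm 1_{(0,\infty)}(v))(1-e^{-uy}\mathbbm 1_{(0,\infty)}(u))\,\bm\nu(\dd v,\dd u)$, and by hypothesis $\mathbf\Phi(0,\cdot)\equiv 0\equiv\mathbf\Phi(\cdot,0)$, which — since both factors in the integrand are nonnegative and the first vanishes at $x=0$, the second at $y=0$ — forces $\bm\nu$ to be carried by $(0,\infty)^2$ (no mass on $\{\infty\}\times(0,\infty]$ nor on $(0,\infty]\times\{\infty\}$, as such mass would make $\mathbf\Phi(\cdot,0)$ or $\mathbf\Phi(0,\cdot)$ nonzero). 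Use the elementary bounds $1-e^{-xv}\le (xv)\wedge 1\le (xv)^{1/2}$ wait — more carefully: $1-e^{-xv}\le 1\wedge xv$ and similarly $1-e^{-uy}\le 1\wedge uy$; also $e^{-xy}\le e^{-xy}$. Split the product $e^{-xy}=e^{-xy/2}e^{-xy/2}$ and estimate $(1-e^{-xv})(1-e^{-uy})e^{-xy}\le (1\wedge xv)(1\wedge uy)$, then integrate — this already gives $\mathbf\Phi(x,y)\le \int (1\wedge xv)(1\wedge uy)\bm\nu(\dd v,\dd u)$, but this is not yet bounded uniformly. The trick is: for fixed $(v,u)$, $(1\wedge xv)(1\wedge uy)e^{-xy}$; on the region $xv\le 1$ and $uy\le 1$ it is $\le xv\cdot uy\cdot e^{-xy}$. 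Hmm, I would instead argue via the mixed second derivative. Since $\mathbf\Phi(0,\cdot)=0=\mathbf\Phi(\cdot,0)$ and $\mathbf\Phi\ge 0$ with $\partial_x\partial_y\mathbf\Phi(0,0)=\int v u\,\bm\nu(\dd v,\dd u)<\infty$ by hypothesis, one has $\bm\nu$ carried by $(0,\infty)^2$ with $\int vu\,\bm\nu<\infty$, so in fact $\mathbf\Phi(x,y)\le \int (xv)(uy)\,\bm\nu(\dd v,\dd u)=xy\,\partial_x\partial_y\mathbf\Phi(0,0)$ using $1-e^{-t}\le t$ twice; therefore $\mathbf\Phi(x,y)e^{-xy}\le \partial_x\partial_y\mathbf\Phi(0,0)\cdot xy\,e^{-xy}\le \partial_x\partial_y\mathbf\Phi(0,0)/e$. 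Summing the finitely many bounds yields \eqref{eq:controlmeanC0} with an explicit constant. The main obstacle I anticipate is bookkeeping: correctly justifying that the stated hypotheses force $\bm\nu$ off the boundary and force $c=0$ in the relevant Bernstein functions, and choosing the cleanest elementary inequalities ($1-e^{-t}\le t$, $te^{-t}\le 1/e$, convexity tangent bounds) so that every term reduces to a universal constant times $\sup_t te^{-t}$ or a plain constant.
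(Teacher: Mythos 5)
Your proof is correct and follows essentially the same route as the paper's: estimate the negative part of each term in \eqref{def:LDSpsi} separately, discard the nonnegative terms $x^2\Sigma(y)$, $\mathbf{\Sigma}(x,y)$, $\hat\Sigma(x)y^2$, dominate $\Psi_-$, $\hat\Psi_-$ and $\kappa_-$ by linear functions, observe that the boundary hypotheses force $\bm\nu$ onto $(0,\infty)^2$ so that $\mathbf{\Phi}(x,y)\le xy\int vu\,\bm\nu(\dd v,\dd u)$, and finish with the boundedness of $t\mapsto te^{-t}$. Your detours (decomposing $\Psi=\Sigma_\Psi-\Phi_\Psi$ and taking a tangent to $\kappa$ at $1$) are harmless but unnecessary, since convexity of $\Psi$, $\hat\Psi$ with finite derivative at $0$ already yields the tangent-line bound at $0$, and $z\mapsto\kappa(z)e^{-z}$ is bounded directly because the negative jumps of $\kappa$ are of size at most $1$.
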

Notice that we have had to impose no conditions on the  $\Sigma$, $\hat\Sigma$, $\mathbf{\Sigma}$ and $\kappa$ of \eqref{def:LDSpsi} in order for the conclusion to prevail. 
\begin{proof}[Proof of Proposition~\ref{prop:momentcond}]
Rewrite  \eqref{def:LDSpsi}:\small \begin{equation}\label{orderedLDS}\psi(x,y)=\kappa(xy)+x\Psi(y)+\hat{\Psi}(x)y-\mathbf{\Phi}(x,y)+x^2\Sigma(y)+\mathbf{\Sigma}(x,y)+\hat{\Sigma}(x)y^2, \ \{x,y\}\subset [0,\infty).
\end{equation}\normalsize
In verifying \eqref{eq:controlmeanC0} we may estimate the negative part of each term in the preceding display separately. As far as $\kappa$ is concerned it is almost trivial because even $([0,\infty)\ni z\mapsto \kappa(z)e^{-z})$ is bounded. For the second two terms: $\Psi_-$ and $\hat\Psi_-$ are bounded by a linear (not just affine!) function on $[0,\infty)$ ($\because$ $\Psi(0)=0=\hat{\Psi}(0)$, $|\Psi'(0)|+|\hat{\Psi}'(0)|<\infty$); it remains to note that $([0,\infty)\ni z\mapsto ze^{-z})$ is bounded. Recalling \eqref{symbolxyPhi}, since $\mathbf{\Phi}(0,\cdot)\equiv 0\equiv \mathbf{\Phi}(\cdot,0)$, the measure $\bm\nu$ charges only $(0,\infty)^2$;  using the elementary estimate $1-e^{-z}\leq z$ for $z\in [0,\infty)$ we then find that $$\mathbf{\Phi}(x,y)\leq xy \int vu \bm\nu(\dd v,\dd u),\quad (x,y)\subset [0,\infty);$$ together with $\int vu \bm\nu(\dd v,\dd u)<\infty$ ($\because$ $\frac{\partial^2\mathbf{\Phi} }{\partial x \partial y}(0,0)<\infty$) it is enough to handle the term involving $\mathbf{\Phi}$. For the remaining terms the negative parts vanish and we are done.
\end{proof}
A straightforward application of Theorem \ref{thm:LaplacedualitysemigroupLDS} and Proposition~\ref{prop:momentcond} yields the following family, which should be compared with Theorem~\ref{theorem:tensor-form}\ref{clement:i}. 
\begin{example}\label{exampleSigmaSigma} Take $\{\Sigma,\hat\Sigma\}\subset \spLpnot$ and set 
\begin{equation*}
\psi(x,y):=\mathbbm{1}_{[0,\infty)^2}(x,y)\hat{\Sigma}(x)\Sigma(y),\quad (x,y)\in [0,\infty]^2.
\end{equation*}
Then there exist unique-in-law $\mathbb{D}^{\mathrm{m}\infty}_{[0,\infty]}\cap  \mathbb{D}_{[0,\infty]}^{\mathrm{m}0}$-valued  Markov processes $X$ and $Y$ whose Laplace symbols are 
$\psi_X=\psi\vert_{[0,\infty]\times (0,\infty)}$ and $\psi_Y=\hat{\psi}\vert_{[0,\infty]\times (0,\infty)}$ respectively, and these processes are  in $(0^+\cdot \infty, \infty \cdot 0^+)$-Laplace duality. 
\end{example}
Examples~\ref{example:dual-subo} and~\ref{exampleSigmaSigma} establish that all processes with  symbols $\psi\vert_{[0,\infty] \times (0,\infty)}$ and $\hat{\psi}\vert_{[0,\infty] \times (0,\infty)}$, where $\psi\in \mathsf{LDS}$ is simple as per Definition~\ref{def:LDS},  exist and lie in Laplace duality. Combining them leads to decomposable Laplace dual symbols. By Theorem~\ref{thm:LaplacedualitysemigroupLDS}, provided the conditions set forth therein are met, there exist processes with such decomposable Laplace dual symbols and they too exhibit Laplace duality. A relatively wide subclass of these processes will be studied via stochastic equations in the forthcoming Subsection~\ref{sec:decomposable}, where we will also relax condition~\eqref{eq:controlmeanC0}. 
\begin{example} 
The Laplace dual symbol  obtained by a mixture of Gamma subordinators of Example~\ref{ex:mixedgamma}, $$\psi(x,y)=\mathbbm{1}_{[0,\infty)^2}(x,y)\int_0^1\log(1+xr)\log(1+y/r)r^{\gamma}\dd r, \quad \{x,y\}\subset [0,\infty]^2,$$ 
satisfies the conditions of   Proposition~\ref{prop:momentcond} due to \eqref{eq:momentcondgamma}. Theorem \ref{thm:LaplacedualitysemigroupLDS} in turn guarantees existence and uniqueness in law of $\mathbb{D}^{\mathrm{m}\infty}_{[0,\infty]}\cap  \mathbb{D}_{[0,\infty]}^{\mathrm{m}0}$-valued Markov processes $X$ and $Y$ whose Laplace symbols are 
$\psi_X=\psi\vert_{[0,\infty]\times (0,\infty)}$ and $\psi_Y=\hat{\psi}\vert_{[0,\infty]\times (0,\infty)}$ respectively, and which are in $(0^+\cdot \infty, \infty \cdot 0^+)$-Laplace duality. 
\end{example}

\section{Construction of examples via stochastic equations}\label{sec:examples}
In the present section the processes will be constructed as solutions to stochastic equations driven by specified Poisson random measure (PRM) and Brownian motions on a common filtered probability space.

 We shall restrict attention to  equations for which strong existence and pathwise uniqueness  follow from standard techniques, making automatic the Markov property of the solutions. Particularly well-suited to our context in this regard  is Proposition 1 of Palau and Pardo \cite{zbMATH06836271}, which, building on earlier work by Li and Pu \cite[Theorem 6.1]{zbMATH06098183}, establishes sufficient conditions -- allowing for jump measures with infinite mean -- for the unique existence of a solution that may exhibit explosion.  Except for the setting of non-local decomposable symbols discussed in Subsection \ref{sec:decomposable}, where we duly check for existence and uniqueness, we shall then essentially take for granted that there exist unique (possibly explosive) solutions to the forthcoming jump SDEs. Meticulous explicit checks of these unique existence results are not our focus here, but  references in which such equations are studied will be provided.  

The stochastic equations' approach is especially indicated because it provides us, via It\^o's lemma for discontinuous semimartingales for which see e.g. Jacod and Shyriaev's book \cite[p.~76, Theorem~4.57 and p.~555, Remark~4.5]{jacod1987limit},  with solutions to MPs, and this in turn allows us to apply Theorem~\ref{thm:generators}. 
Moreover, such approach facilitates the identification of processes previously examined in the literature as completely monotone Markov processes with explicit Laplace dual symbols. 

Below, when working with solutions $X$ with variable initial value ${x_0}\in [0,\infty]$, under a single probability $\PP$, then in considering the Laplace duality, strictly speaking, we should pass to the laws $\PP_{x_0}$ of $X$ under $\PP$ when  started from ${x_0}$; 
instead, by a relatively standard abuse of notation, we let the $x_0$ in the expectation operator $\EE_{x_0}$ (or in the  probability $\PP_{x_0}$) -- and sometimes, in case there is risk of ambiguity, in the superscript of the process $X^{x_0}$ -- indicate the initial value. Similarly for the $Y$ processes. In addition, implicitly we insist all the solutions to be $\mathbb{D}_{[0,\infty]}^{\mathrm{m}\infty}$-valued (with certainty, not just a.s.).

The comparison property between solutions with respect to their initial values is also going to be satisfied, in the sense that for all starting values $x_0\leq x_1$ we will have $X^{x_0}_t\leq X^{x_1}_t$ for all $t\in [0,\infty)$ a.s., see e.g. Fu and Li \cite[Theorem 5.5]{zbMATH05676165} for a condition ensuring  this, which will always be met in the equations to be studied. In particular, the processes we will work with  will all be stochastically monotone. 

Recall  from \eqref{the-exponential-domain} that we denote $\mathcal{D}=\mathrm{lin}(\{\ee^y, \ y\in (0,\infty)\})$, also the various classes of L\'evy-Khintchine forms of Table~\ref{table.3}.  For $s\leq t$ from $[0,\infty]$, an integral of the form $\int_s^t\ldots$ is interpreted to be so over the interval $(s,t]\cap (0,\infty)$.

\subsection{Generalisations of continuous-state branching processes}\label{sec:genCB}
We look first at Laplace dual symbols $\psi$ that satisfy
\begin{equation}\label{symbolgencb}
\psi(x,y)=\mathbbm{1}_{[0,\infty)^2}(x,y)\left(x\Psi(y)+x^2\Sigma(y)-\Phi(y)+\kappa(xy)\right),\quad (x,y)\in [0,\infty]^2.
\end{equation} 
Each term in the expression above corresponds to specific population dynamics that has been studied in the literature. A process with Laplace symbol $\psi$ would indeed have a branching part with mechanism $\Psi\in \pkspLp$, a collision part driven by $\Sigma\in \spLpnot$, an immigration encoded by $\Phi\in \pksubo$ and a random environment governed by $\kappa\in \pkLp_1$.

Aside from the CB and CBI processes, which have received considerable attention, many fundamental questions -- such as the classification of boundaries, the existence of Markovian extensions, and the characterization of stationary distributions -- remain largely unexplored for such ``enhanced'' CB processes. We do not undertake a detailed investigation of these issues here, as a case-by-case analysis may be more appropriate/feasible. However, the duality relationship and its implications -- for instance, Corollary~\ref{cor:absorbing-nonsticky} -- provide a natural starting point for addressing such questions, see for instance \cite{foucartvidmar,MR3940763} for works along such lines. 
 
Instead of constructing the entire class of processes with Laplace symbols corresponding to \eqref{symbolgencb} at once -- subject to appropriate boundary conditions -- we will describe several subcases, either previously defined in the literature or straightforward generalizations thereof.

\subsubsection{CBs}\label{para:cb-processes}
We start with basic continuous-state branching processes. As already pointed out in the Introduction, they are the only positive Markov processes in Laplace duality with a  deterministic process. Theorem~\ref{thm.cbs-classical} below, which should be appreciated in conjunction with Theorem~\ref{theorem:tensor-form}\ref{clement:iii} and Example~\ref{example:cb}, is well-known. It naturally arises as a specific case within our framework and will be established using our methodology.

Let $\Psi\in \pkspLp$ be a branching mechanism with L\'evy-quadruplet $(\nu,a,b,c)$:
\[\Psi(y)=\int \big(e^{-uy}-1+uy\mathbbm{1}_{(0,1]}(u)\big)\nu(\dd u)+ay^2-by-c,\quad y\in [0,\infty),\]
a piece of notation that is to remain in effect throughout the remainder of this subsection. For the purposes of the present Paragraph~\ref{para:cb-processes} only, introduce the Laplace dual symbol $\psi\in \mathsf{LDS}$ by asking that
\begin{equation*}
\psi(x,y):=\mathbbm{1}_{[0,\infty)^2}(x,y)x\Psi(y), \quad (x,y)\in [0,\infty]^2.
\end{equation*}

Consider the stochastic integral equation for the process $\tilde{X}$, $x_0\in [0,\infty)$ indicating the initial value,
\begin{equation}
\label{SDECB}
\begin{split}
\tilde{X}_t
= x_0
&+ \int_0^t \sqrt{2a\tilde{X}_s}\,\dd B_s
+ b\int_0^t \tilde{X}_s\,\dd s \\
&+ \int_0^t \int_{0}^{\tilde{X}_{s-}} \int_{0}^1
u\,\bar{\mathcal{N}}(\dd s,\dd r,\dd u)
+ \int_0^t \int_{0}^{\tilde{X}_{s-}} \int_{1}^{\infty}
u\,\mathcal{N}(\dd s,\dd r,\dd u)
\end{split}
\end{equation}
for $t\in [0,\tilde\zeta)$, with $\tilde \zeta=\inf\{t>0: \tilde{X}_{t-}\text{ or } \tilde{X}_t \notin [0,\infty)\}$ the lifetime, $B$ a Brownian motion, $\mathcal{N}(\dd s,\dd r, \dd u)$  a PRM with intensity $\dd s \dd r \nu(\dd u)$, $\bar{\mathcal{N}}(\dd s,\dd r, \dd u):=\mathcal{N}(\dd s,\dd r, \dd u)-\dd s \dd r \nu(\dd u)$ its compensated version\footnote{Here and below PRM with Lebesgue intensities are taken to live on $(0,\infty)$ in the factor corresponding to that intensity. So, to be precise, $\mathcal{N}$ is a PRM on $((0,\infty)^3,\BB_{(0,\infty)^3})$ with intensity  $ \mathscr{L}^2\times\nu$, where $\mathscr{L}$ is Lebesgue measure on $((0,\infty),\BB_{(0,\infty)})$.}. 
It admits a unique strong solution $\tilde{X}$, see e.g. Fu and Li \cite{zbMATH05676165} and Ji and Li \cite[Theorem 3.1]{zbMATH07189529}, that we set equal to $\infty$ at and after $\tilde \zeta$ (to be implicitly understood henceforth without emphasis), and which is a CB process with branching mechanism $\tilde{\Psi}:=\Psi+c$. The process $X$ that results from $\tilde{X}$ by killing (sending it to $\infty$) at the random time $\zeta:=\inf\{t\in [0,\infty): \int_{0}^{t}c\tilde X_s\dd s> \mathbbm{e}\}$, with $\mathbbm{e}$ an independent mean one exponential random variable, is a CB process with mechanism $\Psi$ and initial value $x_0$, cf. Proposition~\ref{lemma:reductions}. For $x_0=\infty$ we consider $X$ as being identically equal to $\infty$.

The deterministic process $Y$ with initial value $y_0$ is introduced for $y_0\in (0,\infty)$ as the unique locally bounded measurable  solution to the integral equation \begin{equation}\label{eq:odeY}
Y_t=y_0-\int_{0}^{t}\Psi(Y_s)\dd s, \ \ t\in [0,\infty);\end{equation} for $y_0=\infty$ we take it identically equal to $\infty$; finally, for $y_0=0$, we define it by taking the pointwise $\downarrow$ limit of the processes $Y^{y_0}$ as $y_0\downarrow 0$. The fact that there exists a unique solution to the integral equation \eqref{eq:odeY} and that the latter stays bounded away from $0$ and $\infty$ has been established in \cite[Section 4]{zbMATH03294035}; we do not repeat this here.

\begin{theorem}\label{thm.cbs-classical}
We have  $\psi_X=\psi\vert_{[0,\infty]\times (0,\infty)}$ and $\psi_Y=\hat\psi\vert_{[0,\infty]\times (0,\infty)}$. Besides, $X$ is in $(0^+\cdot \infty, \infty \cdot 0^+)$-Laplace duality with $Y$.
\end{theorem}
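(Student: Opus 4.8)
The strategy is to verify the hypotheses of Theorem~\ref{thm:generators} and then invoke it (together with Remark~\ref{rmk:gen-to-duality}), since the Laplace dual symbol here is $\psi(x,y)=\mathbbm{1}_{[0,\infty)^2}(x,y)x\Psi(y)$, which plainly vanishes at infinity and lies in $\mathsf{LDS}$. First I would establish the claims about the domains and the Laplace symbols. For the process $X$: apply It\^o's lemma for discontinuous semimartingales to $\ee^y(\tilde X_t)=e^{-y\tilde X_t}$ along the SDE \eqref{SDECB}, obtaining that $\big(e^{-y\tilde X_{t\wedge\tilde\zeta}}-\int_0^{t\wedge\tilde\zeta}\tilde\Psi(y)\tilde X_s e^{-y\tilde X_s}\dd s\big)_t$ is a local martingale; a standard localisation/boundedness argument (the integrand is bounded since $z\mapsto ze^{-yz}$ is bounded on $[0,\infty)$) upgrades it to a genuine martingale, and the independent killing at rate $c\tilde X_s$ introduced via Proposition~\ref{lemma:reductions} turns $\tilde\Psi=\Psi+c$ back into $\Psi$, giving that $(X,\PP_{x_0})$ solves the MP for $(\XX,\delta_{x_0})$ with $\XX\ee^y(x)=x\Psi(y)e^{-xy}$ for $x\in(0,\infty)$ and $x\Psi(y)$ at $x\in\{0,\infty\}$; this exhibits $\XX_\psi$ as the restriction of the generator of $X$ to $\DD$, so $\DD\subset D_\XX$ and $\psi_X=\psi\vert_{[0,\infty]\times(0,\infty)}$. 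For the deterministic process $Y$, differentiating $e^{-xY_t}$ using \eqref{eq:odeY} gives $\frac{\dd}{\dd t}e^{-xY_t}=x\Psi(Y_t)e^{-xY_t}$, i.e. $(Y,\PP^{y_0})$ solves the MP for $(\YY,\delta_{y_0})$ with $\YY\ee_x(y)=x\Psi(y)e^{-xy}$, hence $\DD\subset D_\YY$ and $\psi_Y=\hat\psi\vert_{[0,\infty]\times(0,\infty)}$; note $\psi(x,y)=x\Psi(y)=\hat\psi(y,x)$ confirms the duality of symbols.

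Next I would check non-explosion of $X$ and non-stickiness of the boundaries. Since $Y$ stays strictly between $0$ and $\infty$ for positive finite starting values and is $C^1$, there is a.s.\ no continuous explosion for $Y$ at $\infty$, and $Y$ is trivially $\mathbb{D}_{[0,\infty]}^{\mathrm{m}0}$-valued (it never hits $0$ from $(0,\infty)$, and once at $0$ or $\infty$ it stays). For $X$: the CB process with branching mechanism $\Psi$ is non-explosive (conservative) from finite starting points precisely under the mild classical condition on $\Psi$, but in any case one can argue non-stickiness of $\infty$ and $0$ directly --- $0$ is absorbing for $X$ by construction of the SDE (the integrand vanishes at $\tilde X_{s-}=0$) and $\infty$ is absorbing by the killing convention; non-stickiness at $\infty$ for $X$ follows because $X$ is $\mathbb{D}_{[0,\infty]}^{\mathrm{m}\infty}$-valued and, from positive finite levels, does not reach $\infty$ other than by explosion, which for the present purposes can be handled exactly as in Theorem~\ref{thm:LaplacedualitysemigroupLDS} (the argument there, passing to the limit $y\downarrow 0$ in the integrated martingale identity using $\psi(\cdot,0)\equiv 0$ and continuity of $\psi$ at zero, shows conservativeness). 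Then the hypotheses of Theorem~\ref{thm:generators}\ref{thm:generators:II} are in hand: take $\sigma\equiv\infty$ and $\tau=\tau_n^+\wedge\tau_\epsilon^-$; condition \eqref{eq:weird-conditionLD} holds trivially since $\phi(x,y)=x\Psi(y)e^{-xy}$ satisfies $\sup_{(x,y)\in[0,\infty)^2}\phi_-(x,y)<\infty$ (as $\Psi_-$ grows at most linearly and $ze^{-z}$ is bounded, cf.\ Proposition~\ref{prop:momentcond}); $\widehat{\eqref{phi:0}}$ is obvious, $\widehat{\eqref{phi:variation'}}$ holds because $\psi(\cdot,0)\equiv 0$, and \eqref{phi:variation} is immediate.

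Applying Theorem~\ref{thm:generators}\ref{thm:generators:II} then yields $\EE_{x_0}[e^{-X_t y_0}]=\EE^{y_0}[e^{-x_0 Y_t}]$ for all $t$ and all $x_0\in[0,\infty]$, $y_0\in(0,\infty]$; the case $y_0=0$ follows by Remark~\ref{rmk:gen-to-duality}, passing to the limit $y_0\downarrow 0$ and using the weak-$[0,\infty)$ continuity at $0$ of $Y$ (which holds because $Y^{y_0}\downarrow Y^0$ pointwise and $\EE^{y_0}[e^{-x_0 Y_t}]$ is continuous in the relevant sense), i.e.\ \eqref{continuityat0LT}. This gives the asserted $(0^+\cdot\infty,\infty\cdot 0^+)$-Laplace duality. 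I do not expect a genuine obstacle here --- the whole point is that the deterministic dual makes the sign of $\phi$ irrelevant and \eqref{eq:weird-conditionLD} essentially free; the only mildly delicate point is the careful justification of the martingale (not merely local-martingale) property for $e^{-y\tilde X_t}$ under the SDE~\eqref{SDECB} when the L\'evy measure $\nu$ is infinite, and the handling of the boundary point $0$ for $X$ (ensuring $\phi(0,\cdot)$ plays no role and that the killing at rate $c\tilde X_s$ is correctly incorporated), both of which are routine and parallel to arguments already carried out in the proof of Theorem~\ref{thm:LaplacedualitysemigroupLDS} and in \cite{zbMATH05676165,zbMATH07189529}.
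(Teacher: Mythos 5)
Your first half (the It\^o-plus-Proposition~\ref{lemma:reductions} argument giving $\DD\subset D_\XX\cap D_\YY$ and the identification of the symbols, and the limiting step for $y_0=0$) is essentially the paper's argument. The duality step, however, has a genuine gap: you route it through Theorem~\ref{thm:generators}\ref{thm:generators:II}, whose hypotheses fail for a general $\Psi\in\pkspLp$. The theorem is stated with no restriction on $\Psi$: one may have $\Psi(0)=-c<0$ (killing) and/or $\Psi'(0+)=-\infty$ or $\int_{0+}\dd u/|\Psi(u)|<\infty$, in which case the CB process $X$ reaches $\infty$ with positive probability, so the requirement ``$X$ has $\infty$ non-sticky'' is simply false, and the conservativeness argument you borrow from Theorem~\ref{thm:LaplacedualitysemigroupLDS} is unavailable (it needs \eqref{eq:controlmeanC0} and $\psi(\cdot,0)\equiv 0$, neither of which holds when $c>0$ or $\Psi'(0+)=-\infty$). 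Your verification of \eqref{eq:weird-conditionLD} is likewise flawed: $\sup_{x}x\Psi_-(y)e^{-xy}=\Psi_-(y)/(ey)$, which blows up as $y\downarrow 0$ whenever $c>0$ or the mean is infinite, so $\sup_{(x,y)}\phi_-(x,y)<\infty$ is false in general; the appeal to Proposition~\ref{prop:momentcond} presupposes $\Psi(0)=0$ and $|\Psi'(0)|<\infty$, which are not assumed here. Also $\widehat{\eqref{phi:variation'}}$, i.e. $\psi(\cdot,0)\equiv 0$, fails when $c>0$ (only the remark following Theorem~\ref{thm:generators} would rescue that particular point, since the deterministic $Y$ does not jump to zero).

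The paper avoids all of this by invoking Theorem~\ref{thm:generators}\ref{thm:generators:0} with $\sigma\equiv\infty\equiv\tau$, exploiting precisely that the dual is deterministic. There the only martingale requirement beyond the two MPs concerns $\big(\mathbbm{1}_{\{Y_t<\infty\}}-\int_0^t\phi(0,Y_s)\dd s\big)_{t\geq 0}$, which is identically $1$ because $Y$ never explodes and $\phi(0,\cdot)\equiv 0$ (note: no condition on $X$ at $\infty$, which is essential since $X$ may explode or be killed). Condition \eqref{eq:weird-conditionLD} is then checked pathwise rather than via a uniform bound on $\phi_-$: using $xye^{-xy}\leq 1$ one gets $\int_0^T\int_0^T|X_s\Psi(Y_t)|e^{-X_sY_t}\dd s\,\dd t\leq T\int_0^T|\Psi(Y_t)|/Y_t\,\dd t<\infty$, because the deterministic flow started from $y_0>0$ stays in a compact subset of $(0,\infty)$ on $[0,T]$ and $y\mapsto|\Psi(y)|/y$ is locally bounded there. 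As written, your argument proves the statement only for conservative CB processes with $\Psi(0)=0$ and $\Psi'(0)>-\infty$, not the general case claimed.
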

\begin{proof}
Work under $0^+\cdot \infty$, $\infty \cdot 0^+$.  

In order to establish Laplace duality we are going to use up Theorem~\ref{thm:generators}\ref{thm:generators:0}, checking most of the first claim en passant as well. 
Fix the initial values $x_0\in [0,\infty]$ and $y_0\in [0,\infty]$.

Because $Y$ is defined for starting value $y_0=0$ by taking limits, in verifying that 
$$\EE_{x_0}[e^{-X_t{y_0}}]=\EE^{y_0}[e^{-{x_0}Y_t}],\quad t\in [0,\infty),$$ we may assume that ${y_0}>0$. The cases when $x_0=\infty$ and/or $y_0=\infty$ are also trivial, so we may as well and do take $x_0$ and $y_0$ finite. 

We first study the deterministic process $Y$. The observation that $Y$ solves the martingale problem for $(\YY,\delta_{y_0})$  follows by a straightforward integration by parts. In particular, for $x\in (0,\infty)$, $$\lim_{t\downarrow 0}\frac{\EE^{y_0}[e^{-xY_t} ]-e^{-xy_0}}{t}=\lim_{t\downarrow 0}\frac{x}{t}\int_0^t\Psi(Y_s)e^{-xY_s}\dd s=x\Psi(y_0)e^{-xy_0}=\psi(x,y_0)e^{-xy_0}.$$ It is also clear that $Y$ has $0$ non-sticky. Besides, since $Y$ does not reach $\infty$ and since $\phi(0,\cdot)=\psi(0,\cdot)=0$ identically ($\phi$ as in \eqref{thm:generators-phi}), the process $(\mathbbm{1}_{\{Y_t<\infty\}}-\int_0^t\phi(0,Y_s)\dd s)_{t\in [0,\infty)}$ of Theorem~\ref{thm:generators}\ref{thm:generators:0} reduces to the constant $1$, which is trivially a martingale. (However, by contrast, the process $(\mathbbm{1}_{\{X_t<\infty\}}-\int_0^t\phi(X_s,0)\dd s)_{t\in [0,\infty)}=(\mathbbm{1}_{\{X_t<\infty\}}-\Psi(0)\int_0^tX_s\dd s)_{t\in [0,\infty)}$ is not always a martingale, e.g. we may have $\Psi(0)=0$ with $X$ explosive.)

Notice next that  $\mathcal{X}\ee^{y}$ is bounded for all $y\in (0,\infty)$. Since the process $\tilde X$ is a  solution to \eqref{SDECB}, by It\^o's lemma and Proposition~\ref{lemma:reductions},  we see that $X$ solves the martingale problem for $(\XX,\delta_{x_0})$. In particular, for $ y\in (0,\infty)$, $$\lim_{t\downarrow 0}\frac{\EE_{x_0}[e^{-X_ty} ]-e^{-x_0y}}{t}=\lim_{t\downarrow 0}\frac{\Psi(y)}{t}\!\int_0^t\!\!\!\EE_{x_0}\left[X_se^{-X_sy}\right]\dd s=x_0\Psi(y)e^{-x_0y}=\psi(x_0,y)e^{-x_0y}.$$

Finally we check \eqref{eq:weird-conditionLD} with $\sigma\equiv \infty\equiv \tau$. Using the bound $xye^{-xy}\leq 1$ for $(x,y)\in [0,\infty]\times (0,\infty)$, we see that
\[\int_0^T\int_0^T|X_s\Psi(Y_t)|e^{-X_sY_t}\dd s \dd t\leq T\int_0^{T}\frac{\vert\Psi(Y_t)|}{Y_t}\dd t,\quad T\in [0,\infty).\]
But $(0,\infty)\ni y\mapsto \frac{\vert \Psi(y)|}{y}$ is locally bounded and we are done.

Thus all the conditions of Theorem \ref{thm:generators}\ref{thm:generators:0} have been verified and the proof of the Laplace duality is thereby completed.

As for the statement concerning the generators, the only non-trivial computation that is yet to be performed is 
\begin{align*}
\lim_{t\downarrow 0}\frac{\EE^0[e^{-xY_t} ]-e^{-x\cdot 0}}{t}&=\lim_{t\downarrow 0}\lim_{y_0\downarrow 0}\frac{e^{-xY_t^{y_0}} -e^{-xy_0}}{t}=\lim_{t\downarrow 0}\lim_{y_0\downarrow 0}\frac{x}{t}\int_0^t\Psi(Y_s^{y_0})e^{-xY_s^{y_0}}\dd s\\
&=\lim_{t\downarrow 0}\frac{x}{t}\int_0^t\Psi(Y_s^0)e^{-xY_s^0}\dd s=x\Psi(0)=\psi(x,0)e^{-x\cdot 0},\quad x\in (0,\infty), 
\end{align*}
on noting that $Y^0$ is continuous at zero (among more elementary observations), which follows e.g. from the duality relation.
\end{proof}
\subsubsection{LCBs}\label{subsec:LCB} Continuous-state branching processes with quadratic competition, also called logistic CB (LCB) processes, were introduced by Lambert \cite{Lambert}. These processes can, for initial values $x_0\in [0,\infty)$, be seen as solutions to the SDE \eqref{SDECB} with an extra negative quadratic drift term $-\int_0^{t}\hat{a}\tilde X_s^2\dd s$, where $2\hat{a}\in (0,\infty)$ is the competition parameter. The extraneous killing, involving the parameter $c\in [0,\infty)$, is added as in the previous subsection, to get the process $X$ from $\tilde X$. It has $$\psi_X(x,y)=\mathbbm{1}_{[0,\infty)}(x)\left(x\Psi(y)+\hat{a}x^2y\right),\quad (x,y)\in [0,\infty]\times (0,\infty),$$ as its Laplace symbol. One still calls  $\Psi$  the branching mechanism of the LCB process $X$.

The dual process $Y$ is, for initial value $y_0\in [0,\infty)$, the unique strong non-explosive absorbed-at-zero solution to the stochastic equation, 
\[Y_t=y_0+\int_0^t\sqrt{2\hat{a}Y_s}\dd W_s-\int_0^t\Psi(Y_s)\dd s,\quad t\in [0,\infty),\]
where $W$ is a Brownian motion. For $y_0=\infty$ the process $Y$ is just identically equal to $\infty$.

Actually, for the Laplace dualities to emerge in their full form one is led to extend the processes $X$ and $Y$ at $\infty$ and $0$, respectively, in a suitable way. It means here also extension of paths after hitting these two points, not just changing $\PP_\infty$ and $\PP^0$. We refer to \cite[esp. Theorems~3.3-3.5]{MR3940763} for the details of these non-trivial extensions and recast below, in our framework, only the end-result findings. Noting that in  \cite{MR3940763} the competition parameter $2\hat{a}$ and the killing term $c$ are denoted by $c$ and $\lambda$ respectively, put  then  
 $$\mathcal{E}:=\int_0^{\varepsilon}\frac{1}{x}\exp\left(\int_x^{\varepsilon}\frac{\Psi(u)}{\hat{a}u}\dd u\right)\dd x$$ for an arbitrary fixed $\varepsilon\in (0,\infty)$ \cite[Theorem~3.1]{MR3940763}.  The quantity depends on the choice of $\varepsilon$, but whether or not it is finite does not, and it is only this property that we refer to in the statement to follow.   We also use up Feller's terminology on the classification of boundaries as exit, entrance etc., see e.g. Karlin and Taylor's book \cite[Chapter 15, Section 6]{zbMATH03736679}, in presenting  (without proof) the main findings of \cite[esp. Theorems~3.3-3.5]{MR3940763}:
\begin{theorem}\label{theorem:clement-lcb}
One has the following duality assertions.
 \begin{enumerate}[(i)]
     \item  Assume $\mathcal{E}=\infty$. There exists a unique Feller extension $X'$ of $X$ at $\infty$ such that $\infty$ is an entrance boundary and such that $X'$ in  $(0^+\cdot\infty,\infty^-\cdot 0)$-Laplace duality with $Y$.
 \item Assume $\mathcal{E}<\infty$ and $\frac{c}{\hat{a}}<1$. There exists a unique Feller extension $X'$ of $X$ at $\infty$ such that $\infty$ is a regular instantaneous reflecting boundary and such that $X'$ is in $(0^+\cdot\infty,\infty^-\cdot 0)$-Laplace duality with $Y$. 
\item Assume $\frac{c}{\hat{a}}\geq 1$ (automatically  $\mathcal{E}<\infty$).  There exist a unique Feller extension $X'$ of $X$ at $\infty$ with $\infty$ an exit boundary, and a unique extension $Y'$ of  $Y$ at $0$ with $0$ an entrance boundary, such that $X'$ is in $(0\cdot\infty^-,\infty\cdot 0^+)$-Laplace duality with  $Y'$. \qed
 \end{enumerate}
\end{theorem}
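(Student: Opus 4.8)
The plan is to treat the three regimes by the same recipe, which is to combine the construction of the Fellerian extension of the logistic CB process $X'$ at $\infty$ with the identification of the appropriate dual diffusion $Y$ (in cases i) and ii)) or extended diffusion $Y'$ (in case iii)) via Theorem~\ref{thm:generators}, using the precise form of the Laplace symbol $\psi_{X'}(x,y)=\mathbbm{1}_{[0,\infty)}(x)(x\Psi(y)+\hat a x^2 y)$ and its dual $\hat\psi(y,x)=\hat a y^2(\partial^2/\partial y^2)-\Psi(y)(\partial/\partial y)$ acting on $\mathcal{D}$. First I would recall from \cite{MR3940763} the existence and uniqueness of the Fellerian extension $X'$ in each case, together with the corresponding classification of $\infty$ as entrance (case i)), regular reflecting (case ii)) or exit (case iii)); the behaviour of $\mathcal{E}$ governs whether the scale function of the auxiliary diffusion accumulates near $0$, hence — by the usual Lamperti-type time-change linking the LCB to a diffusion — whether $\infty$ is accessible/entrance for $X'$. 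I would then record that on $\mathcal{D}$ the generator of $X'$ restricts to $\XX:=\XX_{\psi}$ with $\psi$ as above, so that $X'$ solves the martingale problem for $(\XX,\delta_{x_0})$ for every $x_0\in[0,\infty]$, and that $X'$ has sample paths in $\mathbb{D}_{[0,\infty]}^{\mathrm{m}\infty}$ (cases i), ii)) and, in case iii), also gets absorbed at $\infty$ so lies in $\mathbb{D}_{[0,\infty]}^{\mathrm{m}0}$ after the further extension there — while in cases i), ii) one checks $0$ is non-sticky (resp.\ absorbing) directly from the SDE.

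Next I would handle the dual side. In cases i) and ii) $Y$ is the non-explosive diffusion absorbed at $0$ solving $Y_t=y_0+\int_0^t\sqrt{2\hat a Y_s}\,\dd W_s-\int_0^t\Psi(Y_s)\,\dd s$; by It\^o's lemma $Y$ solves the martingale problem for $(\YY,\delta_{y_0})$ with $\YY=\XX_{\hat\psi}$, and since $\psi$ vanishes at infinity and at zero in each coordinate, the function $\phi(x,y)=\psi(x,y)e^{-xy}=(x\Psi(y)+\hat a x^2y)e^{-xy}$ satisfies \eqref{phi:0}, \eqref{phi:2}, \eqref{phi:variation}, $\widehat{\eqref{phi:variation'}}$ and the analogous hatted conditions, the only slightly delicate point being the uniform bounds \eqref{eq:weird-conditionLD}–\eqref{eq:weird-condition++LD} along the stopping times $\tau_n^+\wedge\tau_\epsilon^-$, which follow from $xye^{-xy}\le1$, $x^2y^2e^{-xy}\le4e^{-2}$ and local boundedness of $y\mapsto\Psi(y)/y$ on $(0,\infty)$ — exactly as in the proof of Theorem~\ref{thm.cbs-classical}. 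Feeding this into Theorem~\ref{thm:generators}\ref{thm:generators:II} (with the roles set so that $X'$ plays the role with $\infty$ non-sticky and $Y$ the role absorbed at $0$, noting there is a.s.\ no continuous explosion for the conservative diffusion) gives $\EE_{x_0}[e^{-X'_ty_0}]=\EE^{y_0}[e^{-x_0Y_t}]$ for all $t$, i.e.\ $(0\cdot\infty^-,\infty^-\cdot0)$-Laplace duality in the notation of the theorem; transposition together with the weak continuity of $X'$ at $\infty$ (entrance/reflecting) and at $0$ then upgrades this, via Theorem~\ref{thm:completemonotonicity}\ref{thmcompletemonotonicity4} and Remark~\ref{rmk:gen-to-duality}, to $(0^+\cdot\infty,\infty^-\cdot0)$-Laplace duality. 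In case iii) the diffusion $Y$ has $\infty$ as an exit boundary so one extends it to $Y'$ with $0$ an entrance boundary (again using $\mathcal{E}<\infty$ and $c/\hat a\ge1$, cf.\ \cite{MR3940763}); both $X'$ and $Y'$ then have $\infty$ non-sticky, $X'$ is $\mathbb{D}_{[0,\infty]}^{\mathrm{m}0}$-valued and $Y'$ is $\mathbb{D}_{[0,\infty]}^{\mathrm{m}0}$-valued, so Theorem~\ref{thm:generators}\ref{thm:generators:III} applies — the conditions \eqref{phi:0}, $\widehat{\eqref{phi:0}}$, \eqref{phi:4}, \eqref{phi:variation'}, $\widehat{\eqref{phi:variation'}}$ are immediate from the displayed form of $\phi$, and \eqref{eq:weird-conditionLD}–\eqref{eq:weird-condition++LD} along $\sigma_\delta^-,\tau_\epsilon^-$ reduce to the same elementary bounds — yielding $(0\cdot\infty^-,\infty\cdot0^+)$-Laplace duality between $X'$ and $Y'$.

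Uniqueness of the extensions is the one place where I would lean entirely on \cite{MR3940763}: the Fellerian extension at a given boundary point with prescribed boundary type is unique, and likewise for the entrance extension of the diffusion, so the only freedom is in the boundary classification, which is forced by $\mathcal{E}$ and $c/\hat a$. The main obstacle I anticipate is bookkeeping around the conventions: one must be careful that in each regime the pair $(\star_1,\star_2)$ matched by Theorem~\ref{thm:generators} (which always produces $(0^+\cdot\infty,\infty\cdot0^+)$) is correctly transported, via transposition and via the weak-continuity/absorptivity dictionary of Tables~\ref{table.1}–\ref{table.2} and Corollary~\ref{cor:absorbing-nonsticky}, to the stated conventions $(0^+\cdot\infty,\infty^-\cdot0)$ in i), ii) and $(0\cdot\infty^-,\infty\cdot0^+)$ in iii); the secondary obstacle is verifying that the extended process $X'$ genuinely still solves the martingale problem for $\XX_\psi$ up to and including the excursions from $\infty$ (reflecting case) or after absorption at $\infty$ (exit case), which is where the explicit description of the extension in \cite{MR3940763} — in particular that the boundary $\infty$ carries no extra generator mass, i.e.\ $\psi(\infty,\cdot)\equiv0$ — is used in an essential way. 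Everything else is a routine application of It\^o's formula, optional stopping along the localising sequences, and bounded convergence, precisely parallel to Theorems~\ref{thm.cbs-classical} and~\ref{thm:LaplacedualitysemigroupLDS}.
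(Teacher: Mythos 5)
The paper does not actually prove this statement: it is imported wholesale from \cite[Theorems 3.3, 3.4, 3.5]{MR3940763} (note the \qed attached to the statement and the preceding sentence ``we recast below, in our framework, only the end-result findings''), so your attempt to re-derive it from Theorem~\ref{thm:generators} is a genuinely different route. Unfortunately, as it stands it has real gaps precisely where the content of the theorem lies.

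First, case ii). Theorem~\ref{thm:generators} requires the $X$-process to be a $\mathbb{D}_{[0,\infty]}^{\mathrm{m}\infty}$-valued solution of the MP, i.e.\ $\infty$ strongly absorbing along paths. When $\mathcal{E}<\infty$ the LCB explodes with positive probability and the reflecting extension $X'$ leaves $\infty$ afterwards, so this standing hypothesis fails and part \ref{thm:generators:II} cannot be invoked for $X'$. Moreover, the assertion that $X'$ still solves the MP for $\XX_\psi$ ``up to and including the excursions from $\infty$'' is exactly the non-trivial boundary analysis: It\^o's formula and optional stopping only give the Dynkin martingales up to the explosion time, and showing that reflection at $\infty$ contributes no boundary term is what \cite{MR3940763} proves -- at which point the duality is essentially already established there, so your argument does not produce an independent proof of ii). (Note also that applying \ref{thm:generators:II} to the \emph{minimal} process instead would, if anything, give duality of the absorbed process with $Y$, which is incompatible with the claimed duality of $X'$ with the same $Y$ on interior points when explosion has positive probability; the let-out is that $\infty$ is sticky for the minimal process, so \ref{thm:generators:II} does not apply to it either.)

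Second, case iii). The hypotheses of Theorem~\ref{thm:generators}\ref{thm:generators:III} demonstrably fail here, so your claim that they are ``immediate from the displayed form of $\phi$'' is not correct: (a) $\ref{thm:generators:III}$ requires $\infty$ non-sticky for both processes, whereas $\infty$ is an exit boundary for $X'$, reached with positive probability and absorbing; (b) $\widehat{\eqref{phi:variation'}}$ asks that $\psi(\cdot,0)\equiv 0$, but for the LCB $\psi(x,0)=x\Psi(0)=-cx$, and in regime iii) one has $c\geq \hat a>0$, so this condition fails (and, unlike for part \ref{thm:generators:II}, the paper offers no remark relaxing it for \ref{thm:generators:III}); swapping the roles of $X'$ and $Y'$ does not help because $X'$ explodes continuously. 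The identities that make case iii) interesting, $\PP_x(X'_t<\infty)=\EE^0[e^{-xY'_t}]$ and $\PP_x(X'_t=0)=\EE^\infty[e^{-xY'_t}]$, couple the explosion of $X'$ to the entrance law of $Y'$ at $0$ and are not reachable by an argument confined to interior starting points plus soft limiting; identifying the abstract weakly continuous extension of Proposition~\ref{prop:continuousextensionat0} with the concrete entrance extension $Y'$ of \cite{MR3940763} would itself require an argument you do not supply. Finally, a smaller slip: in cases i)--ii) you assert that \ref{thm:generators:II} delivers ``$(0\cdot\infty^-,\infty^-\cdot 0)$-Laplace duality in the notation of the theorem'' -- Theorem~\ref{thm:generators} is formulated under $(0^+\cdot\infty,\infty\cdot 0^+)$ and only yields \eqref{eq:ld-apriori} for the given starting pair; and with $c>0$ the condition $\widehat{\eqref{phi:variation'}}$ fails even in case i), where you need the remark following Theorem~\ref{thm:generators} (continuity of $Y$, $y_0>0$) rather than the condition itself. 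The upgrade via weak continuity at the boundaries and Theorem~\ref{thm:completemonotonicity}\ref{thmcompletemonotonicity4} is the right idea for case i), but for ii) and iii) the proposal does not close the argument without importing exactly the results it is meant to reprove.
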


 More general drifts than the negative quadratic term have been considered in the literature.  We refer for instance to the works of Le and Pardoux \cite{zbMATH07553689}, Li et al. \cite{zbMATH07120715} and Rebotier \cite{rebotier} for studies of the solution to the stochastic equation \eqref{SDECB} incorporating an additional nonlinear drift term $-\int_0^tI(\tilde X_s)\dd s$. They are referred to in the literature as CB processes with \textit{drift-interaction}. When $I=\hat{\Psi}\in\pkspLp$, the CB with drift-interaction corresponds to a Laplace dual symbol $\psi\in \mathsf{LDS}$ having $$\psi(x,y)=x\Psi(y)+y\hat{\Psi}(x), \quad (x,y)\in [0,\infty)^2.$$ The LCB case is recovered with $\hat{\Psi}(x)=\hat{a}x^2$. The setting with a general $\hat{\Psi}\in \pkspLp$ will be studied in Foucart and Rebotier \cite{foucartrebotier}.  

\subsubsection{CBCs and CBCIs}\label{subsec:CBC-CBCI} We now describe a class of processes in which the quadratic competition term is generalized to a dynamics of  \textit{pairwise collisions} \cite{foucartvidmar}. 

Consider the stochastic equation \eqref{SDECB} with the following additional terms (we drop $\tilde{\phantom{x}}$ from $\tilde X$, because, for consistency with \cite{foucartvidmar}, we shall take $c=0$ here):
\begin{align}\label{eq:collision}
    \int_{0}^{t}\sqrt{2a'}X_s\dd B_s'-\int_{0}^{t}b'X_s^{2}\dd s +\int_{0}^{t}\int_{0}^{X_{s-}}\int_{0}^{X_{s-}}\int_{0}^{\infty}u\bar{\mathcal{M}}(\dd s, \dd r_1,\dd r_2, \dd u),
\end{align}
where $\{a',b'\}\subset[ 0,\infty)$, $B'$ is a Brownian motion and $\mathcal{M}(\dd s, \dd r_1,\dd r_2, \dd u)$ is a PRM with intensity $\dd s\,\dd r_1\,\dd r_2\, \nu'(\dd u)$,  $\nu'$ a L\'evy measure on $\BB_{(0,\infty)}$. The stochastic drivers $B,B',\mathcal{N},\mathcal{M}$ are assumed to be mutually independent. The process $X$, solution to the SDE ``\eqref{SDECB}$+$\eqref{eq:collision}'', is called a continuous-state branching process with \textit{collisions} (CBC), branching mechanism $\Psi$ and collision mechanism $\Sigma\in \spLpnot$, 
\[\Sigma(y):=\int \big(e^{-uy}-1+uy\big)\nu'(\dd u)+a'y^2+b'y, \quad y\in [0,\infty).\] For $x_0=\infty$ we insist that $X\equiv\infty$.
Existence and uniqueness (until the first explosion time) of the solution to \eqref{SDECB}+\eqref{eq:collision} was shown in \cite{foucartvidmar}. Furthermore, \cite[Corollary 3.1]{foucartvidmar} ensures that $\mathcal{D}=\mathrm{lin}(\{\ee^y:y\in (0,\infty)\})$ lies in the domain of the generator of $X$ and \cite[Eq.~(2.18)]{foucartvidmar} that  $\psi_X=\psi\vert_{ [0,\infty]\times (0,\infty)}$, where $\psi\in \mathsf{LDS}$ is given by
\begin{equation*}
\psi(x,y)=\mathbbm{1}_{[0,\infty)^2}(x,y)\left(x\Psi(y)+x^2\Sigma(y)\right), \quad (x,y)\in [0,\infty]^2.
\end{equation*}
The following duality result  was  established in \cite[esp. Proposition 2.18]{foucartvidmar} and will not be proved here.

\begin{proposition}\label{prop:laplacedualityCBC}
If $X$ does not explode, then $X$ is in   $(0^{+}\cdot \infty, \infty\cdot 0^+)$-Laplace duality with  the diffusion process $Y$, whose Laplace symbol is $\hat{\psi}\vert_{[0,\infty]\times (0,\infty)}$, and which is the unique non-explosive absorbed-at-zero strong solution to the SDE, 
$$Y_t=y_0+\int_0^t\sqrt{2\Sigma(Y_s)}\dd W_s-\int_0^t\Psi(Y_s)\dd s,\quad t\in [0,\infty),$$ $y_0\in [0,\infty)$ being the initial value,  $W$ a Brownian motion (for $y_0=\infty$ we take $Y\equiv \infty$).\qed
\end{proposition}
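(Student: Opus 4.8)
The assertion is essentially a packaging of two ingredients, one of which is already in the literature: Proposition~2.18 of \cite{foucartvidmar} gives precisely the displayed duality under the non-explosion hypothesis. Since we are invited to assume everything stated in the excerpt, the cleanest route is to \emph{re-derive} the statement from Theorem~\ref{thm:generators}\ref{thm:generators:II} (as the rest of Section~\ref{sec:examples} does for the other examples), thereby making the exposition self-contained and uniform. The plan is therefore: (1) set up the two processes as solutions to martingale problems; (2) verify the analytic hypotheses \eqref{phi:variation}, $\widehat{\eqref{phi:0}}$, $\widehat{\eqref{phi:variation'}}$ on $\phi(x,y)=\psi(x,y)e^{-xy}$ with $\psi(x,y)=x\Psi(y)+x^2\Sigma(y)$; (3) verify the integrability bound \eqref{eq:weird-conditionLD} with $\sigma\equiv\infty$ and $\tau=\tau_n^+\wedge\tau_\epsilon^-$; (4) record the path-regularity conditions ($X$ has $\infty$ non-sticky because $X$ is assumed non-explosive; $Y$ has a.s. no continuous explosion at $\infty$ and is $\mathbb{D}_{[0,\infty]}^{\mathrm{m}0}$-valued because it is the non-explosive absorbed-at-zero diffusion); then (5) invoke Theorem~\ref{thm:generators}\ref{thm:generators:II} and Remark~\ref{rmk:gen-to-duality} to push the duality to all starting values including $0$.

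\textbf{Step 1 (set-up).} For $x_0\in[0,\infty)$ let $X$ be the (assumed unique, non-explosive) strong solution to ``\eqref{SDECB}$+$\eqref{eq:collision}'' with $c=0$; for $x_0=\infty$ take $X\equiv\infty$. By It\^o's lemma for discontinuous semimartingales applied to $\ee^y$, $y\in(0,\infty)$, and using $\mathcal{X}\ee^y$ bounded (which follows from \cite[Corollary~3.1]{foucartvidmar}), $X$ solves the MP for $(\mathcal{X},\delta_{x_0})$, where $\mathcal{X}=\mathcal{X}_\psi$ is the pregenerator associated to $\psi\vert_{[0,\infty]\times(0,\infty)}$ with $\psi(x,y):=\mathbbm{1}_{[0,\infty)^2}(x,y)\big(x\Psi(y)+x^2\Sigma(y)\big)$; this already gives $\psi_X=\psi\vert_{[0,\infty]\times(0,\infty)}$. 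Symmetrically, for $y_0\in[0,\infty)$ let $Y$ be the non-explosive absorbed-at-zero strong solution to $Y_t=y_0+\int_0^t\sqrt{2\Sigma(Y_s)}\,\dd W_s-\int_0^t\Psi(Y_s)\,\dd s$, and $Y\equiv\infty$ for $y_0=\infty$; It\^o's lemma gives that $Y$ solves the MP for $(\mathcal{Y},\delta_{y_0})$ with $\mathcal{Y}=\mathcal{X}_{\hat\psi}$, i.e. $\mathcal{Y}f(y)=\Sigma(y)f''(y)-\Psi(y)f'(y)$ on $\mathcal{D}$, cf. Theorem~\ref{thm:localgeninduality}; in particular $\psi_Y=\hat\psi\vert_{[0,\infty]\times(0,\infty)}$. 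Since $X$ is non-explosive by hypothesis, $\infty$ is non-sticky for $X$; since $Y$ is absorbed at $0$ it is $\mathbb{D}_{[0,\infty]}^{\mathrm{m}0}$-valued, and being non-explosive there is a.s. no continuous explosion of $Y$ at $\infty$.

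\textbf{Steps 2--3 (the analytic conditions, the main obstacle).} Here $\psi\in\mathsf{LDS}$ vanishes at infinity and is continuous at zero, so $\widehat{\eqref{phi:0}}$ holds trivially (the map $x\mapsto\phi(x,y)$ is continuous everywhere on $[0,\infty)$), and $\widehat{\eqref{phi:variation'}}$ holds because $\psi(\cdot,0)\equiv0$; condition \eqref{phi:variation}, $\sup_{(x,y)\in[0,m]\times(\epsilon,n)}|\phi(x,y)|<\infty$, is immediate since $\psi$ is finite-valued and continuous on the relevant compact. The only genuine work is \eqref{eq:weird-conditionLD}: one wants $\int_0^T\!\int_0^T\mathbb{E}[\phi_\pm(X_s,Y_{t\wedge\tau_n^+\wedge\tau_\epsilon^-})]\,\mathbbm{1}_{[0,T]}(s+t)\,\dd s\,\dd t<\infty$ for one choice of sign. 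Since $\psi\geq -\,|\Psi(y)|\,x$ is bounded below only after multiplication by $e^{-xy}$, the natural move is to use the elementary bounds $xye^{-xy}\le1$ and $(xy)^2e^{-xy}\le C$ for $(x,y)\in[0,\infty]\times(0,\infty)$, giving $|\phi(x,y)|=\big|x\Psi(y)+x^2\Sigma(y)\big|e^{-xy}\le \tfrac{|\Psi(y)|}{y}+C\tfrac{\Sigma(y)}{y^2}$; hence for $Y$ stopped at $\tau_n^+\wedge\tau_\epsilon^-$ it stays in $[\epsilon,n]$, where $y\mapsto|\Psi(y)|/y$ and $y\mapsto\Sigma(y)/y^2$ are bounded, so the double integral is bounded by $T^2$ times a finite constant. (Alternatively one may verify \eqref{eq:controlmeanC0}-type control via Proposition~\ref{prop:momentcond}, but the crude stopped estimate is quicker.) This is the step that required a little care but is not deep; everything else is bookkeeping.

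\textbf{Steps 4--5 (conclusion).} With Steps 1--3 in hand, all hypotheses of Theorem~\ref{thm:generators}\ref{thm:generators:II} are met (with $E=F=[0,\infty]$, $H(x,y)=e^{-xy}$ under $0^+\cdot\infty$, $\infty\cdot0^+$): $X$ has $\infty$ non-sticky, there is no continuous explosion for $Y$ at $\infty$, $Y$ is $\mathbb{D}_{[0,\infty]}^{\mathrm{m}0}$-valued, and $\phi$ satisfies $\widehat{\eqref{phi:0}}$, \eqref{phi:variation}, $\widehat{\eqref{phi:variation'}}$ together with \eqref{eq:weird-conditionLD}. Hence $\EE_{x_0}[e^{-X_ty_0}]=\EE^{y_0}[e^{-x_0Y_t}]$ for all $t\in[0,\infty)$ and all $x_0,y_0\in[0,\infty]$ with $y_0>0$; the case $y_0=0$ follows from Remark~\ref{rmk:gen-to-duality} on passing to the limit $y_0\downarrow0$, using that the diffusion $Y$ is weakly-$[0,\infty)$ continuous at $0$ and \eqref{continuityat0LT}. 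This is exactly the asserted $(0^+\cdot\infty,\infty\cdot0^+)$-Laplace duality of $X$ with $Y$, and along the way we have also re-established the identification $\psi_X=\psi\vert_{[0,\infty]\times(0,\infty)}$, $\psi_Y=\hat\psi\vert_{[0,\infty]\times(0,\infty)}$. (We note that this reproves \cite[Proposition~2.18]{foucartvidmar} within the present formalism, so one may alternatively just cite it.)
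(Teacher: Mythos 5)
Your argument is correct, but it takes a different route from the paper's own treatment of this statement: the paper does not prove Proposition~\ref{prop:laplacedualityCBC} here at all — it is stated with a \qed and attributed to \cite[Proposition 2.18]{foucartvidmar}. What you do instead is re-derive it inside the present framework via Theorem~\ref{thm:generators}\ref{thm:generators:II}, and this is in substance exactly the argument the paper deploys one paragraph later for the CBCI generalization (Theorem~\ref{thm:LaplacedualityCBCI}), of which the CBC case is the special case $\Phi\equiv 0$: same verification of $\widehat{\eqref{phi:0}}$, \eqref{phi:variation}, $\widehat{\eqref{phi:variation'}}$ from $\psi(\cdot,0)\equiv 0$ and local boundedness, same use of the stopped continuous dual $Y$ to localize \eqref{eq:weird-conditionLD}, same path-regularity inputs ($X$ non-explosive, $Y$ absorbed at $0$, no continuous explosion). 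So your proof buys self-containedness at the cost of duplicating Theorem~\ref{thm:LaplacedualityCBCI}'s proof, while the paper buys brevity by citation. Two small points to tidy: the appeal to Remark~\ref{rmk:gen-to-duality} for $y_0=0$ is both unnecessary and strictly out of scope (the remark is phrased for cases \ref{thm:generators:0}--\ref{thm:generators:I}); in case \ref{thm:generators:II} the value $y_0=0$ is already covered, since $0$ is absorbing for $Y$ and $\infty$ is non-sticky for $X$, so the identity is trivial there. And the stopped dual takes values in $[\epsilon,n]\cup\{y_0\}$ rather than just $[\epsilon,n]$ (when $y_0\notin[\epsilon,n]$ the stopping time vanishes); since $\phi(\cdot,y_0)$ is bounded for each fixed $y_0$ (including $y_0=0$, where it vanishes because $c=0$, and $y_0=\infty$), your bound $\vert\phi(x,y)\vert\leq \frac{\vert\Psi(y)\vert}{y}+C\frac{\Sigma(y)}{y^2}$ on the relevant range still gives \eqref{eq:weird-conditionLD}, so nothing breaks.
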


A sufficient condition for non-explosion is provided in the forthcoming Proposition~\ref{prop:nonexplosionCBCI}. 

We now add an immigration dynamics to the setting with branching and collision. Let then $\Psi \in \pkspLp$, $\Sigma \in \spLpnot$ (as above) and $\Phi \in \pksubo$ denote the branching, collision, and immigration mechanisms, respectively.  Define the Laplace dual symbol $\psi\in \mathsf{LDS}$ by asking that 
\begin{equation}\label{symbolCBCI}
\psi(x, y) := \mathbbm{1}_{[0,\infty)^2}(x,y)\left(x \Psi(y)+x^2 \Sigma(y)  - \Phi(y)\right), \quad (x,y)\in [0,\infty]^2,
\end{equation}
which supersedes the $\psi$ above for the remainder of Paragraph~\ref{subsec:CBC-CBCI}.
For ease of exposition, we focus on the setting with no killing terms: $\Psi(0)=0=\Phi(0)$. Denote by $\nu''$ and $b''$ the L\'evy measure and the drift parameter of $\Phi$ so that $$\Phi(y)=\int (1-e^{-uy})\nu''(\dd u)+b''y,\quad y\in [0,\infty),$$ and introduce the stochastic equation:
\begin{equation}
\label{SDECBCI}
\begin{split}
X_t = x_0
&+ \int_0^t \sqrt{2aX_s}\,\dd B_s
+ b\int_0^t X_s\,\dd s
+ \int_0^t \int_0^{X_{s-}} \int_0^1
u\,\bar{\mathcal{N}}(\dd s,\dd r,\dd u)
\\
&+ \int_0^t \int_0^{X_{s-}} \int_1^{\infty}
u\,\mathcal{N}(\dd s,\dd r,\dd u)
+ \int_0^t \sqrt{2a'}\,X_s\,\dd B'_s
\\
&- b'\int_0^t X_s^2\,\dd s
+ \int_0^t \int_0^{X_{s-}} \int_0^{X_{s-}} \int_0^{\infty}
u\,\bar{\mathcal{M}}(\dd s,\dd r_1,\dd r_2,\dd u)
\\
&+ \int_0^t \int_0^{\infty}
u\,\mathcal{I}(\dd s,\dd u)
+ b''\,t,
\qquad t\in[0,\zeta),
\end{split}
\end{equation}where, in addition to $B,B',\mathcal{N},\mathcal{M}$, which are as before, $\mathcal{I}(\dd s,\dd u)$ is an independent PRM with intensity  $\dd s\nu''(\dd u)$ and $\zeta$ is the explosion time. 

The addition of a subordinator-type term to the stochastic equation -- independent of the state of the process (the last two terms in \eqref{SDECBCI}) -- does not pose any significant difficulty in verifying, for example, Palau and Pardo's conditions  for strong existence and uniqueness up to the first explosion time. We refer to the resulting solution (and its law) as the $\mathrm{CBCI}(\Sigma, \Psi, \Phi)$ process. The existence and uniqueness  for more general non-local decomposable processes is addressed in the next section and partially covers the CBCI processes. Our focus below is on establishing a duality result.

Anticipating the dual process, introduce   $Y$ exactly as in Proposition~\ref{prop:laplacedualityCBC} but additionally we kill and send it to $\infty$ at rate $\Phi$, i.e. $Y$ is relegated to the cemetry $\infty$ at the time $\hat\zeta:=\inf\{t\in [0,\infty):\int_0^t\Phi(Y_s)\dd s>\hat{\mathbbm{e}}\}$, where $\hat{\mathbbm{e}}$ is a mean one exponential random variable independent of $W$. 
 
\begin{theorem}\label{thm:LaplacedualityCBCI}
The  generators of the Markov processes $X$ and $Y$ contain in their domain  $\mathcal{D}$, moreover,   $\psi_X=\psi\vert_{[0,\infty]\times (0,\infty)}$ and $\psi_Y=\hat{\psi}\vert_{[0,\infty]\times (0,\infty)}$ (where $\psi$ is that of \eqref{symbolCBCI}). If $X$ does not explode, then $X$ is in $(0^+\cdot \infty,\infty\cdot 0^+)$-Laplace duality with  $Y$. 
\end{theorem}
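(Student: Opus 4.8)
The plan is to mimic the argument given for Theorem~\ref{thm.cbs-classical} (and the CBC case \cite{foucartvidmar}), but now applying Theorem~\ref{thm:generators}\ref{thm:generators:II} rather than \ref{thm:generators:0}, because the presence of immigration makes the $X$-side have $0$ non-sticky (immigration pushes it away from zero) and the $Y$-side absorbed at $0$, and the killing of $Y$ at rate $\Phi$ makes it the $\mathbb{D}^{\mathrm{m}0}_{[0,\infty]}$-valued process with a possible explosion (to $\infty$) corresponding to that killing; this matches the hypotheses of \ref{thm:generators:II}. First I would record the two generator/martingale computations. Using It\^o's lemma for discontinuous semimartingales applied to $\ee^y(X_t)=e^{-yX_t}$ along the solution of \eqref{SDECBCI} (with the convention that $X$ is sent to and absorbed at $\infty$ at $\zeta$), one checks that each of the six groups of terms contributes the corresponding piece of $\psi(x,y)e^{-xy}$ with $\psi$ as in \eqref{symbolCBCI}: the branching part gives $x\Psi(y)e^{-xy}$, the collision part $x^2\Sigma(y)e^{-xy}$, and the independent subordinator-type immigration terms $\int u\,\mathcal{I}(\dd s,\dd u)+b''\dd s$ give $-\Phi(y)e^{-xy}$ (because $\mathbb{E}[e^{-y\Delta}-1]$ integrated against $\dd s\,\nu''(\dd u)$ plus the drift $b''$ yields $-\Phi(y)$, after noting $\mathbbm 1_{(0,1]}$ is irrelevant since $\Phi\in\pksubo$ has an integrable L\'evy measure near $0$). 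Since $\XX\ee^y$ is bounded for each fixed $y\in(0,\infty)$ (the only unbounded-looking factors $x$ and $x^2$ are tamed by $e^{-xy}$), the local martingale is a genuine martingale, so $X$ solves the MP for $(\XX,\delta_{x_0})$, and passing to the limit $t\downarrow 0$ in $\frac1t(\EE_{x_0}[e^{-X_ty}]-e^{-x_0y})$ identifies $\psi_X=\psi\vert_{[0,\infty]\times(0,\infty)}$. For $Y$: the diffusion $dY=\sqrt{2\Sigma(Y)}\,dW-\Psi(Y)\,dt$ absorbed at $0$ solves the MP for the local operator $f\mapsto \Sigma f''-\Psi f'$, and the extraneous killing at rate $\Phi$ contributes, via Proposition~\ref{lemma:reductions}, the extra term $-\Phi(y)\big(\ee_x(y)-\ee_x(\infty)\big)=-\Phi(y)e^{-xy}$ under $0^+\cdot\infty$; hence $Y$ solves the MP for $(\YY,\delta_{y_0})$ with $\YY=\XX_{\hat\psi}$ and $\psi_Y=\hat\psi\vert_{[0,\infty]\times(0,\infty)}$. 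Note $\DD\subset D_\YY$ is clear because $\Sigma,\Psi,\Phi$ are smooth on $(0,\infty)$ and well-behaved at $0$.

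Next I would verify the list of conditions in Theorem~\ref{thm:generators}\ref{thm:generators:II} for $\phi(x,y):=\psi(x,y)e^{-xy}$. The non-stickiness of $\infty$ for $X$ is exactly the assumed non-explosion; absence of continuous explosion of $Y$ at $\infty$ and $Y$ being $\mathbb{D}^{\mathrm{m}0}_{[0,\infty]}$-valued hold by construction (the diffusion does not explode and is absorbed at $0$, and the killing sends it to $\infty$ by a jump, not continuously). Condition $\widehat{\eqref{phi:0}}$ (continuity at $0$ of $x\mapsto\phi(x,y)$) is trivial since $\phi$ is even jointly continuous on $[0,\infty)^2$ as $\psi\in\mathsf{LDS}$ with $\Psi(0)=\Phi(0)=0$; $\widehat{\eqref{phi:variation'}}$ ($\phi(\cdot,0)\equiv 0$) holds because $\psi(\cdot,0)=0$; and \eqref{phi:variation} (local boundedness of $\phi$ on $[0,m]\times(\epsilon,n)$) holds for every element of $\mathsf{LDS}$. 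The one genuinely substantive point is \eqref{eq:weird-conditionLD} with $\sigma=\infty$, $\tau=\tau_n^+\wedge\tau_\epsilon^-$: one must exhibit a sign, i.e. control either $\phi_+$ or $\phi_-$ in the double time integral. The clean choice is to work with $\phi_+$: write $\phi=\phi_1+\phi_2-\phi_3$ where $\phi_1(x,y)=x\Psi(y)e^{-xy}$, $\phi_2(x,y)=x^2\Sigma(y)e^{-xy}$ and $\phi_3(x,y)=\Phi(y)e^{-xy}\ge 0$; since $\Sigma\ge 0$ we have $\phi_2\ge 0$, so $\phi_+\le \phi_{1,+}+\phi_2$. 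Now $\phi_2(x,y)\le \tfrac{2}{e^2 y}\,\Sigma(y)$ using $x^2 e^{-xy}\le (2/ey)^2\cdot$const, i.e. $\sup_{x\ge0}x^2e^{-xy}=4e^{-2}y^{-2}$, giving $\phi_2(X_s,Y_t)\le C\,\Sigma(Y_t)/Y_t^2$, which on $\{t<\tau_n^+\wedge\tau_\epsilon^-\}$ is bounded by $C\sup_{[\epsilon,n]}(\Sigma/y^2)<\infty$; similarly $\phi_{1,+}(X_s,Y_t)=X_s\Psi_+(Y_t)e^{-X_sY_t}\le \Psi_+(Y_t)/Y_t\cdot\sup_{z\ge 0}ze^{-z}\le \Psi_+(Y_t)/(eY_t)$, bounded on $Y_t\in[\epsilon,n]$ by $\sup_{[\epsilon,n]}\Psi_+/y<\infty$. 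Hence $\EE[\phi_+(X_s,Y_{t\wedge\tau_n^+\wedge\tau_\epsilon^-})]$ is bounded uniformly in $s,t\le T$, so the double integral is finite. (Condition \eqref{eq:weird-condition++LD} is not required for \ref{thm:generators:II}.) With all hypotheses checked, Theorem~\ref{thm:generators}\ref{thm:generators:II} delivers $\EE_{x_0}[e^{-X_ty_0}]=\EE^{y_0}[e^{-x_0Y_t}]$ for all $t$, i.e. the claimed $(0^+\cdot\infty,\infty\cdot 0^+)$-Laplace duality.

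The main obstacle I anticipate is the careful bookkeeping in the It\^o-lemma computation that identifies the immigration contribution as precisely $-\Phi(y)e^{-xy}$ rather than, say, $-\Phi(y)$ or $-\big(\Phi(y)-\Phi(0)\big)e^{-xy}$ — one has to be attentive that the subordinator-type terms act additively on $X$ (so the generator acts as a pure ``multiply the exponential by $e^{-y\Delta}$'' jump plus a $+b''\partial_x$-type drift, collectively producing the factor $e^{-xy}$ times $-\Phi(y)$) and that, because $\Phi\in\pksubo$ has $\int(1\wedge u)\nu''(\dd u)<\infty$, there is no truncation term to worry about and no compensation issue at small jumps. A secondary, purely technical point is to be sure that ``$X$ does not explode'' is invoked in the right places: it is what makes $X$ have $\infty$ non-sticky, and it is also implicitly what makes the martingale problem for $(\XX,\delta_{x_0})$ meaningful on all of $[0,\infty)$; everything else (the $Y$-side behaviour, the sign control in \eqref{eq:weird-conditionLD}) goes through unconditionally given the structure of $\psi$. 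As in Theorem~\ref{thm.cbs-classical}, no appeal to Remark~\ref{rmk:gen-to-duality} is strictly needed here since $Y$'s law at $y_0=0$ is already captured by the construction, but one could use it to handle $y_0=0$ by taking limits if preferred. $\square$
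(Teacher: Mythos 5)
Your proposal is correct and follows essentially the same route as the paper: identify the generators/martingale problems via It\^o's lemma plus Proposition~\ref{lemma:reductions} for the killing in $Y$, then invoke Theorem~\ref{thm:generators}\ref{thm:generators:II}, checking the same list of conditions (non-explosion of $X$ for non-stickiness of $\infty$, $Y$ absorbed at $0$ with no continuous explosion, $\widehat{\eqref{phi:0}}$, $\widehat{\eqref{phi:variation'}}$, \eqref{phi:variation}, and \eqref{eq:weird-conditionLD} with $\sigma=\infty$, $\tau=\tau_n^+\wedge\tau_\epsilon^-$). The only cosmetic difference is that you verify \eqref{eq:weird-conditionLD} through explicit suprema bounds on $\phi_+$ via $\sup_{x}xe^{-xy}$ and $\sup_x x^2e^{-xy}$, whereas the paper simply notes that $Y_{t\wedge\tau}$ a.s. stays in $[\epsilon,n]\cup\{y_0,\infty\}$ so the integrand is bounded by local boundedness of $\psi$ — the same idea.
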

Theorem~\ref{thm:LaplacedualityCBCI} allows for the relatively degenerate case (from the point of view of CBCIs) of $X$ being a subordinator with Laplace exponent $\Phi$; that is to say, we may have $\Psi \equiv 0$, $\Sigma \equiv 0$. In this case, the dual process $Y$ remains at its initial value $y_0$ until a single jump from $y_0$ to $\infty$, occurring at rate $\Phi(y_0)$. Compare with the first bullet point of Example~\ref{example:subo}. 
\begin{proof}[Proof of Theorem~\ref{thm:LaplacedualityCBCI}]
Denote by $\XX$ (resp. $\YY$) the pregenerator associated  to  $\psi\vert_{[0,\infty]\times (0,\infty)}$ (resp.  to $\hat\psi\vert_{[0,\infty]\times (0,\infty)}$). By It\^o's lemma we find that each $\left(f(X_t)-\int_{0}^{t}\mathcal{X}f(X_s)\dd s\right)_{ t\in [0,\zeta)}$, for $f\in \DD$, is a local martingale (when $x_0<\infty$). Boundedness of the integrands and $\psi(\infty,\cdot)\equiv 0$  ensure by a simple argument that the processes $(f(X_t)-\int_{0}^{t}\mathcal{X}f(X_s)\dd s)_{ t\in [0,\infty)}$,   $f\in \mathcal{D}$, are in fact martingales (even under $\PP_\infty$),  and also that $\XX$ acts on $\mathcal{D}$ as the generator of $X$. Similarly, but using also Proposition~\ref{lemma:reductions} to account for the killing in $Y$, we check that that the processes
$ (f(Y_t)-\int_{0}^{t}\mathcal{Y}f(Y_s)\dd s)_{t\in [0,\infty)}$, $f\in \DD$, are martingales and that the Laplace symbol of $Y$ is $\hat{\psi}\vert_{[0,\infty]\times (0,\infty)}$.

We now seek to establish the Laplace duality and work under $0^+\cdot \infty$, $\infty \cdot 0^+$. We are going to appeal to Theorem \ref{thm:generators}\ref{thm:generators:II}, whose notation we adopt. Fix the initial values $x_0\in [0,\infty)$ of $X$ and $y_0\in [0,\infty)$ of $Y$ (the outstanding  duality identities for $x_0= \infty$ or $y_0=\infty$ are trivial). 

By the preceding discussion  the process $Y$ is a solution to the MP for ($\mathcal{Y}, \delta_{y_0}$) and $X$ is a solution to the MP for $(\mathcal{X},\delta_{x_0})$. Recall next from Proposition \ref{prop:laplacedualityCBC} that a.s. $Y$ is absorbed at its boundary $0$,  has continuous sample paths (up to $\hat\zeta$) and that there is no continuous explosion for $Y$  \cite[Lemma 6.1]{foucartvidmar}.  Furthermore, by assumption $\Psi(0)=\Phi(0)=0$; also  $\Sigma(0)=0$. Hence $\psi(\cdot,0)\equiv 0$ and therefore $\widehat{\eqref{phi:variation'}}$ is met.  Since $\psi$ is locally bounded (as is any member of $\mathsf{LDS}$) $\eqref{phi:variation}$ also follows. By assumption $X$ does not explode, a fortiori $\infty$ is non-sticky for it.   $\widehat{\eqref{phi:0}}$ holds because $\psi$, like any member of $\mathsf{LDS}$ is continuous at zero. Finally, for $(n,\epsilon)\in \mathbb{N}\times (0,\infty)$  we check  \eqref{eq:weird-conditionLD} with $\sigma\equiv \infty$, $\tau=\tau_n^+\wedge \tau_\epsilon^{-}$. By the sample path continuity of $Y$ (excepting the eventual jump to $\infty$), evidently, a.s., for all $t\in [0,\infty)$, $Y_{t\land \tau}\in [\epsilon,n]\cup \{y_0,\infty\}$. Since $\Psi$, $\Sigma$ and $\Phi$ are all locally bounded, since $\psi(\cdot,0)\equiv 0$ and since $0^+\cdot\infty$ is in force, \eqref{eq:weird-conditionLD} is then immediate (the integrand is bounded a.s.).

All the conditions for Theorem \ref{thm:generators}\ref{thm:generators:II} are thus met and the proof is complete.
\end{proof}
The next proposition complements \cite[Proposition 2.6]{foucartvidmar}. 
\begin{proposition}\label{prop:nonexplosionCBCI}
  If $\int_{0+}\frac{\dd u}{|\Psi(u)|}=\infty$ and $\limsup_{x\downarrow 0}\frac{\Phi(x)}{-\Psi(x)}<\infty$, then the CBCI process $X$ does not explode.
\end{proposition}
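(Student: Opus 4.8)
The plan is to exploit the $(0^+\cdot\infty,\infty\cdot 0^+)$-Laplace duality of Theorem~\ref{thm:LaplacedualityCBCI} and transfer non-explosion of $X$ to a statement about the dual diffusion $Y$, where it can be read off from a Feller boundary classification at $0$. Concretely, recall from \eqref{eq:laplacedualinftyabsorbing0} that, under $0^+\cdot\infty$, $\infty\cdot 0^+$, for every $y\in(0,\infty)$ and $t\in[0,\infty)$ one has $\EE_0[e^{-X_ty}]=\PP^y(Y_t<\infty)$; more to the point, for $x_0\in[0,\infty)$ the duality gives $\PP_{x_0}(X_t<\infty)=\lim_{y\downarrow0}\EE_{x_0}[e^{-X_ty}]=\lim_{y\downarrow0}\EE^y[e^{-x_0Y_t}]$. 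Thus $X$ (started anywhere in $[0,\infty)$) is non-explosive at time $t$ if and only if $\EE^y[e^{-x_0Y_t}]\to1$ as $y\downarrow0$ for all $x_0\in(0,\infty)$, which, since $Y$ is absorbed at $0$ and $e^{-x_0\cdot}$ is continuous and bounded, holds precisely when $\PP^y(Y_t=0)\to1$ as $y\downarrow0$, i.e. when $0$ is reached (quickly, uniformly in a neighbourhood) by $Y$ for small starting points. So the real task is: under the two stated conditions on $\Psi$ and $\Phi$, show that for the $Y$-diffusion the boundary $0$ is an \emph{exit} (absorbing, accessible) boundary in a strong enough sense — and in particular that $\lim_{y\downarrow0}\PP^y(T_0\le t)=1$ for all $t>0$, where $T_0$ is the hitting time of $0$.

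The core analytic step is therefore a Feller-type scale/speed analysis of the one-dimensional diffusion $Y$ solving $\dd Y=\sqrt{2\Sigma(Y)}\,\dd W-\Psi(Y)\,\dd t$ with killing rate $\Phi$. Write the scale density $s(y):=\exp\bigl(\int_y^{y_*}\frac{-\Psi(u)}{\Sigma(u)}\,\dd u\bigr)$ (sign chosen so that the drift $-\Psi$ enters correctly) and speed density $m(y):=\frac{1}{\Sigma(y)s(y)}$ on $(0,\infty)$, for a fixed reference point $y_*$. The condition $\int_{0+}\frac{\dd u}{|\Psi(u)|}=\infty$ is exactly the Grey/Lamperti-type criterion guaranteeing that the \emph{deterministic} flow $\dot y=-\Psi(y)$ (the degenerate $\Sigma\equiv0$ case, i.e. a plain CB branching mechanism with $0$ not reached) — wait, here it is the opposite: $\int_{0+}\dd u/|\Psi(u)|=\infty$ is the condition under which the ODE reaches $0$ only in infinite time, equivalently (for the CB process) the condition under which \emph{the CB process itself is non-explosive}; since our $X$ has additional immigration and collision on top of branching, the competition $-b'x^2$ part only helps non-explosion, so morally the branching non-explosion condition plus a control on immigration should suffice. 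I would make this precise on the dual side: $\int_{0+}\dd u/|\Psi(u)|=\infty$ forces $\int_{0}^{y}m$ and the relevant scale integral to interact so that $0$ is accessible for $Y$ (an exit boundary), while $\limsup_{y\downarrow0}\Phi(y)/(-\Psi(y))<\infty$ ensures that the cumulative killing $\int_0^{T_0}\Phi(Y_s)\,\dd s$ stays finite a.s. along paths approaching $0$, so that $Y$ is genuinely absorbed at $0$ rather than killed (sent to $\infty$) on the way, and that the probability of absorption at $0$ tends to $1$ as the start point $\to0$. The Feller test computation — checking which of $\int_{0}^{y_*} s(y)\bigl(\int_y^{y_*}m\bigr)\dd y$, $\int_0^{y_*} m(y)\bigl(\int_y^{y_*}s\bigr)\dd y$ are finite — together with the $L^1$ control of the killing functional, is where the two hypotheses are consumed.

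An alternative, and perhaps cleaner, route avoids the full boundary classification: directly estimate $\EE^y[e^{-x_0 Y_t}]$ from below using the martingale problem for $\YY$. Since $\YY\ee_{x_0}(y)=\hat\psi(y,x_0)e^{-x_0 y}=\bigl(x_0\Psi(y)+x_0^2\Sigma(y)-\Phi(y)\bigr)e^{-x_0y}$ — no, this is the wrong pairing; rather, from $\psi(x,y)=x\Psi(y)+x^2\Sigma(y)-\Phi(y)$ we get $\YY\ee_{x_0}(y) = \psi(x_0,y)e^{-x_0y}$, and integrating,
\begin{equation*}
1-\EE^y[e^{-x_0Y_t}] = e^{-x_0 y}-1 - \EE^y\Bigl[\int_0^t \bigl(x_0\Psi(Y_s)+x_0^2\Sigma(Y_s)-\Phi(Y_s)\bigr)e^{-x_0 Y_s}\,\dd s\Bigr].
\end{equation*}
Letting $x_0\downarrow0$ here recovers conservativeness of $Y$ (as in the proof of Theorem~\ref{thm:LaplacedualitysemigroupLDS}), but that is not what we want — we want the $y\downarrow0$ limit. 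So I would instead bound, for fixed small $x_0$, using $\Sigma(0)=0$, $\Psi(0)=0$, $\Phi(0)=0$ and the local behaviour near $0$: the term $-\Phi(Y_s)e^{-x_0Y_s}$ is the only one that could prevent the integral from being small, and $\limsup_{y\downarrow0}\Phi(y)/(-\Psi(y))<\infty$ lets us dominate $\Phi(Y_s)$ by $C(-\Psi(Y_s))=C|\Psi(Y_s)|$ near $0$, after which $\int_{0+}\dd u/|\Psi(u)|=\infty$ enters through a Gronwall/comparison argument on an auxiliary ODE to show $Y$ cannot leave a neighbourhood of $0$ too fast — yielding $\EE^y[\int_0^t |\Psi(Y_s)|\,\dd s]\to0$ as $y\downarrow0$, hence $\EE^y[e^{-x_0Y_t}]\to1$. \textbf{The main obstacle} is controlling the immigration/killing term $\Phi$ uniformly in the starting point near $0$ while simultaneously controlling how long $Y$ spends away from $0$; the condition $\int_{0+}\dd u/|\Psi(u)|=\infty$ is what guarantees the deterministic drift cannot push $Y$ away from $0$ in bounded time, but turning this into a rigorous probabilistic statement about the diffusion (with its Brownian fluctuations scaled by $\sqrt{2\Sigma}$, $\Sigma(0)=0$) requires either the Feller test invoked carefully or a stochastic comparison with the $\Sigma\equiv0$ flow, and reconciling the two hypotheses in that comparison is the delicate point. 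I would ultimately prefer the Feller-boundary route, citing Karlin–Taylor \cite{zbMATH03736679} for the classification and a standard absorption-probability identity, as the cleanest way to package both hypotheses.
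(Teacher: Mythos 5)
Your proposal has a genuine gap, and it is structural rather than technical: the duality you want to exploit is not available to you at this stage. Theorem~\ref{thm:LaplacedualityCBCI} establishes the $(0^+\cdot\infty,\infty\cdot 0^+)$-Laplace duality between the CBCI process $X$ and the killed diffusion $Y$ \emph{under the hypothesis that $X$ does not explode} (non-stickiness of $\infty$ for $X$ is one of the conditions consumed in the application of Theorem~\ref{thm:generators}\ref{thm:generators:II}). Using the identity $\PP_{x_0}(X_t<\infty)=\lim_{y\downarrow0}\EE^y[e^{-x_0Y_t}]$ to \emph{deduce} non-explosion is therefore circular: you would first have to prove a version of the duality (or at least a one-sided inequality, e.g.\ via localization at the explosion time of $X$) that does not presuppose conservativeness, and your sketch does not do this. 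Beyond the circularity, neither of your two routes is actually carried out: the Feller scale/speed computation and the Gronwall/comparison argument are both left as the ``delicate point'', and it is precisely there that the two hypotheses must be consumed. A smaller but real error: $\lim_{y\downarrow0}\EE^y[e^{-x_0Y_t}]=1$ for all $x_0$ is equivalent to $Y_t\Rightarrow\delta_0$ as $y\downarrow0$, not to $\PP^y(Y_t=0)\to1$; since the drift $-\Psi$ is nonnegative near $0$ in the supercritical case, $Y$ need not hit $0$ at all, so framing the problem as ``$0$ is an exit boundary'' is already off target — what one needs is that $Y$ started near $0$ does not move away (nor get killed) in bounded time, which is a different boundary statement.

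For comparison, the paper's proof stays entirely on the $X$ side and avoids the duality altogether: it builds a Lyapunov function of Bernstein form, $f(x)=\int(1-e^{-xv})\mu(\dd v)$, with $\mu$ chosen (via \cite[Lemma 5.1]{foucartvidmar}) with density proportional to $\frac{\theta}{-\Psi(v)}\exp\bigl(\int_v^{v_0}\frac{\theta}{-\Psi(u)}\dd u\bigr)$ in the supercritical case; the hypothesis $\int_{0+}\frac{\dd u}{|\Psi(u)|}=\infty$ makes $f$ finite, increasing to $\infty$, with $x\mathrm{L}^{\Psi}f=\theta f$, the collision term satisfies $x^2\mathrm{L}^{\Sigma}f\le0$, and the hypothesis $\limsup_{x\downarrow0}\Phi(x)/(-\Psi(x))<\infty$ guarantees $\int\Phi\,\dd\mu<\infty$ so that the immigration term $\mathrm{L}^{-\Phi}f$ is bounded; hence $\XX'f\le Cf$ near infinity and a standard Lyapunov criterion yields non-explosion. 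If you want to salvage your dual-side intuition, you would need to first prove a duality valid up to the explosion time (localizing $X$ at $\sigma_m^+$ as in Theorem~\ref{thm:generators}\ref{thm:generators:I}) and then run the comparison with the flow $\dot y=-\Psi(y)$; but as written the proposal does not constitute a proof.
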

The sufficient condition of Proposition~\ref{prop:nonexplosionCBCI}, for which we make no claim on optimality, is automatically satisfied when $\Psi'(0)\geq 0$; also, if $\Phi'(0)<\infty$, then  $\limsup_{x\downarrow 0}\frac{\Phi(x)}{-\Psi(x)}<\infty$.
\begin{proof}[Proof of Proposition~\ref{prop:nonexplosionCBCI}]
Recall Definition~\ref{defLPsi}. Introduce 
 \[\mathcal{X}'f(x):=x^2 \mathrm{L}^{\Sigma}f(x)+x\mathrm{L}^{\Psi}f(x)+\mathrm{L}^{-\Phi}f(x),\quad x\in D_f,\, f\in  D_{\mathrm{L}^{\Sigma}}\cap D_{\mathrm{L}^{\Psi}}\cap D_{\mathrm{L}^{-\Phi}}.\]
The proof of non-explosion relies on constructing a ``Lyapunov'' function $f \in D_{\XX'}$ that is nonnegative, of class $C^2$,  $\uparrow$ and satisfies $\lim_{x \to \infty} f(x) = \infty$, along with the inequality $\XX' f \leq C f$ holding true on a neighborhood of infinity for some $C\in [0,\infty)$.  A Lyapunov-type argument -- see e.g. \cite[Section 6.3]{zbMATH05875699} for background, and \cite[Proof of Lemma 5.3]{foucartvidmar}, \cite[Theorem A]{rebotier} for results in settings similar to ours -- then entails that the process does not explode. In view of  the structure that Theorem~\ref{thm:excessivecm}  identifies for candidate  $\uparrow$ excessive functions, we endeavour to find  $f$ of Bernstein form. 

We first address the case of a (sub)critical mechanism $\Psi$, i.e. $\Psi'(0)\geq 0$.  Let $\mu$ be any infinite measure on $\BB_{(0,\infty)}$  such that $\int (z+z^2+\Phi(z)+\Sigma(z)+\Psi(z)) \mu(\dd z)<\infty$ (it exists $\because$ $\lim_{x\downarrow 0}(\Phi+\Sigma+\Psi)(x)=0$, any will do, e.g. we could select a sequence $(\alpha_n)_{n\in \mathbb{N}}$ in $(0,\infty)$  such that $(\Phi+\Sigma+\Psi)(\alpha_n)+\alpha_n^2+\alpha_n\leq n^{-2}$ for all $n\in \mathbb{N}$ and set $\mu:=\sum_{n\in \mathbb{N}}\delta_{\alpha_n}$). Define $$f(x):=\int (1-e^{-xz})\mu(\dd z), \quad x\in [0,\infty).$$
Evidently $f$ is $\uparrow$ nonnegative of class $C^2$, and  $\lim_{x\to\infty} f(x)=\infty$ due to monotone convergence and since $\mu((0,\infty))=\infty$. By Tonelli and differentiation under the integral sign, $f\in D_{\mathrm{L}^{-\Phi}}$ and 
\begin{equation*}
\mathrm{L}^{-\Phi}f(x)
=\int e^{-xz}\Phi(z)\mu(\dd z),\quad x\in [0,\infty).
\end{equation*}
By dominated convergence we see that $\lim_{x\to\infty}\mathrm{L}^{-\Phi}f(x)=0$ and thus certainly  $\mathrm{L}^{-\Phi}f\leq f$ on a neighborhood of infinity. 
By entirely analogous reasoning we find that $f\in D_{\mathrm{L}^{\Sigma}}\cap D_{\mathrm{L}^{\Psi}}$, $$\mathrm{L}^\Sigma f(x)=-\int e^{-xz}\Sigma(z)\mu(\dd z)\leq 0\text{ and }\mathrm{L}^\Psi f(x)=-\int e^{-xz}\Psi(z)\mu(\dd z)\leq 0,\,\, x\in [0,\infty).$$
Altogether it allows to conclude that $f$ has the stipulated properties of  a Lyapunov function.

We now consider the more intricate supercritical case for which $\Psi'(0) \in [-\infty, 0)$. Take any $\theta \in (0, -\Psi'(0))$, set $\rho:=\sup\{x\in (0,\infty):\Psi(x)<0\}\in (0,\infty]$ and fix any $v_0 \in (0, \rho)$. Define 
\[
f(x) := \int (1 - e^{-xv})\mu(\mathrm{d}v), \quad x\in (0,\infty),\]
 where $\mu$ is the absolutely continuous measure on $\BB_{(0,\rho)}$ given by the density  \begin{equation}\label{eq:densitymu}\frac{\mu(\mathrm{d}v)}{\dd v}:= \frac{\theta}{-\Psi(v)} \exp\left( \int_v^{v_0} \frac{\theta}{-\Psi(u)}\, \mathrm{d}u \right) ,\quad v\in (0,\rho).
\end{equation}
Thanks to having assumed $\int_{0+}\frac{\dd v}{\vert \Psi(v)\vert}=\infty$, by \cite[Lemma 5.1]{foucartvidmar} $f$ is finite, is of class $C^2$, and is $\uparrow$, nonnegative with $ \lim_{x\to \infty} f(x)=\infty$, furthermore it satisfies the identity
\[
x \mathrm{L}^{\Psi} f(x) = \theta f(x),  \quad x \in (0, \infty).
\]
Since $\int\vert  \Psi(v)\vert\mu(\dd v)<\infty$ and because $\limsup_{x\downarrow 0}\frac{\Phi(x)}{-\Psi(x)}<\infty$ we also compute, as before, that 
$$\mathrm{L}^\Sigma f(x)=-\int e^{-xz}\Sigma(z)\mu(\dd z)\leq 0\text{ and }\mathrm{L}^{-\Phi} f(x)=\int e^{-xz}\Phi(z)\mu(\dd z),\quad x\in (0,\infty).$$
It is now easy to conclude that $f$ satisfies all the properties listed in the first paragraph of this proof, as required.
\end{proof}

\subsubsection{CBREs}\label{paragraph:CBRE} Let $S$ be a L\'evy process with Laplace exponent $\kappa\in \pkspLp_1$, $$\kappa(z):=\int \big(e^{-zm}-1+zm\mathbbm{1}_{[-1,1]}(m)\big)\tilde{\nu}(\dd m)+\tilde{a}z^2-\tilde{b}z',\quad z\in [0,\infty),$$ where $\tilde{\nu}$ is a L\'evy measure on $\BB_{[-1,\infty)}$, $\tilde{a}\in [0,\infty)$ and $\tilde{b}\in \mathbb{R}$ (no killing term, however a mass of $\tilde \nu$ at $-1$ is allowed). For $x_0\in [0,\infty)$ consider the stochastic equation, for the process $X$, \begin{equation}\label{eq:doleansdade}X_t=x_0+\int_0^{t}X_{s-}\dd S_s,\quad t\in [0,\infty).\end{equation}
The unique strong solution $X$ is known as the Dol\'eans-Dade exponential associated to the semimartingale $S$, see e.g. Protter's book \cite[Theorem II.37]{zbMATH02006037}. It admits the a.s. explicit representation \begin{equation*}
X_t=x_0e^{S_t-\tilde{a}t}\prod_{s\in (0,t]}(1+\Delta S_s)e^{-\Delta S_s}, \quad t\in [0,\infty).
\end{equation*}
Additionally we consider the process $X$ issued from $x_0=\infty$ as being identically equal to $\infty$. Then it is not difficult to see that $X$ constitutes a positive Markov process with Laplace symbol $\psi\vert_{[0,\infty]\times (0,\infty)}$, where the $\psi\in \mathsf{LDS}$ is specified as $$\psi(x,y):=\mathbbm{1}_{[0,\infty)^2}(x,y)\kappa(xy),\quad (x,y)\in [0,\infty]^2.$$ Furthermore, $X$ is evidently in $(0^+ \cdot \infty, \infty \cdot 0^+)$-Laplace duality with itself.


Consider now the dynamics of a CB process \eqref{SDECB}, branching mechanism $\Psi$ as in Subsection~\ref{para:cb-processes}, but with  an extra stochastic integral term of the right-hand side of \eqref{eq:doleansdade} (with $S$ independent of $B$, $\mathcal{N}$). Such a generalization of a CB process is called a continuous-state branching process in a L\'evy random environment, CBRE for short. It has received some attention in the last decade, we refer the reader for instance to Bansaye et al. \cite{zbMATH06247275}, Palau and Pardo \cite{zbMATH06836271} and  He et al. \cite{zbMATH06963718}.  Informally speaking, at the level of the generator, the random environment is represented with an additional operator $\mathrm{L}_{\kappa}$ of Definition~\ref{eq:multiplicativelevy} corresponding to multiplicative (including downward) jumps.  
 
For simplicity and consistency with the literature that we shall appeal to  we limit our discussion here to branching dynamics with finite mean and no killing, i.e. to $\int (z\land z^2) \nu(\dd z)<\infty$ and $c=0$, so that $\Psi(0)=0$ and $\Psi'(0)\in \mathbb{R}$. The stochastic equation ``\eqref{SDECB}+\eqref{eq:doleansdade}":
\begin{align*}
    X_t=x_0+\int_0^t\!\! \sqrt{2aX_s}\dd B_s+b\int_{0}^t\!\! X_s\dd s +\int_{0}^{t}\int_{0}^{X_{s-}}\!\!\!\int_{0}^{\infty}u\bar{\mathcal{N}}(\dd s, \dd r, \dd u)&+\int_0^{t}X_{s-}\dd S_s
\end{align*}
for $ t\in [0,\infty)$, $x_0\in [0,\infty)$ being the initial value (when $x_0=\infty$ we take $X\equiv \infty$), then admits a unique strong solution \cite[Theorem 3.1]{zbMATH06963718} that is non-explosive, and we shall refer to its law as $\mathrm{CBRE}(\Psi,\kappa)$. Introduce also the associated Laplace dual symbol $\psi\in \mathsf{LDS}$,
\begin{equation}\label{symbol-for-cbre}
\psi(x,y):=\mathbbm{1}_{[0,\infty)^2}(x,y)\big(x\Psi(y)+\kappa(xy)\big), \ \ (x,y)\in [0,\infty]^2,
\end{equation}  
which supersedes the $\psi$ above for the remainder of this subsection. Then we assert
\begin{theorem}\label{thm:dualityCBRE}
The Laplace symbol of $X$ is  $\psi_X=\psi\vert_{[0,\infty]\times (0,\infty)}$. Furthermore, $X$ is in $(0^+\cdot \infty,\infty \cdot 0^+)$-Laplace duality with $Y$, the unique strong non-explosive solution to \begin{equation}\label{SDEdualCBRE}
        Y_t=y_0-\int_{0}^{t}\Psi(Y_s)\dd s+\int_{0}^{t}Y_{s-}\dd S_s,\quad t\in [0,\infty),
    \end{equation}
    $y_0\in [0,\infty)$ being the initial value (for $y_0=\infty$, again $Y\equiv \infty$). The process $Y$ has the Laplace symbol $\psi_Y=\hat\psi\vert_{[0,\infty]\times (0,\infty)}$. (In both cases the $\psi$ is that of \eqref{symbol-for-cbre}.)
\end{theorem}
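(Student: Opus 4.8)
The strategy is to verify that both $X$ and $Y$ are solutions to the martingale problems for the pregenerators $\XX:=\XX_\psi$ and $\YY:=\XX_{\hat\psi}$ respectively, and then to invoke Theorem~\ref{thm:generators}\ref{thm:generators:II} (taking advantage of the fact that $Y$ is non-explosive, so there is a fortiori no continuous explosion at $\infty$, and $\infty$ is non-sticky for both processes) to deduce the $(0^+\cdot\infty,\infty\cdot 0^+)$-Laplace duality. First I would apply It\^o's lemma for discontinuous semimartingales (as in \cite[p.~76, Theorem~4.57]{jacod1987limit}) to $\ee^y(X_t)=e^{-X_ty}$, using the stochastic equation ``\eqref{SDECB}$+$\eqref{eq:doleansdade}'' and the elementary expansion of the exponential under the Dol\'eans-Dade type jumps $x\mapsto x(1+\Delta S_s)$: the Brownian and drift contributions produce the $ay^2-by$ part, the $\bar{\mathcal N}$ integral produces $\int(e^{-uy}-1+uy)\nu(\dd u)$ (all multiplied by $X_s$, since the compensator is $\dd s\,\dd r\,\nu(\dd u)$ integrated over $r\in(0,X_{s-})$), and the $\dd S_s$ term produces $\kappa(X_sy)$ by the very definition of $\kappa$ as Laplace exponent of $S$ acting multiplicatively. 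Thus $\mathcal{X}\ee^y(x)=\big(x\Psi(y)+\kappa(xy)\big)e^{-xy}=\psi(x,y)e^{-xy}$ for $\{x,y\}\subset(0,\infty)$, and since the integrand $\mathcal{X}\ee^y$ is bounded (here one uses $x y e^{-xy}\le 1$, local boundedness of $\Psi$, and boundedness of $z\mapsto\kappa(z)e^{-z}$, with the caveat that a mass of $\tilde\nu$ at $-1$ is permitted here because $\kappa(z)e^{-z}$ stays bounded even then) the local martingales are true martingales; the case $x_0=\infty$ is trivial as $X\equiv\infty$ and $\psi(\infty,\cdot)\equiv0$. This also shows $\DD\subset D_{\XX}$ and identifies $\psi_X=\psi\vert_{[0,\infty]\times(0,\infty)}$. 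The identical computation applied to \eqref{SDEdualCBRE} — now the state-independent drift $-\Psi(Y_s)$ contributes $y\Psi(x)$ wait, more carefully: applying It\^o to $\ee_x(Y_t)=e^{-xY_t}$ gives $x\Psi(Y_s)e^{-xY_s}$ from the drift and $\kappa(xY_s)e^{-xY_s}$ from $\dd S_s$, hence $\mathcal{Y}\ee_x(y)=\big(x\Psi(y)+\kappa(xy)\big)e^{-xy}=\hat\psi(y,x)e^{-xy}$, so $\psi_Y=\hat\psi\vert_{[0,\infty]\times(0,\infty)}$ and $Y$ solves the MP for $(\YY,\delta_{y_0})$.

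Next I would check the remaining hypotheses of Theorem~\ref{thm:generators}\ref{thm:generators:II} with $\phi(x,y):=\psi(x,y)e^{-xy}$. Since $\psi\in\mathsf{LDS}$, it is continuous at zero in each variable and vanishes at infinity, so $\widehat{\eqref{phi:0}}$ holds. Because $\Psi(0)=0$ (we assumed $c=0$ and finite mean) and $\kappa(0)=0$, we get $\psi(\cdot,0)\equiv0\equiv\psi(0,\cdot)$, hence $\widehat{\eqref{phi:variation'}}$. Condition \eqref{phi:variation} — local boundedness of $\phi$ on $[0,m]\times(\epsilon,n)$ — holds for any member of $\mathsf{LDS}$. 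Both $X$ and $Y$ being non-explosive, $\infty$ is non-sticky for $X$ and there is a.s. no continuous explosion for $Y$; but I also need $Y$ to be $\mathbb{D}_{[0,\infty]}^{\mathrm{m}0}$-valued, i.e. strongly absorbed at $0$. Here one observes from \eqref{SDEdualCBRE} that $0$ is absorbing: if $Y_{s}=0$ then $\Psi(0)=0$ and $Y_{s-}\dd S_s=0$, so the process stays at $0$ thereafter (this should follow from pathwise uniqueness — the constant path $0$ solves the equation started from $0$ — together with the strong Markov property; formally one argues as in the construction surrounding \eqref{eq:odeY}, or one simply redefines $Y$ to be absorbed at $0$ without changing its law). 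Finally, for \eqref{eq:weird-conditionLD} with $\sigma\equiv\infty$ and $\tau=\tau_n^+\wedge\tau_\epsilon^-$, I would bound $|\phi(X_s,Y_{t\wedge\tau})|$ using $Y_{t\wedge\tau}\in[\epsilon,n]\cup\{y_0,\infty\}$ a.s. for fixed $\epsilon,n$, together with $x y e^{-xy}\le1$ and local boundedness of $\Psi$ and of $z\mapsto\kappa(z)e^{-z}$, to see the double integral is a.s. finite; under $0^+\cdot\infty$ the contribution on $\{Y=\infty\}$ vanishes.

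\textbf{Main obstacle.} The delicate point is the It\^o expansion of the $\dd S_s$ term and confirming that it reproduces exactly $\kappa(xy)e^{-xy}$ (or $\kappa(X_sy)e^{-X_sy}$). One must carefully handle the L\'evy measure $\tilde\nu$ on $[-1,\infty)$ including a possible atom at $-1$ (which corresponds to $X$ or $Y$ being sent to $0$ by a multiplicative jump of size $-1$): the jump of $e^{-xY_t}$ at such a time is $1-e^{-xY_{s-}}$, which is precisely $e^{-xy}(e^{0\cdot xy}\cdot\text{something})$… in fact $e^{-x\cdot Y_{s-}(1+m)}-e^{-xY_{s-}}$ at $m=-1$ equals $1-e^{-xY_{s-}}=e^{-xY_{s-}}(e^{xY_{s-}}-1)$, matching the $m=-1$ value of $e^{-xY_{s-}}(e^{-xY_{s-}m}-1)$; one must check the compensated small-jump part uses the truncation $\mathbbm1_{[-1,1]}(m)$ consistently so that the first-order term $xY_{s-}m$ cancels correctly against the drift correction $\tilde b$, yielding the stated form of $\kappa$. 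Equivalently, one may avoid re-deriving this by citing that \eqref{eq:doleansdade} alone gives the self-dual process with symbol $\kappa(xy)$ (already established in Paragraph~\ref{paragraph:CBRE}) and that the branching generator contributes additively $x\Psi(y)$ by independence of the drivers, so $\mathcal X$ acts on $\DD$ as the sum; this is the cleanest route and I would present it that way, relegating the verification that the generator of a solution to a sum of independent SDE-drivers is the sum of the individual generators on $\DD$ to a one-line remark invoking It\^o's formula termwise.
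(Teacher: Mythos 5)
Your identification of the Laplace symbols and the martingale problems via It\^o's formula, and your appeal to Theorem~\ref{thm:generators}\ref{thm:generators:II} to integrate the duality to the semigroup level, follow essentially the paper's route; your direct bound of $|\phi(X_s,Y_{t\wedge\tau})|$ on the stopped region is a harmless variant of the paper's shortcut, which instead invokes Proposition~\ref{prop:momentcond} to get the uniform bound \eqref{eq:controlmeanC0} (note $\hat\Psi\equiv 0$, $\mathbf{\Phi}\equiv 0$ and $\Psi'(0)\in\mathbb{R}$ here), whence \eqref{eq:weird-conditionLD}. Your ``main obstacle'' is also resolved exactly as you suggest: the $\dd S$ term contributes $\kappa(xy)e^{-xy}$ additively, as already recorded for the Dol\'eans-Dade exponential \eqref{eq:doleansdade} in Paragraph~\ref{paragraph:CBRE}, and an atom of $\tilde\nu$ at $-1$ causes no trouble since $z\mapsto\kappa(z)e^{-z}$ remains bounded.

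The genuine gap is that you nowhere establish the non-explosivity (nor, strictly, the existence) of the dual process $Y$: you simply write ``both $X$ and $Y$ being non-explosive'' and use it both as part of the statement (``the unique strong non-explosive solution to \eqref{SDEdualCBRE}'') and as a hypothesis of Theorem~\ref{thm:generators}\ref{thm:generators:II} (no continuous explosion at $\infty$). This is not automatic: standard results (Ikeda--Watanabe type, cited in the paper) only give a unique strong solution up to a possibly finite explosion time, and in the supercritical case $\Psi$ is negative near $0$, so the drift $-\Psi(Y_s)$ pushes $Y$ upwards on top of the multiplicative L\'evy noise. The paper closes this by using the finite-mean hypothesis $\Psi'(0)>-\infty$: choose $c\in[0,\infty)$ with $\Psi(y)\geq -cy$ for all $y\geq 0$, compare \eqref{SDEdualCBRE} with the Dol\'eans-Dade exponential $\tilde Y$ of the tilted L\'evy process $\tilde S_t=S_t+ct$ via a comparison theorem (Fu--Li or Dawson--Li), and conclude $Y\leq\tilde Y<\infty$ a.s. Without some such argument your application of Theorem~\ref{thm:generators}\ref{thm:generators:II} (and the uniqueness-of-the-non-explosive-solution claim in the statement) is not justified; you should add this step, or an equivalent Lyapunov-type bound, to complete the proof.
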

The duality of Theorem~\ref{thm:dualityCBRE} can be seen  as a representation of the annealed semigroup of $X$ \cite[Theorems 3.6 and~2.4, Eq.~(3.7)]{zbMATH06963718}. The techniques used in \cite{zbMATH06963718,zbMATH06836271} in order to study these processes do not rely directly on the Laplace duality relationship, but on a quenched branching property.
\begin{proof}[Proof of Theorem~\ref{thm:dualityCBRE}]
Denote by $\XX$ the pregenerator associated to  $\psi\vert_{[0,\infty]\times (0,\infty)}$. For $f\in \DD$, by It\^o's lemma, the process
$(f(X_t)-\int_{0}^{t}\XX f(X_s)\dd s)_{t\in  [0,\infty)}$  is a local martingale for all starting values $x_0\in [0,\infty)$.
Since $\XX f$ is  bounded  we have a true martingale. We deduce that $\psi_X:=\psi\vert_{[0,\infty]\times (0,\infty)}$ is the Laplace symbol of $X$. Analogously  we can check the claim regarding the Laplace symbol of $Y$.

    Existence and uniqueness of a strong solution to  \eqref{SDEdualCBRE}, possibly explosive, follow by standard results, see e.g. Ikeda and Watanabe's book \cite[Chapter~4.9]{ikeda1989stochastic}. We refer also to Leman and Pardo \cite{zbMATH07470626} where a class of SDEs containing \eqref{SDEdualCBRE} is studied.   Let us further argue that actually $Y$ stays almost surely in $[0,\infty)$ for starting values in $y_0\in [0,\infty)$. Since $\Psi'(0)>-\infty$ there is a constant $c\in [0,\infty)$ such that $\Psi(y)\geq -cy$ for all $y\in [0,\infty)$. Define the L\'evy process $\tilde S$ by adding to $S$ a $c$-drift, i.e. $\tilde S_t=S_t+ct$ for $t\in [0,\infty)$. The Dol\'eans-Dade exponential $\tilde Y$ of $\tilde S$ with initial value $y_0$ then solves the stochastic equation
\[\tilde Y_t=y_0+\int_0^tX_{s-}\dd \tilde{S}_t,\quad t\in [0,\infty).\]    
By applying a comparison theorem, see e.g. \cite[Theorem 5.5]{zbMATH05676165} or Dawson-Li's work \cite[Theorem 2.3]{DawsonLi}, one can verify that a.s.  $Y\leq \tilde Y$. But $\tilde Y$ is finite a.s. and therefore $Y$ remains finite too a.s.. 

Thus both processes $X$ and $Y$ are non-explosive and have $0$ strongly absorbing. Then we seek to apply Theorem~\ref{thm:generators}\ref{thm:generators:II}. Appealing to Proposition \ref{prop:momentcond} we get the bound \eqref{eq:controlmeanC0}; therefore  \eqref{eq:weird-conditionLD} is verified with $ \sigma \equiv \infty$ and $\tau= \tau_n^{+}\land \tau_{\epsilon}^{-}$ (in the notation of Theorem \ref{thm:generators}). The other conditions of Theorem~\ref{thm:generators}\ref{thm:generators:II} are easily seen to be met.
\end{proof}
We conclude our study of CBREs by establishing their Feller property.
\begin{corollary} 
   Suppose $\tilde\nu(\{-1\})=0$. Then the $\mathrm{CBRE}(\Psi,\kappa)$ process $X$ is, on restriction to $[0,\infty)$, a $\mathsf{C}_0([0,\infty))$-Feller process. Furthermore, it admits a $(0\cdot \infty^-,\infty\cdot 0^+)$-Laplace dual $Y'$ for which, in particular, \begin{equation}\label{extinctionprob}\mathbb{P}_x(X_t=0)=\EE^{\infty}[e^{-xY_t'}],\quad  t\in  [0,\infty), \, x\in [0,\infty].
    \end{equation}
\end{corollary}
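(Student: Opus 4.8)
The plan is to establish the $\mathsf{C}_0([0,\infty))$-Feller property of $X$ restricted to $[0,\infty)$ by invoking Proposition~\ref{propositon:Feller-property}\ref{propositon:Feller-property:i} applied to the $(0^+\cdot\infty,\infty\cdot 0^+)$-Laplace duality already furnished by Theorem~\ref{thm:dualityCBRE}. Concretely, $X$ is in $(0^+\cdot\infty,\infty\cdot 0^+)$-Laplace duality with the process $Y$ solving \eqref{SDEdualCBRE}, which is non-explosive and strongly absorbed at $0$. To conclude that the restriction of $p$ to $[0,\infty)$ is $\mathsf{C}_0([0,\infty))$-Feller, I need to check that (a) $X$ satisfies the right-continuity property \eqref{FellerII}, and (b) $Y$ has $0$ non-sticky. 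For (a) I would use Proposition~\ref{prop:rightcontinuitysemigroup}, case (iv)/(i) as appropriate: the conditions there are met because $Y$ admits a c\`adl\`ag version (it is the solution of an SDE with c\`adl\`ag driver), indeed $Y$ is right-continuous at zero, so \eqref{cond:rightcontinuityPt} together with $q_t(y,[0,\infty))\to 1$ holds by bounded convergence since $Y_{0+}=Y_0=y\in(0,\infty)$ a.s.-$\PP^y$. Alternatively, since $X$ itself is a c\`adl\`ag process, \eqref{FellerII} is automatic. For (b), the non-stickiness of $0$ for $Y$: from \eqref{SDEdualCBRE}, once $Y$ hits $0$ it is absorbed there (it is $\mathbb{D}_{[0,\infty]}^{\mathrm{m}0}$-valued by construction), but ``non-sticky'' means $\PP^y(Y_t>0)=1$ for all $y>0$ and all $t$; this follows because for $y_0>0$ the process $Y$ is squeezed below by the Dol\'eans--Dade exponential $\tilde Y$ of $\tilde S=S+c\,\mathrm{id}$ (as in the proof of Theorem~\ref{thm:dualityCBRE}) which, under the hypothesis $\tilde\nu(\{-1\})=0$, never hits $0$ in finite time (its jumps are multiplicative factors $1+\Delta\tilde S>0$ a.s.), hence $Y_t>0$ a.s. It is exactly at this point that the hypothesis $\tilde\nu(\{-1\})=0$ is essential: a mass of $\tilde\nu$ at $-1$ would allow $Y$ (and $X$) to jump directly to $0$, destroying both non-stickiness and, ultimately, the continuity of the semigroup in the interior.

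Having the $\mathsf{C}_0([0,\infty))$-Feller property of $p|_{[0,\infty)}$ in hand, the second assertion is to produce a $(0\cdot\infty^-,\infty\cdot 0^+)$-Laplace dual $Y'$ of $X$. For this I would appeal to Theorem~\ref{thm:completemonotonicity}\ref{thmcompletemonotonicity3}, whose hypotheses are: $X$ completely monotone, and $0$, $\infty$ both absorbing for $X$. Complete monotonicity of $X$ follows from Corollary~\ref{corollary:LDimpliesCM} since $X$ already admits a Laplace dual (namely $Y$). That $\infty$ is absorbing for $X$ is by construction (for $x_0=\infty$ we set $X\equiv\infty$; and $X$ does not explode from finite values, so $p_t(\infty,\cdot)=\delta_\infty$). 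The delicate point is that $0$ must be \emph{absorbing} for $X$ in the sense $p_t(0,\cdot)=\delta_0$, i.e. $X$ started at $0$ stays at $0$. From the stochastic equation ``\eqref{SDECB}+\eqref{eq:doleansdade}'' with $x_0=0$: the Brownian term, the Poisson integral on $(0,X_{s-})\times(0,\infty)$, the drift $b\int X_s\,\dd s$, and the Dol\'eans--Dade term $\int X_{s-}\,\dd S_s$ all vanish identically when $X\equiv 0$, and there is no immigration term, so $X\equiv 0$ is the (unique, by pathwise uniqueness) solution from $x_0=0$. Hence $0$ is absorbing. Theorem~\ref{thm:completemonotonicity}\ref{thmcompletemonotonicity3} then yields a positive Markov process $Y'$ in $(0\cdot\infty^-,\infty\cdot 0^+)$-Laplace duality with $X$, and the identities \eqref{eq:laplacedualinftyabsorbing2} give directly
\[
\PP_x(X_t=0)=\EE^\infty[e^{-xY_t'}],\quad x\in[0,\infty],\ t\in[0,\infty),
\]
which is \eqref{extinctionprob}.

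The only genuine obstacle is verifying that $0$ is non-sticky for the dual diffusion-like process $Y$ of \eqref{SDEdualCBRE} (equivalently, after transposition, that $\infty$ is non-sticky for the $X$-side within the relevant duality), and tracing precisely where $\tilde\nu(\{-1\})=0$ enters. Everything else — absorption at $0$ and $\infty$ for $X$, complete monotonicity, the right-continuity of the semigroup — is either already recorded in Theorem~\ref{thm:dualityCBRE} or follows by the one-line arguments sketched above. A minor bookkeeping point: one should note that $p|_{[0,\infty)}$ is genuinely (sub)Markovian with $\infty$ strongly absorbing so that the restriction is meaningful, which is immediate from non-explosion. I would also remark that \eqref{extinctionprob} is consistent with, and in fact refines at the boundary, the annealed-semigroup representation of $X$ discussed after Theorem~\ref{thm:dualityCBRE}.
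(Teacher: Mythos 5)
Your reduction of the Feller claim to ``$0$ is non-sticky for the dual $Y$'' via Proposition~\ref{propositon:Feller-property}\ref{propositon:Feller-property:i}, with \eqref{FellerII} free because $X$ is right-continuous, is exactly the paper's route. The genuine gap is at the step you yourself single out as the crux: your argument that $Y$ never reaches $0$. You claim $Y$ is ``squeezed below'' by the Dol\'eans--Dade exponential of $\tilde S=S+c\,\mathrm{id}$ ``as in the proof of Theorem~\ref{thm:dualityCBRE}''. But there the constant $c$ is chosen so that $\Psi(y)\geq -cy$, which makes that exponential an \emph{upper} bound for $Y$ (it was used for non-explosion); for it to be a lower bound you would need the drift comparison $-\Psi(y)\geq cy$, i.e.\ $\Psi\leq -c\,\mathrm{id}$, which is false in general. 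A concrete counterexample to your literal claim: take $S\equiv 0$ and $\Psi(y)=ay^2$; then one may take $c=0$, the Dol\'eans--Dade exponential is the constant $y_0$, while $Y_t=y_0/(1+ay_0t)<y_0$. The correct argument (the paper's) runs in the opposite direction and uses hypotheses you never invoke: since $\Psi(0)=0$ and $\Psi'(0)<\infty$ (standing assumptions of the CBRE paragraph), one has $\Psi(y)\leq cy$ on $[0,1]$ for some $c\in[0,\infty)$, so that \emph{locally on $[0,1]$} $Y$ dominates the Dol\'eans--Dade exponential of $S-c\,\mathrm{id}$, which stays strictly positive a.s.\ precisely because $\tilde\nu(\{-1\})=0$; the same hypothesis rules out a jump of $Y$ straight to $0$ from outside $[0,1]$ (the multiplicative factors $1+\Delta S$ are a.s.\ nonzero). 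The localization is not cosmetic: a linear bound $\Psi(y)\leq cy$ cannot hold globally in general, so the global domination you assert is unavailable even after fixing the sign. As written, your argument would ``prove'' non-stickiness without ever using the structure of $\Psi$ near $0$, which is where the real content lies.

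Your second claim, by contrast, is correct and proved by a genuinely different route from the paper. You apply Theorem~\ref{thm:completemonotonicity}\ref{thmcompletemonotonicity3} directly to $X$ (complete monotonicity from Corollary~\ref{corollary:LDimpliesCM}, $0$ and $\infty$ absorbing for $X$ from the SDE and the construction — indeed this is already recorded in the proof of Theorem~\ref{thm:dualityCBRE}) and read \eqref{extinctionprob} off \eqref{eq:laplacedualinftyabsorbing2}; note this does not even use $\tilde\nu(\{-1\})=0$. The paper instead obtains $Y'$ via Proposition~\ref{prop:continuousextensionat0}\ref{extensionatinfinity}, as the continuous extension to $[0,\infty]$ of the already constructed dual $Y$ of \eqref{SDEdualCBRE} restricted to $(0,\infty)$ (this restriction is legitimate exactly because of the non-stickiness established in the first part). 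Your argument is shorter; the paper's buys a concrete identification of $Y'$, namely that it agrees in law with $Y$ for interior starting points, rather than an abstract dual whose identification with $Y$ on $(0,\infty)$ would have to be argued separately.
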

\begin{proof}
We have seen in (the course of the proof of) Theorem~\ref{thm:dualityCBRE} that the $(0^+\cdot \infty,\infty \cdot 0^+)$-Laplace dual $Y$ of $X$ has $\infty$ non-sticky; let us argue that this is true also of the boundary $0$. Indeed, since $\Psi(0)=0$ and $\Psi'(0)<\infty$, $\Psi(y)\leq cy$ for all $y\in [0,1]$ for some $c\in [0,\infty)$. Employing the same comparison technique as in the proof of Theorem~\ref{thm:dualityCBRE} the claim follows, the condition $\tilde\nu(\{-1\})=0$ ensuring that the Dol\'eans-Dade exponential $\tilde Y$ associated to $S$ from which is subtracted the $c$-drift stays strictly positive a.s. (it is enough to employ the comparison on the neighborhood $[0,1]$ of $0$, since $S$ never jumps down by $-1$, and hence $X$ never jumps down to $0$, a.s.). The Feller property is then a direct application of Proposition~\ref{propositon:Feller-property}\ref{propositon:Feller-property:i}.  

The second claim follows from  Proposition~\ref{prop:continuousextensionat0}\ref{extensionatinfinity} (extend continuously $Y$ restricted to $(0,\infty)$ back to the points $0$ and $\infty$ as the process $Y'$) and  \eqref{eq:laplacedualinftyabsorbing2}   (for \eqref{extinctionprob}).
\end{proof}

The extinction probability of a CBRE is
given in \cite[Section 4]{zbMATH06963718} in terms of the solution of a random differential equation with a singular terminal condition. Identity \eqref{extinctionprob} provides another representation. CB processes in random environment which also take into account  immigration (CBIREs) were studied in \cite{zbMATH06963718} and can also be explored through Laplace duality. We mention in addition \cite{zbMATH07470626} in which random environment and competition are combined.

\subsection{Stochastic equations for Markov processes with decomposable Laplace dual symbols}\label{sec:decomposable}
We are interested now in processes whose Laplace symbols are decomposable in the sense of Definition~\ref{def:LDS}.  Let then $n\in \mathbb{N}$, let $\{\Sigma_i,\hat{\Sigma}_i\}\subset \spLpnot$, $\{\Phi_i,\hat{\Phi}_i\} \subset \pksubo $ for $i\in [n]:=\{1,\ldots,n\}$, and set 
\begin{equation}\label{eq:symbolX}
\psi(x,y)=\mathbbm{1}_{[0,\infty)^2}(x,y)\sum_{i=1}^{n}\left(\hat{\Sigma}_i(x)\Sigma_i(y)-\hat{\Phi}_i(x)\Phi_i(y)\right), \ \ (x,y)\in [0,\infty]^2,
\end{equation}
The problem of constructing such decomposable processes and studying their symbol has been tackled in several works, we refer to  Hoh \cite{zbMATH00683136}, see also \cite{zbMATH02076723,zbMATH05946936}. In these articles the authors do not assume the functions $(\hat{\Sigma}_i)_{i\in [n]}$, $(\hat{\Phi}_i)_{i\in [n]}$ to be of L\'evy-Khintchine form. Moreover, they primarily consider real-valued processes whose generator domains contain $C^2$ functions with compact support.

For simplicity we shall take that there is no diffusive part, no drift, nor killing in $\psi$. More precisely, for all $i\in [n]$, we shall assume that there are  (automatically unique) measures $\mu_i$ and $\nu_i$ on $\BB_{(0,\infty)}$ satisfying $\int (u\wedge u^2)\,\mu_i(\dd u)<\infty$ and  $\int (1\wedge v) \, \nu_i(\dd v)<\infty$
for which 
\begin{equation*}
\Sigma_i(y)=\int (e^{-uy}-1+uy)\mu_i(\dd u)\text{ and }\Phi_i(y)=\int (1-e^{-vy})\nu_i(\dd v),\quad y\in [0,\infty).
\end{equation*}
Later we will impose the same condition on $\hat\psi$ too, but for now we need it only for $\psi$.

We endeavour then to provide a construction of a positive Markov process, absorbed at $\infty$, with Laplace symbol $\psi\vert_{[0,\infty]\times (0,\infty)}$, via the method of stochastic equations. To such an end let $\big(\mathcal{M}_{i}(\dd s,\dd r,\dd u)\big)_{i\in [n]}$, $\big(\mathcal{N}_{i}(\dd s,\dd r,\dd v)\big)_{i\in [n]}$ be mutually independent PRM with intensities respectively $\dd s\,\dd r\,\mu_i(\dd u)$ and $\dd s\,\dd r\,\nu_i(\dd v)$, $i\in [n]$. For all $i\in [n]$ we denote the compensated PRM associated to ${\mathcal M}_i$ by $\bar{\mathcal{M}}_i(\dd s,\dd u, \dd y):={\mathcal M}_i(\dd s,\dd u,\dd y)-\dd s\,\dd u\,\mu_i(\dd y)$.

Consider the stochastic equation for the process $X$: 
\begin{align}\nonumber
 X_t=x_0&+\sum_{i=1}^n\int_0^t\int_{0}^{\hat{\Sigma}_i(X_{s-})}\!\!\int_{0}^{\infty}u\,\bar{\mathcal{M}}_{i}(\dd s,\dd r, \dd u)\\
  &\qquad+\sum_{i=1}^{n}\int_0^t\int_{0}^{\hat{\Phi}_i(X_{s-})}\!\!\int_{0}^{\infty}v\,\mathcal{N}_{i}(\dd s,\dd r, \dd v),\quad t\in [0,\zeta), \label{SDE}
\end{align}
where $x_0\in [0,\infty)$ is the initial value  of $X$, $\zeta=\inf\{t>0: X_{t-} \text{ or } X_t \notin [0,\infty)\}$ the lifetime. For the initial value $x_0=\infty$ we take $X\equiv \infty$. 

If one were to allow the functions $(\Sigma_i)_{i\in [n]}$, $(\Phi_i)_{i\in [n]}$ in \eqref{eq:symbolX} to have drift parameters, say $(b_i)_{i\in [n]}\in [0,\infty)^n$ and $(d_i)_{i\in [n]}\in [0,\infty)^n$ respectively, then $\psi(x,y)$ in \eqref{eq:symbolX} would have a drift term $\hat{\Psi}(x)y$ with $\hat{\Psi}:=\sum_{i=1}^{n}(b_i\hat\Sigma_i-d_i\hat\Phi_i)\in \pkspLp$, whilst at the level of the stochastic equation \eqref{SDE} an additional term of the form $-\int_{0}^{t}\hat{\Psi}(X_s)\dd s$ would emerge. 

The next theorem provides a sufficient condition for having existence and uniqueness of a strong $[0,\infty]$-valued solution to \eqref{SDE}. Define  $$\hat{\Phi}:=\sum_{i=1}^{n}\hat{\Phi}_i\in \pksubo\text{ and }\Phi:=\sum_{i=1}^n\Phi_i\in\pksubo.$$

\begin{theorem}\label{thm:existencedecomposable}
Assume   \begin{equation}\label{conduniqueness} \int_{0+}\frac
{\dd x}{\hat{\Phi}(x)-\hat{\Phi}(0)}=\infty.
\end{equation} 
Then, for all initial values $x_0\in [0,\infty)$, there exists a unique strong solution $X$ to \eqref{SDE}. Moreover, $X$ is a positive Markov process  whose Laplace symbol is $\psi_X=\psi\vert_{[0,\infty]\times (0,\infty)}$.
\end{theorem}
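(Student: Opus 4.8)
The plan is to establish existence and uniqueness of the strong solution to \eqref{SDE} by appealing to the general theory for stochastic equations with jumps (e.g. Palau and Pardo \cite[Proposition 1]{zbMATH06836271}, Li and Pu \cite[Theorem 6.1]{zbMATH06098183}, Fu and Li \cite{zbMATH05676165}), reducing the matter to verifying the standard Lipschitz/growth hypotheses on the coefficients of \eqref{SDE}, and then to argue separately that the solution does not reach $\infty$ continuously and can be absorbed there, so that the process is genuinely $[0,\infty]$-valued. First I would record that each $\hat\Sigma_i$ and each $\hat\Phi_i$ is locally Lipschitz and of at most linear growth on $[0,\infty)$ (being a Bernstein-type function; for $\hat\Sigma_i\in\spLpnot$ we have $\hat\Sigma_i(0)=0$ and $\limsup_{x\to\infty}\hat\Sigma_i(x)/x^2<\infty$, while $\hat\Phi_i\in\pksubo$ grows at most linearly). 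The coefficient of the compensated jump term, $u$, integrated against $\hat\Sigma_i(X_{s-})$ in the $r$-variable, contributes a term controlled by $\hat\Sigma_i(x)\int (u\wedge u^2)\mu_i(\dd u)$ after a second-moment estimate, while the non-compensated term integrated against $\hat\Phi_i(X_{s-})$ contributes $\hat\Phi_i(x)\int(1\wedge v)\nu_i(\dd v)$; these are exactly the quantities that are finite by hypothesis. The Yamada–Watanabe-type condition \eqref{conduniqueness} on $\hat\Phi$ is there precisely to upgrade the (possibly merely Hölder-$1/2$, if one were careful about the compensated part) modulus-of-continuity estimate on the jump coefficients into pathwise uniqueness for the non-compensated subordinator-type inputs, exactly along the lines of \cite[Theorem 5.5]{zbMATH05676165} or \cite[Proposition 1]{zbMATH06836271}; I would cite these rather than reprove them.

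Next I would address the state space. Because all the jump sizes $u,v$ are positive and the intensities are proportional to $\hat\Sigma_i(X_{s-})$ and $\hat\Phi_i(X_{s-})$ — which vanish at $x=0$ since $\hat\Sigma_i(0)=0$ and, after subtracting $\hat\Phi_i(0)$, also there — the point $0$ is absorbing for the solution of \eqref{SDE}, hence $X\geq 0$ with certainty; this is the comparison/positivity part and follows from the same comparison theorem \cite[Theorem 5.5]{zbMATH05676165}. For the upper boundary, one sets $X\equiv\infty$ from the lifetime $\zeta$ onwards; since the driving inputs are spectrally positive (no downward jumps) and the compensated martingale part has finite quadratic variation on $[0,\zeta\wedge T]$ whenever $X$ stays bounded, explosion can only occur by the solution tending to $\infty$, which is consistent with declaring $\infty$ absorbing. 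This yields a $[0,\infty]$-valued process with $\mathbb{D}^{\mathrm{m}\infty}_{[0,\infty]}$ paths. The Markov property is then automatic from strong existence and pathwise uniqueness of the SDE (a standard consequence, cf. the discussion preceding the theorem and \cite[Chapter IV.9]{ikeda1989stochastic}).

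Finally I would compute the Laplace symbol. Fix $y\in(0,\infty)$ and apply Itô's formula for discontinuous semimartingales (\cite[Theorem 4.57]{jacod1987limit}) to $\ee^y(X_t)=e^{-X_ty}$ along \eqref{SDE}, up to the explosion time. The compensated terms contribute, for each $i$, $\int \big(e^{-(X_{s-}+u)y}-e^{-X_{s-}y}+uy e^{-X_{s-}y}\big)\hat\Sigma_i(X_{s-})\,\mu_i(\dd u)\,\dd s$, which integrates (using $\int(e^{-uy}-1+uy)\mu_i(\dd u)=\Sigma_i(y)$) to $\hat\Sigma_i(X_{s-})\Sigma_i(y)e^{-X_{s-}y}\dd s$; the non-compensated terms contribute $\int\big(e^{-(X_{s-}+v)y}-e^{-X_{s-}y}\big)\hat\Phi_i(X_{s-})\nu_i(\dd v)\dd s=-\hat\Phi_i(X_{s-})\Phi_i(y)e^{-X_{s-}y}\dd s$. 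Summing over $i$ identifies the drift of $e^{-X_ty}$ as $\psi(X_{s-},y)e^{-X_{s-}y}\dd s$ with $\psi$ as in \eqref{eq:symbolX}. Since $\XX\ee^y$ is bounded (the expression $\psi(x,y)e^{-xy}$ is bounded in $x\in[0,\infty]$ because each $\hat\Sigma_i(x)x^2e^{-xy}$ and $\hat\Phi_i(x)e^{-xy}$ is, being of at most polynomial growth times a decaying exponential — here I should be slightly careful and note $\hat\Sigma_i(x)\le C_i(x+x^2)$ so $\hat\Sigma_i(x)\Sigma_i(y)e^{-xy}$ is bounded uniformly in $x$ for fixed $y$), the local martingale $\big(e^{-X_ty}-\int_0^t\psi(X_s,y)e^{-X_sy}\dd s\big)_{t\ge 0}$ is a genuine martingale, and combined with non-explosion (which I would deduce, if needed for this step, by a localization argument or simply by observing the conclusion holds up to $\zeta$ and then trivially past it since $\ee^y(\infty)=0$, $\psi(\infty,\cdot)=0$) this shows $\DD\subset D_\XX$ with $\XX\ee^y(x)=\psi(x,y)e^{-xy}$ for $x\in(0,\infty)$ and $\XX\ee^y(0)=\psi(0,y)=0$, i.e. $\psi_X=\psi\vert_{[0,\infty]\times(0,\infty)}$. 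The main obstacle I anticipate is the careful bookkeeping for pathwise uniqueness: ensuring that the compensated jump terms, which a priori only give a square-root-type modulus of continuity, together with the non-compensated subordinator terms under \eqref{conduniqueness}, genuinely fall within the scope of the cited existence-uniqueness results — this is where one must match the hypotheses of \cite{zbMATH06836271,zbMATH06098183,zbMATH05676165} precisely rather than wave hands, though the referenced results are designed exactly for this and I would lean on them.
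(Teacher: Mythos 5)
Your route coincides with the paper's: rewrite \eqref{SDE} in the framework of \cite[Proposition~1]{zbMATH06836271} (building on \cite{zbMATH06098183,zbMATH05676165}) to get strong existence and pathwise uniqueness, then apply It\^o's formula to $f(X)$ for $f\in\DD$ and use boundedness of $\XX f$ to pass from local martingale to martingale and read off $\psi_X=\psi\vert_{[0,\infty]\times(0,\infty)}$; that second half of your argument is exactly the paper's and is fine. The weak point is that the substance of the paper's proof is precisely the hypothesis-checking you defer to the references. Two remarks there. First, for the non-compensated ($\hat\Phi_i$-driven) part, the Yamada--Watanabe/Osgood condition of \cite{zbMATH06836271} asks for a concave modulus $r_m$ with $\int_{0+}\dd u/r_m(u)=\infty$ controlling $\int\vert h(x,\cdot)\wedge m-h(y,\cdot)\wedge m\vert$ against the intensity; the one non-routine idea, which you do not state, is the subadditivity of the Bernstein functions, $\hat\Phi_i(x)-\hat\Phi_i(y)\leq\hat\Phi_i(x-y)-\hat\Phi_i(0)$ for $y\leq x$, which yields the bound $C_m\big(\hat\Phi(x-y)-\hat\Phi(0)\big)$ and makes \eqref{conduniqueness} literally the required Osgood condition. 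Without that observation, citing the general theorem does not close the argument. Second, your concern about a ``H\"older-$1/2$'' modulus for the compensated part is misplaced: the $x$-dependence enters only through the range $[0,\hat\Sigma_i(x)]$ of the auxiliary variable, so the relevant quantity $\int\big(\vert l(x,y,\underline u)\vert\wedge l(x,y,\underline u)^2\big)$ equals $\sum_i\vert\hat\Sigma_i(x)-\hat\Sigma_i(y)\vert\int(u\wedge u^2)\mu_i(\dd u)$, and local Lipschitzness of $\hat\Sigma_i\in\spLpnot$ gives the Lipschitz-type condition directly (together with the monotonicity of $x\mapsto x+g(x,\underline u)$, which is also part of the hypotheses and should be checked). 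A minor further slip: at the stage of this theorem the pure-jump normalization of $\hat\Phi_i$ has not yet been imposed, so $\hat\Phi_i(0)$ may be strictly positive and $0$ need not be absorbing for $X$; nothing in the statement requires it, and positivity of the solution is subsumed in the cited framework rather than something you need to argue separately.
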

\begin{proof}
The uniqueness of a strong solution will follow from \cite[Proposition 1]{zbMATH06836271}; see also the references therein. We rewrite \eqref{SDE} as \cite[Eq.~(5)]{zbMATH06836271}
\begin{equation*}
X_t=x_0+\int_0^t\int_Vh(X_{s-},\underline{v})\mathcal{N}(\dd s,\dd \underline{v})+\int_0^t\int_Ug(X_{s-},\underline{u})\bar{\mathcal{M}}(\dd s,\dd \underline{u}), \quad t\in [0,\zeta), 
\end{equation*}
where,  $c_{[n]}$ denoting counting measure on $[n]$:
\[V:=[n]\times (0,\infty)^{2},\ \ \underline{v}:=(i,r,v), \ \ h(x,\underline{v}):=v\mathbbm{1}_{[0,\hat{\Phi}_i(x)]}(r), \ \ \underline{\nu}(\dd \underline{v}):= c_{[n]}(\dd i)\nu_i(\dd v)\dd r,\]
$\mathcal{N}(\dd s,\dd \underline{v}):=\mathcal{N}_i(\dd s,\dd r,\dd v)$ having intensity  $\dd s\underline{\nu}(\dd \underline{v})$; 
\[U:=[n]\times (0,\infty)^{2},\ \ \underline{u}:=(i,r,u), \ \ g(x,\underline{u}):=u\mathbbm{1}_{[0,\hat{\Sigma}_i(x)]}(r), \ \ \underline{\mu}(\dd \underline{u}):=c_{[n]}(\dd i)\mu_i(\dd u)\dd r,\] $\mathcal{M}(\dd s,\dd \underline{u}):=\mathcal{M}_i(\dd s,\dd r,\dd u)$ having intensity  $\dd s\underline{\mu}(\dd \underline{u})$.  

We now check  \cite[pp.~60-61, (a)-(c)]{zbMATH06836271}. Written for our setting and with some simplifications as the SDE  \eqref{SDE} has only Poisson terms, these conditions are as follows.
\begin{enumerate}[(a)]
\item\label{palau-calderon:a}  For each $m\in (0,\infty)$ there is $A_m<\infty$, such that $  \int \big(|h(x,\underline{v})|\wedge 1\big) \underline{\mu}(\dd \underline{v})\leq A_m(1+x)$ for every $x\in [0,m]$.
\item\label{palau-calderon:b} For each $m\in (0,\infty)$ there is a $\uparrow$ concave $r_m:[0,\infty)\to [0,\infty)$ for which $\int_{0+}\frac{\dd u}{r_m(u)}=\infty$, such that $\int \big |h(x,\underline{v})\wedge m-h(y,\underline{v})\wedge m\big | \underline{\mu}(\dd \underline{v})\leq r_m(x-y)$ for $\{x,y\}\subset [0,m]$.
\item\label{palau-calderon:c} For all $\underline{u}=(i,u,r)\in U$ the map $([0,\infty)\ni x\mapsto x+g(x,\underline{u}))$ is $\uparrow$. Furthemore, setting  \[l(x,y,\underline{u}):=g(x,\underline{u})-g(y,\underline{u})=u\mathbbm{1}_{[0,\hat{\Sigma}_i(x)]}(r)-u\mathbbm{1}_{[0,\hat{\Sigma}_i(y)]}(r),\]  then for all $m\in (0,\infty)$ there is $B_m<\infty$, such that 
  \[\int \big |l(x,y,\underline{u})|\wedge l(x,y,\underline{u})^2 \underline{\nu}(\dd u)\leq B_m|x-y|,\quad \{x,y\}\subset [0,m].\]
\end{enumerate}
Note that the  notation is  transposed here relative to that of \cite[p.~60]{zbMATH06836271}: the compensation is on $\mathcal{M}$ to which is associated $g$, not on $\mathcal{N}$ to which is associated $h$.  

\textbf{Ad} \ref{palau-calderon:a}. Since the functions $(\hat{\Phi}_i)_{i\in [n]}$ are all $\uparrow$, therefore locally bounded,  the expression $  \int \big(|h(x,\underline{v})|\wedge 1\big) \underline{\mu}(\dd \underline{v})=\sum_{i=1}^{n}\hat{\Phi}_i(x)\int (1\wedge v)\nu_i(\dd v)$ is also locally bounded in $x\in [0,\infty)$.

\textbf{Ad } \ref{palau-calderon:b}. Let $m\in (0,\infty)$. For $\{x,y\}\subset [0,m]$, $y\leq x$,  \begin{align*}
   &\int \big |h(x,\underline{v})\wedge m-h(y,\underline{v})\wedge m\big | \underline{\mu}(\dd \underline{v})
   \leq\sum_{i=1}^{n}\int(v \wedge m)\big(\hat{\Phi}_i(x)-\hat{\Phi}_i(y)\big)\nu_i(\dd v)\\
   &\leq \sum_{i=1}^{n}\int  (v\wedge m) \nu_i(\dd v)\big(\hat{\Phi}_i(x-y)-\hat{\Phi}_i(0)\big)\leq C_m(\hat{\Phi}(x-y)-\hat\Phi(0)\big)=:r_m(x-y),
   \end{align*}
   where in the second inequality we used the fact that the Bernstein functions $(\hat{\Phi}_i-\hat\Phi_i(0))_{i\in [n]}$ are subadditive, while in the third inequality we set $C_m:=\max_{i\in [n]}\int_{0}^{\infty} (u\wedge m) \nu_i(\dd u)\in [0,\infty)$.  The assumption $\int_{0+} \frac{\dd u}{\hat{\Phi}(u)-\hat{\Phi}(0)}=\infty$ ensures $\int_{0+}\frac{\dd u}{r_m(u)}=\infty$;  $r_m:[0,\infty)\to [0,\infty)$ is $\uparrow$ concave since members of $\pksubo$ are so.
   
\textbf{Ad} \ref{palau-calderon:c}. Let $m\in (0,\infty)$. Since the functions $(\hat{\Sigma}_i)_{i\in [n]}$ are all $\uparrow$, plainly for any $\underline{u}=(i,u,r)\in U$, $([0,\infty)\ni x\mapsto x+g(x,\underline{u}))$ is $\uparrow$. 
To see the inequality involving $B_m$ assume with no loss of generality  $x\geq y$. Then $l(x,y,\underline{u})=u\mathbbm{1}_{(\hat{\Sigma}_i(y),\hat{\Sigma}_i(x)]}(r)$ and we have
   \begin{equation*}
\int \big |l(x,y,\underline{u})|\wedge l(x,y,\underline{u})^2 \underline{\nu}(\dd u)   =\sum_{i=1}^{n}\big(\hat{\Sigma}_i(x)-\hat{\Sigma}_i(y)\big) \int  (u\wedge u^2 )\mu_i(\dd u).
   \end{equation*}
But the members of $\spLpnot$  are locally Lipschitz and we are done.

The other  standing assumptions of \cite[p.~60]{zbMATH06836271} are easily seen to be met so that \cite[Proposition 1]{zbMATH06836271} gives a unique strong solution  $X$ to \eqref{SDE}. Let now $\XX$ be the pregenerator associated to the Laplace symbol $\psi\vert_{[0,\infty]\times (0,\infty)}$:
\begin{equation*}
\mathcal{X}f(x)=\sum_{i=1}^{n}\left(\hat{\Sigma}_i(x) \mathrm{L}^{\Sigma_i}f(x)+\hat{\Phi}_i(x)\mathrm{L}^{-\Phi_i}f(x)\right), \quad x\in [0,\infty),\, f\in \mathcal{D}.
\end{equation*}
By It\^o's lemma, for any $f\in \DD$, $M_f:=(f(X_t)-\int_{0}^{t}\mathcal{X}f(X_s)\dd s)_{t\in[ 0,\zeta)}$   
is a local martingale in any filtration $\FF$ relative to which $\mathcal{M}$ and $\mathcal{N}$ are PRM. By Proposition~\ref{lem:extendedpregen} or directly we see that $(\mathcal{X}f)\vert_{[0,\infty)}\in \mathsf{C}_0([0,\infty))$. So, if $(T_n)_{n\in \mathbb{N}}\uparrow \zeta$ is a  sequence of $\FF$-stopping times such that the stopped process $(M_f)^{T_n}$ is a martingale for each $n\in \mathbb{N}$ (a localizing sequence for $M_f$), then for all $s\leq t$ from $[0,\infty)$, in $$\EE_{x_0}\left[f(X_{t\land T_n})-\int_{0}^{t\land T_n}\mathcal{X}f(X_u)\dd u\Big\vert \FF_s\right]=f(X_{s\land T_n})-\int_{0}^{s\land T_n}\mathcal{X}f(X_u)\dd u\quad \text{a.s.-$\PP_{x_0}$}$$ we may pass to the limit $n\to\infty$ by bounded convergence to obtain $\EE_{x_0}[M_f(t)\vert \FF_s]=M_f(s)$ a.s.-$\PP_{x_0}$. It follows that  in fact $(f(X_t)-\int_{0}^{t}\mathcal{X}f(X_s)\dd s)_{t\in[ 0,\infty)}$ is a martingale under $\PP_{x_0}$ for all $x_0\in [0,\infty]$ (it is trivial for $x_0=\infty$).  In turn, for all $x\in [0,\infty]$ and $y\in (0,\infty)$, again by bounded convergence, we may compute $\lim_{t\downarrow 0}\frac{\EE_x[e^{-X_ty}]-e^{-xy}}{t}=\lim_{t\downarrow 0}\frac{\EE_x[\int_0^t(\XX\ee^y)(X_s)\dd s]}{t}=(\XX\ee^y)(x)$ entailing that $\psi_X=\psi\vert_{[0,\infty]\times (0,\infty)}$.
\end{proof}
The next proposition provides a sufficient condition for  non-explosion of \eqref{SDE}. 

\begin{proposition}\label{prop:nonexplosionDCM}
In addition to \eqref{conduniqueness}, suppose \begin{equation}\label{suffcondnonexplosion}\int_{0+}\frac{\dd y}{\Phi(y)}=\infty.
 \end{equation}    
Then the process $X$, solution to \eqref{SDE}, does not explode.
\end{proposition}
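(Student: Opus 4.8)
The strategy is the standard Lyapunov-function argument for non-explosion, closely paralleling the approach already used in the proof of Proposition~\ref{prop:nonexplosionCBCI}. We want to exhibit a nonnegative $f\in C^2([0,\infty))\cap D_{\XX}$ that is $\uparrow$, satisfies $\lim_{x\to\infty}f(x)=\infty$, and for which $\XX f\leq Cf$ on a neighbourhood of infinity for some constant $C\in[0,\infty)$; a Lyapunov-type criterion (see e.g. \cite[Section~6.3]{zbMATH05875699}, or \cite[Proof of Lemma~5.3]{foucartvidmar}, \cite[Theorem~A]{rebotier}) then guarantees that the martingale problem solution constructed in Theorem~\ref{thm:existencedecomposable} cannot reach $\infty$ in finite time. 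Recall here that $$\XX f(x)=\sum_{i=1}^n\Big(\hat{\Sigma}_i(x)\mathrm{L}^{\Sigma_i}f(x)+\hat\Phi_i(x)\mathrm{L}^{-\Phi_i}f(x)\Big),\quad x\in[0,\infty),\, f\in\DD.$$ Guided by the form of $\uparrow$ excessive functions dictated by Proposition~\ref{thm:excessivecm}, I would look for $f$ of Bernstein form $f(x):=\int(1-e^{-xz})\mu(\dd z)$ for a suitable infinite measure $\mu$ on $\BB_{(0,\infty)}$.

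The first step is to choose $\mu$. Exactly as in the (sub)critical part of the proof of Proposition~\ref{prop:nonexplosionCBCI}, but now using the condition $\int_{0+}\Phi(y)^{-1}\dd y=\infty$, I would take $\mu$ to be the absolutely continuous measure on $\BB_{(0,\infty)}$ (or on a bounded interval $(0,\rho)$ if needed) with density proportional to $\Phi(v)^{-1}\exp\big(\int_v^{v_0}\theta\,\Phi(u)^{-1}\dd u\big)$ for a fixed $v_0\in(0,\infty)$ and a parameter $\theta>0$; the divergence of $\int_{0+}\Phi^{-1}$ ensures that this is a genuine (infinite near $0$ or with infinite total mass) measure, so $f$ is $\uparrow$, nonnegative, $C^2$, with $\lim_\infty f=\infty$, and with suitable finite-moment properties near infinity. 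The key identity I want, analogous to $x\mathrm{L}^\Psi f(x)=\theta f(x)$ in the cited proof, is that the ``immigration-type'' aggregate generates $f$ back up to a constant. Concretely, by Tonelli and differentiation under the integral sign one computes $\mathrm{L}^{-\Phi_i}f(x)=\int e^{-xz}\Phi_i(z)\mu(\dd z)$, so that $$\sum_{i=1}^n\hat\Phi_i(x)\mathrm{L}^{-\Phi_i}f(x)=\hat\Phi(x)\int e^{-xz}\Phi(z)\mu(\dd z),$$ using $\sum_i\hat\Phi_i=\hat\Phi$ and $\sum_i\Phi_i=\Phi$ — here one must be a little careful because the $\hat\Phi_i$ are different functions, but since each $\hat\Phi_i\le\hat\Phi$ and all are $\uparrow$, the sum is bounded by $\hat\Phi(x)\int e^{-xz}\Phi(z)\mu(\dd z)$, which, once $\mu$ is chosen with density $\propto\Phi(v)^{-1}\exp(\cdots)$, can be arranged to be $\le Cf(x)$ near infinity by the same ODE computation as in \cite[Lemma~5.1]{foucartvidmar} (with $\Phi$ in the role of $-\Psi$). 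The collision terms are harmless: for each $i$, $\mathrm{L}^{\Sigma_i}f(x)=-\int e^{-xz}\Sigma_i(z)\mu(\dd z)\le 0$, so $\sum_i\hat\Sigma_i(x)\mathrm{L}^{\Sigma_i}f(x)\le 0$, using $\hat\Sigma_i\ge 0$; these only help the inequality.

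Putting the pieces together, with this choice of $\mu$ one obtains $\XX f(x)\le \hat\Phi(x)\int e^{-xz}\Phi(z)\mu(\dd z)\le Cf(x)$ on a neighbourhood of $\infty$, and $f\in D_{\XX}$ (indeed $f\in D_{\mathrm{L}^{\Sigma_i}}\cap D_{\mathrm{L}^{-\Phi_i}}$ for every $i$) is checked by Tonelli and dominated convergence exactly as in Proposition~\ref{prop:nonexplosionCBCI}. The Lyapunov argument then closes the proof.

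\textbf{Main obstacle.} The delicate point is the ODE estimate producing $\hat\Phi(x)\int e^{-xz}\Phi(z)\mu(\dd z)\le Cf(x)$ near infinity. In \cite{foucartvidmar} the analogous bound is obtained because the equation $x\mathrm{L}^\Psi f=\theta f$ is \emph{exact}, coming from a first-order ODE solved by the density of $\mu$; here the presence of the \emph{multiplicative weight} $\hat\Phi(x)$ in front (rather than the linear $x$ appearing in the branching setting), and the fact that we only control $\hat\Phi_i\le\hat\Phi$ rather than having matching pairs, means we do not get an exact identity but an inequality, and one must verify that the exponential tilting built into the density of $\mu$ still dominates the growth of $\hat\Phi(x)\int e^{-xz}\Phi(z)\mu(\dd z)$ relative to $f(x)$ as $x\to\infty$. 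Handling this — in particular choosing $\theta$ and the range of integration for $\mu$ so that the comparison $\XX f\le Cf$ genuinely holds on a full neighbourhood of infinity — is the step that requires the most care; everything else is routine Tonelli/dominated-convergence bookkeeping.
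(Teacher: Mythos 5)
Your route is the paper's: the Bernstein Lyapunov function $f(x)=\int(1-e^{-xv})\mu(\dd v)$ with $\mu$ given by the density \eqref{eq:densitymu} (taken with $\rho=\infty$ and $-\Phi$ in the role of $\Psi$), nonpositivity of the collision terms, and a Lyapunov criterion for non-explosion. However, the step you flag as the ``main obstacle'' and leave unresolved is exactly where the one missing idea sits, and it is a one-liner, not a delicate comparison of the exponential tilting against the growth of $\hat\Phi$. Write $\hat\Phi_i(x)=\frac{\hat\Phi_i(x)}{x}\cdot x$: every $\hat\Phi_i\in\pksubo$ is concave with $\hat\Phi_i(0)\geq 0$, so $x\mapsto\hat\Phi_i(x)/x$ is nonincreasing on $(0,\infty)$, and since $\mathrm{L}^{-\Phi_i}f(x)=\int e^{-xz}\Phi_i(z)\mu(\dd z)\geq 0$, for any fixed $l\in(0,\infty)$ and all $x\in[l,\infty)$,
\begin{align*}
\sum_{i=1}^n\hat\Phi_i(x)\,\mathrm{L}^{-\Phi_i}f(x)
=\sum_{i=1}^n\frac{\hat\Phi_i(x)}{x}\,x\,\mathrm{L}^{-\Phi_i}f(x)
\le\Big(\max_{i\in[n]}\frac{\hat\Phi_i(l)}{l}\Big)\,x\,\mathrm{L}^{-\Phi}f(x)
=\Big(\max_{i\in[n]}\frac{\hat\Phi_i(l)}{l}\Big)\,\theta f(x),
\end{align*}
the last equality being the exact identity $x\,\mathrm{L}^{-\Phi}f(x)=\theta f(x)$ furnished by \cite[Lemma 5.1]{foucartvidmar} for $\theta\in(0,\Phi'(0))$ — this is also where \eqref{suffcondnonexplosion} enters, guaranteeing that $f$ is finite, $C^2$, $\uparrow$ and tends to $\infty$. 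Sublinearity of Bernstein functions at infinity thus reduces everything to the exact ODE identity you already have; the same remark closes your own intermediate bound, since $\hat\Phi(x)\int e^{-xz}\Phi(z)\mu(\dd z)=\frac{\hat\Phi(x)}{x}\,\theta f(x)\le\frac{\hat\Phi(l)}{l}\,\theta f(x)$ on $[l,\infty)$. With this, and your (correct) observation that $\sum_i\hat\Sigma_i(x)\mathrm{L}^{\Sigma_i}f(x)\le 0$, the Lyapunov argument concludes exactly as in Proposition~\ref{prop:nonexplosionCBCI}, which is precisely what the paper does (there are no ``matching pairs'' to worry about: only the one-sided bounds above are used).

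One genuine omission remains: the degenerate case $\Phi\equiv 0$, which is compatible with \eqref{suffcondnonexplosion} but in which the density \eqref{eq:densitymu} with $-\Phi$ in place of $\Psi$ is meaningless (you would be dividing by zero), so your construction does not apply. The paper treats this case separately and more simply: when $\Phi\equiv 0$ the solution of \eqref{SDE} is a local martingale up to $\zeta$, and optional sampling along a localizing sequence announcing $\zeta$ forces $\zeta=\infty$ a.s.; alternatively, within your framework, any Bernstein-type $f$ with infinite representing measure satisfies $\XX f\le 0$ and serves as a Lyapunov function. You should add this case to make the argument complete.
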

\noindent
Note the symmetry between the condition \eqref{conduniqueness} for the unique existence of $X$   and the condition \eqref{suffcondnonexplosion} for non-explosion  of $X$.
\begin{proof}[Proof of Proposition~\ref{prop:nonexplosionDCM}]
In the case $\Phi\equiv 0$ the solution to \eqref{SDE}, $x_0<\infty$, is a local martingale up to $\zeta$ in any filtration $\FF$ in which $\mathcal{M}$ is a PRM. In such case let then  $(T_n)_{n\in \mathbb{N}}\uparrow \zeta$ be a  sequence of $\FF$-stopping times that is localizing for $X$. By Fatou's lemma and optional sampling, for all $t\in [0,\infty)$, $$\infty\PP_{x_0}(\zeta\leq t)\leq \EE_{x_0}[\liminf_{n\to \infty}X_{t\land T_n}]\leq \liminf_{n\to\infty}\EE_{x_0}[X_{t\land T_n}]=x_0;$$ therefore $\PP_{x_0}(\zeta\leq t)=0$ and, on letting $t\uparrow \infty$, $\PP_{x_0}(\zeta<\infty)=0$, which means that $X$ is non-explosive. 

Suppose now that $\Phi$ is not identically null. Introduce
\begin{equation*}\mathcal{X}'f(x)=\sum_{i=1}^{n}\left(\hat{\Sigma}_i(x) \mathrm{L}^{\Sigma_i}f(x)+\hat{\Phi}_i(x)\mathrm{L}^{-\Phi_i}f(x)\right),\quad x\in D_f,\, f\in \cap_{i\in [n]}(D_{\mathrm{L}^{\Sigma_i}}\cap D_{\mathrm{L}^{-\Phi_i}}).
\end{equation*}
As in the proof of Proposition \ref{prop:nonexplosionCBCI}  we will find a function $f\in D_{\XX'}$ that is nonnegative, of class $C^2$, $\uparrow$ and satisfies $\lim_{x \to \infty} f(x) = \infty$, along with the inequality $\XX' f \leq C f$ holding true on a neighborhood of infinity for some $C\in [0,\infty)$.  Heuristically, to find such a function, we will compare the term $\sum_{i=1}^{n}\hat{\Phi}_i \mathrm{L}^{-\Phi_i}$ to the generator of an immortal CB with   branching mechanism $-\Phi$. Fix any $l\in (0,\infty)$. By applying \cite[Lemma 5.1]{foucartvidmar} with (in the notation therein) $\rho=\infty$ and $\Psi=-\Phi$, for any $\theta\in (0, \Phi'(0))$ the function 
    $$f(x):= \int (1-e^{-xv})\mu(\dd v),\quad x\in [l,\infty),$$
where $\mu$ is given by \eqref{eq:densitymu} (with $\rho=\infty$, $\Psi=-\Phi$) is of class $C^2$ and has limit $\infty$ at infinity. Furthermore, $x\mathrm{L}^{-\Phi}f(x)=\theta f(x)$ for all $x\in [l,\infty)$, and   since $[l,\infty)\ni x\mapsto \Phi(x)/x$ is $\downarrow$ we deduce that
    \begin{align*}\sum_{i=1}^{n}\hat{\Phi}_i(x)\mathrm{L}^{-\Phi_i}f(x)=\sum_{i=1}^{n}\frac{\hat{\Phi}_i(x)}{x}x\mathrm{L}^{-\Phi_i}f(x)&\leq \sum_{i=1}^{n}\frac{\hat{\Phi}_i(l)}{l} x\mathrm{L}^{-\Phi_i}f(x)\\ 
    &\leq \hat{C_{0}}x\mathrm{L}^{-\Phi}f(x)
    =\hat{C_{0}}\theta f(x)=:Cf(x)\end{align*} for all $x\in [l,\infty)$, where $\hat{C}_0:=\max_{i\in [n]}\frac{\hat{\Phi}_i(l)}{l}$. 
 Besides, $$\sum_{i=1}^n\hat{\Sigma}_i(x)\mathrm{L}^{\Sigma_i}f(x)
 =-\sum_{i=1}^n\hat{\Sigma}_i(x)\int \Sigma_i(v)e ^{-xv} \mu(\dd v)\leq 0, \quad x\in [l,\infty).$$
Therefore $\mathcal{X}'f(x) \leq C f(x)$ for $x\in [l,\infty)$, which completes the proof.\end{proof}
    
We now establish a Laplace duality relationship. Again for simplicity, we assume henceforth that the functions $(\hat{\Sigma}_i)_{i\in [n]}, (\hat{\Phi}_i)_{i\in [n]}$ take the ``pure-jump'' form 
\[\hat{\Sigma}_i(x)=\int_{0}^{\infty}\big(e^{-xu}-1+ux\big)\hat{\mu}_i(\dd u),\ \ \hat{\Phi}_i(x)=\int_{0}^{\infty}(1-e^{-xv})\hat{\nu}_i(\dd v),\ \  x\in [0,\infty), \ i\in [n],\] 
with $(\hat{\mu}_i)_{i\in [n]}, (\hat{\nu}_i)_{i\in [n]}$ L\'evy measures on $\BB_{(0,\infty)}$ such that \[\int (u\wedge u^2) \hat{\mu}_i(\dd u)<\infty \text{ and } \int (1\wedge v) \hat{\nu}_i(\dd v)<\infty,\quad  i\in [n].\]
Introduce  the  ``dual" stochastic equation for the process $Y$,
\begin{align}
Y_t=y_0+\sum_{i=1}^{n}\int_0^t\int_{0}^{\Sigma_i(Y_{s-})}\!\!\int_{0}^{\infty}u\,\bar{\hat{\mathcal{M}}}_i(\dd s,\dd r, \dd u)+\sum_{i=1}^{n}\int_0^t\int_{0}^{\Phi_i(Y_{s-})}&\!\!\int_{0}^{\infty}v\hat{\mathcal{N}}_i(\dd s,\dd r, \dd v),\label{SDEY}
\end{align}
with $t\in [0,\hat\zeta)$,  $y_0\in [0,\infty)$ the initial value, $\hat\zeta$ the lifetime, $\hat{\mathcal{M}}_{i}(\dd s,\dd r,\dd u)$ and  $\hat{\mathcal{N}}_{i}(\dd s,\dd r,\dd v)$ mutually independent PRM with respective intensities $\dd s\,\dd r\, \hat{\mu}_i(\dd u)$ and $\dd s\,\dd r\, \hat{\nu}_i(\dd v)$, $\bar{\hat{\mathcal{M}}}_{i}$ the compensated random measure associated to $\hat{\mathcal{M}}_{i}$ ($i\in [n]$). The usual provisos for the value $\infty$ of $Y$ under $y_0=\infty$ apply.

\begin{theorem}\label{thm:laplacedualspLDS+}
Assume  \begin{equation}\label{cond:decomposablesymbol}\int_{0+}\frac{\dd x}{\hat{\Phi}(x)}=\infty \text{ and } \int_{0+}\frac{\dd y}{\Phi(y)}=\infty.\end{equation} 
Then $X$, the unique non-explosive strong solution to \eqref{SDE},  is in   $(0^{+}\cdot \infty, \infty\cdot 0^+)$-Laplace duality with  the process $Y$, the unique non-explosive strong solution to \eqref{SDEY}.
\end{theorem}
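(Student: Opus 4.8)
The plan is to combine the existence/uniqueness results already in hand with an application of Theorem~\ref{thm:generators}\ref{thm:generators:II}, in close parallel with the proof of Theorem~\ref{thm:dualityCBRE} and of Theorem~\ref{thm:LaplacedualityCBCI}. First I would record that the conditions \eqref{cond:decomposablesymbol} are tailored so that both \eqref{conduniqueness} and \eqref{suffcondnonexplosion} hold for the equation \eqref{SDE} \emph{and} for its dual \eqref{SDEY}: indeed $\int_{0+}\dd x/\hat\Phi(x)=\infty$ is precisely condition \eqref{conduniqueness} for \eqref{SDE} (note $\hat\Phi(0)=0$ under our ``pure-jump'' standing assumption), while $\int_{0+}\dd y/\Phi(y)=\infty$ is condition \eqref{suffcondnonexplosion} for $X$; and by the symmetry of the roles of $\psi$ and $\hat\psi$, the same two integrals, read in the other order, are exactly conditions \eqref{conduniqueness} and \eqref{suffcondnonexplosion} for the dual equation \eqref{SDEY}, whose coefficients are governed by $\hat\mu_i,\hat\nu_i$ and whose associated ``$\hat\Phi$'' is $\sum_i\Phi_i=\Phi$ and whose ``$\Phi$'' is $\sum_i\hat\Phi_i=\hat\Phi$. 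Hence Theorem~\ref{thm:existencedecomposable} applies to both equations, giving unique strong solutions $X$ and $Y$ that are positive Markov processes with Laplace symbols $\psi_X=\psi\vert_{[0,\infty]\times(0,\infty)}$ and $\psi_Y=\hat\psi\vert_{[0,\infty]\times(0,\infty)}$ respectively, and Proposition~\ref{prop:nonexplosionDCM} (applied to \eqref{SDE} and, symmetrically, to \eqref{SDEY}) ensures neither explodes; in particular $\infty$ is non-sticky for both and — since no jumps are downward, by construction of \eqref{SDE}, \eqref{SDEY} — the point $0$ is absorbing for both, so $X$ and $Y$ are $\mathbb{D}^{\mathrm{m}\infty}_{[0,\infty]}\cap\mathbb{D}^{\mathrm{m}0}_{[0,\infty]}$-valued.

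Next I would verify that $X$ and $Y$ solve the martingale problems for $\XX:=\XX_\psi$ and $\YY:=\XX_{\hat\psi}$, which is already contained in the proof of Theorem~\ref{thm:existencedecomposable}: It\^o's lemma gives that $(f(X_t)-\int_0^t\XX f(X_s)\dd s)_{t\ge0}$ is a local martingale for $f\in\DD$, and boundedness of $\XX f$ (Proposition~\ref{lem:extendedpregen}, or by direct inspection, $(\XX f)\vert_{[0,\infty)}\in\mathsf{C}_0([0,\infty))$) upgrades this to a true martingale, with the same computation applying to $Y$. Then I would check the hypotheses of Theorem~\ref{thm:generators}\ref{thm:generators:II} one by one, under $0^+\cdot\infty$, $\infty\cdot0^+$. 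The symbol $\psi$ lies in $\mathsf{LDS}$ (it is decomposable) and moreover $\psi(0,\cdot)\equiv0\equiv\psi(\cdot,0)$ because each $\Sigma_i,\hat\Sigma_i,\Phi_i,\hat\Phi_i$ vanishes at $0$; hence $\widehat{\eqref{phi:variation'}}$ holds, and continuity of $\psi$ at zero gives $\widehat{\eqref{phi:0}}$, while local boundedness of $\psi$ (true of any element of $\mathsf{LDS}$) gives \eqref{phi:variation}. Condition \eqref{eq:controlmeanC0} follows from Proposition~\ref{prop:momentcond}: here $\Psi\equiv0\equiv\hat\Psi$ and $\mathbf\Phi\equiv0$ in the representation \eqref{def:LDSpsi} (the only non-vanishing pieces are the $x^2\Sigma(y)$-type, $\hat\Sigma(x)y^2$-type and $\mathbf\Sigma$-type terms coming from $\sum_i\hat\Sigma_i(x)\Sigma_i(y)$, plus the $-\mathbf\Phi$-type term coming from $-\sum_i\hat\Phi_i(x)\Phi_i(y)$ — and these latter negative pieces, being sums of products of Bernstein functions, are dominated as in Proposition~\ref{prop:momentcond} once one observes each $\hat\Phi_i\Phi_j$ has finite mixed second derivative at the origin if... — actually it is cleaner to just note that $\psi_-(x,y)\le\sum_i\hat\Phi_i(x)\Phi_i(y)$ and $\hat\Phi_i(x)\le c_ix$, $\Phi_i(y)\le c_i'y$ near $0$ is false in general, so instead I would invoke that $\psi\in\mathsf{LDS}$ with $\mathbf\Phi$ of the form \eqref{symbolxyPhi} built from a product-type $\bm\nu$ for which $\frac{\partial^2}{\partial x\partial y}\mathbf\Phi(0,0)=\sum_i\hat\Phi_i'(0)\Phi_i'(0)$, finite exactly when each $\hat\Phi_i,\Phi_i$ has finite drift-free mean; since this need not hold, the correct move is): estimate directly $\hat\Phi_i(x)\Phi_i(y)\,e^{-xy}\le\hat\Phi_i(x)\Phi_i(y)e^{-xy}$ and use $1-e^{-xv}\le 1$ together with $xye^{-xy}\le1$ to bound, for each $i$, $\hat\Phi_i(x)\Phi_i(y)e^{-xy}\le\big(\int(1\wedge v)\hat\nu_i(\dd v)\big)\big(\int(1\wedge v)\nu_i(\dd v)\big)+\text{bounded}$ — in short, a routine splitting of the integrals over $\{v\le1\}$ and $\{v>1\}$ shows \eqref{eq:controlmeanC0} holds, hence \eqref{eq:weird-conditionLD} is satisfied with $\sigma\equiv\infty$, $\tau=\tau_n^+\wedge\tau_\epsilon^-$ (indeed with $\tau=\infty$). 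Finally, non-explosion of $X$ gives that $X$ has $\infty$ non-sticky and non-explosion of $Y$ gives that there is a.s. no continuous explosion for $Y$ at $\infty$, while $Y$ being $\mathbb{D}^{\mathrm{m}0}_{[0,\infty]}$-valued is the absorption at $0$ noted above. All hypotheses of Theorem~\ref{thm:generators}\ref{thm:generators:II} being met, we conclude $\EE_{x_0}[e^{-X_ty_0}]=\EE^{y_0}[e^{-x_0Y_t}]$ for all $t\in[0,\infty)$ and all $\{x_0,y_0\}\subset[0,\infty]$, which is the asserted $(0^+\cdot\infty,\infty\cdot0^+)$-Laplace duality.

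I expect the main obstacle to be the verification of the integrability condition \eqref{eq:controlmeanC0} (equivalently the boundedness of $\psi_-(x,y)e^{-xy}$ on $[0,\infty)^2$), because unlike in the CBCI or CBRE settings there is no a priori finite-mean hypothesis on the subordinator mechanisms $\hat\Phi_i,\Phi_i$, and Proposition~\ref{prop:momentcond} as literally stated wants $\frac{\partial^2\mathbf\Phi}{\partial x\partial y}(0,0)<\infty$. The resolution is to observe that the negative part of \eqref{eq:symbolX} is $\sum_{i=1}^n\hat\Phi_i(x)\Phi_i(y)$, a sum of products of Bernstein functions each $\le1$ after multiplication by a suitable factor; writing $\hat\Phi_i(x)\Phi_i(y)e^{-xy}\le \big(\sup_{z\ge0}[\Phi_i(z)/(1+z)]\big)\big(\sup_{z\ge0}[\hat\Phi_i(z)/(1+z)]\big)(1+x)(1+y)e^{-xy}$ and noting $\Phi_i(z)\le \Phi_i(1)(1+z)$ for $z\ge0$ (concavity/subadditivity of Bernstein functions) together with $\sup_{x,y\ge0}(1+x)(1+y)e^{-xy}<\infty$, we get \eqref{eq:controlmeanC0} with no moment assumption at all. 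This bookkeeping, plus the careful matching of the two halves of \eqref{cond:decomposablesymbol} to the unique-existence and non-explosion conditions for \eqref{SDE} and \eqref{SDEY} respectively, is the only genuinely non-mechanical part of the argument; everything else is a transcription of the CBRE proof.
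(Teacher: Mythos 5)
Your bookkeeping at the start is fine and matches the paper: the two integrals in \eqref{cond:decomposablesymbol} are exactly \eqref{conduniqueness} and \eqref{suffcondnonexplosion} for \eqref{SDE}, and, after swapping the roles of $\psi$ and $\hat\psi$, for \eqref{SDEY}, so Theorem~\ref{thm:existencedecomposable} and Proposition~\ref{prop:nonexplosionDCM} give unique strong existence, the identification of the symbols, and non-explosion for both processes. The problem is the core of your duality argument. You route everything through Theorem~\ref{thm:generators}\ref{thm:generators:II} with $\sigma\equiv\infty$, which forces you to verify \eqref{eq:controlmeanC0}, and the inequality you ultimately rely on, $\sup_{x,y\geq 0}(1+x)(1+y)e^{-xy}<\infty$, is false: at $y=0$ the expression equals $1+x$, and along $y=1/x$ it is of order $x$. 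This is not a repairable slip, because \eqref{eq:controlmeanC0} genuinely does not follow from \eqref{cond:decomposablesymbol} -- that is precisely the point of this theorem, as remarked immediately after its statement (\eqref{cond:decomposablesymbol} is strictly weaker than \eqref{eq:controlmeanC0}, which imposes finite-mean-type restrictions on the $\Phi_i,\hat\Phi_i$). For a concrete admissible failure, take $n=1$ with no $\Sigma$-part, $\nu(\dd v)=v^{-2}\mathbbm{1}_{(1,\infty)}(v)\dd v$ so that $\Phi(y)\asymp y\log(1/y)$ near $0$ (hence $\int_{0+}\dd y/\Phi(y)=\infty$ but $\Phi'(0)=\infty$), and $\hat\nu$ carried by $(0,1)$ with $\int_0^\epsilon v\,\hat\nu(\dd v)\asymp 1/\log\log(1/\epsilon)$, so that $\hat\Phi'(0)<\infty$ (hence $\int_{0+}\dd x/\hat\Phi(x)=\infty$) while $\hat\Phi(x)\gtrsim x/\log\log x$ at infinity; then along $y=1/x$ one has $\psi_-(x,y)e^{-xy}=\hat\Phi(x)\Phi(1/x)e^{-1}\gtrsim \log x/\log\log x\to\infty$. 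So no argument establishing \eqref{eq:controlmeanC0} under \eqref{cond:decomposablesymbol} alone can exist, and with it your whole verification of \eqref{eq:weird-conditionLD} with unstopped $X$ collapses.

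The paper's proof avoids this by invoking Theorem~\ref{thm:generators}\ref{thm:generators:III} instead, with $\sigma=\sigma^-_\delta$ and $\tau=\tau^-_\epsilon$: since neither \eqref{SDE} nor \eqref{SDEY} has negative jumps, the stopped processes live in $[\delta,\infty)$ and $[\epsilon,\infty)$, where $\sup_{(x,y)\in[\delta,\infty)\times[\epsilon,\infty)}e^{-xy}\vert\psi(x,y)\vert<\infty$ because $xy\geq\delta\epsilon$ and the L\'evy--Khintchine functions grow at most polynomially; this yields \eqref{eq:weird-conditionLD}--\eqref{eq:weird-condition++LD} with no moment hypotheses whatsoever. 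The remaining conditions \eqref{phi:0}, $\widehat{\eqref{phi:0}}$, \eqref{phi:4}, \eqref{phi:variation'}, $\widehat{\eqref{phi:variation'}}$ hold because all of $\Sigma_i,\hat\Sigma_i,\Phi_i,\hat\Phi_i$ vanish at $0$ and $\psi$, hence $\phi$, is continuous on $[0,\infty)^2$; non-explosion gives $\infty$ non-sticky for both processes. Also note that your claim that $X$ and $Y$ are $\mathbb{D}_{[0,\infty]}^{\mathrm{m}0}$-valued ``since no jumps are downward'' is not by itself an argument (the compensated integrals produce a continuous downward drift); the paper deduces it from pathwise uniqueness together with the fact that the constant $0$ solves each equation started from $0$, the coefficients vanishing there. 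With these two substitutions -- part \ref{thm:generators:III} in place of part \ref{thm:generators:II}, and the uniqueness argument for absorption at $0$ -- your outline becomes the paper's proof; the rest (martingale problems via It\^o's lemma, symbol identification) is as in the paper.
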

Theorem~\ref{thm:laplacedualspLDS+} applies to a broader class of processes with decomposable non-local Laplace symbols than Theorem~\ref{thm:LaplacedualitysemigroupLDS}, as condition~\eqref{cond:decomposablesymbol} is strictly weaker than~\eqref{eq:controlmeanC0}. In particular, \eqref{eq:controlmeanC0} requires that $\hat{\Phi}'(0) < \infty$ and $\Phi'(0) < \infty$, whereas \eqref{cond:decomposablesymbol} imposes no such restriction.
\begin{proof}[Proof of Theorem~\ref{thm:laplacedualspLDS+}]
 Theorem~\ref{thm:existencedecomposable} and  Proposition~\ref{prop:nonexplosionDCM} guarantee unique strong existence as well as the non-explosivity of the solutions. As for the duality, let us apply Theorem \ref{thm:generators}\ref{thm:generators:III}.  The fact that $X$ and $Y$ solve the relevant MPs was mentioned in the proof of  Theorem~\ref{thm:existencedecomposable} and follows by It\^o's lemma (as it did in said proof). For all $(\delta,\epsilon) \in (0,\infty)^2$, by polynomial boundedness of the L\'evy-Khintchine functions,
\[\sup_{(x,y)\in [\delta,\infty)\times [\epsilon,\infty)}e^{-xy}\vert \psi(x,y)\vert<\infty;\]  together with the absence of negative jumps \eqref{eq:weird-conditionLD}-\eqref{eq:weird-condition++LD} follow  with $\sigma=\sigma_\delta^{-}$ and $\tau=\tau_{\epsilon}^{-}$. 

By assumption, for all $i\in [n]$, $\hat{\Phi}_i(0)=\Phi_i(0)=0$, and one always has $\hat{\Sigma}_i(0)=\Sigma_i(0)=0$, $i\in [n]$. By continuity of the functions $(\hat{\Sigma}_i, \Sigma_i, \hat{\Phi}_i, \Phi_i)_{i\in [n]}$, the symbol $\psi$, and then the $\phi$ of \eqref{thm:generators-phi} are continuous on $[0,\infty)^2$. Thus \eqref{phi:0} $\widehat{\eqref{phi:0}}$,  \eqref{phi:variation'}, $\widehat{\eqref{phi:variation'}}$ and \eqref{phi:4} hold. It remains to check that the processes $X$ and $Y$ are $\mathbb{D}_{[0,\infty]}^{\mathrm{m}0}$-valued. But this is a consequence of the pathwise uniqueness of the solutions and the fact that the constant $0$ is always a solution (with initial value $0$) since the coefficients $(\Sigma_i)_{i\in [n]}$, $(\Phi_i)_{i\in [n]}$ all vanish at $0$. 
\end{proof}
%
%
In closing, let us mention that  completely monotone Markov processes with the Laplace symbol of~\eqref{symbolgencb} can be obtained as scaling limits of discrete population models. A prominent example are the CBI processes, which emerge as limits of Bienaym\'e-Galton-Watson processes with immigration; see, e.g., Grimvall~\cite{MR0362529} and Kawazu and Watanabe \cite{KAW}. We refer to Bansaye and Simatos~\cite{zbMATH06471553} for the setting of branching processes in random environments, and to Bansaye et al.~\cite{zbMATH07055657} for a more general framework that includes competition effects. This raises the natural question of whether processes with other symbols in the class $\mathsf{LDS}$ can similarly be derived as scaling limits of suitable discrete models with a meaningful physical or biological interpretation.

\begin{appendix}
\section{Proof of Theorem \ref{thm:courrege}: Courr\`ege form}\label{appendix}
Uniqueness is direct from \eqref{eq:courrege} and  Proposition~\ref{lemma:laplace-symbol-unique}. We turn to existence.

Fix $x\in [0,\infty]$. Let us prove that
\begin{equation}\label{courrege-ab-initio}
(\XX\ee^y)(x)=\psi_{l_x}(y)e^{-xy},\quad y\in (0,\infty),
\end{equation}
 for a suitable $l_x$.
 
$(\bullet)$ Suppose first that $x\in (0,\infty)$. Set $$\tilde{p}_t(x,\dd u):=p_t(x,x+\dd u),\quad u\in [-x,\infty],$$ for the push-forward  of $p_t(x,\cdot)$ along the map $[0,\infty]\ni u\mapsto u-x\in [-x,\infty]$.  

Picking and fixing $y_0\in (0,\infty)$, $v_0\in (0,y_0)$ arbitrary, 
\begin{align}
\label{lhs}&\frac{v_0}{y_0}e^{xy_0}\frac{(P_t\ee^{y_0})(x)-\ee^{y_0}(x)}{t}-e^{xv_0}\frac{(P_t\ee^{v_0})(x)-\ee^{v_0}(x)}{t}\\ 
&\quad=\frac{v_0}{y_0}\int\left(e^{-uy_0}-1\right)\frac{1}{t}\tilde{p}_t(x,\dd u)-\int\left(e^{-uv_0}-1\right)\frac{1}{t}\tilde{p}_t(x,\dd u)\nonumber \\
&\quad=\int\Big(\underbrace{\frac{v_0}{y_0}e^{-uy_0}-\frac{v_0}{y_0}-e^{-uv_0}+1}_{:=\mathsf{c}(u)}\Big)\frac{1}{t}\tilde{p}_t(x,\dd u),\quad t\in (0,\infty).\label{rus}
\end{align}\normalsize

Let us expose some basic properties of $\mathsf{c}$. For $u\in [0,\infty)$, $\mathsf{c}'(u)=v_0\left(e^{-uv_0}-e^{-uy_0}\right)\geq 0$ and $\mathsf{c}(u)\underset{u\rightarrow \infty}{\longrightarrow} 1-\frac{v_0}{y_0}=\mathsf{c}(\infty)$. Thus $\mathsf{c}$ is $\uparrow$ on $[0,\infty]$ and (so) $0\leq \mathsf{c}(u)\leq  1-\frac{v_0}{y_0}=\mathsf{c}(\infty)$ for $u\in [0,\infty]$. For $u\in [-x, 0]$: $\mathsf{c}'(u)=v_0\left(e^{-uv_0}-e^{-uy_0}\right)\leq 0$; thus $\mathsf{c}$ is $\downarrow$ on $[-x,0]$ and $0\leq \mathsf{c}(u)\leq \mathsf{c}(-x)$. Besides, a Taylor expansion yields
\[\mathsf{c}(u)=\frac{v_0}{y_0}\left(-uy_0+\frac{y_0^2 u^2}{2}\right)+uv_0-\frac{v_0^2u^2}{2}+o(u^2)=\frac{v_0(y_0-v_0)}{2}u^2+o(u^2)\text{ as }u\to 0;\]
therefore there exists $d>0$ (only depending on $y_0,v_0$), such that \begin{equation}\label{bound1wedgeu2} d\leq \frac{\mathsf{c}(u)}{1\wedge u^2},\quad u\in [-x,\infty],
\end{equation}
the expression being understood in the limiting sense when $u=0$. 

Now, the l.h.s. of \eqref{lhs} converges in $\mathbb{R}$ as $t\downarrow 0$ because by assumption $\ee^{y_0}$ and $\ee^{v_0}$ belong to $D_\XX$. The integral of  \eqref{rus} is therefore bounded uniformly in $t\in (0,\infty)$ and we obtain with the help of  \eqref{bound1wedgeu2} that so too is the family of  measures $\gamma:=(\frac{1\wedge u^2}{t}\tilde{p}_t(x,\dd u))_{t\in (0,\infty)}$ on $\BB_{[-x,\infty]}$  bounded. 

Pick  any sequence $(t_k)_{k\in \mathbb{N}}$ in $(0,\infty)$ that is $\downarrow\downarrow$ to $0$. Passing to a subsequence if necessary, we may and do assume $(\gamma_{t_k}([-x,\infty])_{k\in \mathbb{N}}$ converges in $[0,\infty)$. Since $[-x,\infty]$ is compact, by Prokhorov's theorem \cite[p.~104, Theorem 2.2]{EthierKurtz}, passing to a further subsequence if necessary,
\[\frac{1\wedge u^2}{t_k}\tilde{p}_{t_k}(x,\dd u) \Longrightarrow \tilde{\nu}_x(\dd u)\text{ as $k\to\infty$}\]
(weak convergence) for some finite measure $\tilde{\nu}_x$ on $\BB_{[-x,\infty]}$. 


 Let us pick and fix now an $r\in (0,\infty)$ for which $\{-r,r\}\cap [-x,\infty]$ is not charged by $\tilde{\nu}_x$ (it exists because $\tilde{\nu}_x$ has at most countably many atoms). Returning to our aim of getting \eqref{eq:courrege}, we write, for $y\in (0,\infty)$, $k\in \mathbb{N}$,
\begin{align}
\frac{(P_{t_k}\ee^y)(x)-\ee^y(x)}{t_k}&=e^{-xy}\int \frac{e^{-uy}-1+uy\mathbbm{1}_{[-r,r]}(u)}{1\wedge u^2}\frac{1\wedge u^2}{t_k}\tilde{p}_{t_k}(x,\dd u)\label{noise}\\* &\qquad\qquad \qquad\qquad \quad \quad \qquad\qquad-e^{-xy}\int uy\mathbbm{1}_{[-r,r]}(u)\frac{1}{t_k}\tilde{p}_{t_k}(x,\dd u) \label{drift}.
\end{align}
The l.h.s. in \eqref{noise}  converges as $k \to\infty$, since by assumption $\ee^y\in D_\XX$, moreover, the limit is $(\XX \ee^y)(x)$. 
The r.h.s. of \eqref{noise} must therefore also converge as $k\to\infty$. Its first term does so by the portmanteau theorem (indeed $([-x,\infty]\ni u\mapsto \frac{e^{-uy}-1+uy\mathbbm{1}_{[-r,r]}(u)}{1\wedge u^2})$ [understood in the limiting sense at $0$] is bounded,  continuous a.e.-$\tilde\nu_x$ by the choice of $r$), the limit being $$I_1:=e^{-xy}\int \frac{e^{-uy}-1+uy\mathbbm{1}_{[-r,r]}(u)}{1\wedge u^2}\tilde\nu_x(\dd u).$$  In turn the expression of \eqref{drift} converges as $k\to\infty$ and is of the form
\begin{equation}\label{combine1}
I_2:=-b_xy\ee^y(x)
\end{equation}
for 
$$b_x:=\underset{k\rightarrow \infty}{\lim} \int u\mathbbm{1}_{[-r,r]}(u)\frac{1}{t_k}\tilde{p}_{t_k}(x,\dd u)\in \mathbb{R}.$$  
Parametrizing now the measure $\tilde{\nu}_x$  in the form $$\tilde{\nu}_x(\dd u)=:2a_x\delta_0(\dd u)+\mathbbm{1}_{[-x,\infty)}(u)(1\wedge u^2) \nu_x(\dd u)+c_x\delta_{\infty}(\dd u),\quad u\in [-x,\infty],$$ with $a_x\in [0,\infty)$, $c_x\in[ 0,\infty)$ and $\nu_x$ a L\'evy  measure on $\BB_{[-x,\infty)}$, we express
\begin{equation*}
I_1=e^{-xy}\int\left(\frac{e^{-uy}-1+uy\mathbbm{1}_{[-r,r]}(u)}{1\wedge u^2}\right)\left(2a_x\delta_0(\dd u)+(1\wedge u^2)\nu_x(\dd u)+c_x\delta_{\infty}(\dd u)\right).
\end{equation*}
Integration against $\delta_0$, respectively $\delta_\infty$, $(1\wedge u^2)\nu_x(\dd u)$, yields 
\begin{equation}\label{combine2}
e^{-xy}\int\left(\frac{e^{-uy}-1+uy\mathbbm{1}_{[-r,r]}(u)}{1\wedge u^2}\right)2a_x\delta_0(\dd u)=e^{-xy}\frac{y^2}{2}2a_x
=a_xy^2\ee^y(x),
\end{equation}
respectively
\begin{equation}\label{combine3}
e^{-xy}\int\left(\frac{e^{-uy}-1+uy\mathbbm{1}_{[-r,r]}(u)}{1\wedge u^2}\right)c_x\delta_\infty(\dd u)=-c_x\ee^y(x),
\end{equation}
\begin{equation}\label{combine4}
\begin{split}
&e^{-xy}\int\left(\frac{e^{-uy}-1+uy\mathbbm{1}_{[-r,r]}(u)}{1\wedge u^2}\right)(1\wedge u^2)\nu_x(\dd u)\\
&\qquad=\int\left(e^{-uy}-1+uy\mathbbm{1}_{[-r,r]}(u)\right)\nu_x(\dd u)\ee^y(x).
\end{split}
\end{equation}
Combining \eqref{combine1}-\eqref{combine4} we obtain \eqref{courrege-ab-initio} for $l_x:=(\nu_x,a_x,b_x,c_x)$ except that maybe $r\ne 1$, which does not matter (by readjustment of $b_x$).

$(\bullet)$ For $x=0$ we follow a similar but simplified approach. First, picking and fixing arbitrary $y_0\in (0,\infty)$, we have 
\begin{equation}\label{eq:gen-at-zero}
\frac{1-(P_t\ee^{y_0})(0)}{t}=\int\Big(\underbrace{1-e^{-uy_0}}_{=:\mathsf{d}(u)}\Big)\frac{1}{t}p_t(0,\dd u),\quad t\in (0,\infty),
\end{equation} and we note that there is $f>0$ (only depending on $y_0$) such that  $$f\leq \frac{\mathsf{d}(u)}{1\land u},\quad u\in [0,\infty],$$ the expression being understood in the limiting sense when $u=0$. Therefore, since the l.h.s. of \eqref{eq:gen-at-zero} is converging as $t\downarrow 0$ due to $\ee^{y_0}\in D_\XX$,  the family of measures $(\frac{1\wedge u}{t}p_t(0,\dd u))_{t\in (0,\infty)}$  on $\BB_{[0,\infty]}$ is bounded.  We deduce existence of a sequence 
 $(t_k)_{k\in \mathbb{N}}$ in $(0,\infty)$ that is $\downarrow\downarrow$ to $0$ such that 
\[\frac{1\wedge u}{t_k}p_{t_k}(0,\dd u) \Longrightarrow \tilde{\nu}_0(\dd u)\text{ as $k\to\infty$}\] for some finite measure $\tilde{\nu}_0$ on $\BB_{[0,\infty]}$. Writing then, for $y\in (0,\infty)$, $k\in \mathbb{N}$,
\begin{equation*}\label{expressx0}
\frac{1-(P_{t_k}\ee^y)(0)}{t_k}=\int\frac{1-e^{-uy}}{1\land u}\frac{1\land u}{t_k}p_{t_k}(0,\dd u),
\end{equation*}
we may pass to the limit $k\to\infty$ and deduce via portmanteau that \begin{equation}\label{equation:x0}
 -(\XX\ee^y)(0)=\int \frac{1-e^{-uy}}{1\land u}\tilde\nu_0(\dd u)
\end{equation} (the integrand being understood in the limiting sense for $u=0$). Parametrizing the measure $\tilde{\nu}_0$  in the form $$\tilde{\nu}_0(\dd u)=:d_0\delta_0(\dd u)+\mathbbm{1}_{(0,\infty)}(u)(1\wedge u) \nu_0(\dd u)+c_0\delta_{\infty}(\dd u),\quad u\in [0,\infty],$$ with $d_0\in [0,\infty)$, $c_0\in[ 0,\infty)$ and $\nu_0$ a  measure on $\BB_{(0,\infty)}$   satisfying $\int (1\land u)\nu_0(\dd u)<\infty$, we obtain from \eqref{equation:x0} at once \eqref{courrege-ab-initio}  for  $l_0=(\nu_0,d_0,c_0)$.

$(\bullet)$ We consider now the case $x=\infty$. Let $(y_n)_{n\in \mathbb{N}}$ be a sequence in $(0,\infty)$ that is $\downarrow\downarrow$ to $0$.  For $n\in \mathbb{N}$ we have 
\begin{equation*}
\frac{(P_t\ee^{y_n})(\infty)}{t}=\int e^{-uy_n}\frac{p_t(\infty,\dd u)}{t},\quad t\in (0,\infty).
\end{equation*}
As in the previous two instances, inductively in $n\in \mathbb{N}$ and then via diagonalization, we find a sequence $(t_k)_{k\in \mathbb{N}}$ in $(0,\infty)$ that is $\downarrow \downarrow 0$ and such that \[\frac{e^{-uy_n}}{t_k}p_{t_k}(\infty,\dd u) \Longrightarrow \tilde{\nu}_n(\dd u)\text{ as $k\to\infty$}\]
for some finite measure $\tilde{\nu}_n$ on $\BB_{[0,\infty]}$ for all $n\in \mathbb{N}$. Notice that (by portmanteau and uniqueness of weak limits on $[0,\infty]$) $$\tilde\nu_n(\dd u)=e^{-u(y_n-y_m)}\tilde\nu_m(\dd u),\quad u\in [0,\infty],$$ for all $m\geq n$ from $\mathbb{N}$; in particular $\tilde \nu_n$ is actually carried by $[0,\infty)$ for all $n\in \mathbb{N}$. Consequently, there is a single measure $\tilde\nu_\infty$ on $\BB_{[0,\infty)}$ for which $$\tilde\nu_\infty(\dd u)=e^{uy_n}\tilde\nu_n(\dd u),\quad u\in [0,\infty),\, n\in \mathbb{N}.$$ Then, for any given $y\in (0,\infty)$, there is $n\in \mathbb{N}$ such 
that $y>y_n$, and in writing, for $k\in \mathbb{N}$,
$$\frac{(P_{t_k}\ee^y)(\infty)}{t_k}=\int e^{-u(y-y_n)}\frac{e^{-uy_n}}{t_k}p_{t_k}(\infty,\dd u),$$ we may pass to the limit $k\to \infty$ via portmanteau to deduce that 
$$(\XX \ee^y)(\infty)=\int e^{-uy}\tilde\nu_\infty(\dd u).$$
Parametrizing now the measure $\tilde{\nu}_\infty$ in the form 
$$\tilde{\nu}_\infty(\dd u)=:k_\infty\delta_0(\dd u)+\mathbbm{1}_{(0,\infty)}(u)\nu_\infty(\dd u),\quad u\in [0,\infty),$$ with $k_\infty\in [0,\infty)$ and $\nu_\infty$ a measure on $\BB_{(0,\infty)}$ having finite Laplace transform we obtain \eqref{courrege-ab-initio} for $l_\infty:=(\nu_\infty,k_\infty)$.

Lastly, no longer holding $x$ fixed, it is straightforward to check that measurability of $(0,\infty)\ni x\mapsto \psi_{l_x}(y)$ for all $y\in (0,\infty)$ entails measurability of $(0,\infty)\ni x\mapsto  l_x$, for instance by employing the approach of Proposition~\ref{lemma:laplace-symbol-unique} and monotone class arguments.\qed

\section{Proof of Proposition~\ref{lemma:reductions}: killed processes and martingale problems}\label{appendix:reductions}
Recall $\zeta$ and $X'$ from \eqref{reductions:one}-\eqref{reductions:two}. For $(f,g)\in A$ with $f(\triangle)=0$ (a restriction that shall be without loss of generality due to the linearity of $A$ and $(1,0)\in A$) and $fk\in \mathsf{B}(E^\triangle)$, for $\{s,t\}\subset [0,\infty)$ with $s\leq t$, for  $W\in \sigma(X_u:u\in [0,s])$ and for $r\in [0,s]$  we compute:\small
\begin{align*}
&\EE\left[f(X'_t)-f(X'_s)-\int_s^tg(X'_u)\dd u+\int_s^tk(X'_u)f(X'_u)\dd u;W\cap \{ r<\zeta\}\right]\\
&=\EE\left[f(X_t)\mathbbm{1}_{\{t<\zeta\}}-f(X_s)\mathbbm{1}_{\{s<\zeta\}}-\int_s^tg(X_u)\mathbbm{1}_{\{u<\zeta\}}\dd u+\int_s^tk(X_u)f(X_u)\mathbbm{1}_{\{u<\zeta\}}\dd u;W\right]\\
&=\EE\left[\left(f(X_t)e^{-\int_s^{t}k(X_u)\dd u}-f(X_s)-\int_s^{t}g(X_u)e^{-\int_s^uk(X_v)\dd v}\dd u\right.\right.\\
&\qquad \qquad \qquad \qquad \qquad \qquad \left.\left.+\int_s^{t}k(X_u)f(X_u)e^{-\int_s^uk(X_v)\dd v}\dd u\right)e^{-\int_0^{s}k(X_u)\dd u};W\right].
\end{align*}\normalsize
By an application of Dynkin's lemma we see now that actually we are to check the (equivalently, by boundedness, local) martingale property of 
\small
$$\left(f(X_T)e^{-\int_0^{T}\!\!k(X_u)\dd u}-\int_0^{T}\!\!g(X_u)e^{-\int_0^uk(X_v)\dd v}\dd u+\int_0^{T}\!\!k(X_u)f(X_u)e^{-\int_0^uk(X_v)\dd v}\dd u\right)_{T\in [0,\infty)}$$ 
\normalsize
in the natural filtration of $X$. But performing an integration by parts we have, by associativity of integration, a.s.-$\PP$, 
\begin{align*}
&e^{-\int_0^tk(X_u)\dd u}\left(f(X_t)-\int_0^tg(X_u)\dd u\right)\\
&=f(X_0)+\int_0^te^{-\int_0^{u}k(X_v)\dd v}\dd_u\left(f(X_u)-\int_0^ug(X_s)\dd s\right)\\
&\qquad +\int_0^t\left(f(X_u)-\int_0^ug(X_v)\dd v\right)\dd_u\left(e^{-\int_0^uk(X_v)\dd v}\right)\\
&=f(X_0)+\int_0^te^{-\int_0^{u}k(X_v)\dd v}\dd_u\left(f(X_u)-\int_0^ug(X_s)\dd s\right)\\
&\qquad  -\int_0^tk(X_u)f(X_u)e^{-\int_0^uk(X_v)\dd v}\dd u-\int_0^t\int_0^ug(X_v)\dd v\dd_u\left(e^{-\int_0^uk(X_v)\dd v}\right) \\
&=f(X_0)+\int_0^te^{-\int_0^{u}k(X_v)\dd v}\dd_u\left(f(X_u)-\int_0^ug(X_s)\dd s\right)-\int_0^tk(X_u)f(X_u)e^{-\int_0^uk(X_v)\dd v}\dd u\\
&\qquad -e^{-\int_0^tk(X_u)\dd u}\int_0^tg(X_u)\dd u+\int_0^tg(X_u)e^{-\int_0^uk(X_v)\dd v}\dd u,
\end{align*}
where the last equality is another integration by parts. Since the second term on the r.h.s. is (as a process in $t\in [0,\infty)$) indeed a c\`adl\`ag  local martingale in the usual $\PP$-augmentation of the natural filtration of $X$ by the properties of stochastic integration, we conclude at once.\qed

\section{Proof of Theorem~\ref{theorem:duality-from-mtgs}: refined Ethier-Kurtz's duality criterion}\label{sec:proofoftheorem:duality-from-mtgs}
By replacing $\phi,H$ with $-\phi,-H$ if necessary we may and do assume that  \eqref{eq:weird-condition} holds with a ``$-$'' for ``$\pm$'' (integrability of the negative part). 

Put $$F(s,t):=\mathbb{E}[H(X_{s\land \sigma},Y_{t\land\tau})],\quad (s,t)\in [0,\infty)^2; $$ since $H$ is bounded, so is $F$. 

The martingales \eqref{eq:mtg-one}  ensure that, for all $s\in [0,\infty)$, all $y\in F$, one has
$$\EE[H(X_{s\land \sigma},y)-H(X_0,y)]=\EE\left[\int_0^{s\land\sigma}\phi(X_r,y)\dd r\right],$$ 
therefore,   for all $t\in [0,\infty)$, setting $s=T-t$ in, and then integrating the preceding display against $\QQ({Y_{t\land \tau}}\in \dd y)\mathbbm{1}_{[0,T]}(t)\dd t$,  via Tonelli:
\begin{align}
\int_0^T F(T-t,t)-F(0,t)\dd t&=\int_0^T\int_0^{T-t}\mathbb{E}[\phi(X_{s},Y_{t\land\tau});s<\sigma]\dd s\dd t\nonumber\\
&=\int_0^T\int_t^T\mathbb{E}[\phi(X_{s-t},Y_{t\land\tau});s-t<\sigma]\dd s\dd t\nonumber\\
&=\int_0^T\int_0^s\mathbb{E}[\phi(X_{s-t},Y_{t\land\tau});s-t<\sigma]\dd t\dd s,\quad T\in [0,\infty),\label{eq:shit-one}
\end{align} the triple integral on the r.h.s. being necessarily absolutely convergent (because, appealing  to \eqref{eq:weird-condition}, it is so under $\phi\rightsquigarrow \phi_-$, while the l.h.s. is finite). 

By analogy (and some changes of variables)
\begin{align} \nonumber
\int_0^T F(T-s,s)-F(s,0)\dd s&=\int_0^T F(s,T-s)-F(s,0)\dd s\\\nonumber
&=\int_0^T\int_0^s\mathbb{E}[\phi(X_{t\land\sigma},Y_{s-t});s-t<\tau]\dd t\dd s\\
&=\int_0^T\int_0^s\mathbb{E}[\phi(X_{(s-t)\land \sigma},Y_t);t<\tau]\dd t\dd s,\quad T\in [0,\infty),\label{eq:shit-two}
\end{align}
the triple integral on the r.h.s. being again automatically absolutely convergent. Subtracting \eqref{eq:shit-one} from \eqref{eq:shit-two} and rearranging delivers\small
\begin{align*}
&\int_0^T F(s,0)\dd s+\int_0^T\int_0^s\mathbb{E}[\phi(X_{(s-t)\land \sigma},Y_t);t<\tau]\dd t\dd s\\
&\qquad \qquad =\int_0^T F(0,t)\dd t+\int_0^T\int_0^s\mathbb{E}[\phi(X_{s-t},Y_{t\land\tau});s-t<\sigma]\dd t\dd s
\end{align*} 
\normalsize for all $T\in [0,\infty)$. Hence,  for  a.e. $t\in [0,\infty)$, \begin{equation}\label{thmD:general-}
\begin{split}
\mathbb{E}[H(X_{t\land \sigma},Y_0)]&-\mathbb{E}[H(X_0,Y_{t\land\tau})]\\
&=\mathbb{E}\left[\int_0^t\phi(X_{s},Y_{(t-s)\land \tau})\mathbbm {1}_{\{s<\sigma\}} -\phi(X_{s\land\sigma},Y_{t-s})\mathbbm{1}_{\{t-s<\tau\}}\dd s\right],
\end{split}
\end{equation}
where in fact  $$\int_0^t\mathbb{E}[\vert \phi(X_{s},Y_{(t-s)\land \tau})\vert;s<\sigma]\dd s<\infty \text{ and } \int_0^t\mathbb{E}[\vert \phi(X_{s\land\sigma},Y_{t-s}) \vert;t-s<\tau]\dd s<\infty.$$ 

 Notice now that \eqref{eq:weird-condition}  can be rewritten as $$\EE\left[\int_0^T\int_0^t\phi_{-} (X_{s\land \sigma},Y_{(t-s)\land \tau})\dd s\dd t\right]<\infty,\quad T\in [0,\infty),$$ so that
\begin{equation*}
\EE\left[\int_0^t\phi_{-} (X_{s\land \sigma},Y_{(t-s)\land \tau})\dd s\right]<\infty\text{ for a.e. $t\in [0,\infty)$}.
\end{equation*}

Besides, for all $s\leq t$ from $[0,\infty)$,
\begin{align}
&\phi(X_{s},Y_{(t-s)\land \tau})\mathbbm {1}_{\{s<\sigma\}} -\phi(X_{s\land\sigma},Y_{t-s})\mathbbm{1}_{\{t-s<\tau\}}\nonumber \\
&\qquad =\phi(X_{s\land \sigma},Y_{(t-s)\land \tau})(\mathbbm {1}_{\{s<\sigma\}} -\mathbbm{1}_{\{t-s<\tau\}}) \nonumber \\
&\qquad =\phi(X_{s\land \sigma},Y_{(t-s)\land \tau})(\mathbbm {1}_{\{t-s\geq \tau\}} -\mathbbm{1}_{\{s\geq \sigma\}})\nonumber\\
&\qquad =\phi(X_{s\land \sigma},Y_{\tau})\mathbbm {1}_{\{t-s\geq \tau\}}-\phi(X_{\sigma},Y_{(t-s)\land \tau})\mathbbm{1}_{\{s\geq \sigma\}}.\label{eq:aux-tonelli}
\end{align}
Then we compute, for a.e. $t\in [0,\infty)$,
\begin{align}\nonumber
&\EE\left[\int_0^t\phi(X_{s\land \sigma},Y_{\tau})\mathbbm {1}_{\{t-s\geq \tau\}}\dd s\right]\\\nonumber
&\quad=\EE\left[\int_0^{(t-\tau)_+\land\sigma}\phi(X_{s},Y_{\tau})\dd s\right]+\EE[\phi(X_\sigma,X_\tau)((t-\tau)_+-\sigma) _+]\\
&\quad=\EE[H(X_{(t-\tau)\land \sigma},Y_\tau)-H(X_0,Y_\tau);\tau\leq t]+\EE[\phi(X_\sigma,Y_\tau)((t-\tau)_+-\sigma) _+],\label{referee:query-answer}
\end{align}
in particular $\EE\left[\int_0^t\vert \phi(X_{s\land \sigma},Y_{\tau})\vert\mathbbm {1}_{\{t-s\geq \tau\}}\dd s\right]<\infty$ on taking into account \eqref{eq:weird-condition++}. Here, in passing from the second to the third line of \eqref{referee:query-answer}, we used  that the martingale of \eqref{eq:mtg-one} has a constant expectation (combined with Tonelli's theorem). By analogy,
\begin{align}\nonumber
\EE\left[\int_0^t\phi(X_{ \sigma},Y_{(t-s)\land \tau})\mathbbm {1}_{\{s\geq \sigma\}}\dd s\right]&=\EE\left[\int_0^t\phi(X_{ \sigma},Y_{s\land \tau})\mathbbm {1}_{\{t-s\geq \sigma\}}\dd s\right]\\\nonumber
&=\EE[H(X_\sigma,Y_{(t-\sigma)\land \tau})-H(X_\sigma,Y_0);\sigma\leq t]\\\label{ethier-kurtz-last}
&\qquad \qquad  \qquad +\EE[\phi(X_\sigma,Y_\tau)((t-\sigma)_+-\tau) _+],
\end{align}
and again we find a posteriori that actually $\EE\left[\int_0^t\vert\phi(X_{ \sigma},Y_{(t-s)\land \tau})\vert\mathbbm {1}_{\{s\geq \sigma\}}\dd s\right]<\infty$. Combining  \eqref{thmD:general-}-\eqref{ethier-kurtz-last} we conclude at once. \qed

\end{appendix}

\bibliographystyle{abbrv}
\bibliography{doku}

\noindent \textbf{Acknowledgements.}   MV acknowledges funding from the Slovenian Research and Innovation Agency (ARIS) under the research programme No. P1-0448 and is grateful to Alex Watson for the suggestion of using the Esscher transform in the  proof of Proposition~\ref{proposition:closure-under-limits}. This paper was completed while CF was affiliated with the Centre de  Math\'ematiques Appliqu\'ees (CMAP) at \'Ecole Polytechnique. He is grateful to Sylvie M\'el\'eard for  the opportunity to join her team and to Vincent Bansaye, Carl Graham and Anita Winter for several insightful discussions related to this project. CF is supported  by the European Union (ERC, SINGER, 101054787). Views and opinions expressed are however those of the authors only and do not necessarily reflect those of the European Union or the European Research Council. Neither the European Union nor the granting authority can be held responsible for them. 

\end{document}